\pgfplotsset{compat=1.17}
\newtheorem{thm}{Theorem}[section]
\newtheorem{thmx}{Theorem}
\newtheorem{claim}{Claim}
\newtheorem{prop}{Proposition}[section]
\newtheorem{lema}{Lemma}[section]
\newtheorem{corol}{Corollary}[section]
\newtheorem{defn}{Definition}[section]
\newtheorem{rmk}{Remark}[section]
\newcommand{\eqnum}{\refstepcounter{equation}\textup{\tagform@{\theequation}}}
\def\namedlabel#1#2{\begingroup
  #2%
  \def\@currentlabel{#2}%
  \phantomsection\label{#1}\endgroup
}
\newcommand{\diam}{\text{diam} \ }
\newcommand{\rr}{\mathbb{R}}
\newcommand{\zz}{\mathbb{Z}}
\newcommand{\nn}{\mathbb{N}}
\newcommand{\TT}{\mathbb{T}}
\newcommand{\PP}{\mathbb{P}}
\newcommand{\ol}{\overline}
\newcommand{\Leb}{\text{Leb}}
\newcommand{\End}{\text{End}}
\title{Inverse SRB measures for endomorphisms on surfaces}
\author{Victor Janeiro, Radu Saghin}
\begin{document}

\begin{abstract}
    We extend D. Burguet's construction of SRB measures \cite{burguet2024srb} for the non-invertible scenario obtaining hyperbolic invariant measures with absolutely continuous disintegrations on stable manifolds for a certain class of endomorphisms on the two-torus. The constructed measures also maximize the folding entropy, and we call them equidistributed inverse SRB measures. For dissipative perturbations of the examples given by M. Andersson, P. Carrasco and R. Saghin in \cite{martin-a} we obtain existence, uniqueness and continuity with respect to the map of the equidistributed inverse SRB measures as well as the SPR property of the stable geometric potential for such maps. 
\end{abstract}

\maketitle

\tableofcontents

\section{Introduction and results}

\subsection{Introduction}

Among the most important tools in the study of Dynamical Systems are the so-called SRB measures, and this is why they are extensively studied in the literature.

Let $f:M\rightarrow M$ be a $C^{1+\alpha}$ diffeomorphism on a compact Riemannian manifold $M$. A hyperbolic invariant measure $\mu$ is called {\it SRB} if it satisfies one of the following equivalent conditions:
\begin{itemize}
    \item The disintegrations of $\mu$ along the unstable manifolds are absolutely continuous with respect to the normalized Lebesgue measure;
    \item The measure $\mu$ satisfies the Pesin formula:
    \begin{equation}
        h_{\mu}(f)=\int_M\sum_{\lambda_i>0}\lambda_id\mu,    \end{equation}
    where $\lambda_i$ are the Lyapunov exponents, which are well-defined at $\mu$-almost every point.
\end{itemize}
For precise definitions, see Section \ref{sect:SRB}. The {\it basin of the invariant measure $\mu$} is 
\begin{equation}
    \mathcal B(\mu)=\{x\in M:\ \lim_{n\rightarrow\infty}\frac 1n\sum_{k=0}^{n-1}\delta_{f^k(x)}=\mu\}.
\end{equation}
In other words, the measure $\mu$ statistically describes the future orbit of the points from its basin. If the invariant hyperbolic measure $\mu$ is SRB and ergodic, then it satisfies the following important property: {\it The basin of $\mu$ has positive Lebesgue measure}. This means that the measure $\mu$ is {\it physical}, it describes the statistics of a (physically) relevant set of points.

One can define the {\it inverse SRB measures} as SRB measures for $f^{-1}$. Equivalently, they have absolutely continuous disintegrations along stable manifolds and satisfy the inverse Pesin formula:
\begin{equation}
    h_{\mu}(f)=-\int_M\sum_{\lambda_i<0}\lambda_id\mu.
\end{equation}
Again if an inverse SRB measure is also ergodic, then it actually describes the statistics of the backward orbits of a set of points of positive Lebesgue measure.

Now suppose that $f:M\rightarrow M$ is not invertible, it is just a $C^{1+\alpha}$ endomorphism. In order to avoid critical points, an endomorphism throughout this paper is always a local diffeomorphism. The forward SRB measures can still be defined as the hyperbolic measures which satisfy the forward Pesin formula. The unstable manifolds are not well defined on $M$ because they depend on the backward trajectory. However, they are well defined for the natural extension $\hat f$ of $f$ to the space of orbits $L_f$ (see Section \ref{sect:SRB} for more details). 

The map $\hat f$ is invertible and semi-conjugated to $f$ by the natural projection $\pi_{f}:L_f\rightarrow M$. Every invariant measure $\mu$ for $f$ has a unique lift $\hat\mu$ on $L_f$ invariant under $\hat f$. Then $\mu$ is a forward SRB measure if and only if the disintegrations of $\hat\mu$ along the unstable manifolds are absolutely continuous; see e.g. \cite{ledrappier1985metric}. Again, if $\mu$ is SRB and ergodic, then its basin has positive Lebesgue measure.

For endomorphisms we lose the symmetry between forward and inverse SRB measures, because the backward trajectory is not uniquely defined. We can still define the inverse SRB measures using the absolute continuity property.
\begin{defn}\label{def.inverseSRB}
    The hyperbolic invariant measure $\mu$ of the local $C^{1+\alpha}$ diffeomorphism $f$ is an {\it inverse SRB measure} if the disintegrations of $\mu$ along the stable manifolds are absolutely continuous.     
\end{defn}

If $\mathcal{E}$ is the partition of $M$ into points, the {\it folding entropy of $f$ with respect to $\mu$} is
\begin{equation}
    F_{\mu}(f)=H_{\mu}(\mathcal{E}\mid f^{-1}\mathcal{E}).
\end{equation}
Liu \cite{liu2003ruelle} proved that for every $\mathcal{C}^{1+}$ endomorphism
\begin{equation}\label{eq Ruelle ineq endo}
    h_\mu(f) \le F_\mu(f)-\int_M\sum_{\lambda_i<0}\lambda_i \ d\mu.
\end{equation}
If the measure $\mu$ is inverse SRB for the local diffeomorphism $f$, then it satisfies the {\it inverse Pesin formula for endomorphisms:}
\begin{equation}
    h_{\mu}(f)=F_{\mu}(f)-\int_M\sum_{\lambda_i<0}\lambda_id\mu.
\end{equation}
It is not clear if the converse holds in general, but we will describe a situation when it does. The disintegrations of $\mu$ on the elements of the partition $f^{-1}\mathcal E$ give rise to the {\it Jacobian of $\mu$}, $J_{\mu}:M\rightarrow[0,\infty)$, defined $\mu$-almost everywhere, see again Section \ref{sect:SRB} for more details. If the hyperbolic invariant measure $\mu$ satisfies the inverse Pesin formula for the local diffeomorphism $f$, and the Jacobian $J_{\mu}$ is H\"older continuous, then $\mu$ is an inverse SRB measure \cite{liu2008invariant}.

The basin of an inverse SRB measure $\mu$ can be defined using the natural extension of $f$, more precisely, one can consider the basin of $\hat\mu$ for $\hat f^{-1}$. The basin lies in the inverse limit space $L_f$; it can be projected to the manifold $M$, however we may have many inverse SRB measures such that the projections of their basins overlap in $M$, even for Anosov endomorphisms. This is because a point $x\in M$ may have different backward trajectories converging to different inverse SRB measures. This is related to the fact that different inverse $SRB$ measures $\mu$ lift to different measures $\hat\mu$ on $L_f$, and these in turn may have different disintegrations $\hat\mu_x$ along the fibers $\pi^{-1}(x)$ (the set of backward orbits from $x$).

A natural measure to consider on the fibers $\pi^{-1}(x)$ is the Bernoulli measure $p_x$, i.e. the measure which gives equal weight $\frac 1d$ to each preimage of every point ($d$ being the degree of $f$). A special type of measures is the following.
\begin{defn}\label{def.equidistributed}
    An invariant measure $\mu$ of the $C^{1+\alpha}$ local diffeomorphism $f$ of degree $d$ is equidistributed if one of the following 3 equivalent conditions holds: \begin{itemize}
        \item The disintegrations of $\hat\mu$ on $\pi^{-1}(x)$ are $p_x$ for $\mu$-almost every $x$;
        \item $J_{\mu}(x)=\frac 1d$ for $\mu$-almost every $x$;
        \item $F_{\mu}(f)=\log d$ (or $F_{\mu}(f)$ is maximal).
    \end{itemize}
\end{defn}

In this paper we are interested in the existence and uniqueness of equidistributed inverse SRB measures for endomorphisms on surfaces.

The SRB measures were extensively studied during the last years, especially in the case of diffeomorphisms. It is known that they exist for uniformly hyperbolic attractors, and also in lots of specific partially hyperbolic or non-uniformly hyperbolic settings. There are also important advances in the case of endomorphisms, like the work on the existence of forward SRB measures for partially hyperbolic endomorphisms on the torus or the annulus by Tsujii \cite{Tsujii2005-if}. There are also works on a similar notion of inverse SRB measure for hyperbolic repellers by Mihailescu and  Urbansky \cite{mihailescu2010physical}, \cite{mihailescu2013entropy}. We are not aware of any works on the existence of SRB measures for endomorphisms without dominated splittings.

Recently, Burguet \cite{burguet2024srb} and Buzzi-Crovisier-Sarig \cite{buzzi2023existence} independently obtained the following remarkable result:

\begin{thm}[Burguet, Buzzi-Crovisier-Sarig] Suppose that $f:M\rightarrow M$ is a $C^{\infty}$ surface diffeomorphism, and consider the set
$$A_0=\{x\in M:\ \limsup_{n\rightarrow\infty}\frac 1n\log\|Df^n(x)\|>0\}.$$
If $A_0$ has positive Lebesgue measure, then $f$ has $SRB$ measures.
\end{thm}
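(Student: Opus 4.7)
The plan is to realize the SRB measure as a weak-$\ast$ accumulation point of empirical measures obtained by pushing forward arclength on a smooth curve transverse to a ``stable-like'' direction at points of $A_0$, and to verify its SRB property by matching Ruelle's inequality. First, I would extract a positive-Lebesgue-measure subset $A_0'\subseteq A_0$ on which the upper Lyapunov exponent is uniformly bounded below by some $\chi_0>0$, and then apply a Pliss-type lemma to obtain, at each $x\in A_0'$, a positive-density set of hyperbolic times $\{n_k\}$ along which $\|Df^{n_k-j}\|\geq e^{(\chi_0-\varepsilon)(n_k-j)}$ for every intermediate $j\in\{0,\dots,n_k\}$. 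At such times, a short smooth curve $\gamma$ through $x$, chosen transverse to the slowest-expanding direction, has iterates $f^{n_k}(\gamma)$ containing long arcs of controlled distortion, and normalizing arclength on these arcs yields probability measures $\nu_k$ whose Jacobian with respect to arclength is uniformly bounded.

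Second, I would form Cesàro averages of the pushforwards of a fixed $\nu_0$ and pass to a weak-$\ast$ accumulation point $\mu$, whose $f$-invariance follows from Krylov--Bogolyubov. Because the distortion of $f$ along the iterated arcs is controlled in a Pesin-style manner (using the $C^{1+\alpha}$ regularity), the conditionals of $\mu$ along local unstable manifolds inherit absolute continuity with a H\"older density, which gives the SRB property once $\mu$ is shown to be hyperbolic and to see a set of full positive exponent.

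Third, and crucially, one must guarantee that $\mu$ is non-trivial and hyperbolic, i.e.\ that it does not collapse onto periodic saddles or onto invariant sets of vanishing expansion. This is where the $C^\infty$ hypothesis enters decisively: via Yomdin's reparametrization theorem one can cover $f^n(\gamma)$ by at most $e^{\varepsilon n}$ smooth pieces of bounded $C^r$-norm, and, combined with Newhouse's upper semicontinuity of the metric entropy functional for $C^\infty$ surface diffeomorphisms, this upgrades the geometric construction into the entropy lower bound
\[
h_\mu(f)\;\geq\;\int \sum_{\lambda_i>0}\lambda_i\,d\mu,
\]
which by Ruelle's inequality forces equality and Pesin's formula, hence the SRB property. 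The main obstacle is precisely this step: showing that entropy survives the weak-$\ast$ limit, i.e.\ that the empirical measures concentrate on genuinely expanding Pesin blocks rather than being washed out onto trivial invariant sets. Controlling the localized volume growth on $A_0'$, partitioning it into Pesin regular sets of increasing quality, and combining these ingredients with Yomdin reparametrizations to lower-bound $h_\mu(f)$ by the positive-exponent integral is the most delicate part of the argument, and also explains why the result is currently out of reach in finite smoothness.
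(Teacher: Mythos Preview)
This theorem is not proved in the paper; it is cited as the result of Burguet and Buzzi--Crovisier--Sarig, and the paper instead adapts Burguet's machinery to the endomorphism setting (Sections~4--5). So there is no ``paper's own proof'' to compare against directly, but your sketch can be compared to Burguet's approach as it is presented and used in the paper.

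Your outline captures the right philosophy---push forward arclength along a curve, take weak limits, and prove the entropy formula---but it misses the specific technical devices that make the argument work. Three points stand out. First, the relevant notion of ``good time'' is not a Pliss hyperbolic time but a \emph{geometric time} (Definition~\ref{defn geometric times} in the paper), defined via the colored Reparametrization Lemma; positive density of these times (Proposition~\ref{prop key estimates}) is the key estimate, and it does not follow from a Pliss argument alone. Second, one does not take ordinary Ces\`aro averages: the empirical measures are averaged only over a \emph{F\"olner sequence} extracted from the geometric set (Lemma~\ref{lema folner sequence}), and invariance of the limit comes from the F\"olner property, not Krylov--Bogolyubov. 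Third, absolute continuity of conditionals is not argued directly from distortion; instead one proves the \emph{F\"olner--Gibbs property} (Definition~\ref{defn folner gibbs property}, Proposition~\ref{prop folner gibbs prop with q}), which yields the entropy inequality $h_\mu(f)\ge\int\chi^+\,d\mu$ via Proposition~\ref{prop folner gibbs property}, and the SRB property then follows from Ledrappier--Young.

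Your explanation of why $C^\infty$ matters is also slightly off: it is not Newhouse upper semicontinuity that enters, but rather the $(r-1)^{-1}$ exponent in the Reparametrization Lemma's cardinality bound (item~4 there and Corollary~\ref{corol local reparam}). This error term is beaten by the expansion only when $r$ is large enough relative to $R(f)/\chi$, which is why $C^\infty$ handles all positive exponents while finite $r$ requires $\chi>R(f)/r$.
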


This important result proves Viana's Conjecture in dimension 2 for $\mathcal{C}^\infty$ diffeomorphisms, however it is not clear how to obtain concrete and robust examples satisfying the hypothesis outside the class of hyperbolic or partially hyperbolic diffeomorphisms.

The goal of our paper is to provide concrete examples of robust families of endomorphisms where we can apply the techniques developed in \cite{burguet2024srb} and \cite{buzzi2023existence}. We are able to obtain existence and uniqueness of equidistributed inverse SRB measures for an open class of surface endomorphisms without a dominated splitting. Let us explain which are these endomorphisms.

Let $\text{End}^r(\TT^2)$ be the space of $C^r$ local diffeomorphisms of $\mathbb T^2$. For any $f\in \text{End}^r(\TT^2)$ of degree $d\geq 2$, we define for $(x,v) \in T^1\TT^2$ and for $n \in \nn$
\begin{equation}\label{eq defn Ixvfn}
I(x,v;f^n):= \frac{1}{d^n}\sum\limits_{y \in f^{-n}(x)}\log \|(Df^n_y)^{-1}\cdot v\|,
\end{equation}
and
\begin{equation}\label{eq defn Ix}
I_f(x):= \limsup\limits_n \frac{1}{n} \inf_{v \in T^1_x\TT^2} I(x,v;f^n).
\end{equation}
When $\inf_xI_f(x)>0$, we say that $f$ is inverse expanding on average.

We consider the sets
\begin{equation}\label{eq f0}
    \mathcal F_0=\{f\in\text{End}^{1+}(\TT^2): \inf_x I_f(x)>0,  \ \text{and} \det(Df_x)=\deg(f) \}
\end{equation}
and
\begin{equation}\label{eq f1}
    \mathcal F_1=\{f\in\mathcal F_0:\ \pm 1 \text{ is not an eigenvalue of the linear part of } f\}
\end{equation}

Through a series of papers (\cite{martin-a}, \cite{janeiro2023existence}, \cite{ramirez2023non}, \cite{lima2024measuresmaximalentropynonuniformly},\cite{andersson2016transitivity}) we know the following.

\begin{thm}\label{thm previous results for F_1}
    Every $f\in\mathcal F_0$ is conservative, and ($C^1$ robustly) non-uniformly hyperbolic in the sense that Lebesgue almost every point has one positive and one negative Lyapunov exponent. Every $f\in\mathcal F_1$ is transitive and stably ergodic.

    $\mathcal F_0$ (and $\mathcal F_1$) contains endomorphisms without dominated splitting and intersects almost every homotopy class of endomorphisms on $\TT^2$.
\end{thm}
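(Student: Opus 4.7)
This statement collects results scattered across several cited papers, so the plan is to indicate which reference supplies each conclusion and verify that the hypotheses line up. I would split the argument into three parts: conservativity and non-uniform hyperbolicity for $\mathcal F_0$; transitivity and stable ergodicity for $\mathcal F_1$; and the construction of examples without dominated splitting in almost every homotopy class.

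For the first part, conservativity is immediate from the determinant condition: since $f$ is a degree-$d$ covering with $|\det Df_x|=d$ everywhere, the change-of-variables formula gives $\Leb(f^{-1}A)=\int_A\sum_{y\in f^{-1}(x)}|\det Df_y|^{-1}\,d\Leb(x)=\Leb(A)$ for every measurable $A\subset\TT^2$. For non-uniform hyperbolicity, the plan is to invoke the technical core of \cite{martin-a,janeiro2023existence}: the quantity $I(x,v;f^n)$ in \eqref{eq defn Ixvfn} is the $d^n$-average over preimages of the log expansion of $v$ under $Df^n$, so the pointwise bound $I_f(x)>0$ yields, after averaging against Lebesgue and applying Oseledets in the natural extension, a negative Lyapunov exponent at $\Leb$-a.e.\ $x$. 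Since conservativity forces $\sum_i\lambda_i(x)=\log d>0$, the other exponent is automatically positive, so $f$ is NUH. $C^1$-robustness then follows from the lower semicontinuity of $I_f$ in $f$ proved in \cite{martin-a} together with the openness of the constant-determinant condition among local diffeomorphisms of a given degree.

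For the second part, transitivity of $f\in\mathcal F_1$ is proved in \cite{andersson2016transitivity,ramirez2023non}: NUH, conservativity, and the absence of $\pm 1$ in the spectrum of the linear part forbid invariant one-dimensional sub-foliations and force unstable manifolds through Pesin blocks to become dense. Stable ergodicity is then the content of \cite{lima2024measuresmaximalentropynonuniformly}, which runs the Hopf argument in the conservative NUH setting, again using the spectral condition on the linear part to ensure the accessibility needed to glue ergodic components. For the third part, the examples are constructed in \cite{martin-a}: starting from a hyperbolic linear toral endomorphism without $\pm 1$ in its spectrum, one performs a conservative bump perturbation that rotates the unstable cone by a large angle on a small ball; a direct computation shows the condition $I_f>0$ survives while no $Df$-invariant dominated splitting can. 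The flexibility of the construction covers almost every homotopy class.

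The step I expect to be the main obstacle is the passage from the pointwise hypothesis $I_f(x)>0$ to a Lebesgue-a.e.\ negative Lyapunov exponent. The averages in \eqref{eq defn Ixvfn} are over preimages rather than forward orbits, so one cannot apply Birkhoff directly and must instead pass to the natural extension $\hat f$ and exploit conservativity to reinterpret preimage averages as forward-orbit averages for $\hat f^{-1}$, which is the technical content of \cite{janeiro2023existence}.
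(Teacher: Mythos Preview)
The paper does not actually supply a proof of this theorem: it is stated as a summary of results from the cited references, with no argument given beyond the sentence ``Through a series of papers \dots\ we know the following.'' So there is no paper proof to compare against, and your plan of attributing each clause to the relevant reference is exactly what the paper does (only more tersely).

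That said, one step of your sketch is wrong. You write that $C^1$-robustness of NUH follows in part from ``the openness of the constant-determinant condition among local diffeomorphisms of a given degree.'' The condition $\det Df_x=d$ for all $x$ is a closed, nowhere-dense condition in $\End^1(\TT^2)$; a generic $C^1$-perturbation destroys it. What \emph{is} $C^1$-open is the inverse-expanding-on-average condition $C(f)>0$: by item~2 of Proposition~\ref{prop exp on average endos} it is equivalent to $I(x,v;f^N)>C$ for all $(x,v)\in T^1\TT^2$, which is manifestly open in $C^1$. The robustness of NUH in \cite{martin-a} is obtained from this openness (together with $\inf_x|\det Df_x|>1$, which \emph{is} open), not from any openness of the constant-Jacobian condition. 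Depending on how one reads the parenthetical ``$(C^1$ robustly$)$'', the conclusion is either that every conservative perturbation remains NUH with respect to Lebesgue, or that every perturbation admits a NUH equidistributed measure (cf.\ Corollary~\ref{corol hyperbolicity of measures max fold}); in neither reading does openness of $\det Df\equiv d$ play a role.

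A minor point: your attribution of stable ergodicity to \cite{lima2024measuresmaximalentropynonuniformly} looks off --- that reference concerns measures of maximal entropy and is used in this paper for the Pesin-theoretic input (large invariant manifolds, homoclinic classes). The ergodicity/transitivity statements are more likely spread among \cite{martin-a,janeiro2023existence,ramirez2023non,andersson2016transitivity}, but since the paper itself does not disambiguate, this is not a defect in your argument so much as an unavoidable ambiguity.
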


We are interested in dissipative perturbations of maps from $\mathcal F_0$ or $\mathcal F_1$. We will show not only that they have unique equidistributed inverse SRB measures, but also that these SRB measures are very well behaved from a statistical point of view.

Let us quickly recall the notion of SPR property from \cite{buzzi2025strong}. We start again with the diffeomorphism case and assume that $f$ is a $C^{1+\alpha}$ diffeo on a compact Riemannian manifold $M$.

Given $\lambda,\epsilon>0$, we say that the nonempty $\Lambda\subset M$ is a {\it $(\lambda,\epsilon)$-Pesin block for $f$} if there exists a measurable invariant decomposition $TM=E^s\oplus E^u$ on $\cup_{n\in\mathbb Z}f^n(\Lambda)$ and $K>0$ such that for any $n\in\mathbb Z$, $k\geq 0$, and $y\in\Lambda$, we have
\begin{equation}
    \max(\|Df^k|_{E^s(f^n(y))}\|,\|Df^{-k}|_{E^u(f^n(y))}\|)\leq Ke^{-\lambda k+\epsilon|n|}.
\end{equation}

Assume that $X\subset M$ is a Borel invariant set and $\phi:X\rightarrow\mathbb R\cup\{-\infty\}$ is a quasi-Hölder potential with the top pressure $P_{TOP}(f|_X,\phi)$ (see sub-section \ref{sect spr} for more details). In particular $\phi$ can be the stable or the unstable geometric potential:
\begin{eqnarray}
    \phi_s(x)&=\begin{cases}
        \log|\det Df|_{E^s(x)}|,\ \hbox{ if }\ E^s(x) \ \hbox { exists,}\\
        -\infty,\ \hbox{ otherwise}
    \end{cases};\\
     \phi_u(x)&=\begin{cases}
        -\log|\det Df|_{E^u(x)}|,\ \hbox{ if }\ E^u(x) \ \hbox { exists,}\\
        -\infty,\ \hbox{ otherwise}
    \end{cases}.
\end{eqnarray}

We say that {\it $f$ is SPR for the potential $\phi$ on the Borel set $X$} if there exists $\lambda>0$ such that for every $\epsilon>0$, there are a Borel $(\lambda,\epsilon)$-Pesin block $\Lambda$ and numbers $P_0<P_{TOP}(f|_x,\phi),\ \tau>0$ such that for any ergodic invariant measure $\nu$ on $X$, $P(f,\nu,\phi)>P_0$ implies that $\nu(\Lambda)>\tau$.

In \cite{buzzi2025strong} the authors show that the SPR property implies many good statistical properties of the equilibrium states: they are Bernoulli times a permutation, the quasi-Hölder observables satisfy the large deviation property, almost sure invariance principle, asymptotic variance of Birkhoff sums, exponential decay of correlations (in the mixing case).

The definition of the SPR property can be adapted to the noninvertible case. Assume now that $f$ is a local diffeomorphism on $M$ with the invertible lift $\hat f$ to $L_f$. The $(\lambda,\epsilon)$-Pesin blocks can be defined on $L_f$ using $\hat f$, and for any quasi-Hölder potential $\phi$ on $L_f$ we can define the SPR property using the lift $\hat f$. The SPR property will again imply the same good statistical properties for $\hat f$, for more details see sub-sections \ref{sect Pesin}, \ref{sect spr}.

\subsection{Main result}

Now we are ready to state our main theorem. Recall that $\mathcal F_1$ is the $C^1$ open set of endomorphisms of $\mathbb T^2$ defined in \eqref{eq f0}, \eqref{eq f1}.

\begin{thmx}\label{thm main}
There exists a $C^2$ open neighborhood $\mathcal U \subset \End^{1+}(\TT^2)$ of $\mathcal F_1$ such that:
\begin{enumerate}
\item Every $C^{\infty}$ endomorphism $f\in\mathcal U$ has a unique equidistributed inverse SRB measure $\mu_f$
\item $\mu_f$ has full basin, meaning that for Lebesgue almost every point $x\in\mathbb T^2$, for Bernoulli ($p_x$) almost every backward trajectory starting at $x$, the Birkhoff averages of the Dirac measure along the backward orbit converge weakly to $\mu_f$. 
\item Every $f \in \mathcal{U}$ (not necessarily $\mathcal{C}^\infty$) admitting an equidistributed inverse SRB measure is SPR for the stable geometric potential. In particular, $\mu_f$ satisfies many ergodic properties (see \S~\ref{sect spr}). \item The maps $f\mapsto \chi^{\pm}(\mu_f)$ and $f\mapsto h_{\mu_f}(f)$ are continuous ($C^1$ topology in $\mathcal U$).
\item The map $\End^{\infty}(\TT^2)\cap \mathcal{U} \ni f \mapsto \mu_f$ is continuous (in the $\mathcal{C}^1$-topology in $\mathcal{U}$ restricted to $C^2$ bounded sets and the weak$^*$-topology in the space of probabilities).
\end{enumerate}
\end{thmx}

\begin{rmk}
    There is an asymmetry between the future and the past in our examples. Even if the future is deterministic, and the past is in some sense "random", from the statistical point of view, we know much more about the past than about the future. We know where most of the past orbits go, but these methods do not tell us anything about the future orbits. It is exactly the "randomness" that helps us to understand the past better than the future.
\end{rmk}

\subsection{Existence, uniqueness and SPR property of equidistributed inverse SRB measures}

Theorem \ref{thm main} is a consequence of several results which present below. We can state the results under more general hypothesis, because they may have their own separate interest.

For a $C^1$ endomorphism of $\mathbb T^2$ we can define
\[
R^-(f):= \lim\limits_{n \to +\infty}\frac{1}{n} \log^+\sup_{x \in \TT^2}\|(Df^n_x)^{-1}\|.
\]

Following the work of Burguet and Buzzi-Crovisier-Sarig, we can in fact prove the following result which gives the existence of equidistributed inverse SRB measures for endomorphisms with finite $C^r$-regularity.

\begin{thmx}[Existence of equidistributed inverse SRBs]\label{main thm burguet}
    Let $f \in \text{End}^r(\TT^2)$, $r \ge 2$. If $I_f(x)>\frac{R^-(f)}{r}$ on a set of positive Lebesgue measure, then $f$ admits at least one and at most countably many equidistributed inverse SRB measures $\{\mu_i\}_{i \in I}$.
\end{thmx}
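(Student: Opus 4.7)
The plan is to adapt D.\ Burguet's SRB construction, run backward through the endomorphism $f$, with the threshold $R^-(f)/r$ emerging from Yomdin's reparametrization lemma as applied to $\hat f^{-1}$.

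First, I would move the problem to the natural extension $\hat f \colon Sol \to Sol$, so that the task becomes constructing an SRB measure for the invertible map $\hat f^{-1}$ on the solenoidal manifold. Stable manifolds of $f$ are determined by the forward dynamics alone and descend to $\TT^2$ unambiguously, and they correspond to unstable manifolds of $\hat f^{-1}$. An equidistributed inverse SRB measure lifts to an $\hat f$-invariant measure $\hat\mu$ whose unstable disintegrations for $\hat f^{-1}$ are absolutely continuous and whose disintegration along every fiber $\pi^{-1}(x)$ is the Bernoulli measure $p_x$. The weight $\frac{1}{d^n}$ in the definition of $I(x,v;f^n)$ is precisely what will produce the fiberwise Bernoulli disintegration in the weak-$*$ limit.

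Second, I would construct the backward analog of Burguet's ``good curves''. At a typical $x$ in the positive-Lebesgue-measure set where $I_f(x) > R^-(f)/r$, pick a direction $v\in T^1_x\TT^2$ realizing the infimum in (\ref{eq defn Ix}); along $v$ the Bernoulli average $\frac{1}{d^n}\sum_{y\in f^{-n}(x)} \log\|(Df^n_y)^{-1} v\|$ grows at least like $n\, I_f(x)$, so on average over backward branches the vector $v$ is exponentially stretched by $\hat f^{-n}$. Fix a short curve $\gamma$ through $x$ tangent to $v$ and consider, for each $n$, the tree of $d^n$ preimage branches of $\gamma$, uniformly weighted by $d^{-n}$ and lifted to $Sol$ through the canonical backward branches. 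Yomdin's reparametrization lemma applied to the iterates of $\hat f^{-1}$ along each preimage branch produces at most $\exp(n R^-(f)/r + o(n))$ $C^r$-reparametrizations of uniformly controlled distortion covering the branch; the surplus $I_f(x) - R^-(f)/r>0$ is exactly what Burguet's counting needs to beat.

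Third, I would form the weighted empirical measures on $Sol$ associated with these trees of reparametrized backward curves, take a Ces\`aro average in $n$, and extract a weak-$*$ subsequential limit $\hat\mu$. Following Burguet, the Yomdin surplus confines all but an exponentially small fraction of the Bernoulli mass to ``nice'' backward branches of uniform length and bounded distortion, and in the limit this yields absolute continuity of the unstable disintegrations of $\hat\mu$ for $\hat f^{-1}$, equivalently absolute continuity of $\mu = \pi_*\hat\mu$ along stable leaves of $f$. The uniform $d^{-n}$ weighting preserves, in the limit, the identity $J_{\hat\mu} = 1/d$, so by Definition \ref{def.equidistributed} one gets $F_{\hat\mu}(\hat f) = \log d$ and the inverse Pesin formula is forced through equality in Liu's Ruelle inequality (\ref{eq Ruelle ineq endo}). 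For the countability statement, distinct ergodic equidistributed inverse SRB measures are mutually singular upstairs on $Sol$, their basins for $\hat f^{-1}$ are pairwise disjoint mod zero with respect to the reference measure $\int_{\TT^2} p_x\, d\Leb(x)$, and each basin has positive reference mass, so only countably many can coexist. The main obstacle I anticipate is the second step: in Burguet's original setup one tracks a single forward orbit and applies Yomdin to $Df^n$ along that orbit, whereas here one must simultaneously handle $d^n$ backward branches and verify that Yomdin's reparametrization cost is uniformly bounded by the global envelope $n R^-(f)/r + o(n)$ across the whole tree, so that the Bernoulli gain $I_f(x)$ strictly beats it on a positive-measure set of base points.
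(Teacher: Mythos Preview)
Your high-level outline is on the right track and mirrors the paper's strategy: lift to $Sol$, take a curve through the good set, use the Bernoulli-weighted reference measure $\hat\eta_\sigma$ on $\pi_{Sol}^{-1}(\sigma)$, apply Burguet's reparametrization machinery branch by branch, and pass to a limit measure. But two of the steps you describe would not work as stated, and the mechanism by which equidistribution is obtained is not the one you propose.

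First, the passage ``take a Ces\`aro average in $n$ and extract a weak-$*$ subsequential limit'' is not what Burguet does, and it is not enough. The geometric times $E_p(\hat x)$ have positive upper density but are not all of $\nn$, and the entropy lower bound hinges on a F\"olner--Gibbs property (Definition~\ref{defn folner gibbs property}) that only holds along a carefully extracted F\"olner subsequence $(F_{n_k})_k$ with $\partial F_{n_k}\subset E(\hat x)$ on sets $A_{n_k}$ of subexponentially decaying mass (Lemma~\ref{lema folner sequence}). The empirical measures are $\mu_{n_k}^{F_{n_k}}=\frac{1}{\#F_{n_k}}\sum_{j\in F_{n_k}}\hat F^{-j}_*\mu_{n_k}$, not Ces\`aro averages; without the F\"olner structure, Proposition~\ref{prop folner gibbs property} fails and you get no entropy bound.

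Second, the sentence ``the uniform $d^{-n}$ weighting preserves, in the limit, the identity $J_{\hat\mu}=1/d$'' is not justified: the Jacobian is not a weak-$*$ continuous functional, so equidistribution cannot be read off directly from the limit. In the paper, the Bernoulli weights enter the argument in a different place: when bounding $\boldsymbol\eta(\mathcal P^{F_n}(\hat\xi)\cap\boldsymbol{\hat A}_n)$ one observes that the atom $\mathcal P^{F_n}(\hat\xi)$ lies in a single cylinder of $p$-measure at most $d^{-\#F_n}$, which adds an explicit $+\log d$ to the potential in the F\"olner--Gibbs inequality (Proposition~\ref{prop folner gibbs prop with q}). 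This yields $h(\hat f,\hat\mu)\ge -\chi^-(\hat\mu)+\log d$, and the reverse inequality $h_\mu(f)\le F_\mu(f)-\chi^-(\mu)\le \log d-\chi^-(\mu)$ from Liu's Ruelle inequality squeezes both the Pesin formula and $F_\mu(f)=\log d$ simultaneously. Absolute continuity along stable leaves is then a consequence of this equality together with the (constant, hence H\"older) Jacobian via Liu's characterization, not something one reads off from the geometry of the limiting reparametrizations. Finally, the obstacle you flag at the end is real but is dealt with by applying the Reparametrization Lemma to each leaf $\hat\sigma_\omega$ separately with bounds independent of $\omega$, and then integrating over $\Sigma$ against $p$ (see the proof of Proposition~\ref{prop key estimates}).
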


Building on the method of Burguet we can also provide more general criteria for the uniqueness of the equidistributed inverse SRB measures, using only the $C^2$ regularity of the endomorphism $f$.

\begin{thmx}[Uniqueness of equidistributed inverse SRBs]\label{main thm uniqueness}
    Given $f \in \mathcal{F}_1$, there exists $\mathcal{U}\subset \End^{1+}(\TT^2)$ a $\mathcal{C}^2$-neighborhood of $f$ such that every $g \in \mathcal{U}$ has at most one equidistributed inverse SRB measure $\mu$ with full basin. 
\end{thmx}

We can also adapt to endomorphisms the criteria of Buzzi-Crovisier-Sarig and obtain the SPR property of equidistributed inverse SRBs (assuming that they exist) for some large class of $C^{1+}$ endomorphisms on $\mathbb T^2$.

\begin{thmx}[SPR property of equidistributed inverse SRBs]\label{main thm SPR}
    Let $f \in End^1(\TT^2)$ be such that $\inf_xI_f(x)>0$ and $\inf_x |\det Df_x|>1$. If $f$ admits an equidistributed inverse SRB measure, then $f$ is SPR for the stable geometric potential $\phi_s = \log |\det Df|_{E^s}|$.
\end{thmx}

Finally, we provide specific examples of coexistence of inverse SRB measures with different folding entropies, even when the equidistributed one is unique.

\begin{prop}\label{cor}
    There are examples of transitive maps $g \in End^{1+}(\TT^2)$ with, at least, two inverse SRB measures. These measures have different folding entropy and their unique lifts to the natural extension disintegrate along its fibres in different ways. The basin of each inverse SRB projects to the whole torus.
\end{prop}

\subsection{Ideas of the proofs}
The proof of Theorem \ref{main thm burguet} is based on the tools developed by Burguet and Buzzi-Crovisier-Sarig in order to solve the Viana conjecture for surfaces. We adapt these tools to inverse preimages of curves, and we are able to prove that the hypothesis on the backward expansion implies the existence of invariant measures which are equidistributed and satisfy the inverse Pesin formula for endomorphisms, i.e. equidistributed inverse SRB measures.

In order to prove the uniqueness of equidistributed inverse SRBs from Theorem \ref{main thm uniqueness} we use two facts: the stable and unstable manifolds are large for many points, and the map is close to being transitive. These two facts allow us to conclude that two ergodic equidistributed inverse SRB measures must be homoclinically related, and standard results on homoclinic classes of measures will imply the uniqueness.

The lower bounds on the invariant manifolds follow from the inverse expanding on average property. Also the maps in $\mathcal F_1$ have a strong form of transitivity: the pre-images of any curve are dense in the torus. We can extend this property to a neighborhood of $\mathcal F_1$ using a compact family of curves and $\epsilon$-transitivity, and this is enough to conclude the proof of uniqueness.

The proof of the full basin property is again an adaptation of the work of Burguet to the endomorphism case.

The proof of Theorem \ref{main thm SPR} is based on results from \cite{buzzi2025strong}, which relate the SPR property to some form of continuity of the Lyapunov exponents for measures of high pressure. In order to prove the continuity of the Lyapunov exponents, we use the standard method of passing to the projectivization of the derivative cocycle, and make use of the equidistribution of the measure and the inverse expanding on average property.

Putting together Theorems \ref{main thm burguet}, \ref{main thm uniqueness} and \ref{main thm SPR} gives us Theorem \ref{thm main}. The continuity of the Lyapunov exponents follows from the same methods used in the proof of Theorem \ref{main thm SPR}, and the continuity of the metric entropy is a consequence of the inverse Pesin formula. We also note that continuity of the exponents implies upper-semicontinuity of the metric entropy at equidistributed inverse SRB measures, and this together with the inverse Pesin formula imply the continuity of the inverse equidistributed SRB measure with respect to the map.

\subsection{Organization of the paper}

In Section \ref{sect:SRB} we present the definitions and basic results related to endomorphisms on surfaces, invariant measures, Pesin Theory, inverse SRB measures and the SPR property.

In Section \ref{sect:ieap} we discuss the Inverse expanding on average property for endomorphisms on surfaces: we give equivalent definitions and we present several important properties: the unstable directions at every point are spread in different directions, unstable manifolds exist at every point and they are in general large, there exist large stable manifolds. We also have uniform transitivity properties in a neighborhood of conservative endomorphism with the inverse expanding on average property.

Section \ref{sect:tools} is dedicated to the presentation of the main tools used by Burguet in his work on SRB measures and how to translate it to the non-invertible scenario: F{\o}lner sequences and empirical measures associated to them, and some notions of the Yomdin Theory as the Reparametrization Lemma and geometric times.

In Section \ref{sect:B} we prove Theorem \ref{main thm burguet} and in Section \ref{sect:A} we prove Theorem \ref{main thm uniqueness}. In Section \ref{sect:C} we prove the continuity of the exponents, and as a consequence we obtain Theorem \ref{main thm SPR}. Theorem \ref{thm main} is proved in Section \ref{sect:main} and in the last section we prove Proposition \ref{cor}.

\section{Inverse SRB measures}\label{sect:SRB}

In this section, we will present the definitions and the basic properties of hyperbolic and inverse SRB measures for endomorphisms. Most of the definitions and results hold in a greater generality, however for simplicity we will restrict ourselves to the case of the two-torus whenever it is more convenient to do so.

\subsection{The natural extension $L_f$}\label{subsec Solenoid}

Let $f: M \to M$ be an endomorphism (local diffeomorphism) on a compact manifold $M$, then the natural extension of $f$ is $L_f = \{(x_n)_n \in M^{\zz}: f(x_{n}) = x_{n+1} \ \forall n \in \zz \}$. This is a bundle over $M$ with the projection  $\pi_f:L_f \to M$ as $\pi_f((x_n)_n) = x_0$, and the fiber over $x\in M$ consists of all the orbits starting at $x$. The map $f$ lifts to the homeomorphism $\Hat{f}: L_f \to L_f$ given by $\Hat{f}((x_n)_n) = (x_{n+1})_n$.

We may explicitly obtain the local structure of $L_f$ as a product of a $(\dim M)$-disk with a Cantor set, more specifically the Cantor set is $\Sigma:= \{1,\cdots,d\}^\nn$, $d = \deg f$. Given a simply connected open set $V \subset M$, we may identify $\pi_{f}^{-1}(V)$ with $V \times \Sigma$ as follows. Since $f$ is a local diffeomorphism and $V$ is simply connected, $f^{-1}(V)$ consists of exactly $d$ disjoint simply connected open sets $V_1,\cdots V_d$. Now, for each $i=1,\cdots, d$, we denote by $\{V_{i,j}\}_{j=1}^d$ the $d$ disjoint simply connected open sets that form $f^{-1}(V_i)$, then $f^{-2}(V)$ is the disjoint union of $d^2$ disjoint simply connected open sets $\{V_{i,j}\}_{i,j=1}^d$. We continue the process inductively to obtain that for every $n \ge 0$, $f^{-n}(V)$ is the disjoint union of sets $\{V_{\tau}\}_{\tau \in \{1,\cdots,d\}\}^n}$.

Then, we may consider the homeomorphism $\Xi: V\times \Sigma \to \pi_{f}^{-1}(V)$ given for $x \in V$ and $\omega = (\omega_n)_{n\ge 0}$ by
\begin{equation}\label{eq ident fiber with Cantor}
\Xi(x,\omega) = (x_n)_n, \ \ \text{as the unique points satisfying} \ \ x_{-n} = f^{-n}(x) \cap V_{\omega_1,\cdots, \omega_n}, \ \forall n \ge 0.
\end{equation}

\subsection{Invariant measures for endomorphisms}

We consider $f\in End^1(M)$ and $\Hat{f}:L_f\to L_f$ its natural extension, the invariant measures on $M$ and on $L_f$ are related as follows. We say that a measure $\Hat{\mu}$ on $L_f$ projects to a measure $\mu$ on $M$ if $(\pi_{f})_*\Hat{\mu} = \mu$. For every $f$-invariant probability measure $\mu$ on $M$, there exists a unique $\Hat{f}$-invariant probability $\Hat{\mu}$ on $L_f$ that projects to $\mu$. This unique measure $\Hat{\mu}$ is called the lift of $\mu$ to $L_f$. One may verify that $\mu$ is ergodic if and only if $\hat\mu$ is ergodic. Furthermore the metric entropy on the natural extension is the same as the one in the manifold, i.e. 
\begin{equation}\label{eq entropy manifold and sol}
h_\mu(f) = h_{\Hat{\mu}}(\Hat{f}),
\end{equation}
this is a classical result on the natural extension; see e.g. \cite{Qian2009smooth}).

\subsubsection{The Jacobian}
Let $\mathcal E$ be the partition of $M$ into points. Then $f^{-1}\mathcal E$ is a measurable partition of $M$ into subsets with $d$ elements (we assume that $f$ has degree $d$). We can disintegrate the measure $\mu$ along the elements of the partition $f^{-1}\mathcal E$, so for $\mu$-almost every $x\in M$, we obtain a probability measure $\overline \mu_{f^{-1}(x)}$. Using these disintegrations, we can define the {\it Jacobian of $f$ with respect to $\mu$ } as
\begin{equation}
    J_{\mu}(y)=\frac 1{\overline\mu_{f^{-1}(x)}(\{y\})},\ \ \text{ if } \ y\in f^{-1}(x).
\end{equation}

Let us remark that the Jacobian $J_{\mu}$ is well defined and inside $[1,\infty)$ on a set of full $\mu$-measure. An equivalent way to define the Jacobian is the following. Consider the lift $\hat\mu$ of $\mu$ to $L_f$, and consider the partition of $L_f$ given by the fibers $\pi_{f}^{-1}(x),\ x\in M$. Let $\mu_x$ be the disintegrations of $\hat\mu$ on $\pi_{f}^{-1}(x)$ for $\mu$-almost every $x$. Then
\begin{equation}\label{eq disit lift and jacobian}
    J_{\mu}(x)=\frac 1{\Hat{\mu}_{f(x)}(\hat f(\pi_{f}^{-1}(x)))}.
\end{equation}

The Jacobian is in fact the Radon-Nicodym derivative $\frac{df^*\mu}{d\mu}$, it satisfies the following equivalent conditions. If $A\subset M$ is such that $f|_A$ is injective, with the inverse $(f|_A)^{-1}:f(A)\rightarrow A$, then
\begin{equation}
    \mu(f(A))=\int_AJ_{\mu}d\mu,\ \text{and} \ \ 
    \mu(A)=\int_{f(A)}\frac 1{J_\mu\circ (f|_A)^{-1}}d\mu.
\end{equation}

It is also the essentially unique measurable function satisfying that for every continuous function $\varphi: M \to \rr$
\begin{equation}\label{eq jacobian and continuous functions}
    \int \varphi \ d\mu =  \int \sum\limits_{y \in f^{-1}(x)} \frac{\varphi(y)}{J_\mu (y)} \ d\mu(x).
\end{equation}

In general, convergence of measures do not imply convergence of the Jacobians. However, we may identify the Jacobian of a limit measure through the following result. 

\begin{prop}\label{prop limit meas is equidist}
        Let $f_n, f \in \End^1(M)$ be such that $f_n$ converges to $f$ in the $\mathcal{C}^1$-topology and consider a sequence of $f_n$-invariant measures $\mu_n$ converging to some $\mu$ in the weak$^*$ topology. Suppose that there is a continuous function $J: M \to \rr$ such that for any $\epsilon>0$
        \[
        \mu_n\{x \in M : \text{for some} \ y \in f_n^{-1}(x), \ |J_{\mu_n}(y) - J(y)| \ge \epsilon\} \to 0, \ \text{as} \ n \to \infty.
        \]
        Then $J_\mu  = J$ $\mu$-a.e.
\end{prop}
    \begin{proof}
        Consider $\varphi: M \to \rr$ a given continuous function. Note that 
        \begin{align*}
        \int \varphi \ d\mu_n &= \int \sum_{y \in f_n^{-1}(x)} \frac{\varphi(y)}{J_{\mu_n}(y)} \ d\mu_n (x)\\ 
        &= \int \sum_{y \in f_n^{-1}(x)}\left(\frac{1}{J_{\mu_n}(y)}-\frac{1}{J(y)}\right) \varphi(y) \ d\mu_n(x)  \ + \int \sum\limits_{y \in f_n^{-1}(x)} \frac{\varphi(y)}{J(y)} \ d\mu_n(x).
        \end{align*}

        The first term of the sum goes to zero as $n \to \infty$ from the hypothesis on $J_{\mu_n}$, and the second converges since $J$ is continuous and both $f_n \to f$ and $\mu_n \to \mu$. We conclude
        \[
        \int \varphi \ d\mu = \int \sum\limits_{y \in f^{-1}(x)} \frac{\varphi(y)}{J(y)}\ d \mu(x).
        \]
        Since $J_\mu $ is essentially unique satisfying the above relation, we conclude that $J_\mu = J$, $\mu$-almost everywhere.
    \end{proof}

The {\it folding entropy of $f$ with respect to $\mu$} is the relative entropy
\begin{equation}
    F_{\mu}(f)=H_{\mu}(\mathcal E\mid f^{-1}\mathcal E),
\end{equation}
or equivalently
\begin{equation}\label{eq folding entropy and jacobian}
    F_{\mu}(f)=\int \log J_{\mu} \ d\mu.
\end{equation}
It always satisfies that $F_\mu(f) \le \log d$ where $d$ is the degree of $f$. This bound plays a key role in the Jacobian of measures in the following sense.

\begin{prop}\label{prop. conv fold imply conv in meas jacobian}
     Let $f_n, f \in \End^1(M)$ be such that $f_n$ converges to $f$ in the $\mathcal{C}^1$-topology. If $\mu_n$ are $f_n$-invariant measures such that $F_{\mu_n}(f_n) \to \log d$, $d = \deg f$, then for any $\epsilon>0$
        \[
        \mu_n \{x \in \TT^2: \text{for some} \ y \in f_n^{-1}(x), \  |J_{\mu_n}(y)-d|\ge \epsilon\} \to 0 \ \text{as} \ n \to \infty. 
        \]
\end{prop}
    \begin{proof}
    Since $f_n\to f$ in the $\mathcal{C}^1$-topology, we may suppose that $\deg f_n = \deg f = d$ for all n. For each $n \in \nn$, consider the function (defined $\mu_n$-a.e.) 
    \begin{equation}\label{eq jensens ineq}
    \Psi_n(x) = -\sum\limits_{y \in f^{-1}(x)} \frac{1}{J_{\mu_n}(y)}\log \frac{1}{J_{\mu_n}(y)} \le \log d.
    \end{equation}
    From the relations between folding entropy and Jacobian \eqref{eq jacobian and continuous functions}, \eqref{eq folding entropy and jacobian}, we get
    \[
    F_{\mu_n}(f_n)=  \int \Psi_n(x) \ d \mu_n(x) \to \log d, \ \text{as} \ n \to \infty.
    \]
    
    Since the function
    \[
    \left\{(x_1,\cdots,x_d): \sum x_i=1 , \ x_i \in (0,1]  \right\} \ni (x_1,\cdots x_d) \mapsto -\sum x_i \log x_i,
    \]
    is uniformly continuous and achieves its maximum ($\log d$) if and only if $x_i = d^{-1}$ for all $i=1,\cdots,d$, it is enough to show that given $\epsilon>0$, $\mu_n\{x: \Psi_n(x)<\log d-\epsilon\}$ goes to zero as $n \to \infty$.
    
    Now, let $\epsilon_n\to 0$ be such that $F_{\mu_n}(f_n) = \log d- \epsilon_n$ and consider $\beta_n = \mu_n\{x \in \TT^2: \Psi_n(x)<\log d - \sqrt{\epsilon_n}\}$, then
    \[
    \log d -\epsilon_n = \int \Psi_n \ d\mu_n < \beta_n (\log d -\sqrt{\epsilon_n})+ (1-\beta_n) \log d.
    \]
    Hence $\beta_n \le \sqrt{\epsilon_n} \to 0$ as $n \to \infty$.
\end{proof}

\subsubsection{Equidistributed measures}\label{subsec equidistritued meas}
Consider $p$ the equally distributed Bernoulli measure on $\Sigma = \{1,\cdots ,d\}^\nn$. We say that the invariant measure $\mu$ is {\it equidistributed} if one of the following equivalent conditions holds:
\begin{enumerate}
    \item $J_{\mu}(x)=d$ for $\mu$-almost every $x$;
    \item $F_{\mu}(f)=\log d$;
    \item For any small open set $V\subset\TT^2$, given the homeomorphism $\Xi: V\times \Sigma \to \pi_{f}^{-1}(V)$ as in \eqref{eq ident fiber with Cantor}, we have 
    \begin{equation}\label{eq equidistributed meas}
        \Xi_*((\mu|_V)\times p) = \hat\mu|_{\pi_{f}^{-1}(V)}.
    \end{equation}
\end{enumerate}
In other words, the disintegrations $\mu_x$ of $\hat\mu$ on the fibers $\pi_{f}^{-1}(x)$ are Bernoulli equidistributed for $\mu$-almost every $x$.

If $f$ preserves the Lebesgue measure $Leb$ on $M$, then there exists a unique invariant lift $\hat{Leb}$ on $L_f$. However, in our case $Leb$ may not be invariant under $f$, so there is no dynamical lift of $Leb$ to $L_f$. We are interested in the equidistributed lift, which is defined as follows.

The equally distributed Bernoulli measure on the fibres of $\pi_{f}: L_f \to M$ may be obtained as
\[
p_x = \Xi_{*}(\delta_{x} \times p),
\]
We also define our reference measure $\Hat{\eta}$ on $L_f$ by
\begin{equation}\label{eq defn measure on Sol local leb times bernoulli}
\Hat{\eta} = \int p_x \ d\Leb(x).
\end{equation}
For any partition $\mathcal{P}$ of $M$ formed only by simply connected sets, we may define a partition $\Hat{\mathcal{P}}$ of $L_f$ given by $\Hat{\mathcal{P}} = \{\Xi(P \times \{\omega\}): P \in \mathcal{P}, \ \omega \in \Sigma\}$. Since the measures on the fibres are all equally distributed Bernoulli measures, the factor measure on $L_f/\Hat{\mathcal{P}}$ is equivalent to the Bernoulli measure $p$ on $\Sigma$. Moreover the conditional measures $\Hat{\eta}^\omega$ on $P_\omega= \Xi(P\times\{\omega\})$ is equivalent to $\Leb$ on $P \subset M$. More specifically, for every measurable set $\Hat{A} \subset L_f$
\begin{equation}\label{eq fubini on reference measure}
\Hat{\eta}(\Hat{A}) = \int \Leb(A^\omega) \ dp(\omega),
\end{equation}
where $A^\omega = \{x \in M: \Xi(x,\omega) \in \Hat{A}\}$. From \eqref{eq defn measure on Sol local leb times bernoulli} and \eqref{eq fubini on reference measure}, the reference measure $\Hat{\eta}$ is locally a product of the Lebesgue measure on the manifold with the equally distributed Bernoulli measure.

The following Corollary is a direct consequence of the relation between the disintegration of measures in $L_f$ with the Jacobian \eqref{eq disit lift and jacobian} along with Prop. \ref{prop limit meas is equidist} and \ref{prop. conv fold imply conv in meas jacobian} .

\begin{corol}\label{corol dist meas on fibers to p}
    Let $f_n, f \in \End^1(M)$ be such that $f_n$ converges to $f$ in the $\mathcal{C}^1$-topology. If $\mu_n$ are $f_n$-invariant measures such that $F_{\mu_n}(f_n) \to \log d$, $d = \deg f$, then for any continuous $\varphi:M \to \rr$ and any $\epsilon>0$
    \[
    \mu_n\left\{x \in \TT^2: \left|\int_{\pi_{f}^{-1}(x)} \varphi d\Hat{\mu}_{x,n}-\int_{\pi_{f}^{-1}(x)} \varphi \ dp_x  \right| \ge \epsilon\right\} \to 0, \ \text{as} \ n \to \infty,
    \]
    where $\Hat{\mu}_{x,n}$ is the disintegration of $\Hat{\mu}_n$ in $\pi_{f}^{-1}(x)$. In particular, any accumulation measure of $\{\mu_n\}_n$ is equidistributed.
\end{corol}

We also detail here another reference measure we'll be using throughout the paper. Given $\sigma:[0,1] \to M$ a $\mathcal{C}^r$ curve, consider $\Leb_\sigma$ the Lebesgue measure on $\sigma$ induced as a submanifold of $M$. We'll consider the measure $\Hat{\eta}_\sigma$ on $\pi_{f}^{-1}(\sigma)$ as the equidistributed lift of $\Leb_\sigma$, that is
\begin{equation}\label{eq defn Leb times bernoulli on curves}
    \Hat{\eta}_\sigma = \int p_x \ d \Leb_{\sigma}(x).
\end{equation}
It also satisfies for every measurable $\Hat{A} \subset L_f$
\begin{equation}\label{eq fubini on meas curves}
    \Hat{\eta}_\sigma(\Hat{A}) = \int \Leb_\sigma(A^\omega) \ dp(\omega).
\end{equation}

\subsubsection{Lyapunov exponents}
In this section we restrict ourselves to the 2 dimensional case and let $f \in \End^1(\TT^2)$ .For $(x,v) \in T^1\TT^2$, the {\it Lyapunov exponent of $f$ at $(x,v)$} is defined as 
\[
\chi(x,v) := \limsup\limits_{n\to +\infty} \frac{1}{n}\log\|Df^n_x\cdot v\| 
\]
and it measures the asymptotic rate of growth of $v$ along the orbits of $f$. 

From Oseledets Theorem \cite{oseledets1968multiplicative}, any $f$-invariant probability measure $\mu$ satisfies that for $\mu$-almost every point $x \in \TT^2$, and for every $v\neq 0 \in T_x\TT^2$ the limit $\chi(x,v) =  \lim\limits_{n\to +\infty} \frac{1}{n}\log\|Df^n_x\cdot v\|$ exists and is one of the numbers 
\[
\chi^+(x) := \lim\limits_{n \to +\infty} \frac{\log \|Df_x^n\|}{n}, \ \chi^-(x) := \lim\limits_{n \to +\infty} \frac{\log m(Df_x^n)}{n},
\]
called the Lyapunov exponents of $f$ at $x$. One also has
\[
\int (\chi^-(x)+\chi^+(x))\ d\mu(x) = \int \log |\det Df_x| \ d\mu(x).
\]

The Lyapunov exponents of $\mu$ are defined as 
\[
\chi^+(\mu) = \int \chi^+(x)\ d\mu(x), \ \ \ \chi^-(\mu) = \int \chi^-(x)\ d\mu(x).
\]

The differential cocycle $Df$ over $f$ defines a cocycle $D\Hat{f}$ over $\hat{f}$ acting on $TL_f:= L_f \times \rr^2$ given by 
\[
D\Hat{f}_{\Hat{x}} = Df_{\pi_f(\Hat{x})}: \rr^2 \to \rr^2,
\]
which with the usual cocycle notations satisfy for $n \ge 0$ and $\Hat{x} = (x_k)_{k \in \zz}$:
\begin{align*}
    &D\Hat{f}_{\Hat{x}}^{n} =  Df_{x_{n-1}}  \cdots  Df_{x_1}\  Df_{x_0} = Df^n_{x_0},\\
    &D\Hat{f}_{\Hat{x}}^{-n} = (Df_{x_{-n}})^{-1} \cdots (Df_{x_{-2}})^{-1} \ (Df_{x_{-1}})^{-1} = (Df^{n}_{x_{-n}})^{-1}, 
\end{align*}
Thus, we may consider the Lyapunov exponents associated with $D\Hat{f}$ given by
\[
\chi_{\Hat{f}}(\Hat{x},v) = \limsup\limits_{n \to +\infty} \frac{1}{n} \log \|D\Hat{f}^n_{\Hat{x}}\cdot v\|.
\]
It satisfies that for every $\Hat{x} \in L_f$ and $v \in T^1_{\pi_f(\Hat{x})}\TT^2$
\[
\chi_{\Hat{f}}(\Hat{x},v) = \chi_f(\pi_{f}(\Hat{x}),v).
\]
moreover, from Oseledets Theorem, given any $f$-invariant measure $\mu$, if $\Hat{\mu}$ is the unique $\Hat{f}$-invariant lift of $\mu$ to $L_f$ then for $\Hat{\mu}$-almost every $\Hat{x} \in L_f$:
\begin{align*}
&\chi^+_f(\pi_{f}(\Hat{x})) = \chi^+_{\Hat{f}}(\Hat{x}) = -\chi^-_{\Hat{f}^{-1}}(\Hat{x})\\
&\chi^-_f(\pi_{f}(\Hat{x})) = \chi^-_{\Hat{f}}(\Hat{x}) = -\chi^+_{\Hat{f}^{-1}}(\Hat{x}).
\end{align*}

If $\chi^-(x)\neq\chi^+(x)$, the Oseledets Theorem also gives the existence of the invariant one dimensional Oseledets sub-bundle $E^-$ along the orbit of $x$ such that
\begin{equation}
    \chi^-(x) =  \lim\limits_{n\to +\infty} \frac{1}{n}\log\|Df^n_x\cdot v^-\|,\ \forall v^-\in E^-(x)\setminus\{0\}.
\end{equation}

The other Oseledets sub-bundle $E^+$ is not well defined on $\TT^2$ because it depends on the backward trajectory, however it is well defined on $L_f$: if $\chi^-(\hat x)\neq\chi^+(\hat x)$ then there is an invariant one-dimensional Oseledets bundle $E^+$ along the orbit of $\hat x$ such that $\rr^2 = E^-\oplus E^+$ and
\begin{equation}
    \chi^+(\hat x) =  \lim\limits_{n\to \pm\infty} \frac{1}{n}\log\|D\hat f^n_{\hat x}\cdot v^+\|,\ \forall v^+\in E^+(\hat x)\setminus\{0\}.
\end{equation}

The measure $\mu$ is {\it hyperbolic} when $\chi^-(x)<0<\chi^+(x)$ for $\mu$-a.e. $x \in \TT^2$.

\subsection{Pesin Theory}\label{sect Pesin}

We consider $f\in End^r(M)$, $r>1$, and $\Hat{f}:L_f\to L_f$ its natural extension.

\begin{defn}  For $\Hat{x} \in  L_f$ and $x = \pi_{f}(\Hat{x})$.
\begin{enumerate}
    \item A $C^{1+}$ embedded manifold $W^u(\Hat{x}) \subset M$ is a local unstable manifold of $f$ at $\Hat{x}$ if there are numbers $\lambda>0$, $0<\epsilon< \lambda/200$, $0 \le C_1 \le 1 \le C_2$ such that for each $y \in W^u(\Hat{x})$ there is a unique $\Hat{y} \in  \pi_{f}^{-1}(y)$ satisfying for every $n \ge 0$
    \begin{equation}\label{eq unst manifold}
        d(x_{-n}, y_{-n}) \le C_1e^{-n\epsilon}, \ \text{and} \ d(x_{-n},y_{-n}) \le C_2e^{-n\lambda},
    \end{equation}
   the local unstable set $\Hat{W}^u(\Hat{x})\subset L_f$ of $\Hat{f}$ at $\Hat{x}$ is 
    \begin{equation}
        \Hat{W}^u(\Hat{x}) = \{\Hat{y} \in L_f: \Hat{y} \ \text{satisfies \eqref{eq unst manifold}}\}.
    \end{equation}
    The global unstable set $\mathcal{V}^u(\Hat{x}) \subset L_f$ is defined as
    \begin{equation}
        \mathcal{V}^u(\Hat{x}) = \bigcup\limits_{n\ge 0} \Hat{f}^n(\Hat{W}^u(\Hat{f}^{-n}(\Hat{x})))
    \end{equation}
    
    \item Analogously, a $C^{1+}$ embedded manifold $W^s(x) \subset M$ is a local stable manifold of $f$ at $x$ if there are numbers $\lambda>0$, $0<\epsilon< \lambda/200$, $0 \le C_1 \le 1 \le C_2$ such that each $y \in W^s(x)$ satisfies for every $n \ge 0$
     \begin{equation}
        d(f^n(x), f^n(y)) \le C_1e^{-n\epsilon}, \ \text{and} \ d(f^n(x),f^n(y)) \le C_2e^{-n\lambda}.
    \end{equation}
    We take $\Hat{W}^s(\Hat{x}) = \pi_{f}^{-1}(W^s(\Hat{x})) \subset L_f$ and define the global stable set $\mathcal{V}^s(\Hat{x}) \subset L_f$ as
    \begin{equation}
        \mathcal{V}^s(\Hat{x}) = \bigcup\limits_{n \ge 0}\Hat{f}^{-n}(\Hat{W}^s(\Hat{f}^n(\Hat{x})))
    \end{equation}
\end{enumerate}
\end{defn}

Given $\lambda,\epsilon,K>0$, a $(\lambda,\epsilon,K)$-Pesin block is a non-empty set $\Hat{\Lambda} \subset L_f$ for which there is a direct sum decomposition $T_{x_0}M = E^s(\Hat{x})\oplus E^u(\Hat{x})$ for every $\Hat{x} = (x_k)_k \in \bigcup_{n\ge 0}\Hat{f}^n(\Hat{\Lambda})$, such that for any $n \in \zz$, $k\ge 0$ and $\Hat{y} \in \Hat{\Lambda}$
\begin{equation}\label{eq defn pesin block inverse limits}
    \max\{ \|D\Hat{f}^k|_{E^s(\Hat{f}^n(\Hat{y}))}\|, \ \|D\Hat{f}^{-k}|_{E^u(\Hat{f}^n(\Hat{y}))}\|\} \le K \exp(-\lambda k+\epsilon|n|).
\end{equation}

We say that $\Hat{\Lambda}$ is a $(\lambda,\epsilon)$ Pesin block if it is a $(\lambda,\epsilon,K)$ Pesin block for some $K>0$.

For the following theorem we consider $\mu$ any $f$-invariant hyperbolic probability measure on $M$ and $\Hat{\mu}$ the unique $\Hat{f}$-invariant lift of $\mu$ to $L_f$.

\begin{thm}\label{ThmExistencePesinBlocks} (see \cite{ZhuS1998UnstableMan,Qian2009smooth,LW25})
The $(\lambda,\epsilon,K)$-Pesin blocks $\Hat{\Lambda}_{\lambda,\epsilon,K}$ satisfy 
\[
\Hat{\mu}\left(\bigcup_{\lambda,\epsilon,K>0} \Hat{\Lambda}_{\lambda,\epsilon;K}\right) = 1,
\]
and for any fixed Pesin block $\Hat{\Lambda}$:
\begin{enumerate}
    \item There exists a continuous family $\{W^u(\Hat{x}) \subset M:\Hat{x}\in\Hat{\Lambda}\}$ of local unstable manifolds  so that for every $\Hat{x} = (x_n)_n \in \Hat{\Lambda}$, it holds
    \begin{itemize}
        \item $T_{x_0}W^u(\Hat{x}) = E^u(\Hat{x})$ is the Oseledets subspace associated to the positive Lyapunov exponents at $\Hat{x}$, in particular $E^u(\Hat{x})$ depends continuously on $\Hat{x} \in \Hat{\Lambda}$,
        \item $f(W^u(\Hat{f}^{-1}(\Hat{x}))) \supset W^u(\Hat{x})$.
    \end{itemize}

    \item If $\Lambda = \pi_{f}(\Hat{\Lambda})$, then there exists a continuous family of local stable manifolds $\{W^s(x):x \in \Lambda \}$ so that for every $x \in \Lambda$ it holds:
    \begin{itemize}
        \item $T_xW^s(x) = E^s(x)$ is the Oseledets subspace associated to the negative Lyapunov exponents, in particular $E^s(x)$ depends continuously on $x \in \Lambda$,
        \item $f(W^s(x)) \subset W^s(f(x))$.
    \end{itemize}
\end{enumerate}
\end{thm}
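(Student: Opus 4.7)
The plan is to reduce the theorem to the classical Pesin stable and unstable manifold theorems for the invertible system $(\hat f,\hat\mu)$ on the solenoid, and then push the stable part down to the torus. Since $\hat f$ is an invertible homeomorphism and $\hat\mu$ is hyperbolic, Oseledets' theorem gives, $\hat\mu$-a.e., a splitting $T_{\pi_{Sol}(\hat x)}\TT^2 = E^-(\hat x) \oplus E^+(\hat x)$ with $\chi^-(\hat x) < 0 < \chi^+(\hat x)$. As already recalled in the excerpt, $\chi^-$ and $E^-$ factor through $\pi_{Sol}$ (they depend only on forward iterates), while $E^+$ genuinely depends on the backward orbit encoded by $\hat x$, which is why the unstable direction is natural on $Sol$ but not on $\TT^2$.

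First I would fix a small $\epsilon>0$ and define, in the standard way, a Pesin block $\hat\Lambda_k$ as the set of Oseledets regular $\hat x$ for which, for every $n\ge 0$,
\[
\|Df^n_{\pi_{Sol}(\hat x)}|_{E^-}\|\le k\, e^{n(\chi^-/2+\epsilon)},\qquad \|D\hat f^{-n}_{\hat x}|_{E^+}\|\le k\, e^{-n(\chi^+/2-\epsilon)},
\]
together with the angle bound $\angle(E^-(\hat x),E^+(\hat x))\ge 1/k$ and the symmetric tempered-kernel estimates needed to control distortions along the orbit. Standard Lusin-type and tempered-kernel arguments give $\hat\mu(\bigcup_k\hat\Lambda_k)=1$, ensure that on each $\hat\Lambda_k$ the Oseledets splitting is continuous, and yield uniform $C(k)$-bounds on the derivatives and on the Hölder constants of $Df$ along the orbits of points in $\hat\Lambda_k$.

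With these blocks in place, I would construct the unstable manifolds by the Hadamard--Perron graph-transform scheme along the $\hat f$-orbit of each $\hat x\in\hat\Lambda_k$. The invertibility of $\hat f$ makes the backward orbit canonical, so the scheme produces a local graph over $E^+(\hat x)$, of size $\delta_k$, which projects to a $C^{1,1}$ embedded interval $W^u(\hat x)\subset\TT^2$ tangent to $E^+(\hat x)$; the invariance $f(W^u(\hat f^{-1}\hat x))\supset W^u(\hat x)$ is built into the iterative construction. For the stable part, the forward orbit of $x=\pi_{Sol}(\hat x)$ lives in $\TT^2$ itself and the cocycle norms on the block control it uniformly, so the classical Pesin stable-manifold theorem applied directly to $f$ on $\TT^2$ yields $W^s(x)$ tangent to $E^-(x)$ with $f(W^s(x))\subset W^s(f(x))$. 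Continuity of both families within their blocks is the standard consequence of the parameter-continuous dependence of the contraction fixed points that define the graphs.

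The main obstacle is the unstable step: one must verify that the graph-transform construction carried out along the pseudo-orbit $(\pi_{Sol}(\hat f^{-n}\hat x))_{n\ge 0}$ in $\TT^2$ (which is a genuine backward orbit of $f$) produces a bona fide embedded submanifold of $\TT^2$, with the map $\hat x\mapsto W^u(\hat x)$ well defined in the sense that distinct preimage histories in the fiber $\pi_{Sol}^{-1}(x)$ give distinct unstable curves. This requires that the uniform size $\delta_k$ of the local graphs coming from the block bounds prevents them from leaving their chart before the next iterate, and a careful identification that the backward orbit determining $W^u(\hat x)$ is exactly the one encoded by $\hat x$. These are the technical points handled by Zhu's extension \cite{ZhuS1998UnstableMan} of Pesin theory to endomorphisms via their natural extension.
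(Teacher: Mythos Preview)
The paper does not supply a proof of this theorem: it is stated as a citation of \cite{ZhuS1998UnstableMan} and used as a black box. So there is no argument in the paper to compare your proposal against.

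That said, your outline is the expected one and is essentially how the cited reference proceeds: lift to the invertible system $(\hat f,\hat\mu)$ on the solenoid, define Pesin blocks via uniform hyperbolicity estimates and angle bounds, run the Hadamard--Perron graph transform along the backward orbit encoded by $\hat x$ to produce $W^u(\hat x)\subset\TT^2$, and obtain $W^s(x)$ directly on $\TT^2$ since the stable side depends only on forward iterates. Your identification of the one genuinely delicate point---that the unstable construction must be carried out along the specific backward branch determined by $\hat x$, and that different $\hat x$ over the same $x$ may yield different $W^u$---is exactly the issue that distinguishes the endomorphism case from the diffeomorphism case and is what Zhu's paper handles. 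Nothing in your sketch is wrong; it is a faithful summary of the standard proof, which the present paper simply quotes rather than reproduces.
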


\subsubsection{Absolute continuity}\label{subsec abs cont}
We also have that the holonomy along stable and unstable manifolds within Pesin blocks is absolutely continuous. Let us explain the absolute continuity of unstable holonomy with more details.

Given a simply connected open set $V \subset M$, we may identify $\pi_{f}^{-1}(V)$ with $V \times \Sigma$ by considering the homeomorphism $\Xi: V\times \Sigma \to \pi_{f}^{-1}(V)$ given by \eqref{eq ident fiber with Cantor}. We consider the partition of $\pi_{f}^{-1}(V)$ into leaves $\{ V_\omega := \Xi(V\times\{\omega\}): \omega \in \Sigma\}$ and we define the unstable holonomy on the leaves as follows. The unstable manifolds $W^u$ do not form a lamination of $M$ since they may intersect, however their lifts $\Hat{W}^u$ do form a lamination of $L_f$. Thus, for each $\omega \in \Sigma$ the restriction $\Hat{W}^u_\omega = \{\Hat{W}^u(\Hat{x}): \Hat{x} \in V_\omega\}$ do form a lamination of $V_\omega$. We may then consider the lamination $W^u_\omega$ on $V$ given by $W^u_\omega(x) = \pi_{f}(\Hat{W}^u(\Xi(x,\omega)))$.

By restricting ourselves to a Pesin block $\Hat{\Lambda} \subset L_f$ with positive $\Hat{\mu}$-measure, $\Hat{W}^u(\Hat{x})$ varies continuously with $\Hat{x} \in  \Hat{\Lambda} \cap V_\omega$, hence their length are uniformly away from zero. Setting $B_\omega = \bigcup_{\Hat{x} \in \Hat{\Lambda} \cap V_\omega} \pi_{f}(\Hat{W}^u(\Hat{x}))$, given $T_1, T_2 \subset V$ two submanifolds uniformly transverse to the lamination $W^u_\omega$, we may define the leaf unstable holonomy $h^u: T_1 \cap B_\omega \to T_2 \cap B_\omega$ as $h^u(x) = W^u_\omega(x) \cap T_2$. 

\begin{lema}\label{lema abs cont unst holon}
Given a simply connected open set $V \subset M$ and $\omega \in \Sigma$, let  $W^u_{\omega}$ be the unstable lamination of $V$ associated with $\omega$. Given two submanifolds $T_1, T_2 \subset V$ transversal to $W^u_\omega$ such that $\Leb_{T_1}(B_\omega)>0$, the unstable holonomy $h^u: T_1 \cap B^k_\omega \to T_2 \cap B^k_\omega$ is absolutely continuous with respect to the Lebesgue measures of the two transversals.

As a consequence, the conditional measures associated with the disintegration of the Lebesgue measure on $V$ along the elements of the lamination $W^u_\omega$ are absolutely continuous with respect to the Lebesgue measure on unstable manifolds.
\end{lema}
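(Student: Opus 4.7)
The plan is to lift the problem to the solenoid $Sol$, where $\hat W^u$ is a genuine invariant lamination for the invertible map $\hat f$, apply the classical Pesin absolute continuity theorem there, and then transfer the conclusion back to $V$ via the local homeomorphism $\pi_{Sol}|_{V_\omega}$.

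Using the local product structure of Subsection \ref{subsec Solenoid}, the leaf $V_\omega = \Xi(V\times\{\omega\})$ is homeomorphic to $V$ via $\pi_{Sol}|_{V_\omega}$, and this homeomorphism identifies their induced Lebesgue measures. Setting $\tilde T_i := \Xi(T_i\times\{\omega\})$, the holonomy $h^u$ on $V$ is conjugated via $\pi_{Sol}|_{V_\omega}$ to the unstable holonomy $\hat h^u : \tilde T_1 \cap \hat\Lambda_k \to \tilde T_2 \cap \hat\Lambda_k$ along the lamination $\hat W^u|_{V_\omega}$ of $V_\omega$, because each local unstable manifold $\hat W^u(\hat x)$ with $\hat x \in V_\omega$ is contained in $V_\omega$: a nearby point $y \in W^u(x)$ has inverse branches selected by the same symbol sequence $\omega$ as $x$, so its unique matched pre-history lies at Cantor coordinate $\omega$. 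It therefore suffices to prove absolute continuity of $\hat h^u$. For this I would invoke the classical Pesin absolute continuity theorem, as in \cite{ZhuS1998UnstableMan} or \cite{Qian2009smooth}: along $V_\omega$ the map $\hat f^{-1}$ acts as a uniformly $C^{1+\alpha}$ sequence of inverse branches of $f$ determined by $\omega$, so we are in the setting of an invertible $C^{1+\alpha}$ non-uniformly hyperbolic system on a two-dimensional disk, with Pesin blocks $\hat\Lambda_k$ providing continuous families of local unstable manifolds of uniformly bounded length and uniform angle bounds (Theorem \ref{ThmExistencePesinBlocks}). The standard graph-transform and distortion argument then produces a Radon--Nikodym derivative for $\hat h^u$ on $\hat\Lambda_k$ bounded above and below by constants depending only on $k$.

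The statement about disintegrations follows from the absolute continuity of holonomy by a standard Fubini-type argument: one covers a neighborhood of $\pi_{Sol}(\hat\Lambda_k \cap V_\omega) \cap V$ by flow boxes of $W^u_\omega$, trivializes each box via holonomy onto a fixed transversal, and uses the bounded Jacobian of the holonomy together with Fubini to decompose the Lebesgue measure on $V$ as an integral against a transversal measure of measures on unstable leaves that are absolutely continuous with respect to leafwise Lebesgue. The main obstacle will be checking that the classical absolute continuity estimates apply \emph{uniformly in $\omega$}: the distortion estimates controlling the holonomy Jacobian must depend only on the hyperbolic constants of the Pesin block and on uniform $C^{1+\alpha}$ bounds for the inverse branches of $f$. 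Since $f$ is a local diffeomorphism of $\TT^2$ with bounded $C^{1+\alpha}$ norm, all inverse branches share uniform estimates, so the classical machinery adapts directly; what one really has to verify is the compatibility of the Pesin block construction on $Sol$ with the leafwise diffeomorphism structure.
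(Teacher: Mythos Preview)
The paper does not actually supply a proof of this lemma; it is stated as a standard fact from Pesin theory in the solenoid setting, with the relevant references being \cite{ZhuS1998UnstableMan} and \cite{Qian2009smooth} already cited for Theorem~\ref{ThmExistencePesinBlocks}. Your proposal is exactly the intended argument: lift to $V_\omega\subset Sol$ via the local chart $\Xi$, observe that $\pi_{Sol}|_{V_\omega}$ is a measure-preserving homeomorphism onto $V$ which conjugates $h^u$ to the solenoidal holonomy $\hat h^u$, and invoke the classical Pesin absolute continuity theorem for the invertible $C^{1+\alpha}$ system $\hat f$ restricted to the Pesin block $\hat\Lambda_k$.

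Two small remarks. First, your justification that $\hat W^u(\hat x)\subset V_\omega$ is slightly informal: the clean reason is topological---local unstable manifolds are connected and the Cantor direction in $Sol$ is totally disconnected, so a small connected set through $\hat x\in V_\omega$ must stay in the leaf $V_\omega$. Second, your closing concern about uniformity in $\omega$ is not needed for the lemma as stated (which fixes $\omega$); it becomes relevant only later when one integrates over $\omega$ against the Bernoulli measure, and there the uniform $C^{1+\alpha}$ bounds on inverse branches of $f$ indeed give what is required.
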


\subsubsection{Inverse SRB measures}

A partition $\xi$ of $\TT^2$ is subordinated to the stable manifolds if for $\mu$-almost every $x \in \TT^2$ the element $\xi(x)$ of the partition containing $x$ satisfies:
\begin{itemize}
    \item $\xi(x) \subset W^s(x)$ and,
    \item $\xi(x)$ contains an open neighborhood of $x$ inside $W^s(x)$.
\end{itemize}
When $\xi$ is a measurable partition, one may disintegrate the measure $\mu$ along the elements of the partition obtaining $\mu = \int_{\TT^2/\xi} \mu_x^{\xi} d \mu_{\xi}$, where $\mu_x^{\xi}$ are probability measures on $\xi(x)$ and $\mu_\xi$ is the factor measure on $\TT^2/\xi$.

A hyperbolic invariant measure $\mu$ has absolutely continuous conditional measures on stable manifolds if for every measurable partition $\xi$ of $\TT^2$ subordinated to the stable manifolds, $\mu^\xi_x \ll \Leb^s_x$ for $\mu$-a.e. $x \in \TT^2$, where $\Leb^s_x$ is the Lebesgue measure on $W^s(x)$ inherited from $\TT^2$ as a submanifold.

\begin{defn}
    The invariant measure $\mu$ of the endomorphism $f\in End^r(\TT^2)$ is an {\it inverse SRB measure} if it is hyperbolic and it has absolute continuous conditional measures on the stable manifolds.
\end{defn}

Liu \cite{liu2008invariant} shows that there always exists at least one measurable partition subordinated to the stable manifolds. Moreover, if we suppose that $\mu$ has a Hölder continuous Jacobian, then $\mu$ has absolutely continuous conditional measures on stable manifolds if and only if it satisfies the equality $h_{\mu}(f) = F_{\mu}(f) -\int_M\chi^-d\mu$ (see also \cite{LW25} for a more relaxed hypothesis). 

If $\mu$ is equidistributed then it has constant Jacobian $J_{\mu}(x)=\log d$ almost everywhere. In particular, it holds:

\begin{prop}
    Let $f\in End^r(\TT^2)$, $r>1$, and $\mu$ a hyperbolic invariant measure for $f$. Then $\mu$ is an equidistributed inverse SRB measure for $f$ if and only if the following formula holds:
    $$h_{\mu}(f)=\log d-\int_M\chi^-d\mu.$$
\end{prop}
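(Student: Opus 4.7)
My plan is to deduce the statement directly from three ingredients already collected in the paper: Liu's Ruelle inequality for endomorphisms \eqref{eq Ruelle ineq endo}, the universal upper bound $F_{\mu}(f)\le\log d$, and Liu's converse characterization (\cite{liu2008invariant}) stating that a hyperbolic measure with H\"older Jacobian that saturates the inverse Pesin formula is an inverse SRB measure. No new dynamical work is needed: the proof is a short manipulation of entropy (in)equalities, combined with the observation that equidistribution forces the Jacobian to be constant, and therefore trivially H\"older.

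For the direction $(\Rightarrow)$, I would assume $\mu$ is equidistributed and inverse SRB. Equidistribution gives $F_{\mu}(f)=\log d$ by Definition \ref{def.equidistributed}, and the inverse SRB property gives the inverse Pesin formula $h_{\mu}(f)=F_{\mu}(f)-\int_M\chi^-\,d\mu$ (stated right after Definition \ref{def.inverseSRB}). Substituting $F_{\mu}(f)=\log d$ into the Pesin formula produces the displayed equality.

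For the direction $(\Leftarrow)$, I would assume $h_{\mu}(f)=\log d-\int_M\chi^-\,d\mu$ and bracket this quantity from above by two independent bounds. Liu's Ruelle inequality gives $h_{\mu}(f)\le F_{\mu}(f)-\int_M\chi^-\,d\mu$, while the general bound $F_{\mu}(f)=H_{\mu}(\mathcal{E}\mid f^{-1}\mathcal{E})\le\log d$ holds because every atom of $f^{-1}\mathcal{E}$ has at most $d$ elements. Chaining these,
\[
\log d-\int_M\chi^-\,d\mu \;=\; h_{\mu}(f) \;\le\; F_{\mu}(f)-\int_M\chi^-\,d\mu \;\le\; \log d-\int_M\chi^-\,d\mu,
\]
so both inequalities are equalities. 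The equality $F_{\mu}(f)=\log d$ is exactly equidistribution, and the equality $h_{\mu}(f)=F_{\mu}(f)-\int_M\chi^-\,d\mu$ is the inverse Pesin formula. Since equidistribution forces $J_{\mu}\equiv d$ (Definition \ref{def.equidistributed}), the Jacobian is constant and in particular H\"older continuous, so Liu's converse theorem applies and yields absolute continuity of the conditionals on stable manifolds, i.e. $\mu$ is an inverse SRB measure.

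I do not see a genuine obstacle: the only point that merits care is that Liu's converse requires some regularity of $J_{\mu}$, and equidistribution supplies this regularity for free by making $J_{\mu}$ constant. Once that is noted, the two implications are purely book-keeping among the entropy formulas already recorded earlier in the paper.
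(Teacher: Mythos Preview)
Your proposal is correct and follows exactly the argument the paper leaves implicit in the paragraph preceding the proposition: the paper does not give a separate proof, but indicates that the result follows from Liu's characterization (inverse SRB $\Leftrightarrow$ inverse Pesin formula, under H\"older Jacobian) together with the observation that an equidistributed measure has constant Jacobian $J_\mu\equiv d$, hence trivially H\"older. Your sandwich argument for the $(\Leftarrow)$ direction, using Ruelle's inequality \eqref{eq Ruelle ineq endo} and $F_\mu(f)\le\log d$ to force both equalities, is precisely the intended reasoning.
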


\subsection{SPR Property}\label{sect spr}

The strong positive recurrence property (SPR) was defined by J. Buzzi, S. Crovisier and O. Sarig in \cite{buzzi2025strong} for diffeomorphisms on compact manifolds. The authors prove a series of consequences that such property yields for equilibrium states. We present here how to extend such ideas for non invertible local diffeomorphisms.

We consider $g:M \to M$ a $\mathcal{C}^1$ diffeomorphism on a compact manifold $M$. We recall some classical notions on Pesin Theory \cite{barreira2023introduction}. Given $\lambda,\epsilon>0$, a $(\lambda,\epsilon)-$Pesin block is a non-empty set $
\Lambda \subset M$ for which there are a direct sum decomposition $T_xM = E^s(x)\oplus E^u(x)$ for every $x \in \bigcup_{n\ge 0}g^n(\Lambda)$, and a uniform constant $K>0$ such that for any $n \in \zz$, $k\ge 0$ and $y \in \Lambda$
\begin{equation}\label{eq defn pesin block diffeo}
    \max\{ \|Dg^k|_{E^s(g^n(y))}\|, \ \|Dg^{-k}|_{E^u(g^n(y))}\|\} \le K \exp(-\lambda k+\epsilon|n|).
\end{equation}

Consider $X \subset M$ a $g$-invariant Borel set and a potential $\phi: X \to \rr \cup \{-\infty\}$ which is a Borel function with $\sup\phi <\infty$. Given a $g|_X$-invariant measure $\nu$, the pressure of $\phi$ relative to $\nu$ is given by
\begin{equation}
P(g,\nu,\phi) := h_\nu(g)+ \int \phi \ d\nu, 
\end{equation}
and the top pressure of $\phi$ on $X$ is
\[
P_{\text{top}}(g|_X,\phi) := \sup\{P(g,\nu,\phi): \nu \ \text{is} \ g|_X\text{-invariant} \}.
\]
For a compact invariant set $X$ and a continuous potential $\phi$, $P_{\text{top}}(g|_X,\phi)$ equals the topological pressure of $\phi$ on $X$ from Walters variational principle \cite{walters2000introduction}.

A $g$-invariant measure $\mu$ is an equilibrium measure of $\phi$ on $X$ if $P(g,\mu,\phi) = P_{\text{top}}(g|_X,\phi)$ or just an equilibrium measure if $X = M$. We are particularly interested in SRB and inverse SRB measures which are the equilibrium states of the geometric potentials
\begin{equation*}
\phi_u = -\log |\det Dg|_{E^u}|, \ \phi_s = \log |\det Dg|_{E^s}|,
\end{equation*}
and $\phi_\sigma(x) = -\infty$ if $E^\sigma$ is not well defined at $x$, $\sigma = u,s$.

\begin{defn}
    A diffeomorphism $g$ is Strongly Positive Recurrent (SPR) for a potential $\phi$ on an invariant Borel set $X \subset M$ if there exists $\lambda>0$ such that for each $\epsilon>0$ there are a Borel $(\lambda,\epsilon)$-Pesin block $\Lambda$ and numbers $P_0<P_{\text{top}}(g|_X,\phi)$, $\tau>0$ such that, for every $g|_X$-invariant measure $\nu$:
    \[
    P(g,\nu,\phi)>P_0 \ \Longrightarrow \ \nu(\Lambda)>\tau.
    \]
\end{defn}

We present some consequences of the SPR property proven in \cite{buzzi2025strong}, we refer on each result for precise statements.

\begin{thm}\label{thm conseq SPR diffeo} Suppose that $g \in \text{Diff}^{1+}(M)$ is SPR for some H\"older continuous (or geometric) potential $\phi$. Then any equilibrium measure of $\phi$ satisfies:
\begin{itemize}
    \item Exponential decay of correlations [Theorem 11.10, \cite{buzzi2025strong}].

    \item Large deviations of Birkhoff sums [Theorem 11.15, \cite{buzzi2025strong}]. 
    \item Almost sure invariance principle for Birkhoff sums [Theorem 11.19, \cite{buzzi2025strong}].
    \item Central limit theorem and convergence of moments [Corollary 11.14, \cite{buzzi2025strong}].
\end{itemize}
\end{thm}

There are other consequences of the SPR property such as asymptotic variance of Birkhoff sums [Theorem 11.13, \cite{buzzi2025strong}], among others. For a full list see [Subsection 11.15,\cite{buzzi2025strong}].

\begin{rmk}
    The authors actually prove such results for any quasi-H\"older potential (see Section 11, \cite{buzzi2025strong} for precise definition) which includes any H\"older continuous potential as well as the geometric potentials.
\end{rmk}

Now, for a typical point $x \in M$, consider
\[
\chi^1(x) \ge \chi^2(x) \ge \cdots \ge \chi^k(x), \ k = \dim M
\]
the Lyapunov exponents of $g$ at $x$ and consider $\Lambda^+(x) = \sum_{\chi^i \ge 0} \chi^i(x)$ and for a $g$-invariant measure $\mu$, let $\Lambda^+(\mu) = \int \Lambda^+(x) d \mu$. In \cite{buzzi2025strong}, the authors give sufficient conditions for the SPR property in terms of the Lyapunov exponents of $g$. For that, let $X \subset M$ an invariant Borel set and consider the following conditions.
\begin{enumerate}
    \item[\namedlabel{PH Prop.}{(PH)}] Pressure Hyperbolicity of $g$ for $\phi$ on $X$: There exists $\lambda>0$ as follows. Let $\{\mu_n\}_n$ be a sequence of $g|_X$-invariant ergodic measures converging to some $\mu$ in the weak$^*$-topology. If $P(g,\mu_n,\phi) \underset{n}{\longrightarrow} P_{\text{top}}(g|_X,\phi)$, then there exists a constant $i = i(\mu)$ such that $\chi^i(x)>\lambda>-\lambda> \chi^{i+1}(x)$, for $\mu$-almost every $x \in M$.
    \item[\namedlabel{PC Prop.}{(PC)}] Pressure continuity of $\Lambda^+$ for $\phi$ on $X$: Let $\{\mu_n\}_n$ be a sequence of $g|_X$-invariant ergodic measures converging to some $\mu$ in the weak$^*$-topology. If $P(g,\mu_n,\phi) \underset n \longrightarrow P_{\text{top}}(g|_X,\phi)$, then $\Lambda^+(\mu_n) \underset n \longrightarrow \Lambda^+(\mu)$.
\end{enumerate}

\begin{thm}[Theorem 3.1 and Remark 3.5,\cite{buzzi2025strong}] \label{thm SPR iff PH and PC diffeos} Let $g$ be a $\mathcal{C}^1$ diffeomorphism on a closed manifold $M$, $X \subset M$ a Borel invariant set and $\phi:X\to \rr \cup \{-\infty\}$ a potential. If $g$ is pressure hyperbolic and $\Lambda^+$ is pressure continuous for $\phi$ on $X$, then $g$ is SPR for $\phi$ on $X$. 
\end{thm}

\subsubsection{SPR Property for endomorphisms}

We consider now $f:M \to M$ a $\mathcal{C}^1$ local diffeomorphism on a compact manifold $M$. The natural way to define the SPR property is to consider the Pesin blocks as for diffeomorphisms, but since one point may have many unstable directions depending on its past orbits, it can only be defined on the inverse limit space $L_f$. Recall the definition of $(\lambda,\epsilon)$-Pesin blocks in \eqref{eq defn pesin block inverse limits}.

Given a potential $\phi: L_f \to \rr\cup \{-\infty\} $ which is a Borel function with $\sup\phi <\infty$ and an $f$-invariant measure $\nu$, the pressure of $\phi$ relative to $\nu$ is given by
\begin{equation}
P(f,\nu,\phi) := h_{\nu}(f)+ \int_{L_f} \phi \ d\Hat{\nu}, 
\end{equation}
where $\Hat{\nu}$ is the unique $\Hat{f}$-invariant lift of $\nu$ to $L_f$. The top pressure of $\phi$ is
\[
P_{\text{top}}(f,\phi) := \sup\{P(f,\nu,\phi): \nu \ \text{is} \ f\text{-invariant} \}.
\]

Again, an $f$-invariant measure $\mu$ is an equilibrium measure of $\phi$ if $P(f,\mu,\phi) = P_{\text{top}}(f,\phi)$. For example the equilibrium measures for $\phi \equiv 0$ are the measures of maximal entropy. As when considering the geometric potentials
\begin{equation}\label{eq defn geometric potentials}
\phi_s = \log |\det Df|_{E^s}|, \ \text{and} \ \phi_u = -\log |\det D\Hat{f}|_{E^u}|,
\end{equation}
and $\phi_\sigma(\Hat x) = -\infty$ if $E^\sigma$ is not well defined at $\Hat x$, $\sigma = s,u$, the equilibrium measures are respectively the inverse and forward SRB measures.

\begin{defn}
    $f$ is Strongly Positive Recurrent (SPR) for a potential $\phi$ if there exists $\lambda>0$ such that for each $\epsilon>0$ there are a Borel $(\lambda,\epsilon)$-Pesin block $\Hat{\Lambda}$ and numbers $P_0<P_{\text{top}}(f,\phi)$, $\tau>0$ such that, for every measure $f$-invariant measure $\nu$:
    \[
    P(f,\nu,\phi)>P_0 \ \Longrightarrow \ \Hat{\nu}(\Hat{\Lambda})>\tau.
    \]
\end{defn}

We remark that one can consider the SPR property when restricted to $\Hat f$-invariant sets as in the diffeomorphism case, but we only consider the global definition for our purposes. 

We would like to obtain the same results as Theorem \ref{thm conseq SPR diffeo} for endomorphisms as well as the characterization of the SPR property through the Lyapunov exponents as Theorem \ref{thm SPR iff PH and PC diffeos}. The obstruction for the conclusion here is that the natural extension $L_f$ is not a manifold. We consider then the smooth natural extension of $f$ commonly known as the realization of $f$.

\begin{defn}
    Consider $f: M \to M$ a local diffeomorphism. A realization of $f$ is a diffeomorphism $g: N\to N$ for some compact manifold $N$ for which there is an invariant attractor $X \subset N$ and a homeomorphism $h: L_f \to X$ such that $h \circ \Hat{f} = g|_X\circ h$.
\end{defn}

\begin{prop}\label{lema nice realization}
    Every local diffeomorphism $f:M \to M$ on a compact manifold $M$ has a realization $g:N \to N$ of the same regularity satisfying the following. Let $X \subset N$ and $h: L_f \to X$ the homeomorphism that conjugates $g|_X$ with $\Hat{f}$. There is a constant $0<\theta<1$ such that for every $x \in X$ there is an invariant splitting $T_xN = E \oplus F$, with $\dim E = \dim M$, such that:
    \begin{itemize}
        \item The Oseledets subspaces of $g$ at $x$, when well defined, are contained in $E_x \cup F_x$;
        \item $Dg|_F$ is a uniform contraction, that is, for every $x \in X$ and every unit vector $w \in F_x$, $\|Dg_x\cdot w\| = \theta$.
        \item $\Hat{x} \in L_f$ is Lyapunov regular if and only if $h(\hat{x})$ is as well. Moreover, the Lyapunov exponents of $h(\Hat{x})$ are exactly the exponents at $\Hat{x}$ in the $E$-direction and are all equal to $\log \theta$ in the $F$-direction.
    \end{itemize}
    Moreover, $\Hat{\Lambda}$ is a Pesin block for $f$ if and only if $h(\Hat{\Lambda})$ is a Pesin block for $g$. More specifically if $\Tilde{\Lambda}$ is a $(\lambda,\epsilon)$-Pesin block for $g$, then $h^{-1}(\Tilde{\Lambda})$ is a $(\lambda,\epsilon)$-Pesin block for $f$. Conversely, if $\Hat{\Lambda}$ is a $(\lambda,\epsilon)$-Pesin block for $f$ then $h(\Hat{\Lambda})$ is a $(\max\{\lambda,-\log \theta\},\epsilon)$-Pesin block for $g$.
\end{prop}
\begin{proof}
    Let $\iota : M \to D^m$ be a Whitney embedding of $M$ into some $m$-dimensional unitary disk $D^m \subset \rr^m$, with $\|D\iota\|_\infty<m(Df)$, where $m(Df) = \inf_x\|(Df_x)^{-1}\|^{-1}$. Since $f$ is a local diffeomorphism and $\iota$ is uniformly continuous, there is some $\delta>0$ such that if $\|\iota(x)-\iota(y)\|<\delta$ and $x\neq y$ then $f(x) \neq f(y)$. Fix some $\theta < \min\{\delta/2, m(Df),\|Df\|_{\infty}^{-1}\}$, and define $\tilde{g}: M \times D^m \to M \times D^m$ by
    \[
    \tilde{g}(x,z) = (f(x), \iota(x) + \theta z).
    \]
    Then $\tilde{g}$ is a local diffeomorphism of the same regularity as $f$ and, if $\theta$ is small enough, the image of $M \times D^m$ through $\tilde{g}$ is relatively compact in $M \times D^m$, hence the intersection $X = \bigcap_{n\ge 0}\tilde{g}^n(M\times D^m)$ is a $\tilde{g}$-invariant compact attractor.
    
    The choice of $\theta < \delta/2$ guarantees that $\tilde{g}$ is injective. Moreover,  given $\Hat{x} = (x_n)_n \in L_f$, then for all $k \ge 0$ the image $D_k(\Hat{x}):=\tilde{g}^k(\{x_{-k}\}\times D^m)$ is an $m$-dimensional disk of radius $\theta^k$ inside $\{x_0\}\times D^m$. The disks $\{D_k(\Hat{x})\}_{k \ge 0}$ satisfy $\ol{D_k(\Hat{x})} \subset D_{k+1}(\Hat{x})$, hence their intersection is a unique point which we define as $h(\Hat{x})$. One can check that $h: L_f \to X$ defined this way is a homeomorphism that conjugates $\tilde{g}|_X$ with $\Hat f$.

    We take $\alpha>\frac{\|D\iota\|_\infty}{(m(Df)-\theta)}$, note that we may assume $0<\alpha<1$ since we can choose $\iota$ as we want, and consider the cone $C_{\alpha}(x,z) = \{(v,w) \in T_xM \times \rr^m: \|w\| \le \alpha\|v\|\}$. Note that $D\tilde{g}_{(x,z)}\cdot (v,w) = (Df_x\cdot v,\  D\iota _x\cdot v+ \theta w)$, hence for any $(v,w) \in C_{\alpha}$ we have
    \begin{align*}\label{eq cone preserved realization}
        \|D\iota_x \cdot v +\theta w\| \le (\|D\iota\|_\infty+\theta \alpha) \|v\|  < \alpha \  m(Df) \|v\| \le \alpha\|Df_x \cdot v\|.
    \end{align*}
    Thus, $\ol{D\tilde{g} \cdot C_\alpha} \subset C_\alpha$. In particular, see e.g. [Section 3,\cite{crovisier2015introduction}], this implies that we have for every $\tilde{x} \in X$ a $D\tilde{g}$-invariant dominated splitting $T_{\tilde{x}}(M\times D^m) = E_{\tilde{x}}\oplus F_{\tilde{x}}$ (one can check that $F_{\tilde{x}} =\{0\}\times  T_{\tilde{x}}D^m$ and that $E \subset C_\alpha$ has the same dimension as $M$). Moreover $D\tilde{g}|_{F}$ is uniformly contracting at constant rate $\log \theta<0$. 
    
    Therefore, if $\tilde{x}$ is in some Pesin block satisfying \eqref{eq defn pesin block diffeo}, then necessarily there is some $D\tilde{g}$-invariant subspace $E^1_{\tilde{x}} \subset E_{\tilde{x}}$ (that could be trivial) such that $E^s_{\tilde{x}} = E^1_{\tilde{x}}\oplus F_{\tilde{x}}$ and $E^u_{\tilde{x}} \subset E_{\tilde{x}}$.
    
    Note that for $\tilde{x} = h((x_n)_n) \in X$ and $k > 0$,
    \[
    D\tilde{g}^k_{\tilde{x}} = \begin{pmatrix}
        Df^k_{x_0} & 0\\
        A^k(\tilde{x}) & \theta^k Id
    \end{pmatrix}, \ \ D\tilde{g}^{-k}_{\tilde{x}} = \begin{pmatrix}
        (Df_{x_{-k}}^k )^{-1} & 0 \\
        A^{-k}(\tilde{x}) & \theta^{-k} Id
    \end{pmatrix},
    \]
    where $A^k(\tilde{x}) = \sum_{j=0}^{k-1}\theta^{k-(j+1)}D\iota_{x_j} Df_{x_0}^j$ and $A^{-k}(\tilde{x}) = \sum_{j=1}^k-\theta^{-k+j-1}D\iota_{x_{-j+1}} (Df_{x_{-j}}^j)^{-1}$. From the choice of $\theta$, we get that for $v = (v_1,v_2) \in C_\alpha$, $\chi_{\tilde{g}}(\tilde{x},v) = \chi_{f}(x_0,v_1)$. 
    
    Taking $\pi_1:T(M\times D^m) \to TM$ the projection into the first coordinate, if $\tilde{x} = h((x_n)_n) \in X$ is in a $(\lambda,\epsilon)$-Pesin block for $\tilde g$, we take $E^u_{x_n} = \pi_1(E^u_{\tilde{g}^n(\tilde{x})})$ and $E^s_{x_n} = \pi_1(E^1_{\tilde{g}^n(\tilde{x})})$. Then
    \[
    \|Df^k_{x_n}|_{E^s_{x_n}} \| \le \|D\tilde{g}_{\tilde{g}^n(\tilde{x})}^k|_{E^1_{\tilde{g}^n(\tilde{x})}}\| \le C\exp(-\lambda k+\epsilon|n|), \ \ \ \forall k \ge 0, n \in \zz.
    \]
    Analogously, we obtain the same bounds for $\|(Df_{x_{n-k}}^k)^{-1}|_{E^u_{x_n}}\|$ to conclude that these direction satisfy \eqref{eq defn pesin block inverse limits} for the same $(\lambda,\epsilon)$. Hence if $\Tilde{\Lambda}$ is $(\lambda,\epsilon)$-Pesin block for $\tilde{g}$, then $h^{-1}(\Tilde{\Lambda})$ is a $(\lambda,\epsilon)$-Pesin block for $f$.

    Conversely if $\Hat{x} \in L_f$ is in some $(\lambda,\epsilon)$-Pesin block $\Hat{\Lambda}$ for $f$, then we consider $E^u_{\tilde{g}^n(\tilde{x})} = \pi_1^{-1}(E^u_{x_n})\cap E_{\tilde{x}}$, $E^1_{\tilde{g}^n(\tilde{x})} = \pi_1^{-1}(E^s_{x_n})\cap E_{\tilde{x}}$ and $E^s_{\tilde{g}^n(\tilde{x})} = E^1_{\tilde{x}} \oplus F_{\tilde{x}}$. For a unitary $(v,w) \in E^s_{\tilde{g}^n(\tilde{x})}$, if $v=0$, then $\|D\tilde{g}^k_{\tilde{g}^n(\tilde{x})}\cdot (0,w)\| = \theta^k$ (this will impose that $h(\Hat{\Lambda})$ shall be a $(\max\{\lambda,-\log \theta\},\epsilon)$-Pesin block for $\tilde{g}$). As if $v \neq 0$, writing $(v_k,w_k) = D\tilde{g}_{\tilde{g}^n(\tilde{x})}^k\cdot (v,w)$, then since $(v_k,w_k) \in C_\alpha$, we get
    \[
    \|w_k\| \le \|v_k\| \ = \|Df_{x_n}^k\cdot v\| \le C \exp(-\lambda k+\epsilon|n|).
    \]
    
    Thus, taking $\tilde{C}$ such that $\|(v,w)\| \le \Tilde{C} \max\{\|v\|,\|w\|\}$, we get $\|D\tilde{g}^k_{\tilde{g}^n(\tilde{x})}|_{E^u_{\tilde{g}^n(\tilde{x})}}\| \le \tilde{C} C\exp(-\lambda k+\epsilon|n|)$. Again, the same argument shows the desired result for $E^u_{\tilde{g}^n(\tilde{x})}$.

    Finally, by adding a source at infinity we may obtain a system $g: N \to N$ on some compact manifold of dimension $\dim N = m+\dim M$ for which the same results obtained for $\tilde{g}|_X$ hold for some invariant attractor in $N$.
\end{proof}

This way, the following result is a direct consequence of the properties of the constructed realization.

\begin{prop}
    A local diffeomorphism $f: M \to M$ is SPR for some potential $\phi: L_f \to \rr \cup \{-\infty\}$ if and only if its realization $g$ (given by Prop. \ref{lema nice realization})) is SPR for the potential $\phi \circ h^{-1}$ on $X$. Moreover, every consequence of the SPR property given by Theorem \ref{thm conseq SPR diffeo} for an equilibrium measure $\mu$ of $\phi \circ h^{-1}$ for $g|_X$ also hold for $h_*\mu$ which is an equilibrium measure for $\phi$.
\end{prop}

One can define the Pressure Hyperbolicity and Pressure continuity properties for an endomorphism $f$ exactly as made in \ref{PH Prop.} and \ref{PC Prop.} previously for diffeomorphisms. Since the Lyapunov exponents of the realization $g|_X$ are exactly the Lyapunov exponents of $f$ in one direction and are constant (equal to $-\theta<0$) at the other, we obtain from Theorem \ref{thm SPR iff PH and PC diffeos} and the previous Proposition that:

\begin{thm}\label{thm ph and pc imply SPR endos}
    If a local diffeomorphism $f: M \to M$ satisfies \ref{PH Prop.} and \ref{PC Prop.} for a potential $\phi:L_f \to \rr \cup \{-\infty\}$, then $f$ is SPR for $\phi$.
\end{thm}

\section{Inverse expanding on average endomorphisms}\label{sect:ieap}

We remember the definition in $\eqref{eq defn Ixvfn}$, for $f \in \End^1(\TT^2)$, $d = \deg f$. $(x,v) \in T^1\TT^2$ and $n \in \nn$, 
\[
I(x,v;f^n) = \sum\limits_{y \in f^{-n}(x)}\frac{\log\|(Df^n_y)^{-1}\cdot v\|}{d^n} = \int\limits_{\pi_{f}^{-1}(x)}\log \|D\Hat{f}^{-n}_{\Hat{x}}\cdot v\| \ dp_{x}(\Hat{x}).
\]

\begin{prop}\label{prop exp on average endos}
    We consider $f \in \End^1(\TT^2)$ with $d = \deg f>1$. The following conditions are equivalent:
    \begin{enumerate}
    \item It holds:
        \begin{equation}\label{eq hyp I(x) unif bounded}
            C(f) = \lim\limits_{n\to \infty} \frac{1}{n} \inf\limits_{(x,v) \in T^1\TT^2} I(x,v;f^n)  > 0.
        \end{equation}
    \item There are $N \in \nn$ and $C>0$ such that for every $(x,v) \in T^1\TT^2$:
        \begin{equation}\label{eq IxvfN greater than C}
            I(x,v;f^N)>C
        \end{equation}
    \item There is $C>0$ such that for every $x \in \TT^2$: 
        \begin{align}\label{eq I(x) uniformly bounded}
            I_f(x) &= \limsup\limits_{n\to \infty}\frac{1}{n} \inf\limits_{v \in T^1_x\TT^2}I(x,v;f^n) >C.
        \end{align}
    \end{enumerate}
\end{prop}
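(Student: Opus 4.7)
The plan is to prove the three implications in the order $(1)\Leftrightarrow(2)$, $(1)\Rightarrow(3)$, $(3)\Rightarrow(2)$. The starting point is the chain-rule identity
\begin{equation}\label{eq plan key identity}
I(x,v;f^{n+m}) = I(x,v;f^n) + \frac{1}{d^n}\sum_{z \in f^{-n}(x)} I(z,\hat w_z; f^m),\qquad \hat w_z = \frac{(Df^n_z)^{-1} v}{\|(Df^n_z)^{-1} v\|},
\end{equation}
obtained by splitting $y \in f^{-(n+m)}(x)$ as $y \in f^{-m}(z)$ with $z = f^m(y) \in f^{-n}(x)$ and expanding $\log\|(Df^{n+m}_y)^{-1}v\|$ via $(Df^{n+m}_y)^{-1} = (Df^m_y)^{-1}(Df^n_z)^{-1}$. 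Setting $a_n := \inf_{(x,v)} I(x,v;f^n)$ and $b_n(x) := \inf_{v \in T^1_x\TT^2} I(x,v;f^n)$, the identity \eqref{eq plan key identity} yields at once
\begin{equation}\label{eq plan superadditivity}
b_{n+m}(x) \ge b_n(x) + \inf_{z \in f^{-n}(x)} b_m(z),\qquad a_{n+m} \ge a_n + a_m.
\end{equation}
Since $a_n \le n R^-(f) + o(n)$, Fekete's lemma for superadditive sequences gives $C(f) = \lim_n a_n/n = \sup_n a_n/n \in \rr$, so $(1)\Leftrightarrow(2)$ follows immediately: if $C(f)>0$ pick $N$ with $a_N > NC(f)/2$, and conversely $a_N > C$ combined with superadditivity gives $a_{kN} \ge kC$, hence $C(f) \ge C/N$. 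The implication $(1)\Rightarrow(3)$ is trivial because $b_n(x) \ge a_n$ forces $I_f(x) \ge C(f)$.

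The essential content is $(3)\Rightarrow(2)$: converting the pointwise limsup hypothesis $I_f(x) > C$ into a uniform bound at a single iterate. My goal is the stronger statement
\begin{equation}\label{eq plan uniform}
b_n(x) \ge nC - K\qquad \text{for every } n\ge 1 \text{ and } x\in\TT^2,
\end{equation}
for some constant $K = K(f,C)$; then (2) follows by taking $N$ large enough that $NC - K>0$. Two preliminary ingredients are needed. First, $b_n$ is continuous on $\TT^2$: for a local diffeomorphism the preimages $f^{-n}(x)$ vary continuously with $x$, so $I(\cdot,\cdot;f^n)$ is continuous on the compact bundle $T^1\TT^2$, and the minimum over compact fibers preserves continuity. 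Second, the trivial bound $b_n(x) \ge -nK_0$ holds with $K_0 := \log\sup_x\|Df_x\|$, coming from $\|(Df^n_y)^{-1}v\| \ge 1/\|Df^n_y\| \ge e^{-nK_0}$ for unit $v$. Using hypothesis (3), for each $x$ I choose $n(x)$ with $b_{n(x)}(x) > n(x)C$, and by continuity this holds on an open neighborhood of $x$; by compactness of $\TT^2$, finitely many integers $n_1,\dots,n_k$ suffice, in the sense that for every $x$ there is $i=i(x)$ with $b_{n_i}(x) > n_i C$.

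The bound \eqref{eq plan uniform} then follows by strong induction on $n$, with $N_0 := \max_i n_i$ and $K := N_0(C+K_0)$. For $n \le N_0$ the trivial lower bound $b_n(x)\ge -nK_0$ already gives $b_n(x) \ge nC - n(C+K_0) \ge nC - K$. For $n > N_0$, pick $i$ with $b_{n_i}(x) > n_i C$; since $n - n_i \ge 1$ I may apply the induction hypothesis at each preimage, and \eqref{eq plan superadditivity} gives
\begin{equation}
b_n(x) \ge b_{n_i}(x) + \inf_{z \in f^{-n_i}(x)} b_{n-n_i}(z) > n_i C + ((n-n_i)C - K) = nC - K.
\end{equation}
This establishes \eqref{eq plan uniform}, and hence (2). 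The central obstacle, namely promoting a merely pointwise limsup condition to a uniform finite-step estimate, is handled by combining the continuity of $b_n$ on the compact surface with the superadditive cocycle structure inherited from \eqref{eq plan key identity}.
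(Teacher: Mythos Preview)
Your proof is correct, and it rests on the same two ingredients as the paper's argument: the chain-rule identity \eqref{eq plan key identity} giving the superadditive structure, and continuity plus compactness of $\TT^2$ to upgrade the pointwise hypothesis in (3) to a uniform finite-time estimate. The paper organizes the cycle as $1\Rightarrow 2\Rightarrow 3\Rightarrow 1$; for the hard step $3\Rightarrow 1$ it takes $N_x$ to be the \emph{smallest} good time, bounds it uniformly via upper semicontinuity, and then unfolds an explicit ``good branch'' tree decomposition to reach the lower bound $I(x,v;f^n)\ge \lfloor n/M\rfloor C + d^M K$. Your route through Fekete's lemma for $(1)\Leftrightarrow(2)$ and the strong-induction proof of $b_n(x)\ge nC-K$ for $(3)\Rightarrow(2)$ is a tidier repackaging of the same mechanism: the finite cover replaces the uniform bound on $N_x$, and the one-line induction replaces the branch bookkeeping. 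The gain is readability and a slightly sharper constant; the paper's tree argument, on the other hand, makes the combinatorial structure of the preimage averages more visible, which is the viewpoint used elsewhere in the paper.
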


\begin{proof}
    To see that $1$ implies $2$, we take $C = C(f)-\epsilon>0$, then there is $N$ sufficiently large such that 
    \[
    \frac{1}{N} \inf\limits_{(x,v) \in T^1\TT^2} I(x,v;f^N)  > C
    \]
    
    For $2$ implies $3$, we utilize the following relation proven in Corollary $2.1$ of \cite{martin-a}. For every $(x,v) \in T^1\TT^2$, and every pair $n,m \in \nn$:
    \begin{equation}\label{eq Ixvfn as a tree}
        I(x,v;f^{nm}) = \sum\limits_{i=0}^{n-1}\sum\limits_{y \in f^{-im}(x)}\frac{1}{d^{im}}I(y,F^{-im}_yv;f^m),
    \end{equation}
    where $F^{-k}_yv = \frac{(Df^k_y)^{-1}\cdot v}{\|(Df^k_y)^{-1}\cdot v\|}$. Taking $m = N$, we obtain for every $n \in \nn$, and every $(x,v) \in T^1\TT^2$:
    \[
    \frac{1}{nN}I(x,v;f^{nN}) > \frac{1}{nN}\sum\limits_{i=0}^{n-1}\sum\limits_{y \in f^{-iN}(x)}\frac{1}{d^{iN}}C = \frac{1}{N}C.
    \]
    Hence $I_f(x) \ge \frac{C}{N}>0$ for every $x \in \TT^2$. Note that analogously we may obtain that $2$ implies $1$.

    Finally, to prove that $3$ implies $1$, for each $x \in \TT^2$, we consider $N_x \in \nn$  as the smallest natural satisfying
    \[
        \inf_{v \in T^1_x\TT^2} I(x,v;f^{N_x}) >C.
    \]
    From \eqref{eq Ixvfn as a tree}, for every $(x,v) \in T^1\TT^2$, and every $n \in \nn$, we may obtain
    
    \begin{align}\label{eq induction good trees}
        I(x,v;f^n) &= I(x,v;f^{N_x}) +\frac{1}{d^{N_x}} \sum\limits_{y \in f^{-N_x}(x)}I(y,F^{-N_x}_yv;f^{n-N_x})\nonumber\\
        &> C +\frac{1}{d^{N_x}} \sum\limits_{y \in f^{-N_x}(x)}I(y,F^{-N_x}_yv;f^{n-N_x})
    \end{align}
    Since $x \mapsto N_x$ is locally bounded, there are $m, M \in \nn$ such that $N_x \in [m,M]$ for every $x \in \TT^2$.

    We fix $n \in \nn$, and we define a good branch at time $n$, base $x$ and size $k$ as a maximal set of points (meaning that it is not contained in any other good branch at time $n$ and base $x$) $ T = \{x_0,\cdots,x_k\}$ satisfying:
    \begin{enumerate}
        \item $x_0 = x$ and $x_j \in f^{-N_{x_{j-1}}}(x_{j-1})$ for every $j = 1,\cdots, k$;
        \item Taking $N_T := \sum\limits_{j=0}^kN_{x_j}$, we have $n-M \le N_T \le n$.
    \end{enumerate}
    Note that for every good branch $T$ as defined, its size $k = k(T) \ge \lfloor\frac{n}{M}\rfloor$.
    
    We denote by $\mathcal{T}$ the set of all possible distinct good branches at time $n$ with base $x$. Continuing the process as in \eqref{eq induction good trees}, we may obtain:
    \[
    I(x,v;f^n) >2C + \frac{1}{d^{N_x}}\sum\limits_{x_1 \in f^{-N_x}(x)}\frac{1}{d^{N_{x_1}}}\sum\limits_{x_2 \in f^{-N_{x_1}}(x_1)}I(x_2,F_{x_2}^{-(N_x+N_{x_1})}v;f^{n-N_x-N_{x_2}}).
    \]
    And keeping inductively the process until we complete every possible good branch, we obtain from the bound on the size $k(T)$ of good branches
    \[
    I(x,v;f^n)>\left\lfloor\frac{n}{M}\right\rfloor C \ + \sum\limits_{T \in \mathcal{T}} \frac{1}{d^{N_T}}I(x_{k(T)},F_{x_k}^{-N_T}v,f^{n-N_T})
    \]

    We consider
    \[
    K := \inf\left\{I(y,w.f^k): 0 \le k \le M, \ (y,w) \in T^1\TT^2  \right\},
    \]
    and we suppose $K\le0$ (since $K>0$ implies item $2$). We note that $\#\mathcal{T} \le d^n$, since the case with most branches would be if $N_y=1$ for every $y$. Also since $n-M \le N_T$, $I(x_{k(T)},F_{x_k}^{-N_T}v,f^{n-N_T}) \ge K$ and $d^{-N_T} \le  d^{-(n-M)}$ for every $T \in \mathcal{T}$, hence
    \begin{equation} \label{eq bound Ixvfn using trees}
        I(x,v;f^n) \ge \left\lfloor \frac{n}{M} \right\rfloor C \ + \frac{d^n}{d^{n-M}} K = \left\lfloor \frac{n}{M} \right\rfloor C \ + d^M K.
    \end{equation}
    Since $\eqref{eq bound Ixvfn using trees}$ holds for every $(x,v) \in T^1\TT^2$ and for every $n \in \nn$, we conclude
    \[
    C(f) \ge \limsup\limits_{n \in \nn} \frac{1}{n} \left\lfloor \frac{n}{M} \right\rfloor C \ + \frac{d^M}{n} K = \frac{C}{M}>0.
    \]
\end{proof}

\begin{defn}
    We say that $f \in \End^1(\TT^2)$ is inverse expanding on average if one, and hence all the three equivalent conditions in Proposition \ref{prop exp on average endos} hold.
\end{defn}

\begin{rmk}
    From the characterization given by \eqref{eq IxvfN greater than C}, one can note that the inverse expanding on average property is $\mathcal{C}^1$-open in the spaces of endomorphisms of $\TT^2$. 
\end{rmk}

The inverse expanding on average property yields a uniform bound on the Lyapunov exponent of every equidistributed invariant measure. Even more, it gives the existence of the unstable direction $E^u(\hat x)$ for most points $\hat x \in \pi_{f}^{-1}(x)$ and every $x \in \TT^2$, independently of any invariant measure.

\begin{lema}\label{lema weakbundle}
    For $f \in \End^1(\TT^2)$, suppose that $C(f)>0$ and $\inf_{x \in \TT^2}|\det Df_x|>1$. There exist $\lambda>0$ such that every $x \in \TT^2$ satisfies that for $p_x$-almost every $\Hat{x} \in \pi_{f}^{-1}(x)$, there exists a one dimensional direction $E^u(\Hat{x}) \subset \rr^2$ verifying that:
    \begin{enumerate}
        \item If $v \in E^u(\Hat{x})$ then $\limsup\limits_{n\to \infty}\frac{1}{n}\log \|D\Hat{f}^{-n}_{\Hat{x}}\cdot v\| \le -C(f)$;
        \item If $v \in \rr^2 \setminus E^u(\Hat{x})$, then $\liminf\limits_{n \to \infty}\frac{1}{n}\log \|D\Hat{f}^{-n}_{\Hat{x}}\cdot v\| \ge \lambda$.
    \end{enumerate}
\end{lema}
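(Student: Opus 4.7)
The plan is to combine a martingale-type argument (giving $p_x$-almost sure exponential expansion of the backward cocycle $A_n(\hat x):=(Df^n_{x_n})^{-1}$) with the determinant bound (giving uniform contraction in the complementary direction), and then to extract $E^u(\hat x)$ as the limit of the bottom singular directions of $A_n(\hat x)$.

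By Proposition \ref{prop exp on average endos} I first reduce, passing to an iterate of $f$ if necessary, to the case where the uniform lower bound
\[
\phi(x,v)\;:=\;\tfrac{1}{d}\sum_{y\in f^{-1}(x)}\log\|(Df_y)^{-1}v\|\;>\;C_0
\]
holds for every $(x,v)\in T^1\TT^2$, with $C_0$ arbitrarily close to $C(f)$. Fix $x\in\TT^2$ and $v\in T^1_x\TT^2$ and work on the probability space $(\pi_{Sol}^{-1}(x),p_x)$, writing $x_k=\pi_{Sol}(\hat f^{-k}(\hat x))$ and $v_k=A_k(\hat x)v/\|A_k(\hat x)v\|$. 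The telescoping identity $\log\|A_n(\hat x)v\|=\sum_{k=0}^{n-1}\log\|(Df_{x_{k+1}})^{-1}v_k\|$ expresses $\log\|A_n v\|$ as a sum of bounded terms whose conditional expectations with respect to $\mathcal F_k=\sigma(x_0,\ldots,x_k)$ equal $\phi(x_k,v_k)\ge C_0$. Applying the martingale law of large numbers to the bounded differences $X_k:=\log\|(Df_{x_{k+1}})^{-1}v_k\|-\phi(x_k,v_k)$ yields $\liminf_n\tfrac{1}{n}\log\|A_n(\hat x)v\|\ge C_0$ for $p_x$-a.e.\ $\hat x$.

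Applied to a fixed basis vector this gives $\sigma_1(A_n(\hat x))\ge e^{nC_0-o(n)}$ $p_x$-a.s. Combined with $|\det A_n(\hat x)|=\prod_{k=1}^n(\det Df_{x_k})^{-1}\le\lambda^{-n}$, where $\lambda:=\inf|\det Df|>1$, this yields $\sigma_2(A_n(\hat x))\le e^{-n(C_0+\log\lambda)+o(n)}$, so the cocycle is exponentially hyperbolic with gap rate $\ge 2C_0+\log\lambda$ on a set $\Omega_x$ of full $p_x$-measure. A standard angle estimate based on the identity $A_n L_{n+1}=Df_{x_{n+1}}\cdot A_{n+1}L_{n+1}$ and orthogonality of the SVD basis $\{u_n,L_n\}$ at $x$ gives $|\sin\angle(L_{n+1},L_n)|\le\|Df\|_\infty\sigma_2(A_{n+1})/\sigma_1(A_n)$, which is summable and of order $e^{-n(2C_0+\log\lambda)}$ on $\Omega_x$. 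Hence the bottom singular direction $L_n(\hat x)$ is Cauchy and $E^u(\hat x):=\lim_n L_n(\hat x)$ is well defined.

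The dichotomy is then verified as follows. For $v\notin E^u(\hat x)$, the angle $\angle(v,L_n)$ stays bounded below, so the $u_n$-component of $v$ is bounded below and $\|A_n(\hat x)v\|\gtrsim\sigma_1(A_n)\ge e^{nC_0-o(n)}$; letting $C_0\uparrow C(f)$ along a countable sequence gives $\liminf\ge C(f)$. For $v=E^u(\hat x)$, writing $v=\alpha_n u_n+\beta_n L_n$ with $|\alpha_n|\lesssim|v-L_n|\lesssim e^{-n(2C_0+\log\lambda)}$ one has $\|A_n(\hat x)v\|^2\le\alpha_n^2\sigma_1(A_n)^2+\sigma_2(A_n)^2$, and the second term already contributes the rate $\le -(C_0+\log\lambda)$. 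The main obstacle is controlling the cross term $\alpha_n^2\sigma_1(A_n)^2$, since a priori $\sigma_1(A_n)$ can grow as fast as $\|Df^{-1}\|_\infty^n$: the standard remedy is to work with a Pesin/Lyapunov adapted norm in which $\{u_n,L_n\}$ becomes asymptotically orthonormal along the backward orbit, or alternatively to select via Pliss's lemma a subsequence of hyperbolic times $n_j$ along which $\sigma_1(A_{n_j})\approx e^{n_jC_0}$, so that the decomposition argument closes cleanly in the original metric and the target rate $-C(f)$ is recovered after letting $C_0\uparrow C(f)$.
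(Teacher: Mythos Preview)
Your martingale argument for the lower bound on $\sigma_1(A_n)$, the determinant bound on $\sigma_2(A_n)$, the convergence of the bottom singular directions $L_n\to E^u$, and the verification of the second bullet (for $v\notin E^u$) are all correct and standard. The problem is exactly where you flag it: the first bullet, i.e.\ the rate $\le -C(f)$ on $E^u$, is not established. You need $|\alpha_n|\,\sigma_1(A_n)$ to decay at rate $\le -C_0$, but you only have $|\alpha_n|\lesssim e^{-n(2C_0+\log\lambda)}$ together with the crude upper bound $\sigma_1(A_n)\le\|Df^{-1}\|_\infty^n$, and there is no reason for $\log\|Df^{-1}\|_\infty\le C_0+\log\lambda$ to hold. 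Neither remedy you propose closes this gap. A Lyapunov/Pesin adapted norm presupposes the Oseledets splitting and tempering that you are trying to prove, so the argument is circular in this non-invariant setting (the measure $p_x$ is not $\hat f^{-1}$-invariant, so Oseledets does not apply directly). Pliss's lemma produces times at which partial sums are controlled \emph{from one side} given the matching $\liminf$/$\limsup$; here you only know $\liminf\frac1n\log\sigma_1(A_n)\ge C_0$, and this gives no subsequence with $\sigma_1(A_{n_j})\lesssim e^{n_jC_0}$.

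What is genuinely missing is a matching \emph{upper} bound for $\frac1n\log\|A_n\|$, i.e.\ temperedness of the backward cocycle along $p_x$-typical orbits. This is exactly what the paper imports from DeWitt--Dolgopyat: their Section~4 treats the expanding-on-average condition \eqref{eq IxvfN greater than C} for random cocycles and shows (via control of the projective/angular process $v_k$, not just the bare martingale LLN) that $\frac1n\log\|D\hat f^{-n}_{\hat x}\|$ converges $p_x$-a.s.\ with limit bounded by a quantity that, together with the determinant, forces both exponents to lie outside $(-C(f),C(f))$. Once that temperedness is available your SVD argument for the rate on $E^u$ goes through; without it, the proof is incomplete.
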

\begin{proof}
    The property \eqref{eq IxvfN greater than C} is also called expanding on average property by DeWitt and Dolgopyat in \cite{dewitt2024expanding}, in the context of random diffeomorphisms. We may repeat verbatim the technique developed in Section 4 of their work to obtain temperedness of the sequence of norms $\|D\Hat{f}^{-n}_{\Hat{x}}\|$ for $p_x$-almost every point in $\pi_{f}^{-1}(x)$, see Propositions 4.6 and 4.7 in \cite{dewitt2024expanding}. This yields item (1).
    
    Item (2) is a direct consequence of item (1) with the hypothesis that $\inf_x |\det Df_x| >1$.
\end{proof}

The following Corollary is a direct consequence of the disintegration of the lift of equidistributed measures to $L_f$ \eqref{eq equidistributed meas} along with the previous lemma.

\begin{corol}\label{corol hyperbolicity of measures max fold}
    Let $f \in \End^1(\TT^2)$. If $C(f)>0$ and $\inf_x |\det Df_x|>1$, then there exists $\lambda>0$ such that every equidistributed $f$-invariant measure is $\lambda$-hyperbolic, meaning that $\chi^-(x) \le -\lambda<\lambda\le\chi^+(x)$ for $\mu$-a.e. $x \in \TT^2$.
\end{corol}

\begin{rmk}
    As in Proposition 2.2 in \cite{martin-a}, the previous Corollary may be obtained through a direct calculation without the need of Lemma \ref{lema weakbundle}. The importance of such Lemma lies only in the existence of unstable direction for every point.
\end{rmk}

\subsection{Size of unstable manifolds}

Our main goal in this subsection is to obtain uniform bounds for the sizes of unstable manifolds.

\begin{defn}
    A curve $\sigma: [-1,1] \to \TT^2$ is a $L$-Lipschitz graph centered at $x=\sigma(0)$ if writing $T_x\TT^2 = \rr\sigma'(0)\oplus \rr \sigma'(0)^\perp$, there exists a $L$-Lipshitz function $\varphi: J \subset \rr \to \rr$ such that $\sigma([-1,1]) = \exp_x\{(t,\varphi(t)): t \in J\}$.

    We denote the length of a curve $\sigma$ as $|\sigma|$.
\end{defn}

 We have the following Proposition, the point here is that every point on $\TT^2$ has many large unstable manifolds, which allows us to study such objects regardless of the existence of invariant measures.
 
\begin{prop}\label{prop large unstable manifolds}
    Suppose that $f \in \End^2(\TT^2)$ satisfies $C(f)>0$ and $\inf_{x\in M} |\det Df_x| > 1$. Then, given $\delta >0$, there exists $\ell_0>0$ and a $\mathcal{C}^2$-neighborhood $\mathcal{U}_1$ of $f$ such that for every $g \in \mathcal{U}$ and every $x \in \TT^2$:
    \[
    p_x\{\Hat{x} \in \pi_{f}^{-1}(x): W^u_g(\Hat{x}) \ \text{is a} \ 1\text{-Lipschitz graph with} \ |W^u_g(\Hat{x})|>\ell_0\}> 1- \delta,
    \]
    where $W^u_g(\Hat{x})$ is the unstable manifold at $\Hat{x} \in L_f$ of the corresponding lift of $g$ to $L_f$. 
\end{prop}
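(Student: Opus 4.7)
The plan is to combine a $C^2$-openness argument, a uniform concentration estimate for backward trajectories (based on the random-cocycle viewpoint already used in Lemma \ref{lema weakbundle}), and a Hadamard--Perron graph transform to produce unstable manifolds of uniform size.

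\textbf{Step 1: openness of the hypotheses.} By Proposition \ref{prop exp on average endos}, the assumption $C(f)>0$ is equivalent to the \emph{finite time} condition $I(x,v;f^N)>C$ for every $(x,v)\in T^1\TT^2$ and some fixed $N,C$. Since the map $g\mapsto I(x,v;g^N)$ is continuous in the $C^1$ topology and $T^1\TT^2$ is compact, this condition is $C^1$-open; the same is true for $\inf_x|\det Dg_x|>1$. Hence there is a $C^2$-neighborhood $\mathcal{U}_1$ of $f$ and constants $c_0>0$, $\lambda_0>1$ such that for every $g\in\mathcal{U}_1$
\[
C(g)\ge c_0 \quad\text{and}\quad \inf_{x\in\TT^2}|\det Dg_x|\ge\lambda_0 .
\]
In particular, for every such $g$ and every $x$, Lemma \ref{lema weakbundle} yields an a.e.-defined unstable direction $E^u_g(\hat{x})$ along which $\tfrac{1}{n}\log\|D\hat{g}^{-n}_{\hat{x}}v\|\le -c_0$ asymptotically.

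\textbf{Step 2: uniform concentration of Pesin constants.} The core point is to upgrade the a.e.\ statement of Lemma \ref{lema weakbundle} to a uniform one. View the backward dynamics over $x$ as a random cocycle: the fiber $\pi_{Sol}^{-1}(x)$ with $p_x$ is a Bernoulli random walk choosing, at each step, one of the $d$ preimages, and the associated norm cocycle $\log\|D\hat{g}^{-n}_{\hat{x}}\cdot v\|$ has mean $I(x,v;g^n)\ge c_0 n$ by Step 1. Following the techniques of Dewitt--Dolgopyat \cite{dewitt2024expanding} employed already in Lemma \ref{lema weakbundle}, together with a Pliss-type argument, for any $\epsilon>0$ one extracts a set $B_\ell(x,g)\subset\pi_{Sol}^{-1}(x)$ satisfying
\begin{equation*}
p_x(B_\ell(x,g))\ge 1-\delta
\end{equation*}
on which the Oseledets splitting $\rr^2=E^u_g(\hat{x})\oplus E^s_g(\hat{x})$ admits explicit bounds: the angle between $E^u_g$ and $E^s_g$ is $\ge 1/\ell$, the norms $\|D\hat{g}^{-n}_{\hat{x}}|_{E^u_g}\|$ decay as $e^{-(c_0-\epsilon)n}$ and grow as $e^{(c_0-\epsilon)n}$ on $E^s_g$, all up to the multiplicative constant $\ell$. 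Because the random-walk structure and the constants $c_0,\lambda_0$ are uniform in both $g\in\mathcal{U}_1$ and $x\in\TT^2$, the constant $\ell$ can be chosen independently of $g$ and $x$; this uniformity is what distinguishes the present statement from the pointwise Lemma \ref{lema weakbundle}.

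\textbf{Step 3: Hadamard--Perron with uniform constants.} With uniform Pesin constants $\ell$ and uniform hyperbolicity rate $c_0-\epsilon$, the standard Hadamard--Perron theorem (for the backward sequence of maps $\{Dg^{-1}_{x_n}\}_{n\ge 0}$ along $\hat{x}$) produces, for each $\hat{x}\in B_\ell(x,g)$, a $C^{1,1}$ local unstable manifold $W^u_g(\hat{x})$ tangent to $E^u_g(\hat{x})$. Its size is controlled only by $\ell$, $c_0-\epsilon$ and the $C^2$ norm of $g$, all of which are uniform over $\mathcal{U}_1$. One may arrange the graph chart so that $W^u_g(\hat{x})$ is a $1$-Lipschitz graph over $E^u_g(\hat{x})$ of length bounded below by some $\ell_0=\ell_0(\ell,c_0,\epsilon,\mathcal{U}_1)>0$. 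Since $B_\ell(x,g)$ has $p_x$-measure $\ge 1-\delta$, the conclusion of the proposition follows.

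\textbf{Main obstacle.} The delicate step is \textbf{Step 2}: transferring the a.e.\ tempered\-ness of Lemma \ref{lema weakbundle} into a \emph{quantitative} estimate, uniform in both $x\in\TT^2$ and $g\in\mathcal{U}_1$, for the size of the ``good set'' of backward trajectories. This requires verifying that the argument of Dewitt--Dolgopyat is quantitative and that all of its inputs (drift, moment bounds on the cocycle, non-degeneracy) are uniform on $\mathcal{U}_1\times\TT^2$, so that the Pliss extraction yielding $B_\ell(x,g)$ can be made with $\ell$ and $\ell_0$ independent of $x$ and $g$.
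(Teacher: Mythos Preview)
Your outline tracks the paper's strategy in Steps~1 and~3, but Step~2 has a genuine gap that the paper handles differently. You claim that Dewitt--Dolgopyat plus a Pliss argument yield, on a set $B_\ell(x,g)$ of $p_x$-measure $\ge 1-\delta$, an Oseledets splitting $\rr^2=E^u_g(\hat{x})\oplus E^s_g(\hat{x})$ with a uniform angle lower bound along the backward orbit. But no such $E^s_g(\hat{x})$ is available here: Lemma~\ref{lema weakbundle} produces only $E^u_g(\hat{x})$ (and the statement that \emph{every} complementary direction grows backward), and in the endomorphism setting there is no second Oseledets direction attached to $\hat{x}$ without reference to an invariant measure. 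Temperedness of $\|D\hat{g}^{-n}_{\hat{x}}\|$ does not by itself control the angle between $E^u_g(\hat{g}^{-k}(\hat{x}))$ and any candidate complementary direction along the backward orbit, which is exactly what the graph transform requires.

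The paper fills this gap with an additional ingredient you do not invoke: Lemma~\ref{lema unst direction distributed on fibres}, a non-concentration estimate $p_x\{\hat{x}:\angle(E,E^u_g(\hat{x}))<\gamma\}\le A\gamma^\beta$ for any fixed direction $E$. This is used (following Lima--Obata--Poletti) to build a fibered Pesin set $\Lambda^-_{\epsilon,C,E,g}(x)$ in which an \emph{arbitrary} reference direction $E$ and its backward pushforwards $E_k(\hat{x})=D\hat{g}^{-k}_{\hat{x}}E$ serve as the complementary bundle, with the angle $\angle(E_k(\hat{x}),E^u_g(\hat{g}^{-k}(\hat{x})))\ge C^{-1}e^{-k\epsilon}$ controlled precisely because the unstable directions are spread out over the fiber. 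Only then does the graph transform yield unstable manifolds of uniform size. So the ``quantitative Dewitt--Dolgopyat'' you flag as the main obstacle is not the missing piece; the missing piece is the transversality mechanism supplied by the distribution of unstable directions.
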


\begin{rmk}
    The tools developed in this section are mainly utilized in order to obtain uniqueness on a neighborhood of maps $f \in \mathcal{F}_1$ in Theorems \ref{thm main} and \ref{main thm uniqueness}. We remark that this property of large unstable manifolds given by Prop. \ref{prop large unstable manifolds} is the only one that imposes the neighborhood we obtain to be $\mathcal{C}^2$ and not $\mathcal{C}^1$.
\end{rmk}

This is a version of [Theorem 7.2, \cite{lima2024measuresmaximalentropynonuniformly}, where Lima-Obata-Poletti proved such affirmation in a similar context (see also Dolgopyat-Krikorian \cite{DK07}, Zhang \cite{zh19}, Dolgopyat-DeWitt \cite{dewitt2024expanding} in the context of random dynamics).

We point out that their result and ours have a significant difference which is crucial in our context: we need that, for every $x \in \TT^2$, there are large unstable manifolds on a set of uniformly positive $p_x$-measure in $\pi_{f}^{-1}(x)$, while the \cite{lima2024measuresmaximalentropynonuniformly} result only guarantee such condition for typical points $x \in \TT^2$ of the preserved measure.

This restriction only appears in their work in order to guarantee the existence of the unstable directions given by Oseledets Theorem, however we have the $E^u$ direction given by Lemma \ref{lema weakbundle}, and this is enough to get the results we need. For that reason we may argue exactly as they do and we expose only an idea of their argument. A direct adaptation of [Theorem 7.1, \cite{lima2024measuresmaximalentropynonuniformly}] is the following Lemma.

\begin{lema}\label{lema unst direction distributed on fibres}
Suppose that $f \in \End^1(\TT^2)$ satisfies $C(f)>0$ and $\inf_{x\in M} |\det Df_x| > 1$. There exist constants $A, \beta>0$, and a $\mathcal C^1$ neighborhood $\mathcal U_0$ of $f$ such that for any $g\in\mathcal U_0$, $x\in\TT^2$, $E \in \mathbb{P}T_x\TT^2$, and any $\gamma>0$,
    \begin{equation}\label{eq unst direction distributed on fibres} 
        p_x\{\Hat{x} \in \pi_{g}^{-1}(x): \angle (E, E^u_g({\Hat{x}}))<\gamma\}\le A \gamma^\beta.
    \end{equation}
\end{lema}

Such lemma states that condition \eqref{eq hyp I(x) unif bounded} implies that for every $x \in \TT^2$, the unstable directions $\{E^u_g(\Hat{x}): \Hat{x} \in \pi_{f}^{-1}(x)\}$ given by Lemma \ref{lema weakbundle} are varying on points $\Hat{x} \in \pi_{f}^{-1}(x)$. Informally, if the unstable directions of points on a set of positive $p_x$-measure were concentrated on some direction $E \in \PP T_x\TT^2$, we could obtain a point $z \in g^{-n}(x)$ such that the averaged Lyapunov exponent $I(z)$ would necessarily be negative since for $v \in D\Hat{g}^{-n}_{\Hat{x}}\cdot E$ there are many points $\Hat{z} \in \pi_{f}^{-1}(z)$ (in the sense of the $p_z$ measure) that are contracting the vector $v$ when iterating backwards, as it is the unstable direction of such points.

\begin{proof}[Proof of Prop. \ref{prop large unstable manifolds}]
We fix a constant $0<\chi <C(f)$ and take $\mathcal{U}_1\subset\mathcal U_0$ a $\mathcal{C}^2$-neighborhood of $f$ such that for every $g \in \mathcal{U}_1$, the number $C(g)$ given by \eqref{eq hyp I(x) unif bounded} satisfies $C(g) > \chi$. We fix $g \in \mathcal{U}_1$. We follow the proof of [Theorem 7.2 \cite{lima2024measuresmaximalentropynonuniformly}].

They begin by defining a fibered version of a Pesin set, namely for a suitable $\lambda>0$, which does not depend on $g \in \mathcal{U}$ but only on $\chi$ (see Lemmas 7.3 and 7.5 in \cite{lima2024measuresmaximalentropynonuniformly}), for $C,\epsilon>0$ and $E \in \mathbb{P}T_x\TT^2$, let $\Lambda^-_{\epsilon,C,E,g}(x)$ be the set of points $\Hat{x} \in \pi_{f}^{-1}(x)$ satisfying for every $n,k \ge 0$:
\begin{itemize}
    \item $\|D\Hat{g}^{-n}_{\Hat{f}^{-k}(\Hat{x})}|_{E^u_g(\Hat{g}^{-k}(\Hat{x}))}\| \le C e^{-n\lambda+k\epsilon}$;
    \item $\|D\Hat{g}^{-n}_{\Hat{f}^{-k}(\Hat{x})}|_{E_k(\Hat{x})}\| \ge C^{-1} e^{n\lambda-k\epsilon}$, where $E_k(\Hat{x}) = D\Hat{g}^{-k}_{\Hat{x}}\cdot E$;
    \item The angle $\angle(E_k(\Hat{x}),E^u_g(\Hat{g}^{-k}(\Hat{x}))) \ge C^{-1}e^{-k\epsilon}$.
\end{itemize}

They follow by showing that condition \eqref{eq hyp I(x) unif bounded} and Lemma \ref{lema unst direction distributed on fibres} are enough to obtain that for every $\delta>0$, there are $C,\epsilon>0$ such that for any $x \in \TT^2$ it holds that $p_x(\Lambda^-_{\epsilon,C,E,g}(x))>1-\delta$, for every $E  \in \mathbb{P}T_x\TT^2$. This fact allows the use of graph transform methods [\cite{pesin1976families}, Theorem 2.1.1] to obtain that every $\Hat{x} \in \Lambda^-_{\epsilon,C,E,g}(x)$ has a local unstable manifold that is a $1$-Lipschitz graph. Its size depends only on $\epsilon,C,E$ and on the Hölder regularity of $Dg$ which is controlled in $\mathcal{U}_1$.
\end{proof}

\subsection{Size of stable manifolds}

In this subsection we will show that we also have a uniform bound for the sizes of stable manifolds in the following sense. 

\begin{prop}\label{prop large stable manifolds}
    Suppose that $f \in \End^{1+}(\TT^2)$ satisfies $C(f)>0$ and $\inf_{x} |\det Df_x|>1$. Given $\ell_0>0$ sufficiently small (depending only on $f$), there exists a $\mathcal{C}^1$-open neighborhood $\mathcal{U}_2 \subset \End^{1+}(\TT^2)$ of $f$ such that for every $g \in \mathcal{U}$ and for every equidistributed $g$-invariant probability measure $\mu$:
    \[
    \mu(\{x \in \TT^2: W^s_g(x)\  \text{is a $1$-Lipschitz graph with }|W^s_g(x)|>\ell_0\})>0. 
    \]
\end{prop}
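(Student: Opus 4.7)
The plan is to combine the uniform hyperbolicity of equidistributed measures from Corollary~\ref{corol hyperbolicity of measures max fold} with a measure-theoretic Pliss lemma to locate, inside every such $\mu$, a set of uniformly positive measure on which the Hadamard--Perron graph transform produces stable manifolds of a uniform size.

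First I would shrink $\mathcal{U}_2$ so that the hypotheses persist with uniform constants: there exist $\chi>0$ and $\gamma>1$ with $C(g)>\chi$ and $\inf_{x}|\det Dg_x|>\gamma$ for every $g\in\mathcal{U}_2$, together with a uniform $\mathcal{C}^r$ bound. For any $g\in\mathcal{U}_2$ and any equidistributed $g$-invariant $\mu$, Corollary~\ref{corol hyperbolicity of measures max fold} gives $\chi^-(x)\le-\chi$ for $\mu$-a.e.\ $x$; the stable Oseledets bundle $E^-$ is defined $\mu$-a.e.\ directly on $\TT^2$ since it depends only on forward iteration. The function $\phi(x):=\log\|Dg|_{E^-(x)}\|$ is then bounded by some $L$ that is uniform in $\mathcal{U}_2$, and satisfies $\int\phi\,d\mu\le-\chi$.

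Next I would apply a measure-theoretic Pliss lemma to $\phi$: for a fixed $\chi'\in(0,\chi)$, the ``Pliss set''
\[
P_\mu:=\Big\{x\in\TT^2 \ :\ \sum_{k=0}^{n-1}\phi(g^k x)\le -\chi' n\ \ \text{for all}\ n\ge 1\Big\}
\]
has $\mu(P_\mu)\ge\theta$ for some $\theta=\theta(\chi-\chi',L)>0$ independent of $\mu$ and of $g\in\mathcal{U}_2$. This is the step at which uniformity across all equidistributed measures is extracted. For every $x\in P_\mu$ one then has the Pesin-type estimate $\|Dg^n|_{E^-(x)}\|\le e^{-n\chi'}$ for all $n\ge 0$, while $|\det Dg|>\gamma$ forces a matching expansion in the transverse quotient $T_{g^k(x)}\TT^2/E^-(g^k(x))$. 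Working in exponential charts along the forward orbit of $x$ with Pesin constant $C=1$, the graph-transform construction then produces a $\mathcal{C}^{1+\alpha}$ stable manifold through $x$ that is a $1$-Lipschitz graph over $E^-(x)$ of length at least $\ell_0$, where $\ell_0$ depends only on $\chi'$, $\gamma$, the $\mathcal{C}^r$-bound on $\mathcal{U}_2$ and the injectivity radius of $\TT^2$. Since $P_\mu$ is contained in the set in the statement, the desired inequality follows with $\epsilon_0:=\theta$.

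The main obstacle is the Pliss bound itself: obtaining $\mu(P_\mu)\ge\theta$ with $\theta$ depending only on $\chi-\chi'$ and $L$ and not on $\mu$. A direct Egorov argument on the Birkhoff convergence $\frac1n\sum_k\phi(g^kx)\to\chi^-(x)$ would give a set of $\mu$-measure close to $1$ on which the estimate holds only for $n$ above a $\mu$-dependent threshold, which is insufficient. The correct route is a maximal-ergodic argument that converts the mean bound $\int\phi\,d\mu\le-\chi$ directly into a pointwise-for-all-$n\ge 1$ estimate on a set whose measure is controlled purely by the drift and by $L$. Once this is in hand the rest of the argument is standard Pesin theory, made simpler by the fact that the stable bundle lives on the base $\TT^2$ rather than on $Sol$.
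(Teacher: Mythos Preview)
Your approach is correct in outline and genuinely different from the paper's. The paper does not use Pliss times; instead it proves a cone lemma (Lemma~\ref{lema cones UEA}): for every $(x,v)$ there is an $N$-th preimage $y$ of $x$ and a narrow cone around $v$ that $(Dg^N_y)^{-1}$ sends into a cone at $y$ with uniform expansion. Iterating this by graph transform (Corollary~\ref{corol expanding pre-images of curves}) shows that every $C^2$ curve has, among its backward iterates, a piece that is a $1$-Lipschitz graph of length at least $\ell_0=r/2$. Applied to the local stable manifold of a regular point $x$, this produces some $y\in g^{-n}(x)$ lying in the good set $A$, so $\mu\bigl(\bigcup_n g^n(A)\bigr)=1$; the equidistributed hypothesis (constant Jacobian $d$) then forces $\mu(A)>0$. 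Your Pliss route is in fact close to two alternative arguments that appear commented out in the paper's source (one via Crovisier--Pujals, one via Luo--Yang). It has the advantage of yielding an explicit uniform $\epsilon_0=\theta(\chi-\chi',L)$, whereas the paper's argument as written only gives $\mu(A)>0$; conversely, the paper's geometric argument sidesteps the issue you flag of turning a one-sided Pliss contraction along the merely measurable bundle $E^-$ into a uniform stable-manifold size, since the long stable graph is manufactured directly as a backward image of $W^s_g(x)$ itself.
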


We begin by the following lemma.

\begin{lema}\label{lema cones UEA}
    Suppose that $f \in \End^{1+}(\TT^2)$ satisfies $C(f)> 0$ and $\inf_x|\det Df_x|>1$. There exist constants $\epsilon,R>0$, $N \in \nn$, $\lambda>1$ and a $\mathcal{C}^1$-open neighborhood $\mathcal{U}_2$ of $f$ such that every $g \in \mathcal{U}_2$ satisfies the following.
    
    For every $x \in \TT^2$, and any unitary $v \in \rr^2$, there is a cone $C(v,\epsilon) = \{(w_1,w_2) \in \rr v \oplus \rr v^\perp : |w_2| \le \epsilon|w_1|\}$ and some $y \in g^{-N}(x)$ such that, for every $z \in B(y,R)$ it holds:
    \begin{itemize}
        \item $(Dg^N_z)^{-1}C(v,\epsilon) \subset C(v_y,\epsilon)$, where $v_y = \frac{(Dg_{y}^N)^{-1}v}{\|(Dg_{y}^N)^{-1}v\|}$ and
        \item For every $w \in C(v,\epsilon)$, $\|(Dg_z^N)^{-1}w\| \ge \lambda\|w\|$.
    \end{itemize}
\end{lema}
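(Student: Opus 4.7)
I would combine the inverse expanding on average hypothesis with the determinant condition. By Proposition~\ref{prop exp on average endos}(1), there exists $N \in \nn$ such that $\tfrac{1}{N}\inf_{(x,v)\in T^1\TT^2} I(x,v;f^N) \ge C(f)/2$. Since $I(x,v;f^N)$ is an average of $\log\|(Df^N_y)^{-1}v\|$ over the $d^N$ preimages $y \in f^{-N}(x)$, for every $(x,v)\in T^1\TT^2$ at least one summand attains or exceeds this average, so there is a preimage $y$ with $\|(Df^N_y)^{-1}v\| \ge e^{NC(f)/2}=:2\lambda$. Using the $\mathcal{C}^2$-continuity of $g\mapsto Dg^N$ and a finite-cover argument on the compact unit tangent bundle $T^1\TT^2$, this expansion persists in a $\mathcal{C}^2$-neighborhood $\mathcal{U}_2$ of $f$, uniformly for $z$ in some ball $B(y,r)$.

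Next, the cone invariance. Let $A := (Dg^N_z)^{-1}$ and denote by $e_1,e_2$ its orthonormal right-singular basis with singular values $\sigma_1\ge\sigma_2$, so $\sigma_1\sigma_2 = |\det A| \le K^{-N}$ where $K := \inf_x|\det Df_x|>1$ (shrinking $\mathcal{U}_2$ if necessary so $K$ remains a uniform lower bound). Writing $v = \cos\theta\,e_1 + \sin\theta\,e_2$, a direct computation in the singular basis shows that, for $w = v + tv^\perp$ with $|t|\le\epsilon$, the image decomposes as
\[
Aw\cdot v_y = \|Av\| + \frac{t(\sigma_2^2-\sigma_1^2)\sin\theta\cos\theta}{\|Av\|},\qquad Aw\cdot v_y^\perp = \frac{t\,\sigma_1\sigma_2}{\|Av\|}.
\]
Hence the cone invariance $AC(v,\epsilon)\subset C(v_y,\epsilon)$ reduces to the scalar inequality $\sigma_1\sigma_2 + \tfrac{\epsilon}{2}\sigma_1^2 \le \|Av\|^2$. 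The first term is $\le K^{-N}$ and the right-hand side is $\ge 4e^{NC(f)}$, both forced by our two hypotheses; the second term is controlled by choosing $\epsilon$ small enough relative to the a priori bound $\sigma_1^2 \le \|Df^{-1}\|_\infty^{2N}$, which is fixed once $N$ is. With such $\epsilon$ the same decomposition yields $\|Aw\|\ge (\|Av\|/2)|w_1| \ge \lambda\|w\|/(2\sqrt{1+\epsilon^2})$, so shrinking $\epsilon$ once more (and redefining $\lambda$) produces the expansion on the whole cone.

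\textbf{Main obstacle.} The delicate step is the cone invariance: it implicitly requires the chosen preimage $y$ to have $v$ close to the top singular direction $e_1$ of $A$, and the inverse expanding on average hypothesis provides the lower bound $\|Av\|\ge 2\lambda$ but does not by itself force proximity to $e_1$. The determinant hypothesis $\inf|\det Df|>1$ is precisely what closes the gap: it crushes $\sigma_1\sigma_2 = |\det A|$ down to $K^{-N}$, so the inequality $\|Av\|^2 \gg \sigma_1\sigma_2$ becomes automatic for large $N$, forcing the geometric picture. The remaining bookkeeping---balancing $\epsilon$ against the potentially exponentially large quantity $\sigma_1^2$ and obtaining uniformity in $(x,v) \in T^1\TT^2$ and $g\in\mathcal{U}_2$---is routine via compactness and $\mathcal{C}^2$-continuity.
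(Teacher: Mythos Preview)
Your strategy is correct and matches the paper's: extract from $C(f)>0$ a preimage $y\in f^{-N}(x)$ at which $(Df^N_y)^{-1}$ expands $v$, then combine this with the determinant bound $|\det(Df^N_y)^{-1}|<1$ to force cone invariance, and finally propagate to a ball $B(y,r)$ and a $\mathcal{C}^2$-neighborhood by uniform continuity and compactness.

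The only real difference is in how the cone computation is organized. You pass through the singular value decomposition of $A=(Dg^N_z)^{-1}$ and compute $\langle Aw,v_y\rangle$, $\langle Aw,v_y^\perp\rangle$ in that basis. The paper bypasses the SVD entirely: since $v_y$ is by definition $Av/\|Av\|$, the matrix of $A$ in the bases $(v,v^\perp)\to(v_y,v_y^\perp)$ is automatically upper-triangular, $\begin{pmatrix}a&b\\0&c\end{pmatrix}$ with $|a|=\|Av\|>\lambda$, $|ac|=|\det A|<1$, and $|b|$ bounded by $\|(Df^N)^{-1}\|_\infty$. The cone check then becomes a two-line estimate on $|cw_2|/|aw_1+bw_2|$. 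Your route gives the same inequality $\sigma_1\sigma_2+\tfrac{\epsilon}{2}\sigma_1^2\le\|Av\|^2$, but the triangular form is quicker and makes it clearer that no alignment of $v$ with the top singular direction is needed (your ``Main obstacle'' paragraph slightly over-dramatizes this point: the argument uses only $\|Av\|^2\gg|\det A|$, not any angular information about $v$ relative to $e_1$).
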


\begin{proof}
    Since $C(f)>0$, from item 2 of Prop. \ref{prop exp on average endos}, there are $N \in \nn$ and $C>0$ such that for every $(x,v) \in T^1\TT^2$, 
    $I(x,v.f^N)>C$. It is clear then that for every $(x,v) \in T^1\TT^2$, there exists at least one point $y \in f^{-N}(x)$ with 
    \[
    \|(Df_y^N)^{-1}\cdot v\| > \lambda, \ \ \lambda=e^C>1.
    \]

    We write $ v_y= \frac{(Dg_{y}^N)^{-1}v}{\|(Dg_{y}^N)^{-1}v\|}$ and consider $(Df_y^N)^{-1}: \rr v \oplus \rr v^\perp \to \rr v_y \oplus \rr v_y^\perp$ as
    \[
    (Df_y^N)^{-1}=\begin{pmatrix}
        a &b \\
        0 &c
    \end{pmatrix}.
    \]
    Taking $K = \min_x|\det Df^N_x|>1$, we have $|ac| = \frac{1}{K}<1$, also $\|(Df_y^N)^{-1}\cdot v\| = |a| >\lambda>1$ and $|b|<\|Df^N\|_\infty$.

    We consider $0<\epsilon<\min\left\{ \frac{\lambda-1}{\|Df\|_\infty}, \frac{\lambda^2K-1}{K\|Df\|_\infty^2 } \right\}<\min\left\{\frac{|a|-1}{|b|},\frac{a^2K-1}{abK}  \right\}$ and the cone $C(v,\epsilon)$ as enunciated. For every $w=(w_1,w_2) \in C(v,\epsilon)$, $(Df_y^N)^{-1}\cdot w= (aw_1+bw_2,cw_2)$ and from the previous observations we have 
    $|cw_2|\le\frac{\epsilon}{|a| K}|w_1|$ and $|aw_1+bw_2|\ge |aw_1|-|bw_2| \ge\left(|a|-\epsilon |b|\right)|w_1|$. Hence
    \[
    \frac{|cw_2|}{|aw_1+bw_2|} \le \frac{\epsilon}{aK(a-\epsilon b)}<\epsilon.
    \]
    Hence $(Df_y^{N})^{-1} C(v,\epsilon) \subset C(v_y, \epsilon)$.
    
    From the uniform continuity of $Df^N$, we may conclude that for uniform constants $1<\Tilde{\lambda}\le\lambda$ and $R>0$, it holds that for any $(x,v) \in T^1\TT^2$ and for $y \in  f^{-N}(x)$ as before, it holds for every $z \in B(y,R)$ that
    \begin{itemize}
        \item $(Df_z^N)^{-1}C(v,\epsilon) \subset C(v_y,\epsilon)$, and
        \item For every $w \in C(v,\epsilon)$, $\|(Df_z^N)^{-1}w\| > \Tilde{\lambda}\|w\|$.
    \end{itemize}
    
    Since the results obtained are $\mathcal{C}^1$-open properties, we conclude that the same holds for every $g$ in a $\mathcal{C}^1$-open neighborhood $\mathcal{U}_2$ of $f$.
\end{proof}

\begin{corol}\label{corol expanding pre-images of curves}
    Given $f \in \End^{1+}(\TT^2)$ with $C(f)>0$ and $\inf_x|\det Df_x|>1$, let $R>0$ and $\mathcal{U}_2$ be, respectively, the radius and the $\mathcal{C}^1$ neighborhood of $f$ given by Lemma \ref{lema cones UEA}. Then, for every $g \in \mathcal{U}_2$ and every $\mathcal{C}^1$ curve $\sigma:[-1,1] \to \TT^2$, there is some $n \in \nn$ and other curve $\gamma:[-1,1] \to \TT^2$ such that $g^N(\gamma) \subset \sigma$ and $\gamma$ is a $1$-Lipschitz graph with $|\gamma| >R/2$.
\end{corol}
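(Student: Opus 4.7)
The strategy is to iterate the one-step backward pullback furnished by Lemma~\ref{lema cones UEA} along a carefully chosen backward orbit, growing a pre-image of a sub-arc of $\sigma$ until its length exceeds $r/2$. Reparametrize $\sigma$ by arclength and set $x_0 := \sigma(0)$, $v_0 := \sigma'(0)$. Applying Lemma~\ref{lema cones UEA} to $(x_0,v_0)$ yields $y_1\in g^{-N}(x_0)$ and the backward-invariant cone $C(v_0,\epsilon)$. By continuity of $\sigma'$ and of $Dg^N$, there is a sub-arc $\gamma_0\subset\sigma$ around $x_0$ on which the local inverse $h_0$ of $g^N$ through $y_1$ is defined and whose tangents all lie in $C(v_0,\epsilon)$. (By shrinking the $r$ produced by Lemma~\ref{lema cones UEA} below the injectivity radius of $g^N$---which is uniform on $\mathcal{U}_2$---we may assume every local inverse we encounter is well defined on balls of radius $r$ around the relevant preimage.)

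Inductively, suppose we have produced a $1$-Lipschitz graph $\gamma_k$ through a point $y_k\in g^{-kN}(x_0)$, with all tangents contained in a cone $C(v_k,\epsilon)$ (where $v_k$ is the tangent of $\gamma_k$ at $y_k$), satisfying $g^{kN}(\gamma_k)\subset\sigma$. Apply Lemma~\ref{lema cones UEA} to $(y_k,v_k)$ to obtain $y_{k+1}\in g^{-N}(y_k)$ and the local inverse $h_k$ of $g^N$ through $y_{k+1}$. Because for every $z\in B(y_{k+1},r)$ the map $(Dg^N_z)^{-1}$ sends $C(v_k,\epsilon)$ into $C(v_{k+1},\epsilon)$ and expands vectors in it by at least $\lambda>1$, defining $\gamma_{k+1}$ as the connected component through $y_{k+1}$ of $h_k(\gamma_k)\cap B(y_{k+1},r)$ produces a $1$-Lipschitz graph with tangents in $C(v_{k+1},\epsilon)$ such that $g^{(k+1)N}(\gamma_{k+1})\subset\sigma$.

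The key length estimate, to be established in the body of the proof, is
\[
|\gamma_{k+1}|\ \ge\ \min\bigl(\lambda\,|\gamma_k|,\ r/\sqrt{2}\bigr).
\]
While the pullback $h_k(\gamma_k)$ stays inside $B(y_{k+1},r)$ the cone expansion gives length at least $\lambda|\gamma_k|$; if it exits the ball it must reach Euclidean distance $r$ from $y_{k+1}$, and any $1$-Lipschitz graph starting at $y_{k+1}$ needs arclength at least $r/\sqrt{2}$ to reach such a point. Since $r/\sqrt{2}>r/2$, whenever the pullback exits the ball we already have $|\gamma_{k+1}|>r/2$; otherwise the length grows geometrically by a factor $\lambda>1$. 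Hence some iterate $k$ satisfies $|\gamma_k|>r/2$. Truncating to a sub-arc of length slightly greater than $r/2$ and reparametrizing on $[-1,1]$ yields the required $\gamma$, with $n:=kN$ (I read $g^N$ in the statement as a typo for $g^n$).

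The main obstacle I anticipate is the bookkeeping of cone propagation: we need the new cone $C(v_k,\epsilon)$ produced by the Lemma at $(y_k,v_k)$ to contain \emph{every} tangent of $\gamma_k$, not just the tangent at the basepoint. This works precisely because the tangents of $\gamma_k$ were obtained at the previous step by pulling back vectors in $C(v_{k-1},\epsilon)$ through $h_{k-1}$ at points of $B(y_k,r)$, and the backward invariance in Lemma~\ref{lema cones UEA} delivers exactly the cone $C(v_k,\epsilon)$ that the new application of the Lemma generates at $y_k$.
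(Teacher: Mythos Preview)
Your proposal is correct and follows essentially the same approach as the paper: both iterate Lemma~\ref{lema cones UEA} along a backward orbit $(x_k,v_k)$ starting from $(\sigma(0),\sigma'(0))$ and grow a pre-image sub-arc via the cone expansion. The paper simply invokes ``graph transform methods [Theorem 2.1.1, \cite{pesin1976families}]'' for the growth step, whereas you spell out the length dichotomy $|\gamma_{k+1}|\ge\min(\lambda|\gamma_k|,\,r/\sqrt{2})$ and the cone bookkeeping explicitly; your reading of $g^N$ as a typo for $g^n$ is confirmed by the paper's own proof, which produces $\gamma$ with $g^{kN}(\gamma)\subset\tilde\sigma$.
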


\begin{proof}
    Let $g \in \mathcal{U}_2$ and $\sigma$ be given and consider $x_0 = \sigma(0)$ and $v_0 = \sigma'(0)$. Let $x_1 \in g^{-N}(x_0)$ be at pre-image of $x_0$ that satisfies the conclusion of Lemma \ref{lema cones UEA} and consider $v_1 = \frac{(Dg_{x_1}^N)^{-1}v_0}{\|(Dg_{x_1}^N)^{-1}v_0\|}$. Inductively consider for $k \in \nn$, $(x_k,v_k) \in T^1\TT^2$ constructed from the previous one as indicated for $(x_1,v_1)$.

    Consider also the uniform $\epsilon>0$ given by Lemma \ref{lema cones UEA}. Since $\sigma$ is $\mathcal{C}^1$, there is some piece $\Tilde{\sigma}:[-a,a] \to \TT^2$ which is an $\epsilon$-Lipschitz graph and for which $\Tilde{\sigma}'(t) \in C(v_0,\epsilon)$ for all $t \in [-a,a]$. The conclusions of Lemma \ref{lema cones UEA} allow us to apply graph transform methods [Theorem 2.1.1, \cite{pesin1976families}] to $\Tilde{\sigma}$ to obtain eventually some pre-image $\gamma:[-1,1] \to \TT^2$ with $\gamma(0)= x_k$, $f^{kN}(\gamma) \subset \Tilde{\sigma}$ such that $\gamma$ is a $1$-Lipschitz graph with $|\gamma|>R/2$.
\end{proof}

We are now ready to prove Proposition \ref{prop large stable manifolds}.

\begin{proof}[Proof of Prop. \ref{prop large stable manifolds}]
    Let $\mathcal{U}_2$ and $R>0$ be, respectively the $\mathcal{C}^1$-neighborhood of $f$ and the radius given by Lemma \ref{lema cones UEA} and fix $\ell_0 = R/2$. For $g \in \mathcal{U}_2$, consider the set 
    \[
    A= \{x \in \TT^2: W^s_g(x) \ \text{is a 1-Lipschitz graph with } |W^s_g(x)|>\ell_0\}.
    \]
    From Corollary \ref{corol expanding pre-images of curves}, for every Lyapunov regular $x \in  \TT^2$ (which have a well defined stable manifold), there exist some $n \in \nn$ such that $g^{-n}(x) \cap A \neq \emptyset$. Hence, for every $g$-invariant probability measure $\mu$, we have
    \begin{equation}\label{eq mu gnA}
    \mu \left( \bigcup_{n\ge0} g^n(A) \right) >0.
    \end{equation}
    Now, if $\mu$ is equidistributed it is, in particular, a non-singular measure, meaning that for every measurable set $E\subset \TT^2$, $\mu(E) >0$ if and only if $\mu(g^n(E))>0$ for every $n \ge 0$. This, along with \eqref{eq mu gnA} implies that $\mu(A)>0$ for every equidistributed $g$-invariant measure. 
\end{proof}

\subsection{The conservative case}

We recall the families
\[
\mathcal F_0=\{f\in\text{End}^2(\TT^2):\inf_x I_f(x)>0, \ \text{and}  \det(Df_x)=\deg(f)\}
\]
of conservative expanding on average endomorphisms with constant Jacobian, and
\[
\mathcal F_1=\{f\in\mathcal F_0:\ \pm 1 \text{ is not an eigenvalue of the linear part of } f\}
\]
for which from Theorem \ref{thm previous results for F_1} every $f \in \mathcal{F}_1$ is stably ergodic. In particular, from Subsection \ref{subsec equidistritued meas}, the unique $\Hat{f}$-invariant lift of the Lebesgue measure to $L_f$ is equidistributed, thus coincide with the reference measure $\Hat{\eta}$ given by the relations \eqref{eq defn measure on Sol local leb times bernoulli} and \eqref{eq fubini on reference measure}.

\begin{prop}\label{prop dense preimages of curves}
    Consider $f \in \mathcal{F}_1$. For any $\mathcal{C}^1$ curve $\sigma:[-1,1] \to \TT^2$ the set $\bigcup\limits_{n \ge 0} f^{-n}(\sigma)$ is dense in $\TT^2$.
\end{prop}

\begin{proof}
    We prove by contradiction. Suppose then that there is a $\mathcal{C}^1$-curve $\sigma:[-1.1] \to \TT^2$ and an open set $U \subset M$ such that $U \cap \bigcup_{n\ge 0} f^{-n}(\sigma) = \emptyset$. Fix some $\delta>0$ small and let $\ell_0>0$ be given by Prop. \ref{prop large unstable manifolds}. We consider the following set in $L_f$:
    \[ 
        Sat^u_{\ell_o}(\sigma) = \bigcup\limits_{\Hat{x} \in \Hat{L}}W^u(\Hat{x}),
    \]
    where $\Hat{L} = \{\Hat{x} \in \pi_{f}^{-1}(\sigma): W^u(\Hat{x}) \ \text{is a} \ 1\text{-Lipschitz graph with} \ |W^u(\Hat{x})|>\ell_0\}$. We claim that $\Hat{\eta}(Sat^u_{\ell_0}(\sigma)) >0$. 
    
    Indeed, we consider $V$ a simply connected open neighborhood of $\sigma$ with diameter greater than $\ell_0$, and we identify $\pi_{f}^{-1}(V) = V \times \Sigma$ through a homeomorphism $\Xi: V \times \Sigma \to \pi_{f}^{-1}(V)$ as made in Subsection \ref{subsec Solenoid} and write $V_{\omega} = \Xi(V,\omega)$, we may cut the curve $\sigma$ if necessary. We consider $\Leb_{\sigma}$ the Lebesgue measure on $\sigma$ inherited from $\TT^2$ as a submanifold. Then we consider $\Hat{\eta}_\sigma$ on $L_f$ as the unique equidistributed lift of $\Leb_{\sigma}$ as in \eqref{eq defn Leb times bernoulli on curves}.
    
    From Prop. \ref{prop large unstable manifolds}, $p_x(\Hat{L})>1-\delta$ for every $x \in \sigma$, thus $\Hat{\eta}_\sigma(\Hat{L})>0$. Hence, considering for each $\omega \in \Sigma$ the set $L_\omega= \pi_{f}(\Hat{L} \cap V_\omega)$, we obtain $\Leb_{\sigma}(L_{\omega})>0$ for every $\omega \in B \subset \Sigma$ with $p(B)>0$, where $p$ is the equally distributed Bernoulli measure on $\Sigma$. Finally, from Lemma \eqref{lema unst direction distributed on fibres}, we may obtain $\Tilde{B} \subset B$ with $p(\Tilde{B})>0$ such that $\sigma$ is transversal to $\Hat{W}^u_{\omega} = \{W^u(\Hat{x}): \Hat{x} \in V_\omega\}$ for every $\omega \in \Tilde{B}$. We also take $S_\omega = \pi_{f}(Sat^u_{\ell_0}(\sigma) \cap V_\omega)$. From Lemma \ref{lema abs cont unst holon}, $Leb(S_\omega)>0$, for every $\omega \in \Tilde{B}$ which from \eqref{eq fubini on reference measure} implies that $\Hat{\eta}(Sat^u_{\ell_0}(\sigma))>0$.

    By considering a Pesin block $\Hat{\Lambda} \subset L_f$ with $\Hat{\eta}(\Hat{\Lambda} \cap Sat^u_{\ell_0}(\sigma))>0$, there are constants $C>0$ and $0<\lambda<1$ such that for every $\Hat{z} = (z_k)_k \in \Hat{\Lambda} \cap Sat^u_{\ell_0}(\sigma)$ 
    \[
    d(z_{-n}, y_{-n}) \le C \lambda^{-n}\ell_0, \ \text{for every} \ \Hat{y} = (y_k)_k \in W^u(\Hat{z}), \ \text{and every} \ n \ge0. 
    \]
    Hence by taking $\ell_0$ sufficiently small and, if necessary, considering $\Tilde{U} \subset U$ a smaller open set, we have that $\pi_{f}^{-1}(\Tilde{U}) \cap \bigcup_{n\ge 0} \Hat{f}^{-n}(\Hat{\Lambda} \cap Sat^u_{\ell_0}(\sigma)) = \emptyset$. Since $\Hat{\eta}(\pi_{f}^{-1}(\Tilde{U}))>0$, we obtain that $\Hat{\eta}\left(\bigcup_{n \ge 0} \Hat{f}^{-n}(\Hat{\Lambda} \cap Sat^u_{\ell_0}(\sigma))\right) < 1$ contradicting the fact that $\Hat{\eta}$ is ergodic.
\end{proof}

\begin{corol}\label{corol dense preimages of curves}
    Let $f \in \mathcal{F}_1$ be given. Then, given constants $\epsilon,\ell>0$, there exists a $\mathcal{C}^1$-open neighborhood $\mathcal{U}_3 \subset End^{1}(\TT^2)$ and $N \in \nn$ such that every $g \in \mathcal{U}_3$ satisfies the following. For every $\mathcal{C}^{1}$ curve $\sigma:[-1,1] \to \TT^2$ which is a $1$-Lipschitz graph with $|\sigma|\ge \ell$, the union $\bigcup_{n= 0}^Ng^{-n}(\sigma)$ is $\epsilon$-dense in $\TT^2$.
\end{corol}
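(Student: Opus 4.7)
I would deduce this corollary from Proposition~\ref{prop dense preimages of curves} by a compactness argument over the family of admissible curves, followed by a standard stability argument in $g$. Without loss of generality, by restricting the parametrization to a sub-interval of length $\ell$ and using that the preimages of a sub-curve are contained in the preimages of the full curve, I may assume $|\sigma|=\ell$.

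Let $\mathcal K_\ell$ denote the collection of compact sets $\sigma([0,\ell])\subset\TT^2$, where $\sigma:[0,\ell]\to\TT^2$ runs over arc-length parametrizations of $1$-Lipschitz graphs. By Arzel\`a--Ascoli, $\mathcal K_\ell$ is compact in the Hausdorff topology on compact subsets of $\TT^2$, and the subset $\mathcal K_\ell^{C^2}$ of elements admitting a $C^2$ parametrization is Hausdorff-dense in it (e.g.\ via convolution with a smooth symmetric mollifier, which preserves the $1$-Lipschitz property). For each $N\in\nn$, set
\[
U_N:=\Big\{\tau\in\mathcal K_\ell:\ \bigcup_{n=0}^{N}f^{-n}(\tau)\text{ is }\tfrac{\epsilon}{2}\text{-dense in }\TT^2\Big\}.
\]
Since for any fixed $n$ the map $\tau\mapsto f^{-n}(\tau)$ is continuous into the space of compact subsets of $\TT^2$ with the Hausdorff metric, each $U_N$ is open in $\mathcal K_\ell$. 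Given $\tau\in\mathcal K_\ell$, I pick $\tau_0\in\mathcal K_\ell^{C^2}$ sufficiently Hausdorff-close to $\tau$; Proposition~\ref{prop dense preimages of curves} applied to $\tau_0$ yields some $N_{\tau_0}$ with $\tau_0\in U_{N_{\tau_0}}$, and by openness we also get $\tau\in U_{N_{\tau_0}}$ once $\tau_0$ is close enough. Hence $\{U_N\}_N$ is a nested open cover of $\mathcal K_\ell$, and compactness together with $U_N\subset U_{N+1}$ supplies a single $N_0$ with $\mathcal K_\ell\subset U_{N_0}$.

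For the robustness under $C^1$-perturbations of $f$, I invoke the continuous dependence of the inverse branches of a local diffeomorphism on the map (in the $C^0$ topology), together with the fact that the iteration index is bounded by $N_0$. Choosing $\mathcal U_3$ small enough so that
\[
d_H\!\Big(\bigcup_{n=0}^{N_0}g^{-n}(\tau),\ \bigcup_{n=0}^{N_0}f^{-n}(\tau)\Big)<\tfrac{\epsilon}{2}\qquad\forall g\in\mathcal U_3,\ \forall \tau\in\mathcal K_\ell,
\]
the set $\bigcup_{n=0}^{N_0}g^{-n}(\tau)$ is $\epsilon$-dense in $\TT^2$, and taking $N:=N_0$ completes the argument.

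The main obstacle is the compactness step: Proposition~\ref{prop dense preimages of curves} is only a pointwise density statement (one $N$ per curve), so extracting a uniform $N_0$ requires combining the openness of the $\epsilon/2$-density condition in the Hausdorff topology with the Hausdorff-density of $\mathcal K_\ell^{C^2}$ in the full family $\mathcal K_\ell$; this way one never needs to extend Proposition~\ref{prop dense preimages of curves} beyond the $C^2$ class.
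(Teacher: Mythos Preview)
Your argument is correct and follows essentially the same route as the paper: Arzel\`a--Ascoli compactness of the family of $1$-Lipschitz graphs of length $\ell$, Proposition~\ref{prop dense preimages of curves} to get $\epsilon/2$-density of preimages for each curve, uniformity of $N$ by compactness, and then $C^1$-stability of the finitely many inverse branches to pass from $f$ to nearby $g$. Your treatment is in fact slightly more careful than the paper's in two places: you use an open-cover argument rather than claiming $N(\sigma)$ is continuous (it is only upper semicontinuous), and you explicitly bridge the gap between the $C^2$ hypothesis of Proposition~\ref{prop dense preimages of curves} and the merely Lipschitz limits in the compact family via Hausdorff density of smooth graphs and openness of $U_N$.
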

\begin{proof}
    
    We fix $\epsilon,\ell>0$ and consider the set
    \[
    \Gamma = \{\sigma: [-1,1] \to \TT^2: \sigma \ \text{is a 1-Lipschitz graph with }\ell \le |\sigma|\le \ell+1 \}.
    \]
    From Arzelà-Ascoli's Theorem this space is compact in the compact-open topology. 

    For $\sigma \in \Gamma$, let $N(\sigma) = \min\{n \in \nn: \bigcup_{j=0}^{n}f^{-j}(\sigma) \ \text{is} \ \epsilon/2\text{-dense}\}$, from Prop. \ref{prop dense preimages of curves} the function is well defined. It is a continuous function, hence we may take $N=\max\{N(\sigma): \sigma \in \Gamma\}$. We take the $\mathcal{C}^1$-neighborhood $\mathcal{U}_3$ of $f$ given by the maps $g \in \End^1(\TT^2)$ satisfying for every $x \in M$ and every $k=1,\cdots,N$
    \[
    d(f^{-k}(x),g^{-k}(x)) < \epsilon/2.
    \]
    
    This way, for each curve $\sigma \in \Gamma$ and $x \in \TT^2$, there is $k \le N$ such that $d(x,f^{-k}(\sigma)) <\epsilon/2$, hence $d(x,g^{-k}(\sigma)) < \epsilon$. In particular this implies that for every $g \in \mathcal{U}$, and every $\sigma \in \Gamma$, $\bigcup_{j=0}^{N}g^{-j}(\sigma)$ is $\epsilon$-dense. 
\end{proof}

\section{Other tools: F{\o}lner sequences, Yomdin Theory}\label{sect:tools}

In this section, we present the tools given by D. Burguet in \cite{burguet2024srb} in order to prove existence of SRB measures and we show how to extend them to the non-invertible scenario.

\subsection{F{\o}lner sequences}

Let us start by introducing some necessary concepts on F{\o}lner sequences, they are of extreme importance on Burguet's argument since the geometric times will be expressed by a suitable F{\o}lner sequence on which we will consider the iterates of a reference measure. We give a brief review on the main concepts, for details see Section 2 on \cite{burguet2024srb}.

We let $\mathcal{P}_{\nn}$ and $\mathcal{P}_n$ be the power sets of $\nn$ and $\{0,1,2, \cdots, n-1\}$ respectively. For $E \in \mathcal{P}_{\nn}$, its boundary $\partial E$ is given by the integers $n \in E$ such that either $n+1 \notin E$ or $n-1 \notin E$. Given $E \in \mathcal{P}_\nn$ and $n \in \nn$, we consider 
\[
d_n(E) := \frac{\# E \cap [0,n-1]}{n},
\]
then the upper and lower asymptotic densities of $E$ are given respectively by
\[
\ol{d}(E) := \limsup\limits_{n \to \infty}d_n(E), \ \ \underline{d}(E) := \liminf\limits_{n \to \infty} d_n(E).
\]

We let $\mathcal{N} \subset \mathcal{P}_{\mathbb{N}}$ be the set of infinite subsets of $\mathbb{N}$. For a sequence $\mathcal{n} \in \mathcal{N}$ and some $E \in \mathcal{P}_{\mathbb{N}}$ we may also define the upper and lower asymptotic densities of $E$ along a subsequence $\mathcal{n}$ by $\ol{d}^{\mathcal{n}}(E) := \limsup_{n \in \mathcal{n}}d_n(E)$ and  $\underline{d}^{\mathcal{n}}(E): = \liminf_{n \in \mathcal{n}}d_n(E)$.

Finally, we may consider for $\mathcal{n} \in \mathcal{N}$, the generalized power set of $\mathcal{n}$ as $\mathcal{Q_n}:= \prod_{n \in \mathcal{n}} \mathcal{P}_n$. For $\mathcal{F} = (F_n)_{n \in \mathcal{n}} \in \mathcal{Q_n}$, we let
\[
\ol{d}^{\mathcal{n}}(\mathcal{F}) = \limsup\limits_{n \in \mathcal{n}} d_n(F_n), \ \ \underline{d}^{\mathcal{n}}(\mathcal{F}) = \liminf\limits_{n \in \mathcal{n}}d_n(F_n).
\]
To any of the asymptotic densities, we write it as $d^{\mathcal{n}}$ when the corresponding $d_n$ are converging.

We say that $F \in \mathcal{P}_{\nn}$ is F{\o}lner along a subsequence $\mathcal{n} \in \mathcal{N}$ if its boundary $\partial F$ has zero upper asymptotic density along $\mathcal{n}$. More generally, $\mathcal{F} = (F_n)_{n \in \mathcal{n}} \in \mathcal{Q_n}$ is F{\o}lner when $d^{\mathcal{n}}(\partial \mathcal{F}) = 0$, where $\partial\mathcal{F} = (\partial F_n)_{n \in \mathcal{n}}$. In our context, we will only work with F{\o}lner sequences $\mathcal{F}$ with positive lower asymptotic density and in this scenario this property heuristically means that asymptotically the sequence has only few boundary elements. This yields the following.

\begin{lema}\label{lema limit of measures along folner sequences}[Lemma 3,\cite{burguet2024srb}]
Let $X$ be a compact metric space, $T:X \to X$ a continuous map and $\mathcal{F} = (F_n)_{n \in \mathcal{n}} \in \mathcal{Q_n}$ for some $\mathcal{n} \in \mathcal{N}$ a F{\o}lner sequence with positive lower asymptotic density $\underline{d}^{\mathcal{n}}(\mathcal F)>0$. Then for any family $\{\mu_n\}_{n \in \mathcal{n}}$ of Borel probability measures on $X$, any weak$^*$-limit of
\[
\mu_{n}^{F_{n}} := \frac{1}{\#F_{n}}\sum\limits_{j \in F_{n}} T^j_* \mu_{n}
\]
is a $T$-invariant Borel probability measure.
\end{lema}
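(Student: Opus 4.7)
The plan is the classical Folner-averaging argument for weak$^*$-limits of empirical measures. Each $\mu_{n_k}^{F_{n_k}}$ is a convex combination of probability measures and hence itself a probability measure; since $X$ is compact, any weak$^*$-limit $\mu$ is automatically a Borel probability measure, so the substantive content of the lemma is $T$-invariance.

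For the latter, I would fix a continuous $\phi:X\to\rr$ and compute
\[
\int\phi\circ T\,d\mu_{n_k}^{F_{n_k}}-\int\phi\,d\mu_{n_k}^{F_{n_k}} \;=\; \frac{1}{\#F_{n_k}}\left(\sum_{i\in F_{n_k}+1}\int\phi\,dT^i_*\mu_{n_k}\;-\;\sum_{i\in F_{n_k}}\int\phi\,dT^i_*\mu_{n_k}\right),
\]
where $F_{n_k}+1:=\{j+1:j\in F_{n_k}\}$. The common indices cancel, leaving a signed sum over the symmetric difference $F_{n_k}\triangle(F_{n_k}+1)$. A direct check shows that $i\in F_{n_k}\setminus(F_{n_k}+1)$ forces $i\in F_{n_k}$ with $i-1\notin F_{n_k}$, so $i\in\partial F_{n_k}$; symmetrically, $i\in(F_{n_k}+1)\setminus F_{n_k}$ forces $i-1\in\partial F_{n_k}$. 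Absorbing the endpoints $0,\,n_k-1$ into an $O(1)$ correction, this gives $|F_{n_k}\triangle(F_{n_k}+1)|\leq 2|\partial F_{n_k}|+O(1)$, and hence
\[
\left|\int\phi\circ T\,d\mu_{n_k}^{F_{n_k}}-\int\phi\,d\mu_{n_k}^{F_{n_k}}\right|\;\leq\;\frac{2\|\phi\|_\infty\left(|\partial F_{n_k}|+1\right)}{\#F_{n_k}}.
\]

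To finish, I would write
\[
\frac{|\partial F_{n_k}|}{\#F_{n_k}}\;=\;\frac{|\partial F_{n_k}|/n_k}{\#F_{n_k}/n_k},
\]
and observe that the Folner hypothesis along $\{n_k\}_k$ sends the numerator to zero, while the positive lower asymptotic density $\underline d^{\{n_k\}}(F)>0$ keeps the denominator bounded below by a strictly positive constant. Passing to the limit along any subsequence realising the weak$^*$-limit $\mu$ then yields $\int\phi\circ T\,d\mu=\int\phi\,d\mu$ for every continuous $\phi$, i.e.\ $T_*\mu=\mu$. The only step requiring care is the symmetric-difference bookkeeping, which is an elementary combinatorial observation; I foresee no serious obstacle.
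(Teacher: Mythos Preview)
Your argument is correct and is precisely the standard F\"olner-averaging proof; the paper itself does not supply a proof for this lemma, deferring instead to \cite{burguet2024srb}, Section~2. One minor point of bookkeeping: the paper's F\"olner hypothesis is phrased as $\overline d^{\{n_k\}}(\partial F)=0$, i.e.\ in terms of $(\partial F)_{n_k}$ rather than $\partial(F_{n_k})$, but as you note these differ by at most $O(1)$ at the endpoint $n_k-1$, so your estimate goes through unchanged.
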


Given $E \in \mathcal{P}_\nn$ and $N \in \nn$, the $N$-filled sequence related to $E$ is the sequence $\text{Fill}^N(E)$ given by the union over all interval of integers $[a,b) \cap \nn$ with $a,b \in E$ such that $0<b-a\le N$. The sequence $\text{Fill}^N(E)$ is hence constructed by filling all gaps of $E$ with size at most $N$. Given $E \in \mathcal{P}_\nn$, a subsequence $\mathcal{n} \in \mathcal{N}$ and some $\mathcal{F} = (F_n)_{n \in \mathcal{n}} \in \mathcal{Q_n}$, we say that $E$ is dense in $\mathcal{F}$ if
\[
d^{\mathcal{n}} (\mathcal{F}\setminus \text{Fill}^N(E))  \longrightarrow 0, \ \text{as} \ N \to \infty,
\]
where $\mathcal{F}\setminus E := (F_n\setminus E )_{n \in \mathcal{n}} \in \mathcal{Q_n}$.

At last, let $(X,\mathcal{B},\eta)$ be a probability space. A map $E: X \to \mathcal{P}_\nn$ is said to be measurable when for every $n \in \nn$ the set $\{x \in X: n \in E(x)\}$ is measurable. The following lemma is of extreme importance in the construction we aim to make, it will allow us to obtain sets with subexponential decay of measure where every point has its geometrical times on a a unique good F{\o}lner sequence. Its proof is based on a Borel-Cantelli argument, for details see Section 2.4 on \cite{burguet2024srb}.

\begin{lema}\label{lema folner sequence}[Lemma 2, \cite{burguet2024srb}]
    Assume that $E: X \to \mathcal{P}_\nn$ is measurable and has uniformly positive upper asymptotic density on a set of positive measure, i.e. there exists $\beta>0$ such that
    \[
    \ol{d}(E(x)) > \beta >0, \ \text{for every} \ x \in A \ \text{with} \ \mu(A)>0.
    \]
    Then, there exists $\mathcal{n} \in \mathcal{N}$, a sequence $\mathcal F = (F_n)_{n \in \mathcal{n }} \in \mathcal{Q_n}$ and $(A_n)_{n \in \mathcal{n}}$ measurable subsets of $A$, such that:
    \begin{enumerate}
        \item $\mathcal F$ is F{\o}lner and, for all $n \in \mathcal{n}$, $\partial F_{n} \subset E(x)$ for every $x \in A_{n}$.
        \item $\eta(A_{n}) \ge \frac{1}{n^2}e^{-n\delta_n}$ for every $n \in \mathcal{n}$, with $\delta_n \to 0$ as $\mathcal{n} \ni n \to +\infty$.
        \item $\mathcal F$ has positive lower asymptotic density. More specifically
        \[
        \underline{d}^{\mathcal{n}}(\mathcal F) \ge \liminf\limits_{n \in \mathcal{n}} \inf\limits_{x \in A_{n}} d_n(E(x)\cap F_n) \ge \beta.
        \]
        \item $E$ is dense in $\mathcal F$ uniformly in $A_{n}$, that is,
        \[
        \limsup\limits_{n \in \mathcal{n}} \sup\limits_{x \in A_{n}} d_n(F_n \setminus \text{Fill}^N(E(x)))  \longrightarrow 0, \ \text{as} \ N \to \infty.
        \]
    \end{enumerate}
\end{lema}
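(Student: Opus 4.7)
I would prove this lemma by a three-stage argument organized around a Borel--Cantelli-style diagonal extraction.

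First, I would extract a subsequence along which a natural ``good event'' has uniformly positive measure. Define $U_n := \{x \in A : \#E_n(x)/n \ge \beta\}$. Since every $x \in A$ belongs to $U_n$ for infinitely many $n$ by the upper-density hypothesis, reverse Fatou gives $\limsup_n \eta(U_n) \ge \eta(A) > 0$, so one finds a subsequence $\{n_k\}$ with $\eta(U_{n_k}) \ge \eta(A)/2$.

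Second, I would pigeonhole on small-boundary subsets to find a single Fölner pattern carrying many trajectories. Fix $\eta_k \to 0$ slowly and an auxiliary filling scale $N_k \to \infty$. For each $x \in U_{n_k}$, associate the finite set $F(x, n_k) := \text{Fill}^{N_k}(E_{n_k}(x))$; its boundary lies in $E(x)$ by construction, and for a suitable choice of $N_k$ the boundary size is at most $\eta_k n_k$ while $E(x) \cap F(x, n_k)$ retains density at least $\beta$. The number of subsets of $\{0, \ldots, n_k-1\}$ with at most $\eta_k n_k$ boundary points is subexponential, bounded by $\exp(n_k \delta_k)$ for some $\delta_k \to 0$ via a standard entropy estimate. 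A pigeonhole then singles out a specific pattern $F^{(k)}$ and a measurable $A_{n_k} \subset U_{n_k}$ with $\eta(A_{n_k}) \ge \eta(U_{n_k}) e^{-n_k \delta_k}$, on which $F(x, n_k) = F^{(k)}$. The polynomial factor $n_k^{-2}$ in the stated bound is absorbed into a slightly enlarged $\delta_k$ still converging to zero.

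Third, I would patch the finite $F^{(k)}$'s into a single $F \in \mathcal{P}_\nn$ by a diagonal subsequence extraction: pass to a sub-subsequence along which the initial segments $F^{(k)} \cap [0, n_j)$ stabilize for each fixed $j$, and take $F$ as the limit. Conclusions (1)--(4) then follow by construction: the Fölner property along $\{n_k\}$ together with the inclusion $\partial F_{n_k} \subset E(x)$ for $x \in A_{n_k}$ pass through the limit; the lower density bound $\beta$ on $F$ is inherited from $U_{n_k}$; and density of $E(x)$ in $F$ in the $\text{Fill}^N$ sense follows from $N_k \to \infty$.

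The main obstacle is the calibration in the second stage: the choice of filling scale $N_k$ and boundary budget $\eta_k n_k$ must simultaneously guarantee $\partial F(x, n_k) \subset E(x)$, keep the boundary truly small (which is what forces $\delta_k \to 0$ via the subexponential pigeonhole count), and preserve density of $E(x)$ within $F(x, n_k)$. These three requirements pull against one another, and the Borel--Cantelli machinery is essentially what makes it possible to satisfy all of them at once while maintaining the measure bound $\eta(A_{n_k}) \ge n_k^{-2} e^{-n_k \delta_k}$ under the further diagonal extraction.
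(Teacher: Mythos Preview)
Your three-stage outline---reverse Fatou to locate a good subsequence, pigeonhole on small-boundary patterns with a subexponential count, then extract a single $F$---is exactly the Borel--Cantelli scheme the paper cites from Burguet, so the strategy is right. Two of your steps, however, do not go through as written.

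First, a minor slip: the claim that $\partial\,\text{Fill}^N(E)\subset E$ is false. A maximal component of $\text{Fill}^N(E)$ has the form $[c,d)$ with $c,d\in E$, so its left boundary point $c$ lies in $E$, but its right boundary point is $d-1$, which need not lie in $E$ (take $E=\{0,3,5\}$, $N=3$: then $\text{Fill}^3(E)=\{0,1,2,3,4\}$ and $4\notin E$). This is easy to repair---use closed intervals in the definition of Fill, or pigeonhole directly on the pair of component endpoints---so it is not a serious obstacle.

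Second, and this is a genuine gap: the diagonal extraction in stage~3 does not deliver $F_{n_k}=F^{(k)}$. Compactness of $2^{\mathbb N}$ gives a limit $F$ together with, for each fixed $m$, eventual agreement $F^{(k_j)}\cap[0,m)=F\cap[0,m)$; but ``eventual'' depends on $m$, so you cannot take $m=n_{k_j}$. Concretely, if $F^{(k)}=[0,n_k)\setminus\{n_k-1\}$ for all $k$, then $F^{(k)}\to\mathbb N$ in $2^{\mathbb N}$ yet $F_{n_k}\neq F^{(k)}$ for every $k$. Since conclusion~(1) requires the exact inclusion $\partial F_{n_k}\subset E(x)$ (not merely an asymptotic smallness of the boundary), the mismatch cannot be ignored. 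Burguet's argument avoids this by building $F$ and the $A_{n_k}$ simultaneously via a \emph{nested} pigeonhole: at stage $k$ one first restricts to $A_{n_{k-1}}$, then chooses $n_k$ large (reverse Fatou applied inside $A_{n_{k-1}}$ supplies arbitrarily large good $n$) so that the accumulated measure loss $\exp(-\sum_{j\le k} n_j\delta_j)$ stays above $n_k^{-2}e^{-n_k\delta'_k}$ with $\delta'_k\to 0$. Your outline pigeonholes the levels independently and then patches, which is precisely where the construction breaks; the ``calibration'' difficulty you flag in stage~2 is real, but the harder consistency problem actually lives in stage~3.
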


\subsection{Empirical measures associated to F{\o}lner sequences}

We consider an additive cocycle $\psi^n = \sum\limits_{j=0}^{n-1}\psi \circ T^{j}$ associated to a continuous function $\psi: X \to \rr$. We also consider a measurable $E: Y \to \mathcal{P}_\nn$ with positive asymptotic density, for $Y$ a Borel subset of $X$, and let $\mathcal{n} \in \mathcal{N}$, $\mathcal F = (F_n)_{n \in \mathcal{n}} \in \mathcal{Q_n}$ a F{\o}lner sequence and $(A_{n})_{n \in \mathcal{n}}$ subsets of $Y$ which are associated to $E$ given by Lemma \ref{lema folner sequence}.

\begin{defn}\label{defn folner gibbs property}(F{\o}lner-Gibbs property) 
    The measure $\eta$ is said to satisfy the F{\o}lner-Gibbs property with respect to the additive cocycle $(\psi^n)_n$ and the F{\o}lner sequence $\mathcal{F} = (F_n)_{n \in \mathcal{n}}$ when there exist $\epsilon>0$ such that given any partition $\mathcal{P}$ of $X$ with diameter less than $\epsilon$, there exists $N \in \nn$ such that for every integer $\mathcal{n} \ni n>N$ and every $x \in A_{n}$
    \begin{equation}
    \frac{1}{\eta(P^{F_{n}}(x) \cap A_{n})} \ge \exp{\bigg( \sum\limits_{j \in F_{n}}\psi \circ T^j(x) \bigg)}.
    \end{equation}
\end{defn}

We consider the probability measures $(\mu_{n})_{n \in \mathcal{n}}$ on $X$ given by
\[
\mu_{n}(B) = \frac{\eta(A_{n} \cap B)}{\eta(A_{n})}, \ \text{for every Borel} \ B \subset X.
\]
We are interested on studying limit measures of the sequence
\[
\mu_{n}^{F_{n}} = \frac{1}{\#F_{n}}\sum\limits_{j \in F_{n}} T^{j}_*\mu_{n}
\]
The F{\o}lner-Gibbs property is a version of the classical Gibbs property restricted to a F{\o}lner sequence. Burguet shows that this is sufficient to relate the entropy of limit measures associated to $\mu_{n}^{F_n}$ with the associated cocycle as follows.

\begin{prop} \label{prop folner gibbs property}[Prop. 3, \cite{burguet2024srb}]
    If $\eta$ satisfies the F{\o}lner-Gibbs property with respect to the additive cocycle $(\psi^n)_n$ and the F{\o}lner sequence $\mathcal F$, then any weak$^*$ limit $\mu$ of the sequence $(\mu_{n}^{F_{n}})_{n \in \mathcal{n}}$  satisfies
    \[
    h_\mu(T) \ge \int \psi d\mu.
    \]
\end{prop}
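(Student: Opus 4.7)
The approach is to bound $H_{\mu_{n_k}}(P^{F_{n_k}})$ from below using the F\"olner--Gibbs property and from above by a Misiurewicz-type block argument adapted to the F\"olner combinatorics of $F_{n_k}$, then combine the two estimates as the block length diverges.

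\textbf{Lower bound.} Fix a partition $P$ of $X$ of diameter less than the $\epsilon$ from Definition~\ref{defn folner gibbs property}. Since $\mu_{n_k}$ is supported on $A_{n_k}$ with $\mu_{n_k}(B)=\eta(A_{n_k}\cap B)/\eta(A_{n_k})$, the F\"olner--Gibbs inequality gives, for each $x\in A_{n_k}$,
\[
\mu_{n_k}\bigl(P^{F_{n_k}}(x)\bigr)\ \le\ \frac{1}{\eta(A_{n_k})}\exp\!\Bigl(-\sum_{j\in F_{n_k}}\psi\circ T^j(x)\Bigr).
\]
Integrating $-\log\mu_{n_k}(P^{F_{n_k}}(\cdot))$ against $\mu_{n_k}$, and using $\sum_{j\in F_{n_k}}\int\psi\circ T^j\,d\mu_{n_k}=\#F_{n_k}\int\psi\,d\mu_{n_k}^{F_{n_k}}$, gives
\[
H_{\mu_{n_k}}(P^{F_{n_k}})\ \ge\ \log\eta(A_{n_k})+\#F_{n_k}\int\psi\,d\mu_{n_k}^{F_{n_k}}.
\]
The bounds $\eta(A_{n_k})\ge n_k^{-2}e^{-n_k\delta_k}$ with $\delta_k\to 0$ and $\#F_{n_k}\ge\beta n_k$ from Lemma~\ref{lema folner sequence} make the first term $o(\#F_{n_k})$; continuity of $\psi$ together with weak convergence $\mu_{n_k}^{F_{n_k}}\to\mu$ then yield
\[
\liminf_{k}\,\frac{H_{\mu_{n_k}}(P^{F_{n_k}})}{\#F_{n_k}}\ \ge\ \int\psi\,d\mu.
\]

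\textbf{Upper bound.} Fix $q\in\nn$ and decompose $F_{n_k}$ into its maximal intervals $I_1,\dots,I_{r_k}$, with left endpoints $a_s$; since $\partial F$ has zero upper density along $\{n_k\}_k$ and $F$ has positive lower density, $r_k/\#F_{n_k}\to 0$. For each shift $i\in\{0,\dots,q-1\}$, cut every $I_s$ into consecutive length-$q$ blocks starting at $a_s+i$, leaving fewer than $q$ positions per interval uncovered. Writing $B_k^{(i)}$ for the collection of block starts, subadditivity of entropy yields
\[
H_{\mu_{n_k}}(P^{F_{n_k}})\ \le\ \sum_{\ell\in B_k^{(i)}}H_{T^\ell_*\mu_{n_k}}(P^q)+O(qr_k\log\#P).
\]
Each position in $F_{n_k}$ at distance at least $q$ from the right end of its interval lies in exactly one $B_k^{(i)}$. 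Averaging this inequality over $i$ and applying concavity of $\nu\mapsto H_\nu(P^q)$ produces
\[
H_{\mu_{n_k}}(P^{F_{n_k}})\ \le\ \frac{\#F_{n_k}}{q}\,H_{\nu_k'}(P^q)+o(\#F_{n_k}),
\]
where $\nu_k'$ is the empirical measure $\frac{1}{N_k}\sum_{p\in\cup_i B_k^{(i)}}T^p_*\mu_{n_k}$. A direct computation bounds $\|\nu_k'-\mu_{n_k}^{F_{n_k}}\|_{\mathrm{TV}}$ by $O(qr_k/\#F_{n_k})\to 0$, so $\nu_k'\to\mu$ weakly. Choosing $P$ with $\mu(\partial P^q)=0$ by standard regularization, and taking $\limsup$ after dividing by $\#F_{n_k}$, one obtains
\[
\limsup_k\,\frac{H_{\mu_{n_k}}(P^{F_{n_k}})}{\#F_{n_k}}\ \le\ \frac{H_\mu(P^q)}{q}.
\]

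\textbf{Conclusion and main obstacle.} Combining both bounds gives $\int\psi\,d\mu\le H_\mu(P^q)/q$ for every $q\ge 1$; letting $q\to\infty$ yields $\int\psi\,d\mu\le h_\mu(T,P)\le h(\mu)$, as required. The delicate step is the upper bound: since $\mu_{n_k}$ is neither $T$-invariant nor does its full orbit average coincide with $\mu_{n_k}^{F_{n_k}}$, one must invoke the F\"olner hypothesis on $F_{n_k}$ precisely to ensure that the block-average measures $\nu_k'$ produced by the Misiurewicz decomposition still converge weakly to $\mu$, which is what the estimate $r_k/\#F_{n_k}\to 0$ delivers.
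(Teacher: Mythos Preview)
Your proof is correct and follows the same Misiurewicz-type strategy that Burguet uses; the paper itself does not prove this proposition but cites \cite{burguet2024srb} for it. The key adaptation---replacing the usual interval $[0,n)$ by $F_{n_k}$, decomposing into its maximal subintervals, and using the F\"olner condition $\#\partial F_{n_k}/n_k\to 0$ together with $\underline{d}^{\{n_k\}}(F)\ge\beta>0$ to ensure $r_k/\#F_{n_k}\to 0$---is exactly the mechanism in Burguet's Section~2, and you carry it out accurately, including the use of the subexponential lower bound on $\eta(A_{n_k})$ to absorb the $\log\eta(A_{n_k})$ term.
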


At last, the following Lemma will allows us to control the Lyapunov exponents of such limit measures.

\begin{lema}\label{lema prop of large sets}[Lemma 4, \cite{burguet2024srb}]
    Suppose that there is $a>0$ such that for any $x \in X$ it holds $\psi^{m-k}(T^k(x)) > (m-k)a$ for all integers $m>k \in E(x)$. Then, for any weak$^*$ limit $\mu$ of the sequence $(\mu_{n}^{F_{n}})_{n \in \mathcal{n}}$ satisfies
    \[
    \Tilde{\psi}(x) := \lim\limits_{n\to +\infty}\frac{1}{n}\psi^n(x) \ge a, \ \text{for} \ \mu\text{-a.e.} \ x \in X.
    \]
\end{lema}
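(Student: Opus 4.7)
The plan is to exploit the structural compatibility between the F\"olner sequence and the good-time set: for $x \in A_{n_k}$, Lemma \ref{lema folner sequence}(1) guarantees $\partial F_{n_k} \subset E(x)$, so both endpoints of every maximal interval of $F_{n_k}$ belong to $E(x)$ and the hypothesis of the lemma applies to each interval separately.

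Fix $k$ and $x \in A_{n_k}$, and decompose $F_{n_k}$ into its maximal intervals $[a_i, b_i] \cap \nn$, whose endpoints satisfy $a_i, b_i \in E(x)$. The hypothesis gives $\psi^{b_i - a_i}(T^{a_i}x) \ge (b_i - a_i)\,a$ for every $i$. Summing over intervals and compensating for the omitted endpoint terms $\psi(T^{b_i}x)$ (each bounded by $\|\psi\|_\infty$, with the number of intervals at most $\#\partial F_{n_k}$) yields
\[
\sum_{j \in F_{n_k}} \psi(T^j x) \;\ge\; a\,\#F_{n_k} \;-\; (a + \|\psi\|_\infty)\,\#\partial F_{n_k}.
\]
Dividing by $\#F_{n_k}$ and combining the F\"olner condition $\#\partial F_{n_k}/n_k \to 0$ with the positive lower density $\#F_{n_k}/n_k \ge \beta > 0$ from Lemma \ref{lema folner sequence}(3) gives the uniform pointwise bound
\[
\frac{1}{\#F_{n_k}} \sum_{j \in F_{n_k}} \psi(T^j x) \;\ge\; a - \epsilon_k,\qquad \epsilon_k \to 0,
\]
valid for every $x \in A_{n_k}$. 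Integrating against $\mu_{n_k}$ and using continuity of $\psi$ together with weak$^*$ convergence produces the integrated inequality $\int \psi\,d\mu \ge a$.

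The remaining task is to upgrade this to the pointwise statement $\tilde{\psi} \ge a$ $\mu$-a.e., which is the main obstacle, since an integrated bound does not in general control individual ergodic components of $\mu$. The key observation is that the hypothesis is itself pointwise: if $E(x)$ is infinite, then for any fixed $k_0 \in E(x)$ one has $\psi^n(T^{k_0}x)/n \ge a$ along the infinite subsequence $n \in E(x) - k_0$; since any subsequence of a convergent sequence converges to the same limit, this forces $\tilde{\psi}(T^{k_0}x) \ge a$ whenever the full Birkhoff limit exists at $T^{k_0}x$, and $T$-invariance of $\tilde{\psi}$ then gives $\tilde{\psi}(x) \ge a$. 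Since $\mu$ is $T$-invariant by Lemma \ref{lema limit of measures along folner sequences}, Birkhoff's theorem ensures $\tilde{\psi}$ exists $\mu$-a.e.; it remains to check that $\mu$-a.e.\ point meets some forward iterate of $\{E(\cdot)\ \text{infinite}\}$, which follows from $\mu$ being a weak$^*$ limit of empirical averages of measures supported on $A_{n_k} \subset Y$, where $E$ has positive upper density and is therefore infinite.
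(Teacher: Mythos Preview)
Your first two paragraphs correctly establish the integrated inequality $\int \psi\,d\mu\ge a$, and you rightly identify the pointwise upgrade as the crux. However, your last paragraph contains a genuine gap.

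You claim that $\mu$-a.e.\ point lies in the forward orbit of $\{E(\cdot)\text{ infinite}\}\subset Y$, on the grounds that $\mu$ is a weak$^*$ limit of pushforwards of measures supported in $A_{n_k}\subset Y$. This inference is not valid: weak$^*$ convergence gives no control over non-closed Borel sets, and $\bigcup_{j\ge 0}T^jY$ is typically not closed. A $\mu$-typical point $z$ is an accumulation point of orbit segments $\{T^jx:x\in A_{n_k},\,j\in F_{n_k}\}$, but there is no reason $z$ itself should equal $T^jx$ for any $x$ with $E(x)$ defined. Your (correct) observation that $E(x)$ infinite forces $\tilde\psi\ge a$ along the orbit of $x$ is a statement about points of $Y$; it says nothing about $\mu$-typical points, which are a priori unrelated to $Y$. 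Put differently: the integrated bound cannot by itself separate the ergodic components of $\mu$, and your pointwise argument never actually touches a $\mu$-generic point.

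The argument in \cite{burguet2024srb} (to which this lemma is attributed without proof here) avoids this by working exclusively through continuous test functions and the full strength of Lemma~\ref{lema folner sequence}. In particular item~(4)---the uniform density of $E(x)$ in $F_{n_k}$---is essential and you have not used it: it allows one to cover most of $F_{n_k}$ by $E(x)$-intervals of length at most $N$, so that the hypothesis becomes effective at every finite scale $N$. One can then test $\mu_{n_k}^{F_{n_k}}$ against continuous functions built from $\tfrac{1}{N}\psi^N$ and let $N\to\infty$ via dominated convergence and Birkhoff to obtain $(a-\tilde\psi)^+=0$ $\mu$-a.e. Your interval decomposition using only item~(1) produces intervals of unbounded length, which is why it yields only the integrated statement.
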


\subsection{Geometric times}\label{subsec geom times}

A $\mathcal{C}^r$ curve $\gamma: [-1,1] \to \TT^2$ is said to be bounded if 
\begin{equation}\label{eq bounded prop of curves}
\max\limits_{s=2,\cdots,r} \|d^s\gamma\|_{\infty} \le \frac{1}{6}\|d\gamma\|_{\infty}.
\end{equation}
We remark some important properties on bounded curves:
\begin{itemize}
    \item A bounded curve has bounded distortion, i.e.
    \begin{equation}\label{eq distortion bounded curves}
    \text{for every} \ t,s \in [-1,1], \ \frac{\|d\gamma(t)\|}{\|d\gamma(s)\|} \le \frac{3}{2}.
    \end{equation}
    \item The projective component of $\gamma$ also oscillates slowly, that is, by fixing local charts covering $\gamma$, we have
    \[
    \text{for every}\ t,s \in [-1,1], \  \angle (d\gamma(t),d\gamma(s)) \le \frac{\pi}{6}
    \]
\end{itemize}

A $\mathcal{C}^r$ curve is strongly $\epsilon$-bounded when it is bounded and $\|d\gamma\|_{\infty} \le \epsilon$. Burguet remarks that such a map satisfies $\|\gamma\|_r = \max_{1 \le s \le r}\|d^s\gamma\|_{\infty} \le \epsilon$, which is the standard $\mathcal{C}^r$ upper bound required for the reparametrizations on the usual Yomdin's theory. Hence the bounded property \eqref{eq bounded prop of curves} is the main difference introduced which allows the control of the distortion along the curve. 

There is yet another difference between the Burguet's and the Yomdin's approach. Here we also want to control the size of our reparametrizations from below, when possible. This way, he introduces the notion of geometric times, which we adapt to the non-invertible case.

We consider a $\mathcal{C}^r$ non-invertible local diffeomorphism $g: \TT^2 \to \TT^2$, and a curve $\sigma:[-1,1] \to \TT^2$. For $x = \sigma(s)$, we take $v_x = \sigma'(s)/\|\sigma'(s)\| \in T_x\TT^2$ an unitary vector in the direction tangent to $\sigma$ at $x$. We also consider the map $\Hat{g}:L_g \to L_g$ and some fixed identification of $\pi_{g}^{-1}(\sigma)$ with $\sigma \times \Sigma$ through a homeomorphism $\Xi: \sigma \times \Sigma \to \pi_{g}^{-1}(\sigma) $ as in Subsection \ref{subsec Solenoid}. As before, for $\Hat{x} = (x_n)_n \in \pi_{g}^{-1}(\sigma)$, we let $v_{\Hat{x}} = v_{x_0} \in  T_x\TT^2$.

\begin{defn}(Strongly $(n,\epsilon)$-bounded curves)
For each $\omega \in \Sigma$, we consider the curve $\Hat{\sigma}_\omega:[-1,1] \to L_g$ as $\Hat{\sigma}_\omega(s) = \Xi(\sigma(s),\omega)$, then $\Hat{\sigma}_\omega$ is strongly $(n,\epsilon)$-bounded if for every $0 \le k \le n$ the curve $\sigma_{\omega}^k := \pi_{g} \circ \Hat{g}^{-k} \circ \Hat{\sigma}_\omega$ is strongly $\epsilon$-bounded.
\end{defn}
Basically this property depends on the pre-images of $\sigma$ by $g$ up to time $n$ associated with $\omega$, in particular if $\Hat{\sigma}_\omega$ is strongly $(n,\epsilon)$-bounded, then for every $\omega'\in \Sigma$ with $\omega'_i = \omega_i$ for all $1 \le i \le n$, the curve $\Hat{\sigma}_{\omega'}$ is also strongly $(n,\epsilon)$-bounded.

\begin{defn}\label{defn geometric times}(Geometric Times) \ 
    Fix a $\mathcal{C}^r$ curve $\sigma: [-1,1] \to \TT^2$ and an identification $\Xi: \sigma \times \Sigma \to \pi_{g}^{-1}(\sigma)$. For $\Hat{x} = \Xi(x,\omega) \in \pi_{g}^{-1}(\sigma)$, given $\alpha,\epsilon > 0$, a positive integer $n$ is called an $(\alpha,\epsilon)$-geometric time of $\Hat{x}$ if there exists an affine map $\theta_n^\omega:[-1,1]\to [-1,1]$ such that:
    \begin{itemize}
        \item $\Hat{\sigma}_\omega \circ \theta_n^\omega(0) = \Hat{x}$.
        \item $\Hat{\sigma}_\omega \circ \theta_n^\omega $ is strongly $(n,\epsilon)$-bounded.
        \item $\||d(\sigma_\omega^n\circ \theta_n^\omega)(0)\| \ge \frac{3}{2}\alpha\epsilon$, where $\sigma^n_\omega = \pi_{g}\circ \Hat{g}^{-n} \circ \Hat{\sigma}_\omega$.
    \end{itemize}
\end{defn}

\subsection{Reparametrization Lemma}

We present the following lemma with purpose to illustrate the idea behind the upcoming division of reparametrizations into red and blue ones (in item (2) of the Reparametrization Lemma).

\begin{lema}\label{lema red reparametrizations}[Lemma 6, \cite{burguet2024srb}]
    Let $\gamma:[-1,1]\to \TT^2$ be a $\mathcal{C}^r$ bounded curve with $\|d\gamma\|_{\infty} \ge \epsilon$. Then, there is a family $L:= \ol{L} \cupdot \underline{L}$ of affine maps $\theta:[-1,1]\to [-1,1]$ such that:
    \begin{itemize}
        \item for every $\theta \in L$, the curve $\gamma \circ \theta$ is strongly $\epsilon$-bounded and $\|d(\gamma \circ \theta)(0)\| \ge \frac{\epsilon}{6}$.
        \item $[-1,1]$ is the union of $\bigcup_{\theta \in \ol{L}} \theta([-1/3,1/3])$ and $\bigcup_{\theta \in \underline{L}}\theta([-1,1])$.
        \item $\# \underline{L} \le 2$ and $\#\ol{L}\le 6\left( \frac{\|d\gamma\|_{\infty}}{\epsilon}+1 \right)$.
        \item For every $x \in \gamma$, $\#\{\theta \in L : (\gamma \circ \theta) \cap B(x,\epsilon) \neq \emptyset\} \le 100\|d\gamma\|_\infty$.
    \end{itemize}
\end{lema}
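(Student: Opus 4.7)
The plan is to adapt the classical Yomdin-style geometric reparametrization: partition $[-1,1]$ into subintervals at a scale adapted to $\epsilon/\|d\gamma\|_\infty$, and define affine maps $\theta:[-1,1]\to[-1,1]$ of two types. The red maps $\ol{L}$ are blow-ups by a factor of $3$, so that $\theta([-1/3, 1/3])$ is one partition interval and $\theta([-1,1])$ is the concentric triple sitting comfortably inside $[-1,1]$; the blue maps $\underline{L}$ (at most one per endpoint of $[-1,1]$) handle the leftover boundary segments for which a red $\theta([-1,1])$ would exit $[-1,1]$.

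Concretely, I set $h := \epsilon/(3\|d\gamma\|_\infty)$ and partition $[-1,1]$ into $N = \lceil 2/h\rceil \le 6\|d\gamma\|_\infty/\epsilon + 1 \le 6(\|d\gamma\|_\infty/\epsilon + 1)$ consecutive intervals $I_j$ of length at most $h$. For each $I_j$ whose midpoint $c_j$ lies in $[-1+3h/2,\,1-3h/2]$, I define $\theta_j(t) := c_j + (3h/2)\,t$, so that $\theta_j([-1/3,1/3]) = I_j$ and the slope is $a := 3h/2 = \epsilon/(2\|d\gamma\|_\infty) \le 1/2$ (using $\|d\gamma\|_\infty \ge \epsilon$). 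Then $\|d(\gamma\circ\theta_j)\|_\infty \le a\|d\gamma\|_\infty = \epsilon/2 \le \epsilon$, and the bounded property of $\gamma$ combined with the distortion estimate $\|d\gamma|_{\theta_j([-1,1])}\|_\infty \ge (2/3)\|d\gamma\|_\infty$ (from \eqref{eq distortion bounded curves}) gives, for every $s=2,\ldots,r$,
\[
\|d^s(\gamma\circ\theta_j)\|_\infty = a^s\|d^s\gamma|_{\theta_j([-1,1])}\|_\infty \le \tfrac{a^s}{6}\|d\gamma\|_\infty \le \tfrac{a^{s-1}}{4}\|d(\gamma\circ\theta_j)\|_\infty \le \tfrac{1}{6}\|d(\gamma\circ\theta_j)\|_\infty,
\]
the last inequality since $a \le 1/2$. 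The lower bound $\|d(\gamma\circ\theta_j)(0)\| = a\|d\gamma(c_j)\| \ge (2a/3)\|d\gamma\|_\infty = \epsilon/3 > \epsilon/6$ uses distortion again. For each of the (at most two) leftover boundary regions, which have length between $h$ and $2h$, I would pick a single affine $\theta:[-1,1]\to[-1,1]$ whose image covers the region, with slope in $[\epsilon/(6\|d\gamma\|_\infty),\,\epsilon/\|d\gamma\|_\infty]$; this range is nonempty since $\|d\gamma\|_\infty \ge \epsilon$, and the same type of computation verifies both derivative bounds, giving $|\underline{L}| \le 2$.

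The step I expect to be most delicate is the local multiplicity bound. In parameter space the red intervals $\theta_j([-1,1])$ have length $3h$ with midpoints spaced by $h$, so the parameter-space covering has multiplicity at most $3$; together with the two blue maps this yields parameter-space multiplicity at most $5$. If $(\gamma\circ\theta)$ meets $B(x,\epsilon)$ in $\TT^2$ then, since $(\gamma\circ\theta)([-1,1])$ has diameter at most $2\epsilon$, the interval $\theta([-1,1])$ lies in a single connected component of $\gamma^{-1}(B(x, 3\epsilon))$, and by distortion each such component is a parameter interval of length $O(\epsilon/\|d\gamma\|_\infty)$, hence contains only $O(1)$ of the $\theta_j$'s. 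The remaining subtlety is that $\gamma^{-1}(B(x, 3\epsilon))$ can split into several components when $\gamma$ passes near $x$ more than once; the bounded property forces $d\gamma$ to rotate by at most $\tfrac{1}{2}$ radian along $[-1,1]$, so each lift of $\gamma$ to the universal cover is nearly straight and traverses any ball of radius $3\epsilon$ at most once, and near-straightness combined with the planar geometry bounds the number of relevant passes by a universal constant. Combining these counts, the constant $100$ is a comfortable overall bound.
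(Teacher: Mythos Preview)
The paper does not actually prove this lemma; it states it to illustrate the red/blue dichotomy and then, later in the section, refers the reader to Burguet \cite{burguet2024srb} (his Lemma~6) for the argument. So there is no in-paper proof to compare against, and your construction is exactly the standard one.

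For the first three items your argument is correct. Two small points: the slope range $[\epsilon/(6\|d\gamma\|_\infty),\,\epsilon/\|d\gamma\|_\infty]$ you give for the blue maps does not work at both endpoints (at the lower end the value $\|d(\gamma\circ\theta)(0)\|$ drops to $\epsilon/9$, and at the upper end the boundedness check $b^{s-1}\le 2/3$ can fail when $\|d\gamma\|_\infty=\epsilon$); but any fixed choice like $b=\epsilon/(4\|d\gamma\|_\infty)$ satisfies all the constraints, so this is only a matter of being explicit. Likewise the ``leftover of length between $h$ and $2h$'' is really of length at most $h$, which only helps.

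The genuine gap is in the fourth item. Your parameter--space overlap count and your bound on the length of each component of $\gamma^{-1}(B(x,3\epsilon))$ are fine, but the last sentence---``near-straightness combined with the planar geometry bounds the number of relevant passes by a universal constant''---is asserted, not proved, and as written the lemma does not supply enough hypothesis to make it true. A bounded curve on $\TT^2$ with large $\|d\gamma\|_\infty$ can wrap around the torus arbitrarily many times, and its (nearly straight) lift then passes near many integer translates of~$\tilde x$, so $\gamma^{-1}(B(x,\epsilon))$ has an unbounded number of components and the count explodes. What makes the bound valid in Burguet's setting (and in every use of the lemma inside the Reparametrization Lemma here) is that the curve is produced from a strongly $\epsilon$-bounded curve by a single inverse branch on a chart of radius~$\epsilon_0$, so it lives in one chart; there the lift is a single Lipschitz graph and meets $B(\tilde x,3\epsilon)$ in at most one arc, after which your per-component count of $O(1)$ reparametrizations gives the bound immediately. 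You should either add that hypothesis explicitly, or replace your last sentence with the one-chart/graph argument.
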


We now give a brief overview on Burguet's reparametrization lemma. For textual cohesion purposes we already present it on the non-invertible scenario and we give the proper remarks on how it can be extended to this case. Such construction in the diffeomorphism case can be found on Section 4.2 \cite{burguet2024srb}.

To construct the reparametrizations, we shall divide a curve $\sigma:[-1,1] \to \TT^2$ as follows. Fixed $n \ge 1$, we define $k_n,\tilde{k_n}: \pi_{g}^{-1}(\sigma) \to \zz$ as:
\[
k_n(\Hat{x}) := \lfloor\log \|D\Hat{g}^{-1}_{\Hat{g}^{-(n-1)}(\Hat{x})} \| \rfloor, \ \ \ \tilde{k}_n(\Hat{x}) = \lfloor\log \|D\Hat{g}^{-1}_{\Hat{g}^{-(n-1)}(\Hat{x})} \cdot v^{(n-1)}_{\Hat{x}}\|\rfloor,
\]
where $\lfloor \cdot \rfloor$ is the floor function and $ v^n_{\Hat{x}} = \frac{D\Hat{g}^{-n}_{\Hat{x}}\cdot v_{\Hat{x}}}{\|D\Hat{g}^{-n}_{\Hat{x}}\cdot v_{\Hat{x}}\|}$. 

We consider the function $\boldsymbol{k}_n: \pi_{g}^{-1}(\sigma) \to \zz^{2n}$, given by
\begin{equation}\label{eq defn k n omega}
\boldsymbol{k}_n(\Hat{x}) = (k_1(\Hat{x}), \tilde{k}_1(\Hat{x}), k_2(\Hat{x}), \cdots, \tilde{k}_{n-1}(\Hat{x}), k_{n}(\Hat{x}), \tilde{k}_{n}(\Hat{x})).
\end{equation}
Finally, for a $2n$-tuple $\boldsymbol{k} = (k_1,\tilde{k}_1,\cdots,k_{n},\tilde{k}_{n}) \in \zz^{2n}$, we consider the following subset of $\pi_{g}^{-1}(\sigma)$:
\[
\mathcal{H}(\boldsymbol{k}):= \{\Hat{x} \in \pi_{g}^{-1}(\sigma): \boldsymbol{k}_n(\Hat{x}) = \boldsymbol{k}\}.
\]

We restate now the Burguet's Reparametrization Lemma \cite{burguet2024srb} for the endomorphism case. We define a reparametrization tree as a collection $\mathcal{R}$ of affine maps $\theta: [-1,1] \to [-1,1]$.

\begin{lema}[Reparametrization Lemma]
    Let $g \in \text{End}^r(\TT^2)$, $r>1$, $d = \deg g$ be fixed. We may fix a scale $\epsilon_0 >0$ small enough (depending only on $g$ and $r$) with the following property. There exist a universal constant $C_r$ (depending only on the regularity $r$) such that for any $\mathcal{C}^r$ strongly $\epsilon$-bounded curve $\sigma:[-1,1]\to \TT^2$  with $0<\epsilon<\epsilon_0$, we may construct a reparametrization tree $\mathcal{R} = \bigcupdot_{n \ge 0} \bigcupdot_{\tau \in \{1,\cdots,d\}^n}\mathcal{R}_n^\tau$ satisfying that for every $n \in  \nn$, $\tau = (\tau_1,\cdots,\tau_n) \in \{1,\cdots,d\}^n$, it holds:

    \begin{enumerate}
        \item For every $\omega \in C(\tau)=\{\omega \in \Sigma: \omega_i = \tau_1 \ \forall i =1,\cdots, n\}$ and every $\theta_n^\tau  \in \mathcal{R}_n^\tau$, the curve $\Hat{\sigma}_\omega \circ \theta_n^\tau$ is strongly $(n,\epsilon)$-bounded.
        
        \item  Each $\mathcal{R}_n^{\tau}$ can be decomposed into the disjoint union of \textbf{blue} ($\underline{\mathcal{R}_n^\tau}$), and \textbf{red} ($\ol{\mathcal{R}_n^\tau}$) reparametrizations satisfying that for every red one $\theta_n^\tau \in \ol{\mathcal{R}_n^\tau}$
        \begin{equation}\label{eq expanding prop red rep}
        \||d(\pi_g \circ \Hat{g}^{-n} \circ \Hat{\sigma}_\omega \circ \theta_n^\tau)(0)\| \ge \frac{\epsilon}{6}, \ \text{for some  (hence any)} \ \omega \in C(\tau).
        \end{equation}
        
        \item Each $\mathcal{R}_n^\tau$ can also be divided as $\bigcupdot_{\boldsymbol{k} \in \zz^{2n}}\mathcal{R}_n^\tau(\boldsymbol{k})$ satisfying that for every $\omega \in C(\tau)$, the set $(\Hat{\sigma}_\omega)^{-1}(\mathcal{H}(\boldsymbol{k}))$ is contained in the set:
        \[
        \bigcup\limits_{\theta \in \ol{\mathcal{R}_n^\tau}(\boldsymbol{k})}\theta([-1/3,1/3]) \cup \bigcupdot\limits_{\theta \in \underline{\mathcal{R}_n^\tau}(\boldsymbol{k})}\theta([-1,1]).
        \]
        
        \item For each $\theta_n^\tau \in \mathcal{R}_n^\tau(\boldsymbol{k})$, $\boldsymbol{k} = (k_1,\tilde{k}_1,\cdots,k_{n},\tilde{k}_{n}) \in \zz^{2n}$, there is an affine contraction $\varphi :[-1,1]\to [-1,1]$ , with contraction rate smaller than $1/100$, such that $\theta_n^\tau = \theta_{n-1}^{\tilde{\tau}} \circ \varphi$ for some $\theta_{n-1}^{\tilde{\tau}} \in \mathcal{R}_{n-1}^{\tilde{\tau}}(\tilde{\boldsymbol{k}})$, where $\tilde{\tau} = (\tau_1,\cdots,\tau_{n-1})$ and $\tilde{\boldsymbol{k}}= (k_1,\tilde{k}_1,\cdots,k_{n-1},\tilde{k}_{n-1})$.
        
        \item If $\theta_n^\tau = \theta_{n-1}^{\tilde{\tau}}\circ \varphi$ with $\theta_{n-1}^{\tilde{\tau}} \in \ol{\mathcal{R}_{n-1}^{\tilde{\tau}}}$ being a red reparametrization, then $\theta_n^\tau([-1,1]) \subset \theta_{n-1}^{\tilde{\tau}}([-1/3,1/3])$.
        
        \item We may control inductively the cardinality of $\mathcal{R}_n^\tau(\boldsymbol{k})$ as follows. For a fixed element $\theta_{n-1}^{\tilde{\tau}} \in \mathcal{R}_{n-1}^{\tilde{\tau}}(\tilde{\boldsymbol{k}})$, we have
        \begin{align*}
        &\#\{\theta \in \mathcal{R}_n^\tau(\boldsymbol{k}) \cap \ol{\mathcal{R}_n^\tau}: \theta = \theta_{n-1}^{\tilde{\tau}} \circ \varphi, \ \text{for some} \ \varphi \ \text{as in item (4)}\} \le C_re^{\max\left\{k_{n}, \frac{k_{n}-\tilde{k}_{n}}{r-1}\right\}}, \\
        &\#\{\theta \in \mathcal{R}_n^\tau(\boldsymbol{k}) \cap \underline{\mathcal{R}_n^\tau}: \theta = \theta_{n-1}^{\tilde{\tau}} \circ \varphi, \ \text{for some} \ \varphi \ \text{as in item (4)} \} \le C_re^{\frac{k_{n}-\tilde{k}_{n}}{r-1}}.
        \end{align*}
    \end{enumerate}
\end{lema}
\begin{proof}
    Fix the scale $\epsilon_0>0$ such that:
    \begin{itemize}
        \item For every $x \in \TT^2$, the exponential map $\exp_x:T_x\TT^2 \to \TT^2$ is injective on $B(0,\epsilon_0)$;
        \item For every $x \in \TT^2$, if $V_x = \exp_x(B(0,\epsilon_0))$, then every $h: V_x \to \TT^2$, inverse branch of $g$, is a well defined $\mathcal{C}^r$ immersion. 
        \item For every $x \in \TT^2$ and every $h:V_x \to \TT^2$ inverse branch of $g$, the map $h^x_{2\epsilon} = h \circ \exp_x (2\epsilon\cdot): \{v \in T_x\TT^2:\|v\| \le 1\} \to \TT^2$ satisfies $\|d^sh^x_{2\epsilon}\| \le 3 \epsilon \|D_xh\|$ for all $s = 1,\cdots, r$ and all $\epsilon<\epsilon_0$.
    \end{itemize}

    We fix $\sigma:[-1,1] \to \TT^2$ a strongly $\epsilon$-bounded curve for some $\epsilon< \epsilon_0$ and construct the reparametrizations inductively on $n \ge 0$. For $n=0$, set $\mathcal{R}_0 = \underline{\mathcal{R}}_0 = \{Id\}$. 
    
    Assume that $\mathcal{R}_{n}^\tau$ is constructed for every $\tau \in \{1,\cdots,d\}^n$. Let $\tau = (\tau_1,\cdots, \tau_{n+1}) \in \{1,\cdots,d\}^{n+1}$ be fixed and set $\tilde{\tau} = (\tau_1,\cdots,\tau_n) \in \{1,\cdots,d\}^n$, similarly let $$\boldsymbol{k} = (k_1,\tilde{k}_1,\cdots,k_{n+1},\tilde{k}_{n+1}) \in \zz^{2(n+1)}$$ be fixed and set $\tilde{\boldsymbol{k}} = (k_1,\tilde{k}_1,\cdots, k_n,\tilde{k}_n) \in \zz^{2n}$. For each $\theta_n^{\tilde{\tau}} \in \mathcal{R}_n^{\tilde{\tau}}(\tilde{\boldsymbol{k}})$, we will define the elements $\theta_{n+1}^\tau \in \mathcal{R}_{n+1}^\tau(\boldsymbol{k})$ that satisfy items $2$ and $3$ of the Lemma. We fix $\theta_{n+1}^\tau \in \mathcal{R}_{n+1}^\tau(\boldsymbol{k})$ and write
    \[
    \Hat{\theta}_n^\tau = \left\{\begin{array}{ll}
        \theta_n^\tau(\frac{1}{3}\cdot), \ &\text{if} \ \theta_n^\tau \in \ol{\mathcal{R}_n^\tau}  \\
        \theta_n^\tau, &\text{if} \ \theta_n^\tau \in \underline{\mathcal{R}_n^\tau}.
    \end{array}\right.
    \]
    We fix any $\omega \in C(\tilde{\tau})$ , we are interested in the curves:
    \[
    \Hat{\sigma}_\omega \circ \Hat{\theta}_n^{\tilde{\tau}}, \ \text{and} \ \sigma_\omega^n \circ \theta_n^{\tilde{\tau}} = \pi_g\circ \Hat{g}^{-n} \circ \Hat{\sigma}_\omega \circ \Hat{\theta}_n^{\tilde{\tau}}.
    \]
    For clarification, the latter curve is the pre-image of the first by the inverse branch of $g$ corresponding to $\tilde{\tau}$, thus this construction does not depend on the choice of $\omega \in C(\tau)$.

    We may now follow the steps on Burguet's Reparametrization Lemma, we give an outline of these steps and refer the reader to \cite{burguet2024srb} for details. Our purpose is to define the contractions $\varphi:[-1,1] \to [-1,1]$ such that the reparametrizations of the type $\Hat{\theta}_n^{\tilde{\tau}}\circ \varphi$ satisfy our conclusions as the elements of $\mathcal{R}_{n+1}^\tau$.

    At steps 1-3, by studying the inverse branch on a neighborhood of $\sigma_\omega^n \circ \theta_n^{\tilde{\tau}}$ associated to the pre-image given by $\tau$, we obtain a constant $C_r>0$ (depending only on $r$) such that for any affine contraction $\Tilde{\varphi}:[-1,1]\to [-1,1]$ with contraction rate $b = (C_r e^{k_{n+1}-\tilde{k}_{n+1}-4})^{-\frac{1}{r-1}}$ (which can be supposed smaller than $1/100$ by increasing $C_r$ if necessary), if $\Hat{\theta}_n^{\tilde{\tau}}\circ \tilde{\varphi}([-1,1]) \cap H^\omega(\boldsymbol{k}) \neq \emptyset$ then the curve $\sigma_{\omega}^{n+1} \circ \Hat{\theta}_n^{\tilde{\tau}} \circ \tilde{\varphi}= \pi_{g}\circ \Hat{g}^{-1} \circ \Hat{g}^{-n} \circ \Hat{\sigma}_\omega \circ \Hat{\theta}_n^{\tilde{\tau}} \circ \tilde{\varphi}$ is bounded. 
    
    We may cover $[-1,1]$ with no more than $b^{-1}+1$ of such affine maps $\Tilde{\varphi}$, this term is the source of the factor $e^{\frac{k_n-\tilde{k}_n}{r-1}}$ on the last item of the Reparametrization Lemma. Although theses curves are bounded, they may not satisfy $\|d(\sigma^{n+1}_\omega \circ \Hat{\theta}_n^{\tilde{\tau}} \circ \tilde{\varphi})\|_{\infty} <\epsilon$. What is left then is to obtain the strongly $\epsilon$-bounded property. This is obtained in the last step as follows:
    \begin{itemize}
        \item If $\sigma^{n+1}_\omega \circ \Hat{\theta}_n^{\tilde{\tau}} \circ \tilde{\varphi}$ is strongly $\epsilon$-bounded, there is nothing to do. We define $\varphi = \Tilde{\varphi}$, $\theta_{n+1}^\tau = \Hat{\theta}_{n}^{\tilde{\tau}} \circ \varphi$ and label it as a blue reparametrization.
        \item If not, then $\sigma^{n+1}_\omega \circ \Hat{\theta}_n^{\tilde{\tau}} \circ \tilde{\varphi}$ is a bounded curve with $\|d(\sigma^{n+1}_\omega \circ \Hat{\theta}_n^{\tilde{\tau}} \circ \tilde{\varphi})\|_{\infty}\ge \epsilon$. We then apply Lemma \ref{lema red reparametrizations} to obtain the family $L = \ol{L} \cup \underline{L}$ of affine maps $\theta:[-1,1]\to [-1,1]$ such that $\sigma^{n+1}_\omega \circ \Hat{\theta}_n^{\tilde{\tau}} \circ \tilde{\varphi} \circ \theta$ is strongly $\epsilon$-bounded. We take $\varphi = \tilde{\varphi} \circ \theta$, $\theta_{n+1}^\tau = \Hat{\theta}_n^{\tilde{\tau}} \circ \varphi$, for $\theta \in \underline{L}$ we label it a blue reparametrization and for $\theta \in \ol{L}$ we label it a red reparametrization. The red property required \eqref{eq expanding prop red rep} is achieved from the first item of Lemma \ref{lema red reparametrizations}.
    \end{itemize}

    Finally, the control of the cardinality of $\mathcal{R}_{n+1}^\tau$ is obtained from the fact that we cover the interval with no more than $b^{-1}+1$ maps $\tilde{\varphi}$ along with the control of $\#L$ given by Lemma \ref{lema red reparametrizations}.
\end{proof}

As a consequence, we also have a local version of the Reparametrization Lemma. More precisely, let $G: \mathbb{P}L_g \to \mathbb{P}L_g$ be the associated projective cocycle. For a curve $\sigma$ in $\TT^2$ and for each $\omega \in \Sigma$, we may reparametrize only the intersection of $\Hat{\sigma}_\omega$ with the projection of a $\Hat{G}^{-1}$-dynamical ball as follows.

For a given a curve $\sigma:[-1,1] \to \TT^2$ and $\omega \in \Sigma$, let $\Hat{\sigma}_\omega$ be the curve in $L_g$ associated to the $\omega$ pre-orbit of $\sigma$ by $\Hat{\sigma}_\omega(s) = \Xi(\sigma(s),\omega)$. For each $\Hat{y} \in \Hat{\sigma}_\omega$, let $(\Hat{y},v_y) \in \mathbb{P}L_g$ be the line tangent to $\sigma$ at $y = \pi_{g}(\Hat{y})$. For each $x \in \sigma$, $\epsilon>0$ and $n \in \nn$, we define:
\[
B^{\Hat{G}^{-1}}_{\omega}(x,\epsilon,n) = \{\Hat{y} \in \Hat{\sigma}_{\omega}: \Hat{d}(\Hat{G}^{-j}(\Hat{y},v_y), \Hat{G}^{-j}(\Hat{x},v_x))<\epsilon, \ \text{for every} \ 0 \le j \le n\}.
\]
This is the intersection of $\sigma$ with the projection to $\TT^2$ of the $\Hat{G}^{-1}$-dynamical ball at $(\Hat{x}_\omega,v_x)$. Note that for every $\omega'$ with $\omega'_i = \omega_i$ for every $1 \le i \le n$, we have 
\[
\pi_{g}(B^{\Hat{G}^{-1}}_{\omega'}(x,\epsilon,n)) = \pi_{g}(B^{\Hat{G}^{-1}}_{\omega}(x,\epsilon,n)),
\]
thus we may define for $\tau \in \{1,\cdots,d\}^n$:
\[
B^{\Hat{G}}_{\tau}(x,\epsilon) = \pi_{g}(B^{\Hat{G}^{-1}}_{\omega}(x,\epsilon,n)), 
\]
for some $\omega \in C(\tau) = \{\omega \in \Sigma: \omega_i = \tau_i \ \forall \ i = 1,\cdots, n\}$. 

Define also the function $\mathcal{w}:\mathbb{P}L_g \to \rr$ as 
\[
\mathcal{w}(\Hat{x},v) = \log \|Dg_x\| - \log \|Dg_x\cdot v\|,
\]
and for $n \in \nn$, let $(\mathcal{w}^n)_n$ be the associated $\Hat{G}^{-1}$ additive cocycle defined by 
\[
\mathcal{w}^n(\Hat{\xi}) = \sum\limits_{j=0}^{n-1} \mathcal{w} \circ \Hat{G}^{-j}(\Hat{\xi}).
\]

We consider $\epsilon<\epsilon_0$ (the scale given by the Reparametrization Lemma) and we also assume that for $(x,v), (y,w) \in \mathbb{P}L_g$:
\begin{equation*}
d((x,v),(y,w))<\epsilon \  \Rightarrow  \left\{\begin{array}{ll}
     &|\log \|Dg_x\cdot v\| -\log \|Dg_y\cdot w\|\ | <1,\ \text{and}\\
     &|\log\|Dg_x\|-\log\|Dg_y\|\ |<1. 
\end{array}\right.
\end{equation*}

\begin{corol}\label{corol local reparam}
    There is a constant $C_r>0$ (depending only on r) such that for any $\epsilon<\epsilon_0(g)$ given by the Reparametrization Lemma, for any strong $\epsilon$-bounded curve $\sigma: [-1,1] \to \TT^2$ and $x \in \sigma$. For every $n \in \nn$ and every $\tau \in \{1,\cdots,d\}^n$, if $\omega \in C(\tau)$, then
    \[
    \#\{\theta \in \mathcal{R}_n^\tau: (\sigma \circ \theta) \cap B^{\Hat G}_\tau(x,\epsilon) \neq \emptyset\} \le C_r^n \exp \left(\frac{\mathcal{w}^n(\Hat{x}_\omega,v_x)}{r-1}\right),
    \]
    where $\Hat{x}_{\omega} = \Xi(x,\omega)$ is the unique point in $\pi_{g}^{-1}(x)\cap \Hat{\sigma}_\omega$.
\end{corol}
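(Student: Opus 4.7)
I proceed by induction on $n$, the base case $n=0$ being trivial ($\mathcal{R}_0 = \{\mathrm{Id}\}$). For the inductive step, fix $\tau = (\tau_1, \ldots, \tau_n)$ and set $\tilde{\tau} = (\tau_1, \ldots, \tau_{n-1})$. Every $\theta \in \mathcal{R}_n^\tau$ factors as $\theta_{n-1}^{\tilde{\tau}} \circ \varphi$, and because adding the $n$-th iterate constraint only shrinks the dynamical ball we have $B^G_\tau(x,\epsilon) \subset B^G_{\tilde{\tau}}(x,\epsilon)$. Hence any child meeting $B^G_\tau(x,\epsilon)$ has a parent meeting $B^G_{\tilde{\tau}}(x,\epsilon)$, of which there are at most $C_r^{n-1} \exp(\mathcal{w}^{n-1}(\hat{x}_\omega, v_x)/(r-1))$ by the inductive hypothesis. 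It therefore suffices to prove that each parent has at most $C_r \exp(\mathcal{w}(\hat{G}^{-(n-1)}(\hat{x}_\omega, v_x))/(r-1))$ children meeting $B^G_\tau(x,\epsilon)$.

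\textbf{Pinning down $\boldsymbol{k}$.} Fix such a parent $\theta_{n-1}^{\tilde{\tau}}$. For any $\hat{y}$ in the lift along $\hat{\sigma}_\omega$ of an intersection point of $\sigma \circ \theta_{n-1}^{\tilde{\tau}} \circ \varphi$ with $B^G_\tau(x,\epsilon)$, the full orbit $(\hat{G}^{-j}(\hat{y}, v_y))_{0 \le j \le n}$ stays within distance $\epsilon$ of $(\hat{G}^{-j}(\hat{x}_\omega, v_x))_{0\le j\le n}$. By the uniform-continuity assumption on $\log\|Dg\|$ and $\log\|Dg\cdot(\cdot)\|$ built into the choice of $\epsilon$ just before the corollary, the integers $k_j(\hat{y})$ and $\tilde{k}_j(\hat{y})$ agree with $k_j(\hat{x}_\omega)$ and $\tilde{k}_j(\hat{x}_\omega)$ up to $\pm 1$; consequently, $\boldsymbol{k}$ takes at most $O(1)^n$ values across all such children, a factor absorbed into $C_r^n$. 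For each allowed $\boldsymbol{k}$, the Reparametrization Lemma covers the parent's parameter interval by affine maps $\tilde{\varphi}$ of contraction rate $b \asymp e^{-(k_n-\tilde{k}_n)/(r-1)}$, and then splits each $\tilde{\varphi}$-piece via Lemma \ref{lema red reparametrizations}, whose output satisfies the bounded-overlap property that every point of the secondary curve lies in at most $100$ sub-pieces.

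\textbf{Length of the dynamical ball.} The key local estimate is the length $\rho$ of $B^G_\tau(x,\epsilon)$ inside the parent's parameter interval. A direct $2\times 2$ singular-value computation gives the identity
\[
(k_n - \tilde{k}_n) + \mathcal{w}(\hat{G}^{-n}(\hat{x}_\omega, v_x)) = \log\frac{\|Dg_{z_n}\|}{m(Dg_{z_n})} + O(1),
\]
showing that the discrete factor $k_n - \tilde{k}_n$ controlling the Reparametrization Lemma's $\tilde{\varphi}$-partition and the continuous factor $\mathcal{w}$ measuring how far $v^{(n)}$ is from the most-expanded direction of $Dg_{z_n}$ are complementary contributions to the full distortion $\log(\|Dg_{z_n}\|/m(Dg_{z_n}))$. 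Using the strong $\epsilon$-boundedness of $\sigma_{n-1}^\omega \circ \theta_{n-1}^{\tilde{\tau}}$ together with the additional tangent-direction constraint at step $n$ encoded in the dynamical ball, a geometric tracking of the projective cocycle along the backward orbit yields $\rho \le C\, b\, e^{\mathcal{w}(\hat{G}^{-(n-1)}(\hat{x}_\omega, v_x))/(r-1)}$. Dividing by $b$ bounds the number of $\tilde{\varphi}$-pieces meeting the ball by $C_r e^{\mathcal{w}/(r-1)}$, and the bounded overlap from Lemma \ref{lema red reparametrizations} propagates this bound through the secondary splitting. The principal difficulty lies exactly in this $\rho$ estimate: the naive bound from the Reparametrization Lemma alone would carry the weaker factor $e^{(\sum\log(\|Dg\|/m(Dg)) - \mathcal{w}^n)/(r-1)}$, so obtaining the sharp exponent $\mathcal{w}^n/(r-1)$ genuinely requires the geometric cancellation provided by the dynamical ball rather than only per-step cardinality bounds.
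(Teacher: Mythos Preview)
Your inductive setup and the pinning of $\boldsymbol{k}$ are exactly right, and you correctly recognize that the last item of Lemma~\ref{lema red reparametrizations} controls multiplicity in the secondary (red/blue) splitting. The gap is the ``length of the dynamical ball'' paragraph: the estimate $\rho \le C\,b\,e^{\mathcal{w}/(r-1)}$ is the entire content of your per-step bound, yet it is asserted via an unspecified ``geometric tracking of the projective cocycle'' and never proved. In fact no such estimate on $\rho$ is needed at all.

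The argument (Burguet's Corollary~3, to which the paper defers) is simpler. Per parent, the \emph{total} number of first-stage pieces $\tilde{\varphi}$ is already at most $b^{-1}+1 \le C_r\,e^{(k_n-\tilde{k}_n)/(r-1)}$, straight from the construction in the Reparametrization Lemma. For each $\tilde{\varphi}$, either no secondary splitting occurs (one child), or Lemma~\ref{lema red reparametrizations} is applied to $\sigma_n^\omega \circ \Hat{\theta}_{n-1}^{\tilde{\tau}}\circ\tilde{\varphi}$ and its last item gives at most $100$ children meeting $B(x_n,\epsilon)$, which contains the level-$n$ projection of $B^G_\tau(x,\epsilon)$. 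Hence each parent contributes at most $100\,C_r\,e^{(k_n-\tilde{k}_n)/(r-1)}$ children meeting the dynamical ball, and induction closes with exponent $\sum_j(k_j-\tilde{k}_j)/(r-1)$. Your complementarity identity is then unnecessary: $(k_j - \tilde{k}_j)$ \emph{is}, up to $O(1)$, the $j$-th summand of $\mathcal{w}^n$ --- once $\mathcal{w}$ is read with the backward derivative $D\Hat{g}^{-1}$ rather than $Dg$, consistently with the later definition of $\mathcal{w}_q$ via $D\Hat{f}^{-q}$. With the literal forward-$Dg$ formula written just before the corollary, $\sum(k_j-\tilde{k}_j)$ and $\mathcal{w}^n$ genuinely differ and the statement would need exactly the unproved $\rho$-estimate you attempt; that discrepancy is a slip in the paper's adaptation of Burguet's forward setting, not something to be worked around.
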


For a clarification, we would like to apply the reparametrization Lemma to $g = f^q$, $q \ge 1$ and $f \in \text{End}^r(\TT^2)$, thus we return now to our notation of $f \in \text{End}^r(\TT^2)$ as in Theorem \ref{main thm burguet}. As a consequence of this local reparametrization Lemma, we may estimate the number of strongly $(\omega,n,\epsilon)$-bounded curves reparametrizing the intersection of a given strongly $\epsilon$-bounded curve  with an $\Hat{F}^{-1}$-dynamical ball of length $n$ and radius $\epsilon$. This estimate is an instrument for the proof of the F{\o}lner-Gibbs property (Proposition \ref{prop folner gibbs prop with q}). 

For a given $q \in \nn$, we define $\mathcal{w}_q: \mathbb{P}L_f\to \rr$ by
\[
\mathcal{w}_q(\Hat{x},v) := \frac{1}{q}(\log \|D\Hat{f}_{\Hat{x}}^{-q}\| - \log \|D\Hat{f}_{\Hat{x}}^{-q}\cdot v\|),
\]
and we denote by $(\mathcal{w}_q^n)_n$ the associated $\Hat{F}^{-1}$ additive cocycle given by
\[
\mathcal{w}_q^n(\Hat{\xi}) = \sum\limits_{j=0}^{n-1}\mathcal{w}_q(\Hat{F}^{-j}(\Hat{\xi})).
\]

\begin{lema}\label{lema rep of dynamical balls}
    Given $q \in \nn$, there exist $\epsilon_q>0$ and $B_q>0$ such that for any strongly $\epsilon_q$-bounded curve $\sigma:[-1,1]\to \TT^2$, for any $x \in \sigma$, any $\omega \in \Sigma$ and any $n \in \nn$ there exist a family $\Theta_{q,n}^\omega$ of affine maps of $[-1,1]$ such that
    \begin{itemize}
        \item $B^{\Hat{F}^{-1}}_{\omega}(x, \epsilon,n) \subset \bigcup\limits_{\theta \in \Theta_{q,n}^\omega} \Hat{\sigma}_\omega \circ \theta$;
        \item $\sigma \circ \theta$ is strongly $(\omega,n,\epsilon_q)$-bounded (with respect to $f$) for any $\theta \in \Theta_{q,n}^\omega$;
        \item $\#\Theta_{q,n}^\omega \le B_q C_r^{n/q}\exp{\left(\frac{\mathcal{w}_q^n(\Hat{x}_\omega,v_x)}{r-1}\right)}$, where $\Hat{x}_\omega = \Xi(x,\omega)$ is the unique point in $\pi_{f}^{-1}(x)\cap \Hat{\sigma}_\omega$ and $C_r$ is the constant given by the Reparametrization Lemma.
    \end{itemize}    
\end{lema}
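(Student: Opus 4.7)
The plan is to apply the local Reparametrization Lemma (Corollary \ref{corol local reparam}) to the iterate $g = f^q$ rather than to $f$ directly, and then transcribe the conclusion back in terms of $f$-iterates and the cocycle $\mathcal{w}_q$.

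First I would fix $\epsilon_q > 0$ smaller than the scale $\epsilon_0$ furnished by Corollary \ref{corol local reparam} applied to $g$, and in addition small enough that the composition of any strongly $\epsilon_q$-bounded curve with a local branch of $\hat{f}^{-r}$, for every $0 \le r < q$, remains strongly $\epsilon_q$-bounded. Since $r$ ranges over the finite set $\{0,1,\dots,q-1\}$ and each local branch of $f^{-r}$ is uniformly $\mathcal{C}^r$ on the compact torus, this is achievable by shrinking $\epsilon_q$ by a factor depending on $q$; this is precisely why the scale must depend on $q$. The upshot is that strong boundedness for $g$ at the $\lceil n/q \rceil$ times that are multiples of $q$ automatically upgrades to strong boundedness for $f$ at all $n$ intermediate times.

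Next I would use the identity $\hat{g}^{-j} = \hat{f}^{-jq}$ to obtain the inclusion $B^{\hat{F}^{-1}}_\omega(x,\epsilon,n) \subset B^{\hat{G}^{-1}}_{\omega^{(q)}}(x,\epsilon,n')$, where $n' = \lceil n/q \rceil$ and $\omega^{(q)}$ is the compression of $\omega$ obtained by grouping its coordinates into blocks of length $q$. Applying Corollary \ref{corol local reparam} to $g$ at time $n'$ then yields a family $\Theta_{q,n}^\omega$ of affine reparametrizations whose union covers the $g$-dynamical ball --- hence also the $f$-dynamical ball --- with each $\sigma \circ \theta$ strongly $(n',\epsilon_q)$-bounded for $g$ and, by the previous step, strongly $(\omega,n,\epsilon_q)$-bounded for $f$. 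Its cardinality is at most $C_r^{n'}\exp(\mathcal{w}_g^{n'}(\hat{x}_\omega, v_x)/(r-1))$.

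The final step is to replace $\mathcal{w}_g^{n'}$ by $\mathcal{w}_q^n$ in the exponent. Starting from $\mathcal{w}_g(\hat{x}, v) = \log \|Dg_x\| - \log \|Dg_x \cdot v\|$ and applying the $2$-dimensional singular-value identity $\log \|A\| - \log \|A v\| = \log \|A^{-1} u\| - \log m(A^{-1})$, valid for $u = Av/\|Av\|$, one sees that each summand $\mathcal{w}_g(\hat{G}^{-j}(\hat{x}_\omega, v_x))$ coincides, up to a constant depending only on $f$ and $q$, with the corresponding block of $q$ consecutive summands in $\mathcal{w}_q^n$. Consequently $\mathcal{w}_g^{n'}(\hat{x}_\omega, v_x) \le \mathcal{w}_q^n(\hat{x}_\omega, v_x) + K_q \cdot n'$ for some $K_q > 0$; the factor $e^{K_q n'/(r-1)}$ is absorbed into the base of $C_r^{n/q}$, with a prefactor $B_q$ accounting for edge effects. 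This comparison --- matching the forward-derivative cocycle $\mathcal{w}_g$ aggregated over non-overlapping length-$q$ blocks with the backward-derivative cocycle $\mathcal{w}_q$ summed over single $f$-steps --- is the main technical obstacle, and reduces to the above singular-value bookkeeping together with the uniform $\mathcal{C}^1$ bounds on $f$.
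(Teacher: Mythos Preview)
The paper itself does not prove this lemma --- it refers to Burguet's Lemma~8 with the single remark that inverse branches replace forward iterates --- so there is no ``paper proof'' to compare against beyond that reference. Your overall strategy, namely to apply Corollary~\ref{corol local reparam} with $g=f^{q}$ at time $n'=\lceil n/q\rceil$ and then translate the conclusion back to $f$, is exactly the intended approach.

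There is, however, a genuine gap in your final step. You allow an error of size $K_{q}n'$ in the comparison between $\mathcal{w}_{g}^{n'}$ and $\mathcal{w}_{q}^{n}$, and you propose to ``absorb $e^{K_{q}n'/(r-1)}$ into the base of $C_{r}^{n/q}$''. This does not work: the lemma fixes $C_{r}$ as the universal constant produced by the Reparametrization Lemma, independent of $q$, so nothing growing with $n$ may be absorbed there. The only free constant is $B_{q}$, which can swallow at most a quantity bounded in $n$. What you actually need is the sharper estimate $\mathcal{w}_{g}^{n'}(\hat{x}_{\omega},v_{x})\le \mathcal{w}_{q}^{n}(\hat{x}_{\omega},v_{x})+O_{q}(1)$. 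To see this, write out both Birkhoff sums explicitly: the ``vector'' contributions telescope along the orbit and agree up to boundary terms involving only the first and last $q$ backward iterates, each bounded by $q\log^{+}\|Df^{\pm 1}\|_{\infty}$; the ``norm'' contributions, after the two--dimensional identity you cite, match the $j$-th $g$-summand to a single $q$-block term of $\mathcal{w}_{q}^{n}$, again with an index shift of at most $q$. Your singular-value identity is the right tool, but it must be applied term by term to align each $g$-step with a block of $q$ consecutive $f$-steps, not used merely to produce a crude linear bound. (You should also be aware that the paper's displayed formula for $\mathcal{w}$ uses $Dg_{x}$ where the backward-iterate context calls for $D\hat{g}^{-1}_{\hat{x}}$; under the forward reading the two cocycles differ by a term of order $n$ and the comparison fails outright.)
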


For the ideas behind Lemma \ref{lema rep of dynamical balls} and Corollary \ref{corol local reparam} we refer to \cite{burguet2024srb} (respectively Lemma 8 and Corollary 3), we remark that there is no major difference in the argument required for the endomorphism case but to consider inverse branches as we did in the Reparametrization Lemma.

\subsection{The Geometric set}

We now return to our context, we let $f:\TT^2 \to \TT^2$ be a $\mathcal{C}^r$ non-invertible local diffeomorphism as in Theorem \ref{main thm burguet} and let $d$ be the number of pre-images of $f$. For some positive integer $p \in \nn $, we apply the Reparametrization Lemma to $g = f^p$, we fix $\sigma:[-1,1] \to \TT^2$ a strongly $\epsilon$-bounded curve for $\epsilon<\epsilon_0$ (the scale given by the Lemma). 

There is an important observation to be made here. The inverse limit space constructed depend on the map $f$. Rigorously, when we apply the Reparametrization Lemma to $f^p$ we would obtain $\mathcal{R}^p$ a reparametrization tree of $\sigma$ with respect to $L_{f^p}$, that is,
\[
\mathcal{R}^p = \bigcupdot\limits_{m\ge 0} \bigcupdot\limits_{\tilde{\tau} \in \{1,\cdots,d^p\}^m} \mathcal{\Tilde{R}}_m^{\tilde{\tau}}.
\]
  
For better understanding we would like to rewrite such reparametrization tree as follows. Write $\pi_{f}:L_f \to \TT^2$ and $\pi_{f^p}: L_{f^p} \to \TT^2$ as the canonical projections, and let $\Xi:\sigma \times \{1,\cdots,d\}^\nn \to \pi_{f}^{-1}(\sigma)$ and $\Xi_p:\sigma \times \{1,\cdots,d^p\}^\nn \to (\pi_{f^p})^{-1}(\sigma)$ the corresponding identifications as in Subsection \ref{subsec Solenoid}. For each $\tilde{\tau} \in \{1,\cdots,d^p\}^m$, there is a unique $\tau \in \{1,\cdots,d\}^{mp}$ such that for every $\omega \in C(\tau):=\{\omega \in \{1,\cdots,d\}^\nn: \omega_i = \tau_i, \forall i=1,\cdots m\}$ and every $\tilde{\omega} \in C(\tilde{\tau}) \subset \{1,\cdots,d^p\}^\nn$ where:
\[
\pi_{f^p}\circ (\Hat{f^p})^{-m} \circ \Xi_p(\sigma(s),\tilde{\omega}) = \pi_{f}\circ \Hat{f}^{-mp}\circ \Xi(\sigma(s),\omega).
\]
Then we may write $\mathcal{\Tilde{R}}_m^{\tilde{\tau}}:=\mathcal{R}_{mp}^\tau$ and
\begin{equation}\label{eq relation rep fp and rep f}
\mathcal{R}^p = \bigcupdot\limits_{m\ge 0} \bigcupdot\limits_{\tau \in \{1,\cdots,d\}^{mp}}\mathcal{R}_{mp}^{\tau}.
\end{equation}

\begin{defn} (Geometric Set)
    For $\Hat{x} \in \pi_{f}^{-1}(\sigma)$, let $\omega = (\omega_1,\omega_2,\cdots) \in \{1,\cdots,d\}^\nn$ to be the unique sequence such that $\Hat{x} = \Xi(x,\omega)$ and define the set $E_p(\Hat{x}) \subset p\nn$ of integers $mp$ for which there is a red reparametrization $\theta \in \ol{\mathcal{R}_{mp}^\tau}(\boldsymbol{k})$ with $\pi_{f}(\Hat{x}) \in \sigma \circ \theta([-1/3,1/3])$, or equivalently $\Hat{x} \in \Hat{\sigma}_\omega\circ \theta([-1/3,1/3])$, where $\tau = (\omega_1,\cdots,\omega_{mp})$ and $\boldsymbol{k} = \boldsymbol{k}^\omega_{m}(\Hat{x})$ is as in \eqref{eq defn k n omega} for $g = f^p$.
\end{defn}

Although the construction of such geometric set for a point $\Hat{x} \in \pi_{f}^{-1}(\sigma)$ depends on the reparametrization tree given by the Reparametrization Lemma, there is a more intrinsic property of the curve around $\Hat{x}$ that we would like to explore. More specifically, we may relate the geometric set of $\Hat{x}$ with its geometric times.

\begin{lema}
    There are $\alpha_p>0$ and $\epsilon_p>0$ (depending only on $r$, $f$ and $p$) such that any $n \in E_p(\Hat{x})$ is a $(\alpha_p,\epsilon_p)$-geometric time for $\Hat{x}$ (with respect to $\Hat{f}$).
\end{lema}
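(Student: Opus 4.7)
The plan is to extract a red reparametrization from the $f^p$-reparametrization tree associated to $n \in E_p(\Hat{x})$ and recenter it at $\Hat{x}$ to produce the desired affine map. Let $\omega \in \Sigma$ be the unique sequence with $\Hat{x} = \Xi(x, \omega)$, and write $n = mp$. By definition of $E_p(\Hat{x})$, there are $\tau = (\omega_1, \ldots, \omega_{mp})$, a red reparametrization $\theta \in \ol{\mathcal{R}_{mp}^\tau}$, and $t_0 \in [-\tfrac{1}{3}, \tfrac{1}{3}]$ with $\Hat{\sigma}_\omega \circ \theta(t_0) = \Hat{x}$. I would propose the affine map
\[
\theta_n^\omega(s) \;:=\; \theta\bigl(t_0 + \tfrac{2}{3} s\bigr), \qquad s \in [-1,1],
\]
which lies in $\theta([-1,1]) \subset [-1,1]$ since $|t_0| + \tfrac{2}{3} \le 1$, and satisfies $\Hat{\sigma}_\omega \circ \theta_n^\omega(0) = \Hat{x}$ by construction.

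The expansion lower bound is immediate: the red property \eqref{eq expanding prop red rep} gives $\|d(\sigma_{mp}^\omega \circ \theta)(0)\| \ge \epsilon/6$, and the distortion bound \eqref{eq distortion bounded curves} for the strongly $\epsilon$-bounded curve $\sigma_{mp}^\omega \circ \theta$ upgrades this to $\|d(\sigma_{mp}^\omega \circ \theta)(t_0)\| \ge \epsilon/9$. Composing with the slope-$\tfrac{2}{3}$ affine shift yields $\|d(\sigma_n^\omega \circ \theta_n^\omega)(0)\| \ge 2\epsilon/27$, so any $\alpha_p \le 4\epsilon/(81\,\epsilon_p)$ realizes the required $\tfrac{3}{2}\alpha_p\epsilon_p$ bound. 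The strong $(n,\epsilon_p)$-boundedness with respect to $\Hat{f}$ at $f$-times $k$ that are multiples of $p$ reduces to a direct computation: the curve $\pi_{Sol} \circ \Hat{f}^{-jp} \circ \Hat{\sigma}_\omega \circ \theta_n^\omega$ is an affine sub-parametrization of $\sigma_{jp}^\omega \circ \theta$ with slope $\tfrac{2}{3}$, and combining $\|d^s\tilde\gamma\|_\infty \le (\tfrac{2}{3})^s\|d^s\gamma\|_\infty$ with the distortion lower bound $\|d\tilde\gamma\|_\infty \ge \tfrac{4}{9}\|d\gamma\|_\infty$ checks that the $\tfrac{1}{6}$ ratio is preserved (with equality at $s=2$) and the scale $\epsilon \le \epsilon_p$ is maintained.

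The main obstacle is the bounded condition at $f$-times $k$ that are not multiples of $p$. These intermediate curves $\pi_{Sol}\circ\Hat{f}^{-k}\circ\Hat{\sigma}_\omega\circ\theta_n^\omega$ arise by applying up to $p-1$ forward $f$-iterates to the strongly $\epsilon$-bounded curve at the next multiple of $p$, and their strong boundedness does not follow directly from the $f^p$-tree. The idea is to invoke Fa\`a di Bruno's formula together with the uniform $C^r$ bounds on $f$: by choosing the scale $\epsilon$ in the Reparametrization Lemma for $f^p$ sufficiently small and by setting $\epsilon_p$ essentially as $\|Df\|_\infty^p \cdot \epsilon$, one absorbs the inflation of the first derivative and controls the higher derivatives so that the intermediate curves remain strongly $\epsilon_p$-bounded. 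This step dictates how $\alpha_p$ and $\epsilon_p$ depend on $f$, $p$, and $r$.
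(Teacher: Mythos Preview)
Your recentering $\theta_n^\omega(s)=\theta(t_0+\tfrac23 s)$ and your treatment of the $f$-times that are multiples of $p$ match the paper exactly (the paper calls this $\tilde\theta$). The gap is at the intermediate times. Your claim that choosing $\epsilon$ small in the Reparametrization Lemma for $f^p$ is enough to make the intermediate curves bounded is not correct: the bounded ratio \eqref{eq bounded prop of curves} is \emph{not} improved by shrinking the scale. Concretely, if $\gamma$ is strongly $\epsilon$-bounded and you apply one forward iterate of $f$, Fa\`a di Bruno gives
\[
\|d^2(f\circ\gamma)\|_\infty \le \|D^2f\|_\infty\,\|d\gamma\|_\infty^2 + \|Df\|_\infty\,\|d^2\gamma\|_\infty,
\]
while $\|d(f\circ\gamma)\|_\infty$ can be as small as $\tfrac23 m(Df)\,\|d\gamma\|_\infty$. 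The second term in the numerator together with this denominator yields a ratio of order $\frac{\|Df\|_\infty}{4\,m(Df)}$, a constant depending only on $f$ and typically much larger than $\tfrac16$; the $\epsilon^2$-term vanishes as $\epsilon\to 0$ but this leading term does not. Hence the curve $f^{(j+1)p-k}\circ\sigma_{(j+1)p}^\omega\circ\theta_n^\omega$ need not be bounded, no matter how small $\epsilon$ is. The same objection applies if you go via inverse branches from the previous multiple of $p$.

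The paper repairs this by composing $\tilde\theta$ with a further affine contraction $\tilde\theta_a(s)=\tilde\theta(as)$, with $a=\bigl(9C_p\|Df\|_\infty^p\bigr)^{-1}$ for a constant $C_p$ coming from the $C^r$ control of inverse branches (Step~1 of the Reparametrization Lemma). This extra contraction suppresses $\|d^s(\cdot)\|$ by $a^s$ while $\|d(\cdot)\|$ only by $a$, so the ratio picks up a factor $a^{s-1}$ and the bounded condition is restored at every $0\le n\le mp$. Consequently $\epsilon_p=\epsilon/(9C_p)$ is \emph{smaller} than $\epsilon$, not $\|Df\|_\infty^p\,\epsilon$ as you propose, and $\alpha_p=\tfrac{4}{81}\|Df\|_\infty^{-p}$. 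Your affine map needs this additional factor of $a$ in its slope; once you insert it, the rest of your argument goes through.
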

\begin{proof}
     Fix $\Hat{x} \in \pi_{f}^{-1}(\sigma)$  and let $\omega = \omega(\Hat{x})$ be such that $\Hat{x}=\Xi(x,\omega)$. Let $x = \pi_{f}(\Hat{x}) = \sigma\circ \theta (b)$, for $b \in [-1/3,1/3]$, $\theta \in \ol{\mathcal{R}_{mp}^\tau}(\boldsymbol{k})$, $\tau = (\omega_1,\cdots,\omega_{mp})$ and $\boldsymbol{k} = \boldsymbol{k}^\omega_m(\Hat{x})$. We write the identification $\sigma^n_\omega = \pi_{f} \circ \Hat{f}^{-n} \circ \Xi(\sigma,\omega)$ (so that there is no confusion with what could be an identification made with $\Xi_p)$.
     
     From the first item of the reparametrization Lemma and the identity \eqref{eq relation rep fp and rep f}, we have that for every $0 \le k \le m$ the curve $\sigma^{kp}_\omega \circ \theta$ is strongly $\epsilon$-bounded.

    If we define $\tilde{\theta}(s) = \theta(b+\frac{2}{3}s)$, we have $\|d(\sigma \circ \tilde{\theta})(s)\| = \frac{2}{3}\|d(\sigma \circ \theta)(b+\frac{2}{3}s)\|<\epsilon$, and for every $2 \le s \le r$
    \begin{align*}
    \|d^s(\sigma \circ \tilde{\theta})\|_{\infty} &\le \left(\frac{2}{3}\right)^{s}\|d^s(\sigma \circ \theta)\|_{\infty} \le   \frac{1}{6} \left(\frac{2}{3}\right)^{s} \|d(\sigma\circ\theta)\|_{\infty}, \ \text{since} \ \sigma \circ \theta \ \text{is bounded}, \\
    &\le \frac{1}{6} \left(\frac{2}{3}\right)^{s}  \frac{3}{2} \|d(\sigma\circ \theta)(0)\| \le \frac{1}{6} \left(\frac{2}{3}\right)^{s-1}  \|d(\sigma\circ \theta)(0)\|, \ \text{from \eqref{eq distortion bounded curves}}, \\
    &\le \frac{1}{6}\left(\frac{2}{3}\right)^{s-2}\|d(\sigma \circ \tilde{\theta})(-3b/2)\| \le \frac{1}{6}\|d(\sigma\circ\tilde{\theta})\|_{\infty}.
    \end{align*}
    Hence, $\sigma\circ\tilde{\theta}$ is a strongly $\epsilon$-bounded curve with $\sigma\circ\tilde{\theta}(0) = x$. Analogously, we may obtain that $\sigma^{kp}_\omega \circ \tilde{\theta}$ is strongly $\epsilon$-bounded for every $0 \le k \le m$. Moreover, from the property \eqref{eq expanding prop red rep} of red reparametrizations, we also have for every $0 \le k \le m$
    \begin{align*}
        \|d(\sigma^{kp}_\omega \circ \tilde{\theta})(0)\| = \frac{2}{3}\|d(\sigma^{kp}_\omega \circ \theta)(b)\| \ge \frac{4}{9}\|d(\sigma^{kp}_\omega \circ \theta)(0)\|\ge \frac{4}{9}\frac{\epsilon}{6} = \frac{2}{27}\epsilon.
    \end{align*}
    
We would like to obtain similar results for every $0 \le n \le mp$. For $0<a<1$ we let $\Tilde{\theta}_a(s) = \Tilde{\theta}(as)$, we fix $0\le n < mp$ and let $\ell \in \{0,\cdots, m-1\}$ be such that $\ell p \le n <(\ell+1)p$. By utilizing the same approach as in the Reparametrization Lemma with inverse branches, we may show that there is a constant $C_p>0$ (depending only on $r$, $f$ and $p$) such that for every $1 \le s \le r$
\[
\|d^s(\sigma^n_\omega \circ \tilde{\theta}_a)\|_{\infty} \le C_p a^s \|d^s(\sigma^{\ell p}_\omega \circ \tilde{\theta})\|_{\infty}.
\]
As $\sigma^{\ell p}_\omega \circ \tilde{\theta}$ is bounded and $f^{n-\ell p} \circ \sigma^{n}_\omega = \sigma^{\ell p}_\omega$, we obtain
\begin{align*}
    \|d^s(\sigma^n_\omega \circ \tilde{\theta}_a)\|_{\infty} &\le \frac{3}{2} C_p a^s \|d(\sigma^{\ell p}_\omega \circ \tilde{\theta})(0)\| \\
    &\le \frac{3}{2} C_p a^s \|Df\|_{\infty}^p  \|d(\sigma^n_\omega \circ \tilde{\theta})(0)\|\\
    &\le \frac{3}{2} C_p a^{s-1} \|Df\|_{\infty}^p  \|d(\sigma^n_\omega \circ \tilde{\theta}_a)\|_{\infty}.
\end{align*}

Then we may fix $a = \frac{1}{9C_p\|Df\|_{\infty}^p}$ so that $\sigma\circ \Tilde{\theta}_a$ is strongly $(\omega,mp,\epsilon_p)$-bounded (with respect to $f$), with $\epsilon_p = \frac{\epsilon}{9C_p}$. Finally,
\[
    \|d(\sigma^{mp}_\omega \circ \tilde{\theta}_a)(0)\| = a \|d(\sigma^{mp}_\omega \circ \tilde{\theta})(0)\| \ge \frac{2}{27} a \epsilon = \frac{3}{2}\alpha_p\epsilon_p,
\]
for $\alpha_p = \frac{4}{81\|Df\|^p_{\infty}}$. Hence, $mp$ is a $(\alpha_p,\epsilon_p)$-geometric time for $\Hat{x}$.
\end{proof}

Finally, we show the exponential growth of vectors along geometric times given by Lemma \ref{lema prop of large sets}. We consider the function $\phi:\mathbb{P}L_f \to \rr$ given by $\phi(\Hat{x},v) = \log\|D\Hat{f}^{-1}_{\Hat{x}}\cdot v\|$ and the associated $\Hat{F}^{-1}$-cocycle
\[
\phi^n(\Hat{x},v) = \sum\limits_{j=0}^{n-1}\phi \circ \Hat{F}^{-j}(\Hat{x},v).
\]

\begin{lema}\label{lema Ep is large}
    For every $p \in \nn$, there is a constant $a_p>0$ such that for every $\Hat{x} \in \pi_{f}^{-1}(\sigma)$ and every $mp>kp \in E_p(\Hat{x})$:
    \[
    \phi^{mp-kp}(\Hat{F}^{-m}(\Hat{x},v_{\Hat{x}})) > (mp-kp)a_p.
    \]
\end{lema}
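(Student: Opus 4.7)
The plan is to exploit the red property of the reparametrization witnessing $mp\in E_p(\Hat{x})$, together with the nested structure of the reparametrization tree, to convert a single good bound at the deeper time $mp$ into an exponential lower bound on the backward derivative between the levels $kp$ and $mp$. Fix $\Hat{x}\in\pi_{Sol}^{-1}(\sigma)$, write $\Hat{x}=\Hat{\sigma}_\omega(t)$ for the corresponding $\omega$ and $t\in[-1,1]$, and let $v_x$ be the unit tangent to $\sigma$ at $t$. The identity $f^{(m-k)p}\circ\sigma_{mp}^\omega=\sigma_{kp}^\omega$ and the chain rule give
\[
\frac{\|d\sigma_{mp}^\omega(t)\|}{\|d\sigma_{kp}^\omega(t)\|}\;=\;\bigl\|D\Hat{f}^{-(m-k)p}_{\Hat{f}^{-kp}(\Hat{x})}\cdot v_k\bigr\|\;=\;\exp\bigl(\phi^{(m-k)p}(\Hat{F}^{-kp}(\Hat{x},v_x))\bigr),
\]
where $v_k$ is the unit tangent to $\sigma_{kp}^\omega$ at $t$ (i.e., the normalized image of $v_x$ under $D\Hat{f}^{-kp}_{\Hat{x}}$). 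So it suffices to bound this ratio from below by $100^{m-k}/9$.

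Since $mp\in E_p(\Hat{x})$, there is a red reparametrization $\theta_m\in\overline{\mathcal{R}_{mp}^\tau}(\boldsymbol{k})$ and $s_m\in[-1/3,1/3]$ with $\theta_m(s_m)=t$. Let $\theta_k^*$ denote the ancestor of $\theta_m$ at level $k$ in the tree for $f^p$; by item $2$ of the Reparametrization Lemma, $\theta_m=\theta_k^*\circ\varphi_{k+1}\circ\cdots\circ\varphi_m$ with each $\varphi_j$ affine of slope at most $1/100$, so $|\theta_m|\leq 100^{-(m-k)}|\theta_k^*|$. Let $s_k^*\in[-1,1]$ satisfy $\theta_k^*(s_k^*)=t$. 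The red property of $\theta_m$ combined with the bounded distortion \eqref{eq distortion bounded curves} of bounded curves gives
\[
\|d(\sigma_{mp}^\omega\circ\theta_m)(s_m)\|\;\geq\;\tfrac{2}{3}\|d(\sigma_{mp}^\omega\circ\theta_m)(0)\|\;\geq\;\tfrac{\epsilon}{9},
\]
while $\sigma_{kp}^\omega\circ\theta_k^*$ is strongly $\epsilon$-bounded (it is the level-$k$ iterate of a level-$k$ reparametrization), so $\|d(\sigma_{kp}^\omega\circ\theta_k^*)(s_k^*)\|\leq\epsilon$.

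Since $\theta_m$ and $\theta_k^*$ are affine with $|D\theta_m|=|\theta_m|/2$ and $|D\theta_k^*|=|\theta_k^*|/2$, the chain rule yields
\[
\|d\sigma_{mp}^\omega(t)\|\;\geq\;\frac{2\epsilon/9}{|\theta_m|},\qquad \|d\sigma_{kp}^\omega(t)\|\;\leq\;\frac{2\epsilon}{|\theta_k^*|},
\]
and dividing together with the tree nesting bound gives
\[
\frac{\|d\sigma_{mp}^\omega(t)\|}{\|d\sigma_{kp}^\omega(t)\|}\;\geq\;\frac{1}{9}\cdot\frac{|\theta_k^*|}{|\theta_m|}\;\geq\;\frac{100^{m-k}}{9}.
\]
Taking logarithms gives $\phi^{(m-k)p}(\Hat{F}^{-kp}(\Hat{x},v_x))\geq(m-k)\log 100-\log 9\geq(m-k)\log(100/9)$ for $m>k$, so the lemma holds with $a_p:=\log(100/9)/p>0$. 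The only real point of care is to compare $\theta_m$ with its own ancestor $\theta_k^*$ rather than with the (possibly unrelated) red witness at level $k$: this comparison is what feeds the tree's per-level contraction rate $1/100$ into the final exponential lower bound, and no further estimates are needed beyond the nesting and the bounded-curve distortion.
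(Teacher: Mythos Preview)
Your proof is correct and follows essentially the same strategy as the paper's: express $\phi^{(m-k)p}(\Hat{F}^{-kp}(\Hat{x},v_x))$ as the ratio $\|d\sigma_{mp}^\omega(t)\|/\|d\sigma_{kp}^\omega(t)\|$, bound the numerator from below via the red property of $\theta_m$ plus distortion, bound the denominator from above via strong $\epsilon$-boundedness at level $k$, and feed in the $100^{-1}$ per-level contraction from item~2 of the Reparametrization Lemma. Your choice to compare with the \emph{ancestor} $\theta_k^*$ of $\theta_m$ (rather than with the red witness $\theta_k$ for $kp\in E_p(\Hat{x})$, as the paper writes) is the cleaner formulation, since item~2 of the Reparametrization Lemma only directly yields $|\theta_m'|\le 100^{-(m-k)}|\theta_k^{*\prime}|$ for the ancestor; the paper's proof uses exactly the same properties (strong $\epsilon$-boundedness and the nesting factor), and never actually invokes the redness of $\theta_k$, so the two arguments are really the same up to this labeling and a harmless difference in the final constant ($a_p=\log(100/9)/p$ versus $\log 10/p$).
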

\begin{proof}
   Fix $\Hat{x} \in \pi_{f}^{-1}(\sigma)$  and let $\omega = \omega(\Hat{x})$ be such that $\Hat{x}=\Xi(x,\omega)$, we fix $p \in \nn$ and $mp>kp \in E_p(\Hat{x})$. Let $x = \pi_{f}(\Hat{x}) = \sigma \circ \theta_m (t)$, for $t \in [-1/3,1/3]$, $\theta_m \in \ol{\mathcal{R}_{mp}^\tau}(\boldsymbol{k})$, $\tau = (\omega_1,\cdots,\omega_{mp})$, and $\boldsymbol{k} = \boldsymbol{k}^\omega_m(\Hat{x})$. Analogously we let $x = \sigma\circ\theta_k(s)$, for $s \in [-1/3,1/3]$ and $\theta_k \in \ol{\mathcal{R}_{kp}^{\tilde{\tau}}}(\tilde{\boldsymbol{k}})$. Then, writing $(\Hat{x},v_x) \in \mathbb{P}L_f$ the tangent direction to $\sigma$ at $x = \pi_{f}^{-1}(\Hat{x})$, we have
   \begin{align*}
        e^{\phi^{mp-kp}(\Hat{F}^{-kp}(\Hat{x},v_x))} &= \frac{\|d(\sigma^{mp}_\omega \circ \theta_m)(t)\|}{\|d(\sigma^{kp}_\omega \circ \theta_m)(t)\|} \\
        &\ge \frac{2}{3}\frac{\|d(\sigma^{mp}_\omega \circ \theta_m)(0)\|}{\|d(\sigma^{kp}_\omega \circ \theta_m)(t)\|}, \ &\text{since} \ \sigma^{mp}_\omega \circ \theta_m \ \text{is bounded}, \\
        &\ge \frac{2}{3}\frac{\|d(\sigma^{mp}_\omega \circ \theta_m)(0)\|}{\|d(\sigma^{kp}_\omega \circ \theta_k)(s)\|} 100^{m-k}, \ &\text{from item} \ (4) \ \text{of the Reparametrization Lemma},\\
        &\ge \frac{2}{3\epsilon}\|d(\sigma^{mp}_\omega \circ \theta_m)(0)\|100^{m-k}, \ &\text{since} \ \sigma^{kp}_\omega \circ \theta_k \ \text{is strongly} \ \epsilon\text{-bounded},\\
        &\ge \frac{1}{9}100^{m-k} \ge 10^{m-k}, \ &\text{by property \eqref{eq expanding prop red rep} of} \ \theta_m \in \ol{\mathcal{R}}.
   \end{align*}

    Hence, $\phi^{mp-kp}(\Hat{F}^{-kp}(\Hat{x},v_x)) \ge (mp-kp)a_p$, for $a_p = \frac{\log 10}{p}$.
\end{proof}

We end this section by extending a key result on Burguet's construction (more specifically Proposition 4 on \cite{burguet2024srb}). It states that if a curve $\sigma$ has sufficient large asymptotic rate of expansion on its tangent direction, then every point on a subset of positive Lebesgue measure has uniformly large asymptotic density of geometric times. 

Recalling the notations on Subsection \ref{subsec equidistritued meas}, the main difference between our setting and Burguet's is that we want this property on a set of positive $\Hat{\eta}_{\sigma}$ measure on $\pi_{f}^{-1}(\sigma)$. From Burguet's result, it is true on each curve $\Hat{\sigma}_\omega$ on $L_f$ when considering the Lebesgue measure on the curve, this is the reason we require that $\Hat{\eta}_{\sigma}$ disintegrates along these curves as lifts of the Lebesgue measure $\Leb_{\sigma}$.

\begin{prop}\label{prop key estimates}
    Let $b > \frac{R^-(f)}{r}$. For $p$ large enough, there exists $\beta_p>0$ such that:
    \[
    \limsup\limits_{n \to +\infty}\frac{1}{n} \log \Hat{\eta}_{\sigma}(\{\Hat{x} \in \pi_{f}^{-1}(\sigma): d_n(E_p(\Hat{x})) < \beta_p \ \text{and} \ \|D\Hat{f}_{\Hat{x}}^{-n}\cdot v_x\| \ge e^{nb}\}) < 0.
    \]
\end{prop}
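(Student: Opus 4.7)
The plan is to adapt Burguet's argument for the corresponding statement in the diffeomorphism setting (\cite[Proposition 4]{burguet2024srb}) to our non-invertible solenoidal framework, taking advantage of the fact that the measure $\hat\eta_\sigma$ already disintegrates over $\Leb_\sigma$ as a product with the Bernoulli measure on fibers. First I would fix the parameters: since $b > R^-(f)/r$, by Fekete's lemma I can choose $p$ large enough so that $\tfrac{1}{p}\log\|Df^{-p}\|_\infty$ is as close as I wish to $R^-(f)$, in particular so that $b > \tfrac{1}{p}\log\|Df^{-p}\|_\infty/r$ with room to spare. Then I will fix $\beta_p>0$ small depending on this gap; the permissible size of $\beta_p$ comes out of the final exponent computation.

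Next I would reduce to a per-$\tau$ combinatorial estimate. Under the identification $\pi_{Sol}^{-1}(\sigma)\cong \sigma\times\Sigma$ and $\hat\eta_\sigma=\Leb_\sigma\times p$, the set $B_n = \{\hat x : d_n(E_p(\hat x)) < \beta_p, \|D\hat f_{\hat x}^{-n}\cdot v_x\| \ge e^{nb}\}$ has a cylindrical structure: for $n=mp$, the two conditions defining $B_n$ depend on $\omega\in\Sigma$ only through $\tau=(\omega_1,\ldots,\omega_{mp})$ (they are functions of the reparametrization chain at level $m$ of the $f^p$-tree, which is determined by $\tau$ via the correspondence \eqref{eq relation rep fp and rep f}). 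Writing $B_n^\tau\subset\sigma$ for the projection of the corresponding slice, this yields
\[
\hat\eta_\sigma(B_n) = \frac{1}{d^{mp}}\sum_{\tau\in\{1,\ldots,d\}^{mp}} \Leb_\sigma(B_n^\tau).
\]

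For fixed $\tau$, I would cover $B_n^\tau$ by the images $\sigma\circ\theta_m([-1,1])$ of the $\theta_m\in\mathcal{R}_{mp}^\tau$ whose ancestor chain $(\theta_0,\ldots,\theta_m)$ has at most $\beta_p m$ red levels and satisfies $\sum_{i=1}^{m}\tilde k_i\ge nb$ (from the expansion constraint, using $\|D\hat f^{-n}_{\hat x}\cdot v_x\|\asymp e^{\sum\tilde k_i}$). Writing each $\theta_m$ as a composition of affine contractions $\varphi_1\circ\cdots\circ\varphi_m$ with rates $b_i$, we have $|\sigma\circ\theta_m|\le 2\epsilon\prod_i b_i$. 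The Reparametrization Lemma gives the key cancellation for blue levels: the number of choices at a blue step is $\le C_r e^{(k_i-\tilde k_i)/(r-1)}$ while $b_i\lesssim e^{-(k_i-\tilde k_i)/(r-1)}$, so the product (count $\times$ length contribution) is a universal constant per blue step. For red levels, $b_i=\Theta(1)$ and the count is $\le C_r e^{\max\{k_i,(k_i-\tilde k_i)/(r-1)\}}$, which is the only exponentially large factor. Summing over chain types (red/blue patterns with $\le\beta_p m$ reds) and over $(k_i,\tilde k_i)$ integer tuples (only polynomially many relevant values since $k_i,\tilde k_i$ take values in a bounded set),
\[
\Leb_\sigma(B_n^\tau) \le C^m \cdot \mathrm{poly}(n) \cdot \max \exp\Bigl(\sum_{i\in R}\max\{k_i,\tfrac{k_i-\tilde k_i}{r-1}\}\Bigr).
\]
Using $\max\{k,\tfrac{k-\tilde k}{r-1}\}\le k+\tfrac{\max\{0,-\tilde k\}}{r-1}$, the bounds $\sum_i k_i \le mp\cdot \tfrac{1}{p}\log\|Df^{-p}\|_\infty$, $\sum_i\tilde k_i\ge nb$, $|R|\le\beta_p m$, and the boundedness of $-\tilde k_i$ from below, the exponent per $\tau$ becomes
\[
n\cdot\Bigl(\beta_p\cdot R^-(f)(1+\delta_p) + \tfrac{R^-(f)(1+\delta_p)-b}{r-1}\Bigr)+o(n).
\]
After substituting into the sum over $\tau$ (the $d^{-mp}$ factor cancels with the trivial $d^{mp}$ count), this is the desired $\limsup$-bound on $\tfrac{1}{n}\log\hat\eta_\sigma(B_n)$.

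The main obstacle will be the careful bookkeeping in the combinatorial step, in particular controlling the $\max\{k_i,(k_i-\tilde k_i)/(r-1)\}$ term at red levels and making sure that all the constants (number of chain types, polynomial factor in $(k_i,\tilde k_i)$ tuples, contribution of universal constants $C,C_r$, the distortion gap between $g=f^p$ and $f$) are absorbed into the $o(n)$ remainder. Then negativity of the leading exponent amounts to checking
\[
b > R^-(f)(1+\delta_p)\left(\tfrac{1}{r}+\tfrac{r-1}{r}\beta_p\right),
\]
which is precisely the hypothesis $b>R^-(f)/r$ for $p$ large ($\delta_p\to 0$) and $\beta_p$ small. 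This is the unique place where the threshold $R^-(f)/r$ (and not the weaker $R^-(f)$) appears, and it reflects the gain of a factor $r-1$ in exponent provided by the Yomdin–Gromov reparametrization machinery.
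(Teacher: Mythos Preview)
Your overall reduction—decomposing $\hat\eta_\sigma$ over $\Sigma$ via the product structure, bounding $\Leb_\sigma(B_n^\tau)$ uniformly in $\tau$, then averaging—is exactly the paper's approach. The gap is in the per-$\tau$ combinatorial estimate. The exponent you display,
\[
n\Bigl(\beta_p R^-(f)(1+\delta_p) + \tfrac{R^-(f)(1+\delta_p)-b}{r-1}\Bigr),
\]
is negative precisely when $b > R^-(f)(1+\delta_p)\bigl(1+(r-1)\beta_p\bigr)$, which in the limit $\delta_p,\beta_p\to 0$ requires $b>R^-(f)$, \emph{not} $b>R^-(f)/r$. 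The final inequality you assert, $b > R^-(f)(1+\delta_p)\bigl(\tfrac{1}{r}+\tfrac{r-1}{r}\beta_p\bigr)$, simply does not follow from that exponent; the two conditions differ by a factor of $r$ in the leading term, so your own computation does not recover the correct threshold.

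The error stems from the ``count $\times$ length'' cancellation scheme. Bounding $|\sigma\circ\theta_m|\le 2\epsilon\prod_i b_i$ and cancelling the blue contributions against the blue count discards the sharp length information: since $\sigma^\omega_m\circ\theta_m$ is strongly $\epsilon$-bounded and $\|D\hat f^{-n}_{\hat x}v_x\|\ge e^{nb}$ on the bad set, bounded distortion gives $|\sigma\circ\theta_m|\le 3\epsilon e^{-nb}$ directly, and it is this bound that produces the full $-b$ in the final exponent (the domain contraction $\prod_i b_i$ need not be comparable to $e^{-nb}$). Your claim that $b_i=\Theta(1)$ at red levels is also incorrect: a red step still contains the Yomdin cut of rate $\sim e^{-(k_i-\tilde k_i)/(r-1)}$ before the subdivision of Lemma~\ref{lema red reparametrizations} is applied. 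The paper, following Burguet, instead separates the two estimates: (i) each relevant piece has length $\le 3\epsilon e^{-nb}$; (ii) for fixed $\boldsymbol{k}$ the number of relevant $\theta_m$ with at most $\beta_p m$ red ancestors is $\le 2^m C_r^m e^m\|(Df^p)^{-1}\|_\infty^{m/r}\|Df^{-1}\|_\infty^{\beta_p p^2 m}$; (iii) the number of admissible $\boldsymbol{k}\in\zz^{2m}$ is $\le(2pA_f+1)^{2m}$. Multiplying and taking $\tfrac{1}{n}\log$ yields the exponent $\tfrac{R^-(f)}{r}-b+o_p(1)+O(p\beta_p)$, which is what gives the threshold $b>R^-(f)/r$.
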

\begin{proof}
    We let $\mathcal{E}_n = \{\Hat{x} \in \pi_{f}^{-1}(\sigma): d_n(E_p(\Hat{x})) < \beta_p \ \text{and} \ \|D\Hat{f}_{\Hat{x}}^{-n}\cdot v_x\| \ge e^{nb}\}$. For a fixed $p \in \nn$, let $\mathcal{R}^p$ be the reparametrization tree of $\sigma$ used to construct $E_p$, divided as in \eqref{eq relation rep fp and rep f}, let $A_f = \log\|Df\|_{\infty}+\log\|Df^{-1}\|_{\infty}+1$.  We fix $n = mp \in p\nn$, $\omega \in \Sigma$ and $\tau = (\omega_1,\cdots,\omega_n) \in \{1,\cdots,d\}^n$. By following exactly the same arguments as Burguet (see Proposition 4 \cite{burguet2024srb}) on $\Hat{\sigma}_\omega$, we obtain:
    \begin{enumerate}
        \item For $\theta \in \mathcal{R}_{mp}^\tau$, with $(\Hat{\sigma}_\omega \circ \theta) \cap \mathcal{E}_n \neq \emptyset$, the length of $\sigma \circ \theta$ is less than $3\epsilon e^{-nb}$,
        \item The number of sequences $\boldsymbol{k} \in \zz^{2m}$ in the division of $\mathcal{R}_{mp}^\tau$ as in \eqref{eq defn k n omega} is bounded by $(2pA_f+1)^{2m}$,
        \item For a fixed sequence $\boldsymbol{k}$, the number of elements $\theta_m \in \mathcal{R}_{mp}^\tau(\boldsymbol{k})$ such that $(\Hat{\sigma}_\omega \circ \theta_m) \cap \mathcal{H}(\boldsymbol{k}) \cap \mathcal{E}_n \neq \emptyset$ is bounded from above by $2^m C_r^me^m\|(Df^p){-1}\|_{\infty}^{\frac{m}{r}}\|Df^{-1}\|_{\infty}^{\beta_p p^2 m}$.
    \end{enumerate}

    This yields for every $\omega \in \Sigma$:
    \begin{align*}
        Leb_{\sigma}(\pi_{f}(\mathcal{E}_n \cap \Hat{\sigma}_\omega)) &\le \sum\limits_{\boldsymbol{k} \in \zz^{2m}} \Leb_{\sigma}(\pi_{f}(\mathcal{H}(\boldsymbol{k}) \cap \mathcal{E}_n \cap \Hat{\sigma}_\omega))\\
        &\le \sum\limits_{\boldsymbol{k} \in \zz^{2m}} \sum\limits_{\theta \in \mathcal{R}_{mp}^\tau (\boldsymbol{k})} |\sigma\circ \theta| \\
        &\le (2pA_f+1)^{2m}\cdot (2^mC_r^me^m\|(Df^p)^{-1}\|_{\infty}^{\frac{m}{r}}\|Df^{-1}\|_{\infty}^{\beta_p p^2 m})\cdot (3\epsilon e^{-nb})
    \end{align*}
    
    Finally, we may use the relation \eqref{eq fubini on meas curves} to write
    \begin{equation*}
    \Hat{\eta}_{\sigma}(\mathcal{E}_n) = \int_{\Sigma} \Leb_{\sigma}(\pi_{f}(\mathcal{E}_n\cap \Hat{\sigma}_\omega)) \ dp(\omega).
    \end{equation*}
    Hence 
    \begin{align*}
    \limsup\limits_{n \to +\infty} \frac{1}{n} \log \Hat{\eta}_{\sigma}(\mathcal{E}_n) &= \limsup\limits_{n \to +\infty}\frac{1}{n} \log \int_{\Sigma} \Leb_{\sigma}(\pi_{f}(\mathcal{E}_n\cap \Hat{\sigma}_\omega)) \ dp(\omega)\\
    &\le \frac{2}{p}\log(2pA_f+1) + \frac{\log(2C_r e)}{p} + \frac{\log(\|(Df^p)^{-1}\|_{\infty})}{p r}+p \beta_p A_f-b.
    \end{align*}
    Taking p sufficiently large, the right hand side is bounded by $\frac{R^-(f)}{r}+p\beta_pA_f -b $. Since $b>\frac{R^-(f)}{r}$, we we may choose first p large enough, then $\beta_p$ small enough such that this expression is negative. 
\end{proof}

\section{Proof of Theorem \ref{main thm burguet}}\label{sect:B}

Let $f: \TT^2 \to \TT^2$ be a non-invertible $\mathcal{C}^r$-local diffeomorphism with $d = \deg(f)$. We let $R^-(f) = \lim\limits_{n\to \infty} \frac{1}{n} \log^+ \sup_{x \in \TT^2} \|(Df_x^n)^{-1}\|$ and suppose that 
\[
I(x) = \limsup\limits_{n \ge 0} \frac{1}{n} \inf\limits_{v \in T^1_x\TT^2} \frac{1}{d^n}\sum\limits_{y \in  f^{-n}(x)}\log \|(Df_y^n)^{-1}\cdot v\| > b >  \frac{R^-(f)}{r}
\]
on a set of positive Lebesgue measure on $\TT^2$.

Take $p$ large enough (so that Proposition \ref{prop key estimates} holds), then $\epsilon = \epsilon_0(\Hat{f}^{-p})$ as in the Reparametrization Lemma. From now on, $\beta = \beta_p$ given by Prop. \ref{prop key estimates} is fixed.  By a standard argument using Fubini's Theorem, there is a $\mathcal{C}^r$ smooth embedded curve $\sigma: [-1,1] \to \TT^2$, which can be assumed to be strongly $\epsilon$-bounded, and a subset $A$ of $\sigma$ with $\Leb_{\sigma}(A)>0$ such that $I(x)>b$ for every $x \in A$. Consequently, taking $v_x = \frac{\sigma'(s_0)}{\|\sigma'(s_0)\|}$, where $\sigma(s_0) = x$, we get
\begin{equation}\label{eq limsup I x, vx larger than b}
\limsup\limits_{n \to +\infty}\frac{1}{n}I(x,v_x, f^n) > b, \ \text{for every} \ x \in A.
\end{equation}
We can also obtain $\sigma$ in a way that the countable set of periodic sinks do not intersect it.

With the notations of Subsection \ref{subsec equidistritued meas}, we take the measure $\Hat{\eta}_{\sigma}$ on $\pi_{f}^{-1}(\sigma)$ obtained from the Lebesgue measure on $\sigma$ as 
\begin{equation*}
\Hat{\eta}_{\sigma} = \int p_x d\Leb_{\sigma}(x),
\end{equation*}
where $p_x$ is the unique measure on $\pi_{f}^{-1}(x)$ satisfying for every $z \in f^{-n}(x)$ and every $n \ge 0$ that $p_x ( \{(x_k)_k: x_n = z\} ) = d^{-n}$.

\begin{claim}
    There exists a set $\Hat{G} \subset \pi_{f}^{-1}(\sigma)$ with $\Hat{\eta}_\sigma(\Hat{G})>0$ such that
\begin{equation}\label{eq large l.e. on G}
\chi_{\Hat{f}^{-1}}(\Hat{x},v_{\Hat{x}}) = \limsup\limits_{n \to +\infty}\frac{1}{n} \log \|D\Hat{f}_{\Hat{x}}^{-n}\cdot v_{\Hat{x}}\| > b.
\end{equation}
\end{claim}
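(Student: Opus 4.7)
The plan is to take the obvious candidate set
\[
\Hat{G} := \{\Hat{x} \in \pi_{Sol}^{-1}(\sigma) : \chi_{\Hat{f}^{-1}}(\Hat{x}, v_{\Hat{x}}) > b\},
\]
and deduce $\Hat{\eta}_\sigma(\Hat{G})>0$ from the hypothesis \eqref{eq limsup I x, vx larger than b} by a Fubini plus reverse-Fatou argument. The key identity, already noted at the start of Section \ref{sect:ieap}, is
\[
I(x,v_x;f^n) \;=\; \int_{\pi_{Sol}^{-1}(x)} \log\|D\Hat{f}^{-n}_{\Hat{x}}\cdot v_x\|\, dp_x(\Hat{x}),
\]
so \eqref{eq limsup I x, vx larger than b} rewrites, for every $x\in A$, as
\[
\limsup_{n\to\infty} \int_{\pi_{Sol}^{-1}(x)} \tfrac{1}{n}\log\|D\Hat{f}^{-n}_{\Hat{x}}\cdot v_x\|\, dp_x(\Hat{x}) > b.
\]

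Next I would apply the reverse Fatou lemma inside the fibre $\pi_{Sol}^{-1}(x)$. This is legitimate because $f$ is a local diffeomorphism on the compact torus, so the uniform bound
\[
\tfrac{1}{n}\log\|D\Hat{f}^{-n}_{\Hat{x}}\cdot v_x\| \;\le\; \tfrac{1}{n}\log\|D\Hat{f}^{-n}_{\Hat{x}}\| \;\le\; \log\|Df^{-1}\|_\infty < \infty
\]
holds for every $\Hat{x}\in\pi_{Sol}^{-1}(x)$ and every $n\ge 1$, providing the integrable dominating majorant required by reverse Fatou. Pulling the $\limsup$ inside yields, for every $x\in A$,
\[
\int_{\pi_{Sol}^{-1}(x)} \chi_{\Hat{f}^{-1}}(\Hat{x},v_x)\, dp_x(\Hat{x}) \;\ge\; \limsup_{n\to\infty} \tfrac{1}{n} I(x,v_x;f^n) \;>\; b.
\]
Since the integrand is bounded above by $\log\|Df^{-1}\|_\infty$, the inequality on the integral forces
\[
p_x\bigl(\{\Hat{x}\in\pi_{Sol}^{-1}(x): \chi_{\Hat{f}^{-1}}(\Hat{x},v_x)>b\}\bigr) \;>\; 0, \quad \forall x\in A.
\]

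Finally I would conclude by the disintegration \eqref{eq defn Leb times bernoulli on curves}, which asserts $\Hat{\eta}_\sigma = \int p_x\,d\Leb_\sigma(x)$. For any $x\in\sigma$, the vector $v_{\Hat{x}}$ attached to $\Hat{x}\in\pi_{Sol}^{-1}(x)$ is, by definition, $v_x$, so the restriction of $\Hat{G}$ to the fibre over $x\in A$ equals $\{\Hat{x}: \chi_{\Hat{f}^{-1}}(\Hat{x},v_x)>b\}$, which has positive $p_x$-measure by the previous step. Hence
\[
\Hat{\eta}_\sigma(\Hat{G}) \;=\; \int_\sigma p_x(\Hat{G}\cap\pi_{Sol}^{-1}(x))\, d\Leb_\sigma(x) \;\ge\; \int_A p_x(\Hat{G}\cap\pi_{Sol}^{-1}(x))\, d\Leb_\sigma(x) \;>\;0,
\]
using $\Leb_\sigma(A)>0$.

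The only delicate point is the interchange of $\limsup$ and integral; once one recognizes that the uniform bound $\log\|Df^{-1}\|_\infty$ supplied by the local-diffeomorphism assumption serves as the integrable majorant, reverse Fatou applies cleanly and everything else is bookkeeping with the Fubini-type formula for $\Hat{\eta}_\sigma$. The measurability of $\Hat{G}$ is routine since $\chi_{\Hat{f}^{-1}}(\cdot,v_{(\cdot)})$ is a $\limsup$ of continuous functions along the tangent field of $\sigma$.
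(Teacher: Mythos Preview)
Your proof is correct and follows essentially the same route as the paper: rewrite $I(x,v_x;f^n)$ as a fibre integral, apply reverse Fatou with the uniform majorant coming from the local-diffeomorphism bound (the paper uses $R^-(f)+\epsilon$ for large $n$, you use $\log\|Df^{-1}\|_\infty$ for all $n$; either works), and then use the disintegration of $\Hat{\eta}_\sigma$ to conclude. The only cosmetic difference is that you pass through the pointwise statement $p_x(\Hat{G}\cap\pi_{Sol}^{-1}(x))>0$ for each $x\in A$, whereas the paper integrates once more over $A$ and concludes directly from $\int_{\pi_{Sol}^{-1}(A)}\chi_{\Hat{f}^{-1}}\,d\Hat{\eta}_\sigma > b\,\Hat{\eta}_\sigma(\pi_{Sol}^{-1}(A))$.
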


\begin{proof}
From \eqref{eq limsup I x, vx larger than b},
\begin{equation}\label{set essent sup}
    \int_A\limsup_n \frac{1}{n}\int_{\pi_{f}^{-1}(x)} \log\|D\Hat{f}^{-n}_{\Hat{x}}\cdot v_{\Hat{x}} \|\ dp_x(\Hat{x})\ d\Leb_{\sigma}(x) > b\ \Leb_{\sigma}(A).
    \end{equation}
    For all large $n$ we have $\frac{1}{n} \log\|D\Hat{f}^{-n}_{\Hat{x}}\cdot v_{\Hat{x}} \| \le \frac{1}{n}\log^+ \sup\limits_{x \in M}\|(Df^n_x)^{-1}\| \le R^-(f)+\epsilon$, for some $\epsilon>0$. Hence, for every $x \in M$, we may use Fatou's Lemma on the sequence of functions $\pi_{f}^{-1}(x) \ni  \Hat{x} \mapsto R^-(f)+\epsilon - \frac{1}{n} \log\|D\Hat{f}^{-n}_{\Hat{x}}\cdot v_{\Hat{x}}\|$ to obtain that
    \[
    \int_{\pi_{f}^{-1}(x)}\limsup\limits_n\frac{1}{n}\log\|D\Hat{f}^{-n}_{\Hat{x}}\cdot v_{\Hat{x}} \|\ dp_x(\Hat{x}) \ge \limsup_n \frac{1}{n}\int_{\pi_{f}^{-1}(x)} \log\|D\Hat{f}^{-n}_{\Hat{x}}\cdot v_{\Hat{x}} \|\ dp_x(\Hat{x}).
    \]
    
    Combining with \eqref{set essent sup}, we get
    \[
    \int_A\int_{\pi_{f}^{-1}(x)}\limsup\limits_n\frac{1}{n}\log\|D\Hat{f}^{-n}_{\Hat{x}}\cdot v_{\Hat{x}} \|\ dp_x(\Hat{x})\ d\Leb_{\sigma}(x) > b \ \Leb_{\sigma}(A).
    \]
    Thus from the definition of $\Hat{\eta}_\sigma$, $\int_{\pi_{f}^{-1}(A)} \chi_{\Hat{f}^{-1}}(\Hat{x},v_{\Hat{x}}) \ d\Hat{\eta}_{\sigma} > b \ \Leb_{\sigma}(A) = b \ \Hat{\eta}_\sigma(\pi_{f}^{-1}(A))$, which implies the existence of the set $\Hat{G} \subset \pi_{f}^{-1}(A)$ as we want.
\end{proof}

We fix $E = E_p: \pi_{f}^{-1}(\sigma) \to \mathcal{P}_\nn$ the geometric set. We then apply Proposition \ref{prop key estimates} to obtain that
\[
\sum\limits_{n} \Hat{\eta}_{\sigma}(\{\Hat{x} \in \Hat{G}: d_n(E(\Hat{x}))< \beta \ \text{and} \ \|D\Hat{f}_{\Hat{x}}^{-n}\cdot v_x\| \ge e^{nb}\}) <+\infty.
\]
Hence, by taking a smaller subset $\Hat{A} \subset \Hat{G}$, still with $\Hat{\eta}_{\sigma}(\Hat{A})>0$, there exist $N \in \nn$ such that for every $n \ge N$ it holds that
\begin{equation*}
\text{for every} \ \Hat{x} \in \Hat{A}, \ \|D\Hat{f}^{-n}_{\Hat{x}} \cdot v_{\Hat{x}}\|>e^{nb} \Longrightarrow d_n(E(x)) \ge \beta .
\end{equation*}
This along with \eqref{eq large l.e. on G} imply that $\ol{d}(E(\Hat{x}))\ge \beta$ for every $\Hat{x} \in \Hat{A}$.

We then apply our results on the projective action $\Hat{F}: \mathbb{P}L_f \to \mathbb{P}L_f$ given by $\Hat{F} (\Hat{x},[v]) = (\Hat{f}(\Hat{x}), [D\Hat{f}_{\Hat{x}} \cdot v])$, where we consider the following objects:

\begin{itemize}
    \item The derivative cocycle $\Phi = (\phi_k)_k$ given by $\phi_k(\Hat{x},v) = \log \|D\Hat{f}_{\Hat{x}}^{-k} \cdot v\|$.
    \item The measure $\boldsymbol{\eta} = (\boldsymbol{\pi}_{\sigma})_* \Hat{\eta}_{\sigma}$ on $\mathbb{P}L_f$, where $\boldsymbol{\pi}_{\sigma}: \pi_{f}^{-1}(\sigma) \to \mathbb{P}L_f$ is given by $\boldsymbol{\pi}_{\sigma}(\Hat{x}) = (\Hat{x},v_x)$.
    \item The set $\boldsymbol{\Hat{A}} = \boldsymbol{\pi}_{\sigma}(\Hat{A}) \subset \mathbb{P}L_f$ which satisfies $\boldsymbol{\eta}(\boldsymbol{\Hat{A}}) = \Hat{\eta}_{\sigma}(\Hat{A}) > 0$.
    \item The geometric sequence $E:\Hat{A} \to \mathcal{P}_{\nn}$ which can be seen as $E:\boldsymbol{\pi}_{\sigma}(\Hat{A}) \to \mathcal{P}_{\nn}$ since $\boldsymbol{\pi}_{\sigma}$ is one to one. It satisfies $\ol{d}(E(\Hat{\xi})) \ge \beta$, for every $\Hat{\xi} \in \boldsymbol{\Hat{A}}$. 
\end{itemize}

We then apply Lemma \ref{lema folner sequence} to obtain a subsequence $\mathcal{n} \in \mathcal{N}$, a F{\o}lner sequence $\mathcal F = (F_{n})_{n \in \mathcal{n}}$ and $(\boldsymbol{\Hat{A}}_{n})_{n \in \mathcal{n}}$ subsets of $\boldsymbol{\Hat{A}}$ with $\boldsymbol{\eta}(\boldsymbol{\Hat{A}}_{n})$ decreasing sub-exponentially and with the property that $\partial F_{n} \subset E(\Hat{\xi})$ for every $\Hat{\xi} \in \boldsymbol{A}_{n}$ and every $n \in \mathcal{n}$. We denote by $\boldsymbol{\mu}_{n}$ the probability measure induced by $\boldsymbol{\eta}$ on $\boldsymbol{\Hat{A}}_{n}$, i.e. $\boldsymbol{\mu}_{n}(B) = \frac{\boldsymbol{\eta}(\boldsymbol{\Hat{A}}_{n} \cap B )}{\boldsymbol{\eta}(\boldsymbol{\Hat{A}}_{n})}$.

From Lemma \ref{lema limit of measures along folner sequences}, any $\boldsymbol{\mu}$ weak$^*$ limit of $\boldsymbol{\mu}_n^{F_{n}}:= \frac{1}{\#F_{n}} \sum\limits_{j \in F_{n}} \Hat{F}_*^{-j}\boldsymbol{\mu}_n$, is $\Hat{F}$-invariant, thus supported by the Oseledets bundle. Hence $\boldsymbol{\mu}$-a.e. $\Hat{\xi} = (\Hat{x},v) \in \mathbb{P}L_f$ satisfies 
\[
\Tilde{\phi}(\Hat{\xi}) = \lim\limits_{n\to +\infty} \frac{\phi_n}{n} = \chi_{\Hat{f}^{-1}}(\Hat{x},v) \in \{-\chi^-(\Hat{x}), -\chi^+(\Hat{x})\},
\]
where $\chi^-(\Hat{x}) \le \chi^+(\Hat{x})$ are the Lyapunov exponents of $\Hat{f}$ at $\Hat{x}$.
By Lemmas \ref{lema Ep is large} and \ref{lema prop of large sets}, $\Tilde{\phi}(\Hat{\xi}) > a >0$ for $\boldsymbol{\mu}$-almost every $\Hat{\xi} \in \mathbb{P}L_f$, we claim that $\Tilde{\phi}(\xi) = - \chi^-(\boldsymbol{\pi}(\Hat{\xi}))$ for $\boldsymbol{\mu}$-a.e. $\Hat{\xi}$.

In effect, if that was not true, $\Hat{\mu} = \boldsymbol{\pi}_*(\boldsymbol{\mu})$ would have an ergodic component with two negative Lyapunov exponents. Such measure is necessarily supported on a periodic sink $S$ for which there is an open neighborhood $U$ with $f(U) \subset U$ and $\sigma \cap U = \emptyset$. In particular, $f^{-n}(\sigma) \cap U = \emptyset$ for all $n \ge 0$, hence $\boldsymbol{\pi}_* \boldsymbol{\mu}_n^{F_{n}} (U) = 0$. Taking the limit in $n$ we get $\Hat{\mu}(U) = \Hat{\mu}(S) =0$. Therefore,
\begin{equation}\label{eq measure on unstable bundle}
    \Tilde{\phi}(\Hat{\xi}) = -\chi^-(\Hat{x})> a >0, \ \text{for}\ \boldsymbol{\mu}\text{-a.e.} \ \Hat{\xi} = (\Hat{x},v) \in \mathbb{P}L_f.
\end{equation}

Now, for each $q \in \nn$, we define the function $\psi^q: \mathbb{P}L_f \to \rr$ by
\[
\psi_q  = \phi - \frac{\mathcal{w}_q}{r-1},
\]
where $\phi(\Hat{x},v) = \log \|D\Hat{f}_{\Hat{x}}^{-1}\cdot v\}$ and 
\[
\mathcal{w}_q(\Hat{x},v) := \frac{1}{q}(\log \|D\Hat{f}_{\Hat{x}}^{-q}\| - \log \|D\Hat{f}_{\Hat{x}}^{-q}\cdot v\|),
\]
the number $\mathcal{w}_q^n(\Hat{x},v_x)$ is an exponential upper bound for the number of reparametrizations required to cover an $\Hat{F}^{-1}$ dynamical ball around $(\Hat{x},v_x)$ as in Lemma \ref{lema rep of dynamical balls}. Then, we have the following Gibbs property for $\boldsymbol{\mu}$.

\begin{prop}\label{prop folner gibbs prop with q}
    There exist an infinite sequence of positive numbers $(\delta_q)_q$ with $\delta_q \to 0$ as $q \to \infty$ such that for every $q \in \nn$, $\boldsymbol{\eta}$ satisfies the F{\o}lner-Gibbs property (Definition \ref{defn folner gibbs property}) with respect to the $\Hat{F}^{-1}$-cocycle associated to $\psi_q + \log d -\delta_q$ and the F{\o}lner sequence $\mathcal{F}$.
\end{prop}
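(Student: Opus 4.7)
My plan is to establish the Fölner-Gibbs property by transporting $\boldsymbol{\eta}$ to its natural Lebesgue-Bernoulli product structure on $\pi_{Sol}^{-1}(\sigma)\cong \sigma\times \Sigma$ and then iterating the local reparametrization lemma along the intervals of $F_{n_k}$. Fix $q\in\nn$ and set $\epsilon=\min(\epsilon_q,\epsilon_p)/2$, additionally requiring $\epsilon<\tfrac{1}{2}\inf_{x\in\TT^2}\min_{y\ne y'\in f^{-1}(x)}d(y,y')$. For any partition $\boldsymbol{P}$ of $\mathbb{P}Sol$ with $\text{diam}(\boldsymbol{P})<\epsilon$ and any $\Hat{\xi}=(\Hat{x},v_x)\in \Hat{A}_{n_k}$ (with $\Hat{x}=\Xi(x,\omega)$), every $(\Hat{z},v_z)\in \boldsymbol{P}^{F_{n_k}}(\Hat{\xi})\cap \Hat{A}_{n_k}$ lies in $\boldsymbol{\pi}_\sigma(\pi_{Sol}^{-1}(\sigma))$ and satisfies $d(\Hat{f}^{-j}(\Hat{z}),\Hat{f}^{-j}(\Hat{x}))<2\epsilon$ for every $j\in F_{n_k}$. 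Writing $\Hat{z}=\Xi(z,\omega')$, the separation hypothesis on $\epsilon$ forces $\omega'_i=\omega_i$ for every $1\le i\le\max F_{n_k}$, so by \eqref{eq fubini on meas curves} the $\Sigma$-coordinate contribution to $\boldsymbol{\eta}(\boldsymbol{P}^{F_{n_k}}(\Hat{\xi})\cap \Hat{A}_{n_k})$ is at most $d^{-\max F_{n_k}}\le d^{-\# F_{n_k}}$.

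The Lebesgue-on-$\sigma$ estimate I obtain by iteratively applying Lemma~\ref{lema rep of dynamical balls} across the maximal intervals $[a_\ell,b_\ell]$ of $F_{n_k}$, whose endpoints all lie in $E_p(\Hat{x})$ by construction of $\Hat{A}_{n_k}$. At each $a_\ell$ the geometric-time property yields a red reparametrization of $\sigma$ that is strongly $(b_\ell-a_\ell,\epsilon_p)$-bounded around $\Hat{f}^{-a_\ell}(\Hat{x})$; Lemma~\ref{lema rep of dynamical balls} applied to this strongly bounded starting curve for duration $b_\ell-a_\ell$ covers the dynamical-ball constraint in $[a_\ell,b_\ell]$ by at most $B_qC_r^{(b_\ell-a_\ell)/q}\exp(\mathcal{w}_q^{b_\ell-a_\ell}(\Hat{F}^{-a_\ell}(\Hat{\xi}))/(r-1))$ strongly bounded sub-reparametrizations whose images at time $b_\ell$ have $C^0$-size at most $\epsilon_q$. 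Passing from interval $\ell$ to $\ell+1$ is handled by intersecting the carried-over sub-reparametrizations with the geometric-time strongly bounded piece at $a_{\ell+1}$ (re-localizing them after any stretching across the gap $(b_\ell,a_{\ell+1})$) and restarting the lemma. Combining all $m_k$ intervals, using that $\sum_\ell \mathcal{w}_q^{b_\ell-a_\ell}(\Hat{F}^{-a_\ell}(\Hat{\xi})) = \mathcal{w}_q^{F_{n_k}}(\Hat{\xi})$ modulo a sublinear boundary error, and applying the distortion bound \eqref{eq distortion bounded curves} to pull the final sub-reparametrizations back to $\sigma$, I arrive at
\begin{equation*}
\Leb_\sigma\bigl(\pi_{Sol}\bigl(\boldsymbol{P}^{F_{n_k}}(\Hat{\xi})\cap \Hat{A}_{n_k}\bigr)\bigr)\le C\epsilon_q\,B_q^{m_k}C_r^{\# F_{n_k}/q}\exp\!\Bigl(\tfrac{\mathcal{w}_q^{F_{n_k}}(\Hat{\xi})}{r-1}-\phi^{F_{n_k}}(\Hat{\xi})+E_k\Bigr),
\end{equation*}
where $E_k$ encodes the cocycle contribution of the non-Fölner indices; by item~(4) of Lemma~\ref{lema folner sequence} (uniform density of $E_p(\Hat{x})$ in $F$), $E_k=o(\# F_{n_k})$ uniformly in $\Hat{\xi}\in\Hat{A}_{n_k}$ as $k\to\infty$.

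Multiplying by the Bernoulli factor $d^{-\# F_{n_k}}$ and rearranging gives exactly
\[
\boldsymbol{\eta}(\boldsymbol{P}^{F_{n_k}}(\Hat{\xi})\cap\Hat{A}_{n_k}) \le \exp\!\Bigl(-\!\!\sum_{j\in F_{n_k}}\bigl(\psi_q(\Hat{F}^{-j}(\Hat{\xi}))+\log d - \delta_q\bigr)\Bigr),
\]
provided one chooses
\[
\delta_q \;\ge\; \frac{\log C_r}{q} + \sup_{k\ge K}\frac{\log(C\epsilon_q)+m_k\log B_q + E_k}{\# F_{n_k}}
\]
for some sufficiently large $K=K(q)$. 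The Fölner property along $\{n_k\}_k$ gives $m_k/\# F_{n_k}\to 0$, item~(4) gives $E_k/\# F_{n_k}\to 0$, while $(\log C_r)/q\to 0$ as $q\to\infty$; hence the family $(\delta_q)_q$ can be chosen decreasing to $0$.

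The principal obstacle is the bookkeeping in the iterative reparametrization, specifically the passage through the gaps $(b_\ell,a_{\ell+1})$: sub-reparametrizations that are strongly bounded at $b_\ell$ may stretch during the gap and must be re-localized by intersection with the fresh geometric-time strongly bounded curve at $a_{\ell+1}$. One has to verify that this intersection does not inflate the branching count (each carried-over sub-reparametrization gives at most one starting curve for the next application of Lemma~\ref{lema rep of dynamical balls}, since it is a subcurve of a strongly bounded one) and that the partial cocycle sums $\sum_\ell \phi^{b_\ell-a_\ell}(\Hat{F}^{-a_\ell}(\Hat{\xi}))$ and $\sum_\ell \mathcal{w}_q^{b_\ell-a_\ell}(\Hat{F}^{-a_\ell}(\Hat{\xi}))$ telescope to $\phi^{F_{n_k}}(\Hat{\xi})$ and $\mathcal{w}_q^{F_{n_k}}(\Hat{\xi})$ modulo a sublinear gap error. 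This last point is exactly where the specific construction of $F$ in Lemma~\ref{lema folner sequence} is essential: the uniform density of $E_p(\Hat{x})$ in $F$ ensures that the cumulative length of gaps is $o(\# F_{n_k})$ and so can be absorbed in $\delta_q$. The non-invertibility of $f$ poses no additional difficulty because Lemma~\ref{lema rep of dynamical balls} is stated along a single backward itinerary $\omega$, which is precisely the one recorded by the parametrization $\Hat{x}=\Xi(x,\omega)$.
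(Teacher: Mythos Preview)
Your approach coincides with the paper's: both decompose $\boldsymbol{\eta}$ via \eqref{eq fubini on meas curves} into the fibrewise Lebesgue measures $\boldsymbol{\eta}^\omega$ and the Bernoulli measure $p$, obtain the factor $d^{-\#F_{n_k}}$ from the cylinder constraint $\omega'\in C^{\max F_{n_k}}(\omega)$, and reduce matters to the fibrewise bound $\boldsymbol{\eta}^\omega(\mathcal{P}^{F_n}(\Hat{\xi})\cap\boldsymbol{\Hat{A}}_n)\le\exp\bigl(-\sum_{j\in F_n}\psi_q\circ\Hat{F}^{-j}(\Hat{\xi})+\delta\,\#F_n\bigr)$. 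The paper delegates this last estimate entirely to Burguet's Subsection~5.4, whereas you sketch it explicitly by iterating Lemma~\ref{lema rep of dynamical balls} across the maximal intervals of $F_{n_k}$; that is the correct outline of the cited argument.

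One point to revisit: your appeal to item~(4) of Lemma~\ref{lema folner sequence} to control the error $E_k$ is misplaced. Item~(4) concerns the density of $E_p(\Hat{x})$ \emph{within} $F$, not the gaps $[0,n_k)\setminus F_{n_k}$, and the cumulative gap length is generically of order $n_k$, not $o(\#F_{n_k})$. What actually controls the gap contribution to $\phi$ is positivity: since both endpoints $b_\ell,a_{\ell+1}$ lie in $E_p(\Hat{x})$, Lemma~\ref{lema Ep is large} gives $\phi^{a_{\ell+1}-b_\ell}(\Hat{F}^{-b_\ell}(\Hat{\xi}))>0$, so the gap factors in the pull-back of the length only sharpen the bound and can simply be dropped. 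The telescoping over $F_{n_k}$ is then exact up to multiplicative constants of order $C^{m_k}$, which are absorbed by the F\"olner property $m_k/\#F_{n_k}\to 0$; no additional sublinear error term $E_k$ is needed.
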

\begin{proof}
    We start by noticing that, from \eqref{eq fubini on meas curves}, for every measurable set $B \subset \mathbb{P}L_f$:
    \[
    \boldsymbol{\eta}(B \cap \boldsymbol{\Hat{A}}_n) = \Hat{\eta}_{\sigma}(\boldsymbol{\pi}_{\sigma}^{-1}(B) \cap \Hat{A}_n) = \int_{\Sigma} \Leb_{\sigma}(\pi_{f}(\boldsymbol{\pi}_{\sigma}^{-1}(B) \cap \Hat{A}_n \cap \Hat{\sigma}_{\omega})) \ dp(\omega).
    \]
    Define for each $\omega \in \Sigma$, the probability measure $\boldsymbol{\eta}^\omega$ on $\boldsymbol{\pi}_\sigma (\Hat{\sigma}_\omega)$ by $\boldsymbol{\eta}^\omega(E) = \Leb_{\sigma}(\pi_{f}(\boldsymbol{\pi}_{\sigma}^{-1}(E) \cap \Hat{\sigma}_\omega))$, we have
    \begin{equation}
        \boldsymbol{\eta}(B \cap \boldsymbol{\Hat{A}}_n) = \int_{\Sigma} \boldsymbol{\eta}^{\omega}(B \cap \boldsymbol{\Hat{A}}_n) \ dp(\omega).
    \end{equation}

    Now, from Lemma \ref{lema rep of dynamical balls}, we may repeat Burguet's arguments in Subsection 5.4 \cite{burguet2024srb} to obtain that for $\delta>0$ arbitrarily small, there is $q \in \nn$ (depending only on $r$ and $\delta$), $\epsilon_q>0$ and $N \in \nn$ (depending on $r$, $\delta$, $f$ and $q$) such that for every partition $\mathcal{P}$ of $\mathbb{P}L_f$ with diameter less than $\epsilon_q$, every $\mathcal{n} \ni n \ge N$, every $\Hat{\xi} \in \boldsymbol{\Hat{A}}_n$ and every $\omega \in \Sigma$:
    \[
    \boldsymbol{\eta}^\omega(\mathcal{P}^{F_n}(\Hat{\xi}) \cap \boldsymbol{\Hat{A}}_n) \le \exp\left(-\sum\limits_{j \in F_n}\psi_q \circ \Hat{F}^{-j}(\Hat{\xi}) +\delta \#F_n \right).
    \]
    Note that this estimate does not depend on $\omega \in \Sigma$.
    
    Now, given $\Hat{\xi} \in \boldsymbol{\Hat{A}}_n$, let $\omega \in \Sigma$ be such that $\boldsymbol{\pi}(\Hat{\xi}) \in \Hat{\sigma}_{\omega}$. By reducing $\epsilon_q$ if necessary, we may assume that for every $n \in \mathcal{n}$, $\mathcal{P}^{F_n}(\Hat{\xi}) \cap \boldsymbol{\pi}_{\sigma}(\Hat{\sigma}_{\omega'}) = \emptyset$ for every $\omega'$ outside the cylinder $C^{N(n)}(\omega) = \{\omega' \in \Sigma: \omega'_i = \omega_i \ \forall \ 0 \le i \le N(n)\}$, where $N(n) = \max\{k \in F_n\}$. Thus, since $p(C^{N(n)}(\tau)) = d^{-N(n)} \le d^{-\#F_n}$, we obtain:
    \begin{align*}
    \boldsymbol{\eta}(\mathcal{P}^{F_n}(\Hat{\xi}) \cap \boldsymbol{\Hat{A}}_n) &= \int_{C^{N(n)}(\tau)} \boldsymbol{\eta}^\omega(\mathcal{P}^{F_n}(\Hat{\xi}) \cap \boldsymbol{\Hat{A}}_n) \ dp(\omega) \\ 
    &\le \exp\left(-\sum\limits_{j \in F_n}\psi_q \circ \Hat{F}^{-j}(\Hat{\xi}) +\#F_n\cdot \delta - \#F_n \cdot \log d\right).
    \end{align*}
\end{proof}

Finally, for any fixed $\boldsymbol{\mu}$ weak$^*$ limit of $\boldsymbol{\mu}_n^{F_n}$, from Propositions \ref{prop folner gibbs property} and \ref{prop folner gibbs prop with q}, we obtain that for every $q \in \nn$:
\begin{align*}
    h_{\boldsymbol{\mu}}(\Hat{F}) &\ge \int \psi_q \ d\boldsymbol{\mu} + \log d - \delta_q\\
    &\ge \int \phi d\boldsymbol{\mu} - \frac{1}{r-1}\left(\frac{1}{q} \int \log\|D\Hat{f}_{\Hat{x}}^q\| d\boldsymbol{\mu} + \frac{1}{q}\int \log\|D\Hat{f}_{\Hat{x}}^{-q}\cdot v\| \ d\boldsymbol{\mu}\right) + \log d - \delta_q
\end{align*}
We take $\Hat{\mu} = \boldsymbol{\pi}_*\boldsymbol{\mu}$ on $L_f$, from \eqref{eq measure on unstable bundle} and the subadditive ergodic theorem it satisfies
\[
\int \phi d \boldsymbol{\mu} = \lim_q \frac{1}{q}\int \log\|D\Hat{f}_{\Hat{x}}^{-q}\cdot v_x\| \ d\boldsymbol{\mu}= -\chi^{-}(\Hat{\mu}).
\]
moreover, by a straightforward application of the Furstenberg-Kersten theorem
\[
\lim\limits_{q\to +\infty} \frac{1}{q} \int \log \|D\Hat{f}_{\Hat{x}}^{-q}\| \ d\boldsymbol{\mu} = -\chi^-(\Hat{\mu}).
\]
Since $h_{\boldsymbol{\mu}}(\Hat{F}) = h_{\Hat{\mu}}(\Hat{f})$, we obtain by taking the limit on $q \to \infty$
\[
h_{\Hat{\mu}}(\Hat{f}) \ge -\chi^-(\Hat{\mu}) + \log d.
\]

Finally, by taking $\mu = (\pi_{f})_*\Hat{\mu}$ on $\TT^2$, since $h_{\Hat{\mu}}(\Hat{f}) = h_\mu(f)$, $\chi^-(\Hat{\mu}) = \chi^-(\mu)$ and $F_{\mu}(f) \le \log d$, we obtain
\[
h_\mu(f) \ge |\chi^-(\mu)| + \log d \ge |\chi^-(\mu)| + F_{\mu}(f) \ge h_\mu(f),
\]
where the last inequality is given from Ruelle's inequalities on non-invertible systems \eqref{eq Ruelle ineq endo}. We conclude $F_{\mu}(f) = \log d$ and  
\begin{equation}\label{eq srb}
h_\mu(f) = |\chi^-(\mu)| + \log d.
\end{equation}
Note that any ergodic component of $\mu$ satisfies \eqref{eq srb}.

\section{Proof of Theorem \ref{main thm uniqueness}}\label{sect:A}

In this section we will prove Theorem \ref{main thm uniqueness}. We divide the proof into two parts, first we prove the uniqueness of the equidistributed inverse SRB measures, for that we require the system to be close to a conservative one. Then we prove the full basin property under more general assumptions.

\subsection{Uniqueness}
In order to study the uniqueness of such measures, we will study the homoclinic classes of hyperbolic measures, we follow Lima-Poletti-Obata \cite{lima2024measuresmaximalentropynonuniformly}. Let $\mu_1,\mu_2$ be $f$-invariant ergodic hyperbolic probability measures on $\TT^2$ and consider $\Hat{\mu}_1, \Hat{\mu}_2$ their unique $\Hat{f}$-invariant lifts to $L_f$.

\begin{defn}(Homoclinic relation of measures)
We say that $\mu_1 \preceq \mu_2$ if there are measurable sets $A_1, A_2 \subset L_f$ with $\Hat{\mu}_i(A_i)>0$ such that $\mathcal{V}^u(\Hat{x}) \pitchfork \mathcal{V}^s(\Hat{y}) \neq \emptyset$ for every $(\Hat{x},\Hat{y}) \in A_1 \times A_2$. We say that $\mu_1$ and $\mu_2$ are homoclinically related, and write $\mu_1 \overset{h}{\sim} \mu_2$, if $\mu_1 \preceq \mu_2$ and $\mu_2 \preceq \mu_1$.
\end{defn}

One may verify that the homoclinic relation of measures is indeed an equivalence relation on the family of $f$-invariant hyperbolic ergodic probability measures [Proposition 4.3, \cite{lima2024measuresmaximalentropynonuniformly}].

\begin{thm}\label{thm uniqueness inverse SRB max fold on homoc class}
    Let $f \in End^r(\TT^2)$, $r>1$. In each homoclinic class of an $f$-invariant ergodic hyperbolic probability measure there exists at most one inverse SRB measure that maximizes the folding entropy. 
\end{thm}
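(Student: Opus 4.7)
The plan is to run a Hopf-style uniqueness argument inside the solenoid $Sol$, with the roles of stable and unstable manifolds reversed relative to the classical SRB setting (reflecting that $\hat\mu_i$ is ``SRB for $\hat f^{-1}$''). Suppose, for contradiction, that $\mu_1\neq\mu_2$ are ergodic equidistributed inverse SRB measures that are homoclinically related, with lifts $\hat\mu_1,\hat\mu_2$ on $Sol$. The homoclinic hypothesis yields sets $A_i\subset Sol$ with $\hat\mu_i(A_i)>0$ and $\mathcal V^u(\hat x)\pitchfork\mathcal V^s(\hat y)\neq\emptyset$ for every $(\hat x,\hat y)\in A_1\times A_2$. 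Intersecting $A_i$ with a Pesin block $\hat\Lambda_{k_i}$ and with the backward Birkhoff basin
\[
\hat B_i=\Big\{\hat w\in Sol:\tfrac{1}{n}\sum_{j=0}^{n-1}\delta_{\hat f^{-j}(\hat w)}\to\hat\mu_i\Big\},
\]
and invoking Poincaré recurrence in the Pesin blocks to bring a homoclinic intersection to local scale, I obtain $\hat x\in\hat\Lambda_{k_1}\cap\hat B_1$ and $\hat y\in\hat\Lambda_{k_2}\cap\hat B_2$ whose local manifolds $\hat W^u(\hat x)$ and $\hat W^s(\hat y)$ meet transversely in a single point $\hat z$ inside a common leaf $V_\omega$ of a chart $\pi_{Sol}^{-1}(V)\cong V\times\Sigma$.

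Two features of the basins drive the argument. First, $\hat B_i$ is saturated by global unstable manifolds $\mathcal V^u$: two points on a common global unstable leaf have backward orbits in $\TT^2$ that converge exponentially, hence share the same backward Birkhoff averages. Second, the inverse SRB hypothesis asserts that the conditional measures of $\hat\mu_i$ along stable leaves are absolutely continuous; equidistribution provides the local product decomposition $\hat\mu_i=\mu_i\times p$ via $\Xi$, and standard Pesin-theoretic arguments identify the conditional density on stable leaves as positive Lebesgue-almost everywhere on the support. Combined with $\hat\mu_i(\hat B_i)=1$ and $\hat\mu_i(\bigcup_k\hat\Lambda_k)=1$, this yields, for $\hat\mu_i$-a.e.\ $\hat u$, a Lebesgue-full subset of $W^s(\hat u)$ contained in $\hat B_i$ and, after discarding a null set, also lying in a Pesin block. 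Apply this to $\hat x$ and $\hat y$.

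For each $\hat w\in W^s(\hat x)\cap\hat B_1\cap\hat\Lambda_{k_1}$ sufficiently close to $\hat x$, the local unstable $\hat W^u(\hat w)$ varies continuously with $\hat w$ inside the Pesin block, and therefore meets $\hat W^s(\hat y)$ in a unique point $\hat z(\hat w)$; the assignment $\hat w\mapsto\hat z(\hat w)$ is precisely the unstable holonomy between the transversals $W^s(\hat x)$ and $W^s(\hat y)$ inside the single leaf $V_\omega$, and is absolutely continuous by Lemma \ref{lema abs cont unst holon}. By $\mathcal V^u$-saturation of $\hat B_1$, the image $\{\hat z(\hat w)\}$ is a positive Lebesgue subset of $W^s(\hat y)$ contained in $\hat B_1$. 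Intersecting it with the Lebesgue-full $W^s(\hat y)\cap\hat B_2$ produces a non-empty subset of $\hat B_1\cap\hat B_2$; any point in it has backward Birkhoff averages converging simultaneously to $\hat\mu_1$ and $\hat\mu_2$, forcing $\hat\mu_1=\hat\mu_2$ and hence $\mu_1=\mu_2$. The main obstacle I foresee is the density-point step that upgrades ``absolute continuity of the stable disintegration'' to ``Lebesgue-full intersection with the basin'' on a typical stable leaf; this rests on positivity of the conditional densities, which ought to follow from equidistribution and the invariance of $\hat\mu_i$ but needs a careful verification, combined with the reduction of the global homoclinic intersection to a single chart via Poincaré recurrence so that the unstable holonomy indeed stays inside a fixed leaf $V_\omega$.
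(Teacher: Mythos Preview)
Your approach is genuinely different from the paper's. The paper recasts equidistributed inverse SRB measures as equilibrium states for the potential $\hat\varphi(\hat x)=-\log\|D\hat f|_{E^-_{\hat x}}\|+\log d$ (using Ruelle's inequality for endomorphisms and $F_\mu(f)\le\log d$), then invokes the Lima--Obata--Poletti countable-Markov-shift coding of a homoclinic class together with the Buzzi--Sarig uniqueness theorem for equilibrium states of H\"older potentials on irreducible shifts. Your Hopf argument is more elementary and entirely geometric, avoiding symbolic dynamics.

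Your scheme is sound and the obstacle you flag is exactly the crux: from $\hat\mu_i^s\ll\mathrm{Leb}^s$ one gets $\hat\mu_i^s(\hat B_i)=1$, but to deduce that $\hat B_i\cap W^s(\hat y)$ is $\mathrm{Leb}^s$-\emph{full} you need $\mathrm{Leb}^s\ll\hat\mu_i^s$, i.e.\ strictly positive conditional densities. For equidistributed measures the Jacobian is constant (hence H\"older) and one expects the density on a stable leaf to be given by a convergent infinite product that is manifestly positive, in the spirit of Liu's work; but this equivalence is not stated in the paper and must be supplied. There is a second subtlety you touch on but should make explicit: since $\hat W^s(\hat y)=\pi_{Sol}^{-1}(W^s(y))$ spans all leaves while $\hat W^u(\hat x)$ lives in a single leaf $V_{\omega_x}$, the transversal on the $\mu_2$ side is really $W^s(y)\times\{\omega_x\}$, so you must choose $\hat x$ so that $\omega_x$ is $p$-typical for the $\hat B_2$ leaf-slices---which is possible precisely because $\hat\mu_1=\mu_1\times p$ is equidistributed. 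The paper's thermodynamic route bypasses both points.
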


\begin{proof}
    We consider $\text{NUH}\subset L_f$ the set of Oseledets regular points $\Hat{x} \in L_f$ admitting a splitting $T {L_{f}}_{\Hat{x}} = E^-_{\Hat{x}} \oplus E^+_{\Hat{x}}$ into the Oseledets subspaces associated with a negative and a positive Lyapunov exponent respectively, then for every hyperbolic $f$-invariant probability measure $\mu$, we have $\Hat{\mu}(\text{NUH})=1$. We consider also the measurable potential $\Hat{\varphi}:\text{NUH} \to \rr$ given by $\Hat{\varphi}(\Hat{x}) = -\log \|D\Hat{f}|_{E^-_{\Hat{x}}}\|+\log d$. From Ruelle's inequality for endomorphisms \cite{liu2003ruelle} and the relation $F_{\mu}(f) \le \log d$, every $f$-invariant hyperbolic measure $\mu$ satisfies
    \[
    h_{\Hat{\mu}}(\Hat{f}) \le \int \Hat{\varphi} \ d\Hat{\mu}.
    \]
    Hence, any equidistributed inverse SRB measure is an equilibrium state for the potential $\Hat{\varphi}$. We claim that in each homoclinic class there is at most one equilibrium state of $\Hat{\varphi}$.  
    
    Theorem 4.1 in \cite{lima2024measuresmaximalentropynonuniformly} applied in our context states that given $f \in End^r(\TT^2)$, for every ergodic and hyperbolic $f$-invariant probability measure $\mu$, given $\chi>0$, there is a locally compact countable topological Markov shift $(\Tilde{\Sigma},\Tilde{\sigma})$ and a Hölder continuous map $\Tilde{\pi}: \Tilde{\Sigma} \to L_f$ such that $\Tilde{\pi} \circ \Tilde{\sigma} = \Hat{f} \circ \Tilde{\pi}$, satisfying, among other things, that:
    \begin{enumerate}
        \item $\Tilde{\Sigma}$ is irreducible;
        \item Any $\chi$-hyperbolic $f$-invariant probability measure $\nu$ homoclinically related to $\mu$ satisfies that $\Hat{\nu}(\Tilde{\pi}(\Tilde{\Sigma})) = 1$ and that there exists a $\sigma$-invariant probability measure $\Tilde{\nu}$ on $\Tilde{\Sigma}$ such that $\Hat{\nu} = \Tilde{\pi}_*(\Tilde{\nu})$ and $h_{\tilde{\nu}}(\Tilde{\sigma}) = h_{\Hat{\nu}}(\Hat{f})$;
        \item If $\Tilde{\nu}$ is a given $\Tilde{\sigma}$-invariant probability measure on $\Tilde{\Sigma}$, then $\Hat{\nu}= \Tilde{\pi}_*(\Tilde{\nu})$ is an $\Hat{f}$-invariant hyperbolic probability measure homoclinically related to $\Hat{\mu}$ satisfying $h_{\Tilde{\nu}}(\Tilde{\sigma}) = h_{\Hat{\nu}}(\Hat{f})$;
        \item For every $\Hat{x} \in \Tilde{\pi}(\Tilde{\Sigma})$, there is a splitting $T{L_f}_{\Hat{x}} = E^-_{\Hat{x}} \oplus E^+_{\Hat{x}}$ such that:
        \begin{itemize}
            \item $\limsup\limits_{n\to +\infty}\frac{1}{n}\log \|D\Hat{f}^n_{\Hat{x}}|_{E^-_{\Hat{x}}}\|\le -\frac{\chi}{2}$;
            \item $\limsup\limits_{n\to +\infty}\frac{1}{n}\log \|D\Hat{f}^{-n}_{\Hat{x}}|_{E^+_{\Hat{x}}}\|\le -\frac{\chi}{2}$.
        \end{itemize}
        Moreover, the maps $p \in \Tilde{\Sigma} \mapsto E^{+/-}_{\Tilde{\pi}(p)}$ are Hölder continuous.
    \end{enumerate}

    From item 4 above, the potential $\Tilde{\varphi}:\Tilde{\Sigma}\to \rr$ given by $\Tilde{\varphi}(p) = \Hat{\varphi} \circ \Tilde{\pi}(p)$ is Hölder continuous. From item 1, the potential $\Tilde{\varphi}$ has at most one equilibrium state \cite{buzzi2003uniqueness}. From items 2 and 3, we conclude that $\Hat{\varphi}$ has at most one equilibrium state in each homoclinic class of a hyperbolic measure.
\end{proof}

We are now ready to prove uniqueness, we recall the spaces
\begin{equation*}
    \mathcal F_0=\{f\in\text{End}^{1+}(\TT^2): \inf_x I_f(x)>0, \ \text{and} \det(Df_x)=\deg(f) \}
\end{equation*}
and
\begin{equation*}
    \mathcal F_1=\{f\in\mathcal F_0:\ \pm 1 \text{ is not an eigenvalue of the linear part of } f\}
\end{equation*}

\begin{thm}\label{thm uniq proved}
    For every $f \in \mathcal{F}_1$, there exists $\mathcal{U} \subset \End^{1+}(\TT^2)$ a $\mathcal{C}^2$-neighborhood of $f$ such that every $g \in \mathcal{U}$ has at most one equidistributed inverse SRB measure.  
\end{thm}

\begin{proof}

We fix $f \in \mathcal{F}_1$ and a constant $\delta>0$. We apply Proposition \ref{prop large unstable manifolds} to obtain a constant $\ell_1>0$ and $\mathcal{U}_1$, a $\mathcal{C}^2$-neighborhood of $f$. We then choose $0<\ell_0<\ell_1$ and $0<\epsilon_0<\ell_0/4$ sufficiently small so that we can apply Proposition \ref{prop large stable manifolds} and Corollary \ref{corol dense preimages of curves} to obtain respectively $\mathcal{U}_2$, $\mathcal{U}_3$, $\mathcal{C}^1$-neighborhoods of $f$ and $N \in \nn$. Finally, we consider $\mathcal{U} = \mathcal{U}_1 \cap \mathcal{U}_2 \cap \mathcal{U}_2$, for every $g \in \mathcal U$ we have
    \begin{enumerate}
        \item For every $x \in \TT^2$
        \[
        p_x\{\Hat{x} \in \pi_{g}^{-1}(x): W^u_g(\Hat{x}) \ \text{is a $1$-Lipschitz graph with} \ |W^u_g(\Hat{x})|>\ell_0\}>1-\delta.
        \]
        
        \item For any equidistributed inverse SRB measure $\mu$ for $g$
        \[
        \mu\{x \in \TT^2: W^s_g(\Hat{x}) \ \text{is a $1$-Lipschitz graph with} \ |W^s_g(\hat{x})|>\ell_0\}>0
        \]
        
        \item For every $\mathcal{C}^2$ curve $\sigma:I \subset \rr \to \TT^2$ which is a $1$-Lipschitz graph with $|\sigma| \ge \ell_0$, the union $\bigcup_{n=0}^N g^{-n}(\sigma)$ is $\epsilon_0$-dense in $\TT^2$. 
    \end{enumerate}

We fix $g \in \mathcal{U}$ and let $\mu_1$, $\mu_2$ be two equidistributed $g$-invariant inverse SRB measures (the existence of such measures is guaranteed by Theorem \ref{main thm burguet}), we'll prove that $\mu_1$ and $\mu_2$ are homoclinically related, which from Theorem \ref{thm uniqueness inverse SRB max fold on homoc class} implies $\mu_1=\mu_2$. We show that $\mu_1\preceq \mu_2$, the converse is analogous. Consider the set  $A \subset \TT^2$ as 
    \[
    A = \{x \in \TT^2: W^s_g(x) \ \text{is a} \ L\text{-Lipschitz graph with} \ |W^s_g(x)|>\ell_0\},
    \]
from property (2) of $\mathcal{U}$, $\mu_1(A), \mu_2(A)>0$.
    
From property (3) of $\mathcal{U}$, given $x \in A$, the set $\bigcup_{j=0}^{N}g^{-j}(W^s_g(x))$ is $\epsilon_0$-dense. Hence, for any $z \in \TT^2$, there is $k \in \{0,1,\cdots,N\}$ and a point $\Tilde{x} \in f^{-k}(x)$ such that $W^s_g(\Tilde{x}) \cap B(z,\epsilon_0) \neq \emptyset$. We may then find sets $A_i \subset A$ with $\mu_i(A_i)>0$ such that
    \begin{itemize}
        \item $\text{diam}(A_2)<\epsilon_0$;
        \item for any point $x \in A_1$, $W^s_g(x) \cap B(A_2,\epsilon) \neq \emptyset$;
        \item $A_i$ is contained in a Pesin block of $\mu_i$ (where stable manifolds vary continuously);
        \item $A_i$ is contained in the support of $\mu_i$.
    \end{itemize}

    Given $x \in A_1$ and $y \in W^s_g(x) \cap B(A_2,\epsilon_0)$, Property (1) of $\mathcal{U}$ along with Lemma \eqref{lema unst direction distributed on fibres} gives variation of unstable directions on $\pi_{g}^{-1}(y)$ and good control of their sizes. Moreover, since $A_2\ni z\mapsto W^s(g)$ is continuous and since $\epsilon_0<\ell_0/4$, we may find a set $\Hat{B} \subset \pi_{g}^{-1}(y)$ with $p_{y}(\Hat{B})>0$ satisfying that $W^u_g(\Hat{y})\pitchfork W^s_g(z) \neq \emptyset$ for every $\Hat{y} \in \Hat{B}$ and every $z \in A_2$.
    
    Since for any $x \in A_1$, $\Leb^s(W^s_g(x)\cap B(A_2,\epsilon_0))>0$, $z \mapsto W^s_g(z)$ varies continuously in $A_2$ and both $\mu_1$ and $\mu_2$ disintegrate along stable manifolds with absolutely continuous measures, we obtain a set $\Tilde{A}_1 \subset  W^s_g(A_1) \cap B(A_2,\epsilon_0)$ with $\mu_1(A_1)>0$ satisfying that for every $y \in \Tilde{A}_1$
    \[
    p_{y}\{\Hat{y} \in \pi_{g}^{-1}(y): W^u_g(\Hat{y})\pitchfork W^s_g(z) \neq \emptyset \ \text{for every} \ z \in A_2\}>0
    \] 

    Finally, taking $\Hat{\mu}_i$ the $\Hat{g}$-invariant lift of $\mu_i$ to $L_g$, $i=1,2$. Since $\Hat{\mu}_1$ is equidistributed and the stable direction does not depend on the point $\Hat{z} \in \pi_{g}^{-1}(z)$, we may obtain a set $\Hat{A}_1 \subset \pi_{g}^{-1}(\Tilde{A}_1)$ with $\Hat{\mu}_1(\Hat{A}_1)>0$ satisfying that $W^u_g(\Hat{y})\pitchfork W^s_g(\Hat{z}) \neq \emptyset$ for every $\Hat{x} \in \Hat{A}_1$ and every $\Hat{y} \in \Hat{A}_2 = \pi_{g}^{-1}(A_2)$. Since $\Hat{\mu}_2(\Hat{A}_2) = \mu_2(A_2)>0$ we conclude $\mu_1 \preceq \mu_2$.
\end{proof}

\subsection{The full basin property.}

\begin{prop}
    Let $f \in \text{End}^r(\TT^2)$, $r>1$, with $C(f)>\frac{R^-(f)}{r}$. Consider $\{\mu_j\}_{j \in J}$ the set of $f$-invariant equidistributed inverse SRB measures. Then $\Leb$-almost every $x \in \TT^2$ satisfies that $p_x$-almost every $\Hat{x} \in \pi_{f}^{-1}(x)$ is in the basin $B^-(\mu_j)$ for some $j \in J$.
\end{prop}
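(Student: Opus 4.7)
The plan is to argue by contradiction: suppose $\hat\eta(X)>0$, where
\[X := Sol \setminus \bigcup_{j\in J} B^-(\hat{\mu}_j).\]
Each $B^-(\hat{\mu}_j)$ is $\hat f$-invariant (from the weak-$*$ continuity of Cesaro averages), so $X$ is also $\hat f$-invariant. The strategy is to feed $X$ into the construction from the proof of Theorem \ref{main thm burguet} and extract from it an equidistributed inverse SRB measure entirely supported on $X$, which will contradict the very definition of $X$.

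First I would fix two transverse smooth foliations $\mathcal{F}_1,\mathcal{F}_2$ of $\TT^2$ by strongly $\epsilon$-bounded $\mathcal{C}^r$ leaves, with $\epsilon$ small enough for the Reparametrization Lemma and Proposition \ref{prop key estimates}. By Lemma \ref{lema weakbundle}, for $\hat\eta$-a.e. $\hat{x}$ the unstable direction $E^u(\hat{x})$ exists, and at least one of the two foliations has its leaf through $\pi_{Sol}(\hat{x})$ transverse to $E^u(\hat{x})$; Lemma \ref{lema weakbundle} then forces
\[\liminf_n \frac{1}{n}\log \|D\hat{f}^{-n}_{\hat{x}}\cdot v_{\hat x}\| \geq C(f) > b > \frac{R^-(f)}{r},\]
with $v_{\hat{x}}$ the tangent to that leaf. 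Disintegrating $\hat{\eta} = \int \hat{\eta}_\sigma\,d\nu_i(\sigma)$ along the leaves of $\mathcal{F}_i$, Fubini will produce, for some $i\in\{1,2\}$, a leaf $\sigma$ for which $\hat{A}:=X\cap \pi_{Sol}^{-1}(\sigma)$ has positive $\hat\eta_\sigma$-measure and $\sigma'$ is transverse to $E^u(\hat{x})$ for $\hat\eta_\sigma$-a.e. $\hat{x}\in \hat{A}$. Next I would replay the proof of Theorem \ref{main thm burguet} with $(\sigma,\hat A)$ in place of $(\sigma,\hat G)$: Proposition \ref{prop key estimates} combined with Borel–Cantelli lets us shrink $\hat A$ to a subset of positive $\hat{\eta}_\sigma$-measure on which $E_p(\hat{x})$ has lower asymptotic density $\ge \beta_p$; Lemma \ref{lema folner sequence} then yields a Fölner sequence $(F_{n_k})_k$ and measurable subsets $\hat{A}_{n_k}\subset \hat{A}$; and any weak-$*$ limit $\boldsymbol{\mu}$ of the projective Fölner averages $\boldsymbol{\mu}_k^{F_{n_k}}$ is $\hat{F}$-invariant with hyperbolic projection (by Lemmas \ref{lema Ep is large} and \ref{lema prop of large sets}). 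Proposition \ref{prop folner gibbs prop with q} together with Ruelle's inequality \eqref{eq Ruelle ineq endo} then yield $F_{\hat{\mu}}(f)=\log d$ and the inverse Pesin formula for $\hat{\mu}:=\pi_*\boldsymbol{\mu}$, so $\hat{\mu}$ is an equidistributed inverse SRB measure. Since $\hat A\subset X$ and $X$ is $\hat f$-invariant, every $\hat{F}_*^{-j}\boldsymbol{\mu}_k$ is carried by $\boldsymbol{\pi}^{-1}(X)$, and passing to the limit gives $\hat{\mu}(X)=1$.

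To close, ergodically decompose $\hat{\mu}=\int \hat{\mu}_\omega\,dP(\omega)$. Folding entropy, metric entropy and the integral of $\chi^-$ are all affine in the measure, so integrating Ruelle's inequality for the ergodic components and comparing with the equality for $\hat\mu$ forces $F_{\hat{\mu}_\omega}(f)=\log d$ and the inverse Pesin formula for $P$-a.e. $\omega$. Each $\hat{\mu}_\omega$ is therefore an ergodic equidistributed inverse SRB measure, hence one of the $\hat{\mu}_j$, and Birkhoff applied to $\hat{f}^{-1}$ gives $\hat{\mu}_j(B^-(\hat{\mu}_j))=1$, so $\hat{\mu}_\omega(X)=0$ $P$-a.s., forcing $\hat{\mu}(X)=0$ and contradicting the previous paragraph. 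This rules out $\hat\eta(X)>0$, and the Fubini formula $\hat{\eta}=\int p_x\,d\Leb(x)$ (see \eqref{eq defn measure on Sol local leb times bernoulli}) translates the conclusion into the statement of the proposition. \emph{The main obstacle} is precisely this last step: carefully justifying that the equidistributed inverse SRB property descends to $P$-a.e. ergodic component, which relies on the affinity of the folding entropy under disintegration and on the propagation of the equality case in Ruelle's inequality to the components; a secondary technical point is that the leaf $\sigma$ chosen above must be strongly $\epsilon$-bounded so that the reparametrization tree and Fölner–Gibbs estimates from Section \ref{sect:B} apply verbatim, which is handled by taking $\mathcal{F}_1,\mathcal{F}_2$ at a sufficiently small scale.
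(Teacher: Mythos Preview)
Your argument has a real gap, but not where you flagged it. The ergodic-decomposition step you worry about is actually fine: metric entropy and $\int\chi^-\,d\mu$ are affine under disintegration, each component satisfies $h_{\mu_\omega}(f)\le \log d - \int\chi^-\,d\mu_\omega$ by \eqref{eq Ruelle ineq endo}, and integrating against $P$ forces equality $P$-a.e., so the equidistributed inverse SRB property does descend. The fatal step is the one you treat as routine: ``passing to the limit gives $\hat\mu(X)=1$.'' The set $X=Sol\setminus\bigcup_j B^-(\hat\mu_j)$ is only Borel, not closed --- basins are defined by convergence of Birkhoff averages and their complement typically has full closure. Weak-$*$ convergence only controls mass on closed (or continuity) sets, so from the approximating measures being carried by $X$ you obtain at best $\hat\mu(\overline X)=1$, which is vacuous. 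Passing to a compact $K\subset X$ does not help either, since you would also need $K$ to be $\hat f$-invariant with positive $\hat\eta_\sigma$-measure, and nothing in the construction produces such a set.

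The paper's proof circumvents this by running the contradiction geometrically. It builds the same limit measure $\hat\mu$ from a curve in the bad set $\hat B$ and uses (via the ergodic-decomposition reasoning above) that $\hat\mu(\hat\Gamma)=1$ for a fixed $\hat f$-invariant $\hat\Gamma\subset\bigcup_j B^-(\hat\mu_j)$ of full $\hat\mu_j$-measure --- this direction is legitimate since $\hat\mu$ is already known to be equidistributed inverse SRB. Then, rather than trying to place $\hat\mu$ inside $X$, the argument exploits the geometry of the construction: one first restricts to Lebesgue density points of $\hat A$ along $\sigma$; at geometric times $n\in E(\hat x_n)$ the disk $D_n(\hat x_n)=\hat f^{-n}(H_n(\hat x_n))$ has length bounded below by $\alpha\epsilon$; a Pesin-block argument for $\hat\mu$ produces a nearby point $\hat y_n$ whose short stable leaf carries a fixed positive Lebesgue mass in $\hat\Gamma$; and the absolutely continuous unstable holonomy transports this mass onto $D_n(\hat x_n)$. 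Combined with the density-point property (so that $\hat f^{-n}(\hat A)$ occupies almost all of $D_n(\hat x_n)$), this forces $\hat A\cap \hat W^u(\hat\Gamma)\neq\emptyset$, contradicting the choice of $\hat B$. The density-point and holonomy apparatus is precisely what substitutes for your invalid weak-$*$ step.
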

\begin{proof}
   
    We prove that $\Hat{W}^u(\Hat{\Gamma}) \subset L_f$ has total $\Hat{\eta}$-measure, where $\Hat{\eta}$ is the equidistributed lift of the Lebesgue measure to $L_f$ \eqref{eq defn measure on Sol local leb times bernoulli} and $\Hat{\Gamma}\subset L_f$ is any $\Hat{f}$-invariant subset of $\bigcup_{j\in J}B^-(\mu_j)$ satisfying $\Hat{\mu}_j(\Hat{\Gamma}) = 1$ for all $j \in J$.

    Take $\Hat{\Gamma}$ as above and assume that there exists a set $\Hat{B}\subset L_f$ with $\Hat{\eta}(\Hat{B})>0$ and $\Hat{B} \cap \Hat{W}^u(\Gamma) = \emptyset$. Consider $B = \pi_{f}(\Hat{B})$, then $\Leb(B)>0$. Hence, from Lemma \ref{lema weakbundle}, we may find a $\mathcal{C}^r$ smooth curve $\sigma$ such that, for every point $x$ in a subset $A \subset \sigma\cap B$ with positive $\Leb_\sigma$-measure, we have
    \begin{equation}\label{eq l.e. curve}
    \chi_{\Hat{f}^{-1}}(\Hat{x},v_{\Hat{x}}) = \limsup\limits_{n \to +\infty}\frac{1}{n} \log \|D\Hat{f}_{\Hat{x}}^{-n}\cdot v_{\Hat{x}}\| \ge C(f)> \frac{R^-(f)}{r}, \ \ \ v_{\Hat{x}} = d\sigma_{\pi_{f}(\Hat{x})},
    \end{equation}
    for $p_x$-a.e. $\Hat{x} \in \pi_{f}^{-1}(A)$. In particular, we may take $\Hat{A} \subset \Hat{B} \cap \pi_{f}^{-1}(A)$ with $\Hat{\eta}_{\sigma}(A)>0$ such that \eqref{eq l.e. curve} holds for every $\Hat{x} \in \Hat{A}$. We may then follow the approach from Section \ref{sect:B}, one can define the geometric set $E$ on a subset $\Hat{A}' \subset \Hat{A}$ with $\Hat{\eta}_{\sigma}(\Hat{A}')>0$, and we let $\tau,\beta,\alpha$ and $\epsilon$ be the parameters associated with $E$ in a way that:
    \begin{itemize}
        \item E is $\tau$-large with respect to the cocycle $\Phi$;
        \item $\overline{d}(E(\Hat{x}))\ge \beta >0$ for every $\Hat{x} \in \Hat{A}'$;
        \item If $\Hat{\sigma}_\omega$ is the unique lift of $\sigma$ to $L_f$ containing $\Hat{x}$, we denote by $H_k(\Hat{x})\subset \Hat{\sigma}_\omega$ the maximal disk containing $\Hat{x}$ satisfying the requirements of the geometric time $k$ of $\Hat{x}$, see Definition \ref{defn geometric times}. We also denote by $D_k(\Hat{x}) = \Hat{f}^{-n}H_k(\Hat{x})$. Then $D_k(\Hat{x})$ has semi-length larger than $\alpha\epsilon$.
    \end{itemize}

    Let $\Hat{A}'' \subset \Hat{A}'$ be the subset of density points of $\Hat{A}'$ with respect to $\Leb_{\sigma}$, meaning that for $\Hat{x} \in \Hat{A}''$, if $\Hat{\sigma}_\omega$ is the unique lift of $\sigma$ to $L_f$ containing $\Hat{x}$ as made in Subsection \ref{subsec geom times}, and $\Hat{V}_\epsilon(\Hat{x}): = \Hat{\sigma}_\omega \cap B(\Hat{x},\epsilon)$, then:
    \[
    \lim\limits_{\epsilon \to 0}\frac{\Leb_{\Hat{\sigma}_\omega}(V_\epsilon(\Hat{x})\cap \Hat{A}')}{\Leb_{\Hat{\sigma}_\omega}(V_\epsilon(\Hat{x}))} =1
    \]
    In particular:
    \begin{equation}
        \frac{\Leb_{\Hat{\sigma}_\omega}(H_k(\Hat{x})\cap \Hat{A}')}{\Leb_{\Hat{\sigma}_\omega}(H_k(\Hat{x}))} \overset{k \to \infty}{\longrightarrow}1, \ \forall \ \Hat{x} \in \Hat{A}''.
    \end{equation}
    From \eqref{eq fubini on meas curves}, $\Hat{\eta}_\sigma(\Hat{A}'')>0$, and we may fix a subset $\Hat{C} \subset \Hat{A}''$ with $\Hat{\eta}_{\sigma}(C)>0$ such that the above convergence is uniform in $\Hat{C}$.

    From this set $\Hat{C}$ and the geometric set $E$ on $\Hat{C}$ we may proceed as in Section \ref{sect:B} to build a Fölner sequence $(F_n)_n$ and subsets $\Hat{C}_n \subset \Hat{C}$ such that, by considering $\Hat{\mu}_n = \frac{\Hat{\eta}_{\sigma}(\Hat{C}_n \cap \cdot)}{\Hat{\eta}_{\sigma}(\Hat{C}_n)}$, any weak$^*$-limit measure $\Hat{\mu}$ of $\{\Hat{\mu}_n^{F_n}\}$ is an equidistributed inverse SRB measure. In particular, $\Hat{\mu}(\Hat{\Gamma})=1$. We will show that $\Hat{C}\cap \Hat{W}^u(\Hat{\Gamma}) \neq \emptyset$ and since $\Hat{C} \subset \Hat{B}$, this contradicts the initial hypothesis that $\Hat{B} \cap \Hat{W}^u(\Hat{\Gamma})= \emptyset$. 

    We fix $\Hat{\mu}$ obtained as mentioned and consider $\Hat{\Lambda}$ be a Pesin block with $\Hat{\mu}(\Hat{\Lambda})>1-\beta/2$, see Theorem \ref{ThmExistencePesinBlocks}. We fix $\theta, \ell$ respectively as the minimal angle between $E^u$ and $E^s$, and as the minimal length of the local stable and unstable manifolds on $\Hat{\Lambda}$. From the absolute continuity of $\Hat{\mu}$ on stable manifolds, we have that, for a fixed $\gamma>0$
    \[
    \Hat{\mu}(\{\Hat{y} \in L_f: \Leb_{\Hat{W}^s_{\gamma} (\Hat{y})}(\Hat{\Gamma}\cap \Hat{\Lambda})=0\})\le 1-\Hat{\mu}(\Hat{\Lambda})<\beta/2.
    \]
    Hence, for some $c>0$:
    \[
    \Hat{\mu}(\{\Hat{y} \in L_f: \Leb_{\Hat{W}^s_{\gamma} (\Hat{y})}(\Hat{\Gamma}\cap \Hat{\Lambda})>c\})>1-\beta/2,
    \]
    and, taking $\Hat{G} = \{\Hat{y} \in L_f \in \Hat{\Gamma} \cap \Hat{\Lambda}:\Leb_{\Hat{W}^s_{\gamma} (\Hat{y})}(\Hat{\Gamma}\cap \Hat{\Lambda})>c\}$, we get $\Hat{\mu}(\Hat{G})>1-\beta$.

    To finish the proof, we assume the following claim, which we prove in the sequel. 
    
    \begin{claim}\label{claim unst holonomy}
        Given $\delta <<\min(\theta,\ell,\alpha\epsilon)$, there is $\gamma<\delta$ such that for infinitely many $n \in  \nn$ there are points $\Hat{x}_n \in \Hat{C}_n$ and $\Hat{y}_n \in \Hat{G}$ with $n \in E(\Hat{x}_n)$ such that the unstable holonomy $h^u: \Hat{\Lambda}\cap \Hat{\Gamma}\cap \Hat{W}^s_{\gamma}(\Hat{y}_n) \to D_n^\delta(\Hat{x}_n)$ is well defined, where $D_n^\delta(\Hat{x}_n):= D_n(\Hat{x}_n) \cap B(\Hat{f}^{-n}(\Hat{x}), \delta)$.
    \end{claim}

    The unstable holonomy is absolutely continuous, see subsection \ref{subsec abs cont}, and its Jacobian is bounded from below on $\Hat{\Lambda}$ by a constant depending only on the Pesin block. Since $\Leb_{\Hat{W}^s_\gamma(\Hat{y}_n)}(\Hat{\Lambda}\cap \Hat{\Gamma})>c$, we get for some constant $c'>0$ independent of $n$ that
    \begin{equation}\label{eq basin on disks}
        \Leb_{D_n(\Hat{x}_n)}(\Hat{W}^u(\Hat{\Lambda}\cap \Hat{\Gamma}))>c'.
    \end{equation}

    Since the distortion of $D\Hat{f}^{-n}$ on $H_n(\Hat{x}_n)$ is bounded by 3 \eqref{eq distortion bounded curves} and by the definition of $\Hat{C}$, we get
    \[
    \frac{\Leb_{D_n(\Hat{x}_n)}(D_n(\Hat{x}_n) \setminus \Hat{f}^{-n}(\Hat{A}))}{\Leb_{D_n(\Hat{x}_n)}(D_n(\Hat{x}_n))} \le 9\frac{\Leb_{H_n(\Hat{x}_n)}(H_n(\Hat{x}_n)\setminus\Hat{A})}{\Leb_{H_n(\Hat{x}_n)}(H_n(\Hat{x}_n))} \overset{n\to \infty}{\longrightarrow}0.
    \]
    And since $\Leb_{D_n(\Hat{x}_n)}(D_n(\Hat{x}_n)) \le \alpha\epsilon$, we get
    \begin{equation}\label{eq meas basin on disks to zero}
        \Leb_{D_n(\Hat{x}_n)}(D_n(\Hat{x}_n) \setminus \Hat{f}^{-n}(\Hat{A})) \overset{n\to \infty}{\longrightarrow}0.
    \end{equation}
    Hence, from \eqref{eq basin on disks} and \eqref{eq meas basin on disks to zero}, for $n$ large enough $\Hat{f}^{-n}(\Hat{A}) \cap \Hat{W}^u(\Hat{\Lambda} \cap \Hat{\Gamma})\neq \emptyset$, from the invariance of the unstable manifolds, we get $\Hat{A} \cap \Hat{W}^u(\Hat{\Lambda} \cap \Hat{\Gamma})\neq \emptyset$ and since $\Hat{A} \subset \Hat{B}$, this contradicts the initial hypothesis. 
\end{proof}

\begin{proof}[Proof of Claim \ref{claim unst holonomy}]
    For $\Hat{x} \in \pi_{f}^{-1}(\sigma)$ and $\Hat{y} \in \Hat{\Lambda}$, we denote by
    \[
    \boldsymbol{\Hat{x}}_{\sigma} = (\Hat{x},v_{\Hat{x}}),\  \boldsymbol{\Hat{y}}_s = (\Hat{y},v^s_{\Hat{y}}) \in \mathbb{P}L_f,
    \]
    where $v_{\Hat{x}}$ is the element representing the line tangent to $\sigma$ at $x =\pi_{f}(\Hat{x})$ and $v^s_{\Hat{y}}$ is the element representing the $E^s(\Hat{y})$ direction. We let $\boldsymbol{\Hat{\mu}}_n = (\boldsymbol{\pi}_{\sigma})_*\Hat{\mu}_n$, where $\boldsymbol{\pi}_{\sigma}(\Hat{x}) = \boldsymbol{\Hat{x}}_\sigma$ and $\boldsymbol{\Hat{\mu}}$ the weak$^*$-limit of $\boldsymbol{\mu}_n$ that projects to $\Hat{\mu}$.
    
    We consider
    \[
    \boldsymbol{\zeta}_n:=\int\frac{1}{\#F_n}\sum\limits_{k \in E(\Hat{x})\cap F_n}\delta_{\Hat{F}^{-k}(\boldsymbol{\Hat{x}}_\sigma)} \ d\boldsymbol{\mu}_n(\boldsymbol{\Hat{x}}_\sigma).
    \]
    By item 3 of Lemma \ref{lema folner sequence}, $\liminf_n\inf_{\Hat{x} \in \Hat{C}_n}\frac{\#F_n\cap E(\Hat{x})}{n}\ge \beta$, hence there is a weak$^*$-limit $\boldsymbol{\zeta} = \lim_n \boldsymbol{\zeta}_{n}$ with $\boldsymbol{\zeta} \le \boldsymbol{\Hat{\mu}}$ and $\boldsymbol{\zeta}(\mathbb{P}L_f)\ge \beta$. By considering $\boldsymbol{\Hat{G}}_s = \{\boldsymbol{y}_s= (\Hat{y},v_s): \Hat{y \in \Hat{G}}\}$, we get $\boldsymbol{\zeta}(\boldsymbol{\Hat{G}}_s^c)\le \boldsymbol{\Hat{\mu}}(\boldsymbol{\Hat{G}}_s^c)=\Hat{\mu}(\Hat{G})<\beta$, hence $\boldsymbol{\zeta}(\boldsymbol{\Hat{G}}_s)>0$.

    Now, since $L_f$ is separable, so is $\mathbb{P}L_f$ and we may consider $\{(\Hat{y}_j,v^s_{\Hat{y}})\}_j \subset \boldsymbol{\Hat{G}}_s$ a countable and dense subset of $\boldsymbol{\Hat{G}}_s$. Let $y_j = \pi_{f}(\Hat{y}_j)$, as in Subsection \ref{subsec abs cont}, for a small open neighborhood $V$ of $y_j$ (which the size do not depend on the point), there is a homeomorphism $\boldsymbol{\Xi}: \mathbb{P}V\times \Sigma \to \mathbb{P}\pi_{f}^{-1}(V)$. We define $\boldsymbol{\Xi}_j^\gamma:\mathbb{P}V \to \mathbb{P}L_f$ as the unique embedding $\boldsymbol{\Xi}_{j}(\boldsymbol{x}) = \boldsymbol{\Xi}(\boldsymbol{x},\omega)$ (with $\omega$ fixed) satisfying $\boldsymbol{\Xi}_{j}(\boldsymbol{y}_s^j) = \boldsymbol{\Hat{y}}_s^j$, where $\boldsymbol{y}_s^j = (y_j,v^s_{\Hat{y}_j}) \in \mathbb{P}V$. For $\gamma>0$ small, we denote by $\boldsymbol{V}_{j}^\gamma = \boldsymbol{\Xi}_{j}(B(\boldsymbol{y}_s^j,\gamma))$, and define  
    \[
    \boldsymbol{\Hat{G}}_s^\gamma = \bigcup\limits_{j\ge 0}\boldsymbol{V}_{j}^\gamma.
    \]
    Note that, for each $\gamma>0$ fixed, $\boldsymbol{\Hat{G}}_s\subseteq \boldsymbol{\Hat{G}}_s^\gamma$. We fix $\gamma <<\delta<<\min(\theta,\ell,\alpha\epsilon)$ such that $\boldsymbol{\Hat{G}}_s^\gamma$ is a continuity set of $\boldsymbol{\zeta}$ and conclude:
    \[
    0<\boldsymbol{\zeta}(\boldsymbol{\Hat{G}}_s)\le \boldsymbol{\zeta}(\boldsymbol{\Hat{G}}_s^\gamma) = \lim_n\boldsymbol{\zeta}_{n}(\boldsymbol{\Hat{G}_s^\gamma}).
    \]

    Note also that $\boldsymbol{\Hat{C}}_\sigma := \{\boldsymbol{\Hat{y}}_\sigma: \Hat{y} \in \Hat{C}\}$ has full $\boldsymbol{\Hat{\mu}}_n$-measure for all $n$. Hence, for infinitely many $n \in \nn$, there is $\boldsymbol{\Hat{x}}^n_\sigma = (\Hat{x}^n,v_{\Hat{x}^n}) \in \boldsymbol{\Hat{C}}_\sigma$ such that $\Hat{F}^{-n}(\boldsymbol{x}^n_\sigma) \in \boldsymbol{V}_j^\gamma$ for some $j$ and $n \in E(\Hat{x}^n)$. Set $\boldsymbol{\Hat{y}}^n_s = (\Hat{y}^n,v^s_{\Hat{y}^n}) \in \boldsymbol{\Hat{G}}_s$ satisfying that both $\Hat{F}^{-n}(\boldsymbol{\Hat{x}}^n_\sigma)$ and $\boldsymbol{\Hat{y}}^n_s$ are in the same $\boldsymbol{V}_j^\gamma$ for some $j$.

    From the choice of $\gamma$ and the fact that the points are in the same $\boldsymbol{V}_j^\gamma$, the unstable holonomy is well defined between the curves $D_n^\delta(\Hat{x}^n):=D_n(\Hat{x}_n)\cap B(\Hat{f}^{-n}(\Hat{x}^n),\delta)$ and $\Hat{W}^s(\Hat{y}^n)$ (see Subsection \ref{subsec abs cont}).
\end{proof}

\section{Proof of Theorem \ref{main thm SPR}}\label{sect:C}

In this section we prove continuity of the Lyapunov exponents under the hypothesis that the sequence of measures has their Folding entropy approximating its maximum (Prop. \ref{prop continuity LE}). This will yield the SPR property for the stable geometric potential. We start with some preliminary results.

\begin{lema}(Projectivized cocycles \cite{viana2014lectures})\label{lema projectivized cocycles}
Let $f:M \to M$ be a $\mathcal{C}^1$ map on a 2 dimensional manifold, with an invariant measure $\mu$ and different Lyapunov exponents $\lambda^-(x)<\lambda^+(x)$ at $\mu$-almost every $x \in M$, and let $E^-(x)$ and $E^+(x)$ be the corresponding Lyapunov subspaces. Consider $\Hat{F}: \mathbb{P}M \to \mathbb{P}M$ the 
projectivization of $f$, which is itself a bundle map over $f$. Then:

\begin{enumerate}
    \item If $\mu$ is ergodic, then there are exactly two ergodic lifts of $\mu$ to $\mathbb{P}M$, $\mu^{+}$ and $\mu^-$. The disintegrations of $\mu^{+}$ (resp. $\mu^{^-}$) along the fibers of $\mathbb{P}M$ are exactly the Dirac measures at the Lyapunov spaces $E^+$ (resp. $E^-$).
    \item If $\mu^{\mathbb{P}}$ is a lift of $\mu$ to $\mathbb{P}M$ ($\mu$ is not necessarily ergodic), then there is a measurable $f$-invariant function $\rho: M \to [0,1]$ such that the disintegrations of $\mu^{\mathbb{P}}$ along the fibers of $\mathbb{P}M$  are $\rho(x)\delta_{E^+(x)}+(1-\rho(x))\delta_{E^-(x)}$
\end{enumerate}
\end{lema}

\begin{prop}\label{prop prod disint limit meas}
    Let $X,Y,Z$ be compact metric spaces. Let $\mu^k$ be a sequence of measures on $W = X\times Y \times Z$ such that $\mu^k$ converges to $\mu$ in the weak$^*$-topology and, for $(\pi_X)_*\mu^k$-a.e. $x \in X$, the disintegration $\mu^k_x = \eta^k_x\times \nu^k_x$ is a product measure on $Y \times Z$.

    Let $\eta$ be a probability measure on $Y$ and suppose that for any continuous $\varphi: Y \to \rr$ and any $\epsilon>0$
    \[
    (\pi_X)_*\mu^k\left\{x \in  X: \left| \int_Y \varphi \  d\eta^k_x-\int_Y \varphi \ d\eta \right|  \ge \epsilon\right\} \to 0 , \ \text{as} \ k \to \infty. 
    \]
    Then, for $(\pi_X)_*\mu$-a.e. $x \in X$, the disintegrations $\mu_x$ are product measures on $Y \times Z$ and satisfy $\mu_x = \eta \times \nu_x$. 
\end{prop}
\begin{proof}
    From Lemma 4.3 in \cite{martin-a}, a measure $\mu$ in $W = X\times Y \times Z$ satisfies that the disintegrations $\mu_x$ are product measures on $Y \times Z$ for $(\pi_X)*\mu$-a.e. $x \in X$ if, and only if, for every continuous functions $h_1:X \to \rr$, $h_2:Y \to \rr$, $h_3:Z \to \rr$ we have
    \begin{equation}\label{eq prod disit}
    \int_W h_1(x) \cdot h_2(y) \cdot h_3(z) \ d\mu(x,y,z) = \int_W \left(\int_Y h_2(y) \ d(\pi_Y)_*\mu_x (y)\right) \cdot h_1(x) \cdot h_3(z) \ d\mu(x,y,z)
    \end{equation}

    We'll show \eqref{eq prod disit} for the limit measure $\mu$ and $(\pi_Y)_*\mu_x = \eta$. Since all the $\mu^k$ have product disintegrations $\mu^k_x = \eta^k_x \times \nu^k_x$, it is enough to show that 
    \[
    \int_W \left(\int_Y h_2 \ d\eta^k_x \right) \cdot h_1 \cdot h_3 \ d\mu^k \underset{k}{\longrightarrow} \int_W \left(\int_Y h_2 \ d\eta \right) \cdot h_1 \cdot h_3 \ d\mu 
    \]

    We fix the functions $h_1,h_2,h_3$ and $\epsilon>0$. Given $\delta>0$, we take $K \in \nn$ be such that for every $k \ge K$
    \[
    (\pi_X)_*\mu^k(\{x \in X: |\eta_x^k(h_2)-\eta(h_2)|\ge \epsilon\})<\delta.
    \]
    Hence, writing $h = \eta(h_2)\cdot h_1\cdot h_3$, we have
    \begin{align*}
        \left| \int_W \eta^k_x(h_2) \cdot h_1 \cdot h_3 \ d\mu^k - \int_W h \ d\mu   \right| \le \ & \left| \int h \ d\mu^k - \int h \ d\mu\right|\\
        &+ |h_1|_\infty\cdot |h_3|_\infty \int _X |\eta_x^k(h_2)-\eta(h_2)| \ d(\pi_X)_*\mu^k.
    \end{align*}
    The first term goes to zero as $\mu^k \to \mu$. The second term is controlled by
    \[
    \int _X |\eta_x^k(h_2)-\eta(h_2)| \ d(\pi_X)_*\mu^k \le \epsilon+2|h_2|_\infty\cdot \delta.
    \]
    Since $\epsilon,\delta>0$ are arbitrarily small, we get the desired result.
\end{proof}

\begin{prop}\label{prop continuity LE}
    Let $f_n, f\in \End^1(\TT^2)$ be such that $f_n$ converges to $f$ in the $\mathcal{C}^1$-topology and $C(f)>0$. Let also $\{\mu_n\}_n$ be a sequence of $f_n$-invariant ergodic measures converging weak$^*$ to some $\mu$ (necessarily $f$-invariant). If $F_{\mu_n}(f_n) \to \log \deg f$ and $\chi^-(\mu_n)<\chi^+(\mu_n)$ for every $n$, then $\chi^{\pm}(\mu_n) \to \chi^{\pm}(\mu)$.
\end{prop}

\begin{proof}
    We consider for each $n$, $E^-_n$ the Oseledets space of $f_n$ corresponding to the Lyapunov exponent $\chi^-(\mu_n)$. Consider then $\Hat{\mu}_n$, $\Hat{\mu}$ the lifts of $\mu_n$,$\mu$ to $L_{f_n}$ and $L_f$. Then consider $\Hat{\mu}_n^-$, $\Hat{\mu}^-$ the  $E^-_n, E^-$-lifts of $\mu_n$, $\mu$ to $\mathbb{P} L_{f_n}$, $\mathbb{P} L_f$ given by Lemma \ref{lema projectivized cocycles}. Notice that $\mu^-$ is well defined since from Corollary \ref{corol dist meas on fibers to p}, $\mu$ is equidistributed, and since $C(f)>0$, $\chi^-(\mu)<0<\chi^+(\mu)$ from Corollary \ref{corol hyperbolicity of measures max fold}.
    
    By passing to a subsequence if necessary, we may suppose that $\Hat{\mu}_n^-$ converge to a probability measure $\Hat{\mu}^{p}$ in $\mathbb{P}L_f$ which projects to $\Hat{\mu}$. $\chi^-(\mu_n) \to \chi^-(\mu)$ if and only if $\Hat{\mu}^{p} = \Hat{\mu}^-$, and $\chi^+(\mu_n) \to \chi^+(\mu)$ if and only if $\chi^-(\mu_n) \to \chi^-(\mu)$. Therefore, we will only prove that $\Hat{\mu}^{\mathbb{P}} = \Hat{\mu}^-$. 

    We fix any simply connected compact set $V \subset \TT^2$, then there are homeomorphisms $\Xi_n:V \times \Sigma \to \pi_{f_n}^{-1}(V)$ and $\Xi:V \times \Sigma \to \pi_f^{-1}(V)$, see Subsection \ref{subsec Solenoid}. Thus, although $\Hat{\mu}_n$, $\Hat{\mu}$ are each in different spaces, their restriction $\Hat{\mu_n}|_{\pi_{f_n}^{-1}(V)}$, $\Hat{\mu}|_{\pi_f^{-1}(V)}$ may all be identified with measure $\eta_n$, $\eta$ in $V \times \Sigma$. Analogously, the restrictions of $\Hat{\mu}_n^-$, $\Hat{\mu}^-$ and $\Hat{\mu}^p$ can be identified with measures $\eta_n^-, \eta^-, \eta^p$ in $V \times \Sigma \times \PP \rr^2$. We'll show that for any $V \subset \TT^2$ as such, the corresponding measures satisfy $\eta^p = \eta^-$, yielding that $\Hat{\mu}^p = \Hat{\mu}^-$ as we wanted.
    
    Note that since the direction $E^-_n$ does not depend on the past, it is constant on the fibers of $V\times \{\omega\}\times \PP \rr^2$, hence
    \begin{equation}
        \eta_n^- = \int_{V\times \Sigma} \delta_{E^-_n(x)}\ d\eta_n (\Hat{x}).
    \end{equation}
    We are now in the following scenario:
    \begin{itemize}
        \item The measures $\eta_n^-$ converge to $\eta^p$ in the space $V \times \Sigma\times \PP \rr^2$.
        \item Each $\eta_n^-$ disintegrates along the partition $\{\{x\}\times \Sigma\times \PP \rr^2: x \in V\}$ as the product $\eta_{x,n}\times \delta_{E^-_n(x)}$ on $\Sigma \times \mathbb{P}\rr^2$ that satisfies 
        \[
        (\Xi_n)_*(\delta_x\times \eta_{x,n}) = \Hat{\mu}_{x,n}
        \]
        where $\Hat{\mu}_{x,n}$ is the disintegration of the $\Hat{\mu}_n$ at $\pi_{f_n}^{-1}(x)$.
        \item From Corollary \ref{corol dist meas on fibers to p}, we have that for any continuous $\varphi: \Sigma \to \rr$ and any $\epsilon>0$,
        \[
        (\pi_V)_*\eta_n^-\left\{x \in V: \left|\int_{\Sigma}\varphi \ d\eta_{x,n}-\int_\Sigma \varphi \ d p\right|\ge \epsilon \right\} \longrightarrow 0, \ \text{as} \ n \to \infty,
        \]
        where $\pi_V$ is the projection into the first coordinate and $p$ is the equally distributed Bernoulli measure on $\Sigma$.
    \end{itemize}

    Such observation allows us to apply Lemma \ref{prop prod disint limit meas} with $X = V$, $Y = \Sigma$ and $Z = \mathbb{P}\rr^2$ to obtain that $\eta^p$ disintegrates on $(\pi_V)^{-1}(x)$ as the product $p \times w_{x}$ on $\Sigma \times \mathbb{P}\rr^2$. This translates to the limit measure $\Hat{\mu}^p$ on $\mathbb{P}L_f$ as
    \[
    \Hat{\mu}^p = \int_{\TT^2} p_x\times w_x \ d\mu(x) = \int_{L_f}w_{\pi_{f}(\Hat{x})} \ d\Hat{\mu}(\Hat{x}),
    \]
    where the second equality holds since from Prop. \ref{prop limit meas is equidist}, $\Hat{\mu}$ is equidistributed.
    
    Since $\Hat{\mu}^p$ is $\Hat{F}$-invariant, Lemma \ref{lema projectivized cocycles} gives an $\Hat{f}$-invariant function $\Hat{\rho}: L_f\to [0,1]$ such that
    \[
    \Hat{\mu}^p = \int_{L_f} \Hat{\rho}\delta_{E^+} + (1-\Hat{\rho}) \delta_{E^-} \ d\Hat{\mu}.
    \]
    And since the ergodic components of $\Hat{f}$ are exactly the pre-images by $\pi_{f}$ of the ergodic components of $f$, the function $\Hat{\rho}$ is constant on the fibers $\pi_{f}^{-1}(x)$, hence there is an $f$-invariant function $\rho:\TT^2 \to [0,1]$ such that $\Hat{\rho} = \rho \circ \pi_{f}$. Since $E^-$ is also constant on the fibers, we get that
    \begin{align*}
        \int_{L_f}(\rho \circ \pi_{f}) \delta_{E^+} \ d\Hat{\mu} &= \Hat{\mu}^p - \int_{L_f}(1-\rho\circ\pi_{f})\delta_{E^-} \ d\Hat{\mu}\\
        &= \int_{L_f}w_{\pi_{f}(\Hat{x})} \ d\Hat{\mu} + \int_{L_f} (1-\rho\circ\pi_{f})\delta_{E^-} d\Hat{\mu}\\
        &=\int_{L_f} w_{\pi_{f}(\Hat{x})} + (1-\rho\circ\pi_{f})\delta_{E^-} \ d\Hat{\mu}.
     \end{align*}

     Hence, for $\Hat{\mu}$-almost every $\Hat{x}$, we obtain that
     \[
     \rho(\pi_{f}(\Hat{x}))\delta_{E^+(\Hat{x})} = w_{\pi_{f}(\Hat{x})}+(1-\rho(\pi_{f}(\Hat{x})) \delta_{E^-(\pi_{f}(\Hat{x}))}.
     \]
     If $\Hat{\mu}^p$ is different from $\Hat{\mu}^-$, then $\rho$ is non-zero on a set $A \subset \TT^2$ with $\mu(A)>0$. In particular, for every $x \in A$, $\rho(x)\delta_{E^+(\Hat{x})} = w_x + (1-\rho(x))\delta_{E^-(x)}$ holds for $p_x$-almost every $\Hat{x} \in \pi_{f}^{-1}(x)$. The right-hand depends only on $x \in \TT^2$, which implies that the unit vector $v^+(\Hat{x}) = v^+(x)$ inside $E^+(\Hat{x})$ is constant on the fiber. We conclude that
     \begin{align*}
     \lim\limits_{n\to \infty} \frac{1}{n}I(x,v^+(x),f^n) &= \lim\limits_n \frac{1}{n} \int_{\pi_{f}^{-1}(x)} \log \|D\Hat{f}_{\Hat{x}}^{-n}\cdot v^{+}\| \ dp_x\\
     &= \int_{\pi_{f}^{-1}(x)} \lim\limits_n \frac{1}{n}\log \|D\Hat{f}_{\Hat{x}}^{-n}\cdot v^{+}\| \ dp_x\\
     &= \int_{\pi_{f}^{-1}(x)} -\chi^+ \ dp_x.
     \end{align*}

     Hence, for $\mu$-a.e. $x \in A$
     \[
     \chi^+(x) = -\lim\limits_n \frac{1}{n}I(x,v,f^n) \le -C(f),
     \]
     which contradicts Corollary \ref{corol hyperbolicity of measures max fold}.
\end{proof}

As a consequence, we get continuity of the Lyapunov exponents of the equidistributed inverse SRB measures.

\begin{corol}\label{corol cont LE}
    Consider the $\mathcal{C}^2$-open neighborhood $\mathcal{U}$ of $\mathcal{F}_1$ given by Theorem \ref{thm uniq proved} and, for each $f \in \mathcal{U}$, let $\mu_f$ be the unique equidistributed inverse SRB measure of $f$. Then, the maps $\mathcal U \ni f \mapsto \chi^{\pm}(\mu_f)$ are continuous (with the $\mathcal{C}^1$-topology in $\mathcal U$).
\end{corol}

We are now ready for the proof of Theorem \ref{main thm SPR}

\begin{proof}[Proof of Theorem \ref{main thm SPR}]
Let $f \in \End^1(\TT^2)$ be such that $C(f)>0$ and consider $\phi_s:L_f \to \rr$ the stable geometric potential defined as
\[
\phi_s(x) = \log |\det Df|_{E^s(x)}|, \ \text{and} \ \phi_s(x) = -\infty \ \text{if} \ E^s(x) \ \text{is not defined}.
\]

We'll show that $f$ satisfies the Pressure Hyperbolicity \ref{PH Prop.} and the Pressure Continuity \ref{PC Prop.} properties for $\phi_s$, the result will follow from Theorem \ref{thm ph and pc imply SPR endos}. Let $\mu_n$ be a sequence of $f$-invariant measures converging weak$^*$ to some $\mu$ and suppose that $P(f,\mu_n,\phi_s) \to P_{top}(f,\phi_s)$. We recall that from Ruelle's inequality for endomorphisms \eqref{eq Ruelle ineq endo}, $P(f,\mu,\phi_s) = h_\mu(f) +\int \phi_s d\mu = h_\mu(f)+\sum_{\chi_i<0}\chi_i(\mu) \le F_{\mu}(f)$ for any $f$-invariant measure $\mu$.

Since $f$ admits an equidistributed inverse SRB measure $\eta$, we have that
    \[
    P_{top}(f,\phi) = P(f,\eta,\phi)  = F_\eta(f) = \log d, \  d= \deg f.
    \]
In particular, we have $P(f,\mu_n,\phi_s) \to \log d$. Since $P(f,\mu_n,\phi_s) \le F_{\mu_n}(f) \le \log d$ for every $n$, we also have that $F_{\mu_n}(f) \to \log d$. From Corollary \ref{corol dist meas on fibers to p} we know that the limit measure $\mu$ is equidistributed. Since from Corollary \ref{corol hyperbolicity of measures max fold} there exists $\lambda>0$ such that any equidistributed measure is $\lambda$-hyperbolic we conclude the pressure hyperbolicity of $f$ for $\phi_s$.

Moreover, from the definition of $\phi_s$, we may assume without loss of generality that $\chi^-(\mu_n) \le 0$ (otherwise $P(f,\mu_n,\phi_s) = -\infty$), and that $\chi^+(\mu_n)>0$ (otherwise $\mu_n$ is supported on a periodic sink, thus has zero entropy and $P(f,\mu_n,\phi_s) \le 0$). This way, Prop. \ref{prop continuity LE} gives us that $\chi^{\pm}(\mu_n) \to \chi^{\pm}(\mu)$ yielding that $f$ is pressure continuous for $\phi_s$., which finishes the proof.
\end{proof}

\section{Proof of Theorem \ref{thm main}}\label{sect:main}

\begin{proof}
    Consider $\mathcal{U}_1$ a $\mathcal{C}^1$ open neighborhood of $\mathcal{F}_1$ such that every $f \in \mathcal{U}_1$ satisfies $C(f)>0$. Theorem \ref{main thm burguet} yields that every $\mathcal{C}^\infty$ endomorphism $f \in \mathcal{U}_1$ has an equidistributed inverse SRB measure. 

    Now consider $\mathcal{U}_2$ the $\mathcal{C}^2$-open neighborhood of $\mathcal{F}_1$ given by Theorem \ref{main thm uniqueness} and let $\mathcal{U} = \mathcal{U}_1 \cap \mathcal{U}_2$. Then every $\mathcal{C}^\infty$ endomorphism $f \in \mathcal{U}$ has a unique equidistributed inverse SRB measure which we denote by $\mu_f$ with full basin.

    From Theorem \ref{main thm SPR}, every $f \in \mathcal{U}$ is SPR for the geometric potential, and from Corollary \ref{corol cont LE} the Lyapunov exponents of $\mu_f$ vary continuously with $f \in \mathcal{U}$. Since $h_{\mu_f}(f) = \log d + |\chi^-(\mu_f)|$ for every $f \in \mathcal{U}$, $h_{\mu_f}(f)$ also varies continuously with $f \in \mathcal U$.

    All there is left to show is the continuity of the measures $\mu_f$, which we prove in Theorem \ref{thm cont of equid inv SRB meas} in the sequel.
\end{proof}

\subsection{Continuity of equidistributed inverse SRB measures}

In this section, we prove that equidistributed inverse SRB measures for inverse expanding on average endomorphisms, when unique, vary continuously in the space of endomorphisms.

\begin{thm}\label{thm cont of equid inv SRB meas}
    Consider $f \in \End^r(\TT^2)$, $r>1$, satisfying $C(f)>0$. Suppose that $f_n \in \End^r(\TT^2)$, is converging to $f$ in the $\mathcal{C}^1$-topology and $\|f_n\|_{r}$ is uniformly bounded. Then for any sequence $\{\mu_n\}_n$ of ergodic equidistributed inverse SRB measures for $f_n$, any accumulation point of $\{\mu_n\}_n$ is an equidistributed inverse SRB measure for $f$.
\end{thm}

Before going into the proof of the Theorem let us note its direct consequence.

\begin{corol}\label{corol cont equid inv srb meas}
    Consider $\mathcal{U}$ the $\mathcal{C}^2$-open neighborhood of $\mathcal{F}_1$ given by Theorem \ref{thm uniq proved} and for $f \in \mathcal{U}$, let $\mu_f$ be the unique equidistributed inverse SRB measure for $f$. The map $\mathcal{U} \ni f \mapsto \mu_f$ is continuous in the $\mathcal{C}^1$-topology in $\mathcal{U}$ restricted to $\mathcal{C}^2$-bounded sets and the weak$^*$ topology in the space of probabilities.
\end{corol}

In order to prove Theorem \ref{thm cont of equid inv SRB meas}, we rely on the techniques developed by C. Luo and D. Yang in \cite{luo2024ergodic}. We start with the following lemma, proved in the diffeomorphism case, which we indicate how to extend to endomorphisms.

\begin{lema}[Theorem 2.1, \cite{luo2024ergodic}]\label{lema bound tail entropy}
    Given $r>1$, there exist a constant $C_r$ satisfying the following. For every $\Upsilon>0$ and $q \in \nn$, there exists $\epsilon>0$ such that
    \begin{itemize}
        \item for every $f \in \End^r(\TT^2)$ with $\|f\|_r \le \Upsilon$,
        \item for every ergodic $\mu$ with $\chi^-(\mu) \le 0< \chi^+(\mu)$,
        \item for every finite partition $\mathcal{P}$ with $\diam(\mathcal{P})<\epsilon$ and $\mu(\partial\mathcal{P}) = 0$,
    \end{itemize}
    we have
    \[
    h_\mu(f) \le h_\mu(f,\mathcal{P}) + \frac{1}{r-1}\left(\frac{1}{q}\int \log \|Df_x^q\| \ d\mu - \chi^+(\mu) +\frac{1}{q} \right) + \frac{\log 2q\Upsilon  C_r }{q} 
    \]
\end{lema}
\begin{proof}
    In Theorem 2.1 \cite{luo2024ergodic}, the authors prove such affirmation considering a $\mathcal{C}^r$ diffeomorphism on a compact manifold and an ergodic measure with exactly one positive Lyapunov exponent. In order to rely on their result to obtain ours, we consider $g:N \to N$ the $\mathcal{C}^r$ realization of $f \in \End^r(\TT^2)$ given by Prop. \ref{lema nice realization}.
    
    Let $X \subset N$ be such that there is a homeomorphism $h: L_f \to X$ satisfying $g|_X\circ h = h \circ \Hat{f}$, and, for $\mu$ an $f$-ergodic as in the hypothesis, let $\nu = h_*\Hat{\mu}$, then $\nu$ is a $g$-ergodic measure with exactly one positive Lyapunov exponent $\chi^+(\nu) = \chi^+(\mu)$ and $h_\nu(g) = h_\mu(f)$.
    
    Then we may apply Theorem 2.1 \cite{luo2024ergodic} to $(g,\nu)$ to conclude that for any $q \in \nn$ there is $\epsilon>0$ such that for any partition $\Tilde{\mathcal{P}}$ of $X$ with $\diam \Tilde{\mathcal{P}}<\epsilon$,
    \[
    h_\nu(g) \le h_\nu(g,\Tilde{\mathcal{P}}) + \frac{1}{r-1}\left(\frac{1}{q}\int \log \|Dg_x^q\| \ d\nu - \chi^+(\nu) +\frac{1}{q} \right) + \frac{\log 2q\Upsilon  C_r }{q}.
    \]
    As $h: L_f \to X$ is uniformly continuous, we may fix $\delta>0$ such that $d(\Hat{x},\Hat{y})< \delta$ implies $d(h(\Hat{x}),h(\Hat{y}))<\delta$.
    
    Given a partition $\mathcal{P}$ of $\TT^2$ with $\diam{\mathcal{P}}<\delta$, let $\mathcal{P}':=\pi_{f}^{-1}(\mathcal{P})$. Since $\Hat{f}$ is a uniform contraction at fibers $\pi_{f}^{-1}(x)$, we have that for some $n \ge 0$ sufficiently large $\mathcal{P}' \vee \Hat{f}^n\mathcal{P}'$ is a partition with diameter smaller than $\delta$ and it satisfies $h_{\Hat{\mu}}(\Hat{f},\mathcal{P}'\vee \Hat{f}^n\mathcal{P}') = h_{\Hat{\mu}}(\Hat{f},\mathcal{P}') = h_{\mu}(f,\mathcal{P})$. Therefore $\Tilde{\mathcal{P}}:= h^{-1}(\mathcal{P}' \vee \Hat{f}^n\mathcal{P}')$ satisfies $\diam \tilde{\mathcal{P}} <\epsilon$ and $h_{\nu}(g,\Tilde{\mathcal{P}}) = h_\mu(f,\mathcal{P})$. We conclude
    \begin{align*}
        h_\mu(f) = h_\nu(g) &\le h_\nu(g,\Tilde{\mathcal{P}}) + \frac{1}{r-1}\left(\frac{1}{q}\int \log \|Dg_x^q\| \ d\nu - \chi^+(\nu) +\frac{1}{q} \right) + \frac{\log 2q\Upsilon  C_r }{q}\\
        &=h_\mu(f,\mathcal{P}) + \frac{1}{r-1}\left(\frac{1}{q}\int \log \|Df_x^q\| \ d\mu - \chi^+(\mu) +\frac{1}{q} \right) + \frac{\log 2q\Upsilon  C_r }{q}.
    \end{align*}
\end{proof}

The following is a version of Theorem A in \cite{luo2024ergodic}.
\begin{prop}\label{prop upper semicont entropy}
    Let $f \in \End^r(\TT^2)$, $r>1$, and suppose that $f_n \in \End^r(\TT^2)$ are converging to $f$ in the $\mathcal{C}^1$-topology and $\|f_n\|_r$ is uniformly bounded. Consider also $\{\mu_n\}_n$ a sequence of $f_n$-ergodic measures converging to some $\mu$ (necessarily $f$-invariant). Suppose that
    \begin{itemize}
        \item $\chi^-(\mu_n) <0<\chi^+(\mu_n)$ for all $n$, and
        \item $\chi^+(\mu_n)$ is converging to $\chi^+(\mu)$.
    \end{itemize}
    Then $\limsup_{n} h_{\mu_n}(f_n) \le h_\mu(f)$.
\end{prop}

\begin{proof}
    Let $\Upsilon$ be such that $\|f_n\|_r \le \Upsilon$ for all $n$. From Lemma \ref{lema bound tail entropy} applied to each $(f_n,\mu_n)$, we get that for any $q \in \nn$, there is $\epsilon>0$ such that for any partition $\mathcal{P}$ with $\diam\mathcal{P}<\epsilon$ and $\mu(\partial \mathcal{P}) = \mu_n(\partial \mathcal{P} )=0$ for all $n$, we have
    \begin{align*}
        \limsup\limits_n h_{\mu_n}(f_n) &\le \limsup\limits_n h_{\mu_n}(f_n, \mathcal{P}) +\frac{1}{r-1}\left(\frac{1}{q}\int \log \|(Df_n^q)_x\| \ d\mu_n - \chi^+(\mu_n) +\frac{1}{q} \right)\\
        & \ \ \ +\frac{\log 2q\Upsilon  C_r }{q}\\
        &\le  h_\mu(f,\mathcal{P})+ \frac{1}{r-1}\left(\frac{1}{q}\int \log \|Df_x^q\| \ d\mu - \chi^+(\mu) +\frac{1}{q} \right) + \frac{\log 2q\Upsilon  C_r }{q}.
    \end{align*}
    The second inequality holds for three reasons: first, $\limsup_n h_{\mu_n}(f_n,\mathcal{P}) \le  h_{\mu}(f,\mathcal{P})$, see e.g. [Lemmas 3.3 and 3.4, \cite{gan2021statistical}]. Second, $f_n \to f$ in the $\mathcal{C}^1$ topology implies that $Df_n^q \to Df^q$ uniformly on $\TT^2$, and since $\mu_n \to \mu$ as well we conclude that the integrals converge. Finally, the convergence of $\chi^+(\mu_n)$ holds from the hypothesis.

    Since the inequality holds for any $q \in \nn$, we conclude that for every $\delta>0$, there exist a partition $\mathcal{P}$ such that
    \[
    \limsup_n h_{\mu_n}(f_n) \le h_\mu(f,\mathcal{P})+ \delta \le h_\mu(f) +\delta,
    \]
    which finishes the proof.
\end{proof}

Finally, we prove Theorem \ref{thm cont of equid inv SRB meas}.

\begin{proof}[Proof of Theorem \ref{thm cont of equid inv SRB meas}]
    Let $f_n \to f \in \End^r(\TT^2)$ be a converging sequence in the $\mathcal{C}^1$ topology with $\|f_n\|_r$ uniformly bounded and let $\mu_n$ be ergodic equidistributed inverse SRB measures for $f_n$. We may suppose without loss of generality that $\mu_n$ converges to some probability $\mu$ which from Corollary \ref{corol dist meas on fibers to p} is necessarily equidistributed for $f$. 

    Since $f_n \to f$ and $C(f)>0$, we may assume as well that $C(f_n)>0$ for all $n$, which from Corollary \ref{corol hyperbolicity of measures max fold} implies that $\chi^-(\mu_n)<0<\chi^+(\mu_n)$ for all $n$. Moreover, since $\mu_n$ are all equidistributed, $F_{\mu_n}(f_n) = \log d= F_\mu(f)$ for every $n$. Thus, Prop. \ref{prop continuity LE} yields that $\chi^\pm(\mu_n) \to \chi^\pm(\mu)$ and the previous Prop. \ref{prop upper semicont entropy} yields $\limsup_n h_{\mu_n}(f_n) \le h_\mu(f)$.

    Since every $\mu_n$ is equidistributed inverse SRB for $f_n$ and from Rulle's inequality for endomorphisms \eqref{eq Ruelle ineq endo}, we get
    \[
    \log d +|\chi^-(\mu)| = \limsup_n h_{\mu_n}(f_n) \le h_\mu(f) \le \log d+|\chi^-(\mu)|.
    \]
    Hence $\mu$ is an equidistributed inverse SRB measure for $f$.
\end{proof}

\section{Proof of Proposition \ref{cor}}\label{sect:Cor}

\begin{proof}[Proof of Proposition \ref{cor}]

We consider a linear Anosov endomorphism on $f:\TT^2\to \TT^2$ induced by $E \in M_2(\zz)$ with non-trivial stable bundle, for instance $E = \begin{pmatrix}
    3 &1\\
    1 &1
\end{pmatrix}$.

Since this is an open property, we may consider $\mathcal{U}$ a $\mathcal{C}^1$ neighborhood of $f$ such that every $g \in \mathcal{U}$ is an Anosov systems with non-trivial stable direction. Mihailescu \cite{mihailescu2010physical} proved that every $g \in \mathcal{U}$ preserves an equidistributed inverse SRB measure. 

Since $f$ is conservative, we may consider $g \in \mathcal{U}$ which is also conservative and does not have constant Jacobian ($\det Dg \nequiv \text{const.}$). We have that the Lebesgue measure is an inverse SRB measure \cite{liu2008invariant} and since $g$ has non-constant Jacobian, it satisfies that the folding entropy $F_{\Leb}(g) < \log d$, thus it is not equidistributed.
\end{proof}

We couldn't find on the literature any construction of a non-invertible conservative systems with non-constant Jacobian. Hence we expose below how to obtain the perturbations $g$ as we need for this proof.

\subsection{Perturbations in dimension one}
We begin in dimension 1, as this construction will be utilized for dimension 2. The only self-covers of $\TT^1$ with constant Jacobian are the maps $R_k: \TT^1 \to \TT^1$ given by $R_k(x) = kx(mod1)$, $k \in \zz_*$. For $\epsilon>0$ we construct a $\mathcal{C}^{\infty}$ conservative self-cover $S_\epsilon: \TT^1 \to \TT^1$ which is $\mathcal{C}^1$-$\epsilon$-close to $R_k$ with non-constant Jacobian.

For that, we fix any point $p \in \TT^1$, $\delta< \frac{1}{2k}$ and consider, for $0<\epsilon<\delta$ small, a $\mathcal{C}^\infty$ bump function $\varphi_{\epsilon}:[p-\delta,p+\delta] \to \rr$ satisfying:
\begin{itemize}
    \item $0 \le \varphi(x) \le \epsilon$, for every $x$; $\varphi(p) >0$,
    \item $\varphi(x) = 0$, if $|x-p| \ge \delta /2$, and
    \item $|\varphi'(x)|<\epsilon$ for every $x$.
\end{itemize}
We denote by $I_0 = [p-\delta,p+\delta]$, $J = R_k(I_0)$. Since $\delta$ is sufficiently small, we get that $R_k^{-1}(J)$ is the disjoint union of $k$ intervals $I_0, \cdots I_{k-1}$ with $I_j = I_0+\frac{j}{k}$. 

We consider the diffeomorphism $T: I_0 \to I_1$ given by $T(x) = x + \frac{1}{k}+\frac{2}{k}\varphi_{\epsilon}(x)$ and define the perturbation $S_\epsilon:\TT^1 \to \TT^1$ as:
\[
S_\epsilon(x) = \left\{\begin{array}{ll}
    R_k(x), & x \notin I_o \cup I_1,\\
    R_k(x) + \varphi_{\epsilon}(x), & x \in I_0, \\
    R_k(x) -\varphi_\epsilon \circ T^{-1}(x), & x \in I_1.
\end{array}  \right.
\]
It is clear from the definition of $\varphi_\epsilon$ that $S_\epsilon$ is a $\mathcal{C}^\infty$ self-cover of $\TT^1$ which is $\mathcal{C}^1$-$\epsilon$-close to $R_k$. It has non constant Jacobian, more specifically $|S_\epsilon '(x)|\neq k$ in the points $x \in I_0 \cup I_1$ where the derivative of $\varphi_\epsilon$ and $\varphi_\epsilon\circ T^{-1}$ are non-zero.  All there is left to show is that $S_\epsilon$ is conservative, for that we show that for every $y \in \TT^1$
\begin{equation}\label{eq condition conservative}
\sum\limits_{z \in S_\epsilon^{-1}(y)} \frac{1}{|S_\epsilon'(z)|} = 1.
\end{equation}

For points $y \notin J = R_k(I_0) = S_\epsilon(I_0)$, we have $S_{\epsilon}^{-1}(y) \cap (I_0 \cup I_1) = R_k^{-1}(y)\cap (I_0\cup I_1) = \emptyset$. Hence $S_{\epsilon}'(z) = R_k'(z) = k$ for every $z \in S_\epsilon^{-1}(y)$ and we get \eqref{eq condition conservative} trivially. Now, for $y \in J$, we have $S_{\epsilon}^{-1}(y) = \{z_0,\cdots,z_{k-1}\}$ with $z_i \in I_i$. For $i \notin \{0,1\}$ we have $|S_\epsilon'(z_i)| = k$. We also have that $z_1 = T(z_0)$, indeed
\begin{align*}
S_\epsilon(T(z_0)) &= R_k(T(z_0)) - \varphi_\epsilon\circ T^{-1}(T(z_0)) = kz_0 +\varphi_\epsilon(z_0) +1 = S_\epsilon(z_0).
\end{align*}

We then have
\[
S_\epsilon'(z_1) = k - \frac{\varphi_\epsilon'(z_0)}{1+\frac{2}{k}\varphi_\epsilon'(z_0)}.
\]
Hence, for $z \in J$:
\begin{align*}
\sum\limits_{z \in S_\epsilon^{-1}(y)}\frac{1}{|S_\epsilon'(z)|} = \frac{k-2}{k}+\frac{1}{|S_\epsilon'(z_0)|}+\frac{1}{|S_\epsilon'(z_1)|} 
&= \frac{k-2}{k} + \frac{1}{k+\varphi_\epsilon'(z_0)}+ \frac{1+\frac{2}{k}\varphi_\epsilon'(z_0)}{k+\varphi_\epsilon'(z_0)}  =1. 
\end{align*}
This shows that \eqref{eq condition conservative} holds for every $z \in \TT^1$, hence $S_\epsilon$ is conservative. 

\subsection{Perturbation for linear systems in dimension 2}

We consider $E \in M_2(\zz)$ with $d = |\det E|\ge 2$ and we write $E = U A V$, where $U, V \in SL_2(\zz)$, and 
\[
A = \begin{pmatrix}
\tau_1 & 0\\
0 & \tau_2
\end{pmatrix} \in GL_2(\zz)
\]
is the Smith normal form of $E$ where $\tau_2 = \gcd\{(e_{ij})\}$ is the greater common divisor of the entries of $E$, and $\tau_1 \cdot \tau_2 = d = |\det E|$, in particular $\tau_1>1$. We denote by the same letter $E: \TT^2 \to \TT^2$ the induced endomorphism.

Considering $\TT^2 = \TT^1 \times \TT^1$, we get $A(x,y) = (R_{\tau_1}(x),R_{\tau_2}(y))$. We then consider a perturbation $A_\epsilon: \TT^2 \to \TT^2$ given by $A_{\epsilon}(x,y) = (S_\epsilon(x), R_{\tau_2}(y))$, where $S_\epsilon$ is the perturbation constructed in the previous section. We define $g_\epsilon:\TT^2 \to \TT^2$ by
\[
g_\epsilon = U \circ A_\epsilon\circ V
\]
Again, it is clear that $g_\epsilon$ is $\mathcal{C}^1$-$\epsilon$-close to $E$ and $|\det Dg_\epsilon(p)|\neq d$ for every $p \in V^{-1}((I_0\cup I_1) \times \rr)$. All there is left to demonstrate is that $g_\epsilon$ is conservative and we show it by obtaining that for every $q \in \TT^2$
\[
\sum\limits_{z \in g_\epsilon^{-1}(q)} \frac{1}{|\det Dg_\epsilon(z)|} = 1.
\]
Now, given $q \in \TT^2$, we write $\tilde{q} = V ^{-1}(q)$, then $U (g_\epsilon^{-1}(q)) = A_\epsilon^{-1}(\tilde{q})$ and since $ |\det U| = |\det V| = 1$, we get
\[
\sum\limits_{z \in g_\epsilon^{-1}(q)} \frac{1}{|\det Dg_\epsilon(z)|} = \sum\limits_{\tilde{z} \in A_\epsilon^{-1}(\tilde{q})}\frac{1}{|\det D A_\epsilon(\Tilde{z})|}.
\]
Hence we only need to show that the right side of the equality above is $1$ for every $\Tilde{q} \in \TT^2$.

As in the previous section, we have that if $q \notin J \times \rr$, then for every $z \in A_\epsilon^{-1}(q)$, $A_\epsilon(z) = A(z)$, hence $|\det D A_\epsilon(z)| = d = |\det A|$ and we get the result trivially.

Now for $q = (x,y) \in J\times \rr$, we get $A_\epsilon^{-1}(q) = \{z_{ij} = (x_i,y_j): i=0,\cdots ,\tau_1-1; \ j = 0,\cdots \tau_2-1\}$ with $x_i \in I_i$. For every $i \ge 2$ we have $|\det DA_\epsilon(z_{ij})| =|\det A| = d$ and there are exactly $(\tau_1-2)\tau_2$ of such points.

For $i = 0,1$, if we write $z_{ij} =(x_i,y_j)$, we have that $(x_1,y_j) = (T(x_0),y_j)$ for every $j=0,\cdots \tau_2-1$ and it satisfies
\[
|\det DA_\epsilon(z_{1j})|=|\tau_2||S_\epsilon'(x_1)| = \tau_2\left(\tau_1 -\frac{\varphi_\epsilon'(z_0)}{1+\frac{2}{\tau_1}\varphi_\epsilon'(z_0)} \right)=\frac{\tau_1\tau_2(\tau_1+\varphi_\epsilon'(z_0))}{\tau_1+2\varphi_\epsilon'(z_0)}.
\]
Hence, for every $j$ fixed:
\[
\sum\limits_{i=0}^1\frac{1}{|\det DA_\epsilon(z_{ij})|} = \frac{1}{\tau_2(\tau_1+\varphi_\epsilon'(z_0))}+\frac{1+\frac{2}{\tau_1}\varphi_\epsilon'(z_0)}{\tau_2(\tau_1+\varphi_\epsilon'(z_0))}= \frac{2}{\tau_1\tau_2}.
\]
Since $j = 0,\cdots,\tau_2-1$, we conclude:
\[
\sum\limits_{z \in A_\epsilon^{-1}(q)} \frac{1}{|\det DA_\epsilon(z)|} = (\tau_1-2)\tau_2\frac{1}{d}+\tau_2\frac{2}{d}=1.
\]

\begin{rmk}
    This construction can be generalized to perturb any conservative map $f: \TT^n \to \TT^n$ with constant Jacobian with the hypothesis that the homotopy between $f$ and its linear part is a volume preserving diffeomorphism. For instance, one could perturb the examples in \cite{martin-a} and apply our Theorem \ref{main thm burguet} to obtain such examples of topologically mixing maps with at least two inverse SRB measures inside the open set given by Theorem \ref{thm main}.
\end{rmk}

\bibliographystyle{amsalpha}
\bibliography{bibliography}

@article {LW25,
    AUTHOR = {Liao, Gang and Wang, Shirou},
     TITLE = {Entropy formula of folding type for {$C^{1+\alpha }$} maps},
   JOURNAL = {Comm. Math. Phys.},
  FJOURNAL = {Communications in Mathematical Physics},
    VOLUME = {406},
      YEAR = {2025},
    NUMBER = {12},
     PAGES = {Paper No. 317, 36},
      ISSN = {0010-3616,1432-0916},
   MRCLASS = {37C40 (37D25)},
  MRNUMBER = {4991161},
MRREVIEWER = {Zhiming\ Li},
       DOI = {10.1007/s00220-025-05500-5},
       URL = {https://doi.org/10.1007/s00220-025-05500-5},
}

@article {DK07,
    AUTHOR = {Dolgopyat, Dmitry and Krikorian, Rapha\"el},
     TITLE = {On simultaneous linearization of diffeomorphisms of the
              sphere},
   JOURNAL = {Duke Math. J.},
  FJOURNAL = {Duke Mathematical Journal},
    VOLUME = {136},
      YEAR = {2007},
    NUMBER = {3},
     PAGES = {475--505},
      ISSN = {0012-7094,1547-7398},
   MRCLASS = {37C05 (37A25 37C15 37H15)},
  MRNUMBER = {2309172},
MRREVIEWER = {Ian\ Melbourne},
       DOI = {10.1215/S0012-7094-07-13633-0},
       URL = {https://doi.org/10.1215/S0012-7094-07-13633-0},
}

@article {Zh19,
    AUTHOR = {Zhang, Zhiyuan},
     TITLE = {On stable transitivity of finitely generated groups of
              volume-preserving diffeomorphisms},
   JOURNAL = {Ergodic Theory Dynam. Systems},
  FJOURNAL = {Ergodic Theory and Dynamical Systems},
    VOLUME = {39},
      YEAR = {2019},
    NUMBER = {2},
     PAGES = {554--576},
      ISSN = {0143-3857,1469-4417},
   MRCLASS = {37C85 (37D25)},
  MRNUMBER = {3893271},
MRREVIEWER = {S\'ebastien\ Alvarez},
       DOI = {10.1017/etds.2017.28},
       URL = {https://doi.org/10.1017/etds.2017.28},
}

@article{oseledets1968multiplicative,
  title={A multiplicative ergodic theorem. Characteristic Lyapunov exponents of dynamical systems},
  author={Oseledets, Valery Iustinovich},
  journal={Trudy Moskovskogo Matematicheskogo Obshchestva},
  volume={19},
  pages={179--210},
  year={1968},
  publisher={Moscow Mathematical Society}
}

@book{Qian2009smooth,
author = {Qian, Min and Xie, Jian-Sheng and Zhu, Shu},
year = {2009},
month = {01},
pages = {},
title = {Smooth Ergodic Theory for Endomorphisms},
series = {Lecture Notes in Mathematics},
volume = {1978},
isbn = {978-3-642-01953-1},
doi = {10.1007/978-3-642-01954-8},
publisher = {Springer}
}

@article{liu2003ruelle,
  title={Ruelle inequality relating entropy, folding entropy and negative Lyapunov exponents},
  author={Liu, Pei-Dong},
  journal={Communications in mathematical physics},
  volume={240},
  pages={531--538},
  year={2003},
  publisher={Springer}
}

@article{liu2008invariant,
  title={Invariant measures satisfying an equality relating entropy, folding entropy and negative Lyapunov exponents},
  author={Liu, Pei-Dong},
  journal={Communications in mathematical physics},
  volume={284},
  number={2},
  pages={391--406},
  year={2008},
  publisher={Springer}
}

@article{ledrappier1985metric,
    AUTHOR = {Ledrappier, F. and Young, L.-S.},
     TITLE = {The metric entropy of diffeomorphisms. {I}. {C}haracterization
              of measures satisfying {P}esin's entropy formula},
   JOURNAL = {Ann. of Math. (2)},
  FJOURNAL = {Annals of Mathematics. Second Series},
    VOLUME = {122},
      YEAR = {1985},
    NUMBER = {3},
     PAGES = {509--539},
      ISSN = {0003-486X,1939-8980},
   MRCLASS = {58F11 (58F15)},
  MRNUMBER = {819556},
MRREVIEWER = {D.\ Newton},
       DOI = {10.2307/1971328},
       URL = {https://doi.org/10.2307/1971328},
}

@article{burguet2024srb,
  title={SRB measures for $\uppercase{C}^{\infty}$ surface diffeomorphisms},
  author={Burguet, David},
  journal={Inventiones mathematicae},
  volume={235},
  number={3},
  pages={1019--1062},
  year={2024},
  publisher={Springer}
}

@book{barreira2023introduction,
  title={Introduction to smooth ergodic theory},
  author={Barreira, Luis and Pesin, Yakov},
  volume={231},
  year={2023},
  publisher={American Mathematical Society}
}

@article{martin-a,
  title = {Non-uniformly hyperbolic endomorphisms},
  year = {2025},
  issue = {6},
  journal = {Compositio Mathematica},
  pages = {1313 - 1356},
  volume = {161},
  doi = {10.1112/S0010437X2500702X},
  author = {Andersson, Martin and Carrasco, Pablo D. and Saghin, Radu}
}

@misc{lima2024measuresmaximalentropynonuniformly,
      title={Measures of maximal entropy for non-uniformly hyperbolic maps}, 
      author={Yuri Lima and Davi Obata and Mauricio Poletti},
      year={2024},
      eprint={2405.04676},
      archivePrefix={arXiv},
      primaryClass={math.DS},
      url={https://arxiv.org/abs/2405.04676}, 
}

@article{pesin1976families,
  title={Families of invariant manifolds corresponding to nonzero characteristic exponents},
  author={Pesin, Yakov},
  journal={Math. USSR-Izv},
  volume={10},
  number={6},
  pages={1261--1305},
  year={1976}
}

@article{buzzi2003uniqueness,
  title={Uniqueness of equilibrium measures for countable Markov shifts and multidimensional piecewise expanding maps},
  author={Buzzi, J{\'e}r{\^o}me and Sarig, Omri},
  journal={Ergodic Theory and Dynamical Systems},
  volume={23},
  number={5},
  pages={1383--1400},
  year={2003},
  publisher={Cambridge University Press}
}

@article{buzzi2023existence,
  title={On the Existence of SRB Measures for \uppercase{C}$^\infty$ Surface Diffeomorphisms},
  author={Buzzi, J{\'e}r{\^o}me and Crovisier, Sylvain and Sarig, Omri},
  journal={International Mathematics Research Notices},
  volume={2023},
  number={24},
  pages={20812--20826},
  year={2023},
  publisher={Oxford University Press}
}

@article{janeiro2023existence,
  title={Existence of robust non-uniformly hyperbolic endomorphism in homotopy classes},
  author={Janeiro, Victor},
  journal={Journal of Dynamical and Control Systems},
  volume={29},
  number={4},
  pages={2023--2039},
  year={2023},
  publisher={Springer}
}

@article{ramirez2023non,
  title={Non-uniform hyperbolicity of maps on $\mathbb{T}^2$},
  author={Ram{\'\i}rez, Sebasti{\'a}n and Vivas, Kendry J},
  journal={arXiv preprint arXiv:2312.16742},
  year={2023}
}

@article{mihailescu2010physical,
  title={Physical measures for multivalued inverse iterates near hyperbolic repellors},
  author={Mihailescu, Eugen},
  journal={Journal of Statistical Physics},
  volume={139},
  number={5},
  pages={800--819},
  year={2010},
  publisher={Springer}
}

@article{mihailescu2013entropy,
  title={Entropy production for a class of inverse SRB measures},
  author={Mihailescu, Eugen and Urba{\'n}ski, Mariusz},
  journal={Journal of Statistical Physics},
  volume={150},
  number={5},
  pages={881--888},
  year={2013},
  publisher={Springer}
}

@ARTICLE{Tsujii2005-if,
  title   = "Physical measures for partially hyperbolic surface endomorphisms",
  author  = "Tsujii, Masato",
  journal = "Acta Mathematica",
  volume  =  194,
  number  =  1,
  pages   = "37--132",
  month   =  mar,
  year    =  2005
}

@article{dewitt2024expanding,
  title={Expanding on average diffeomorphisms of surfaces: exponential mixing},
  author={DeWitt, Jonathan and Dolgopyat, Dmitry},
  journal={arXiv preprint arXiv:2410.08445},
  year={2024}
}

@article{ZhuS1998UnstableMan, 
    title={Unstable Manifolds for endomorphisms},
    author={Shu Zhu},
    journal={Sci. China Ser.A-Math},
    volume={41},
    pages={147--157},
    year={1998},
}

@article{andersson2016transitivity,
  title={Transitivity of conservative toral endomorphisms},
  author={Andersson, Martin},
  journal={Nonlinearity},
  volume={29},
  number={3},
  pages={1047},
  year={2016},
  publisher={IOP Publishing}
}

@article{luo2024ergodic,
  title={Ergodic measures with large entropy have long unstable manifolds for \uppercase{C}$^\infty$ surface diffeomorphisms},
  author={Luo, Chiyi and Yang, Dawei},
  journal={arXiv preprint arXiv:2410.07979},
  year={2024}
}

@article{buzzi2025strong,
  title={Strong positive recurrence and exponential mixing for diffeomorphisms},
  author={Buzzi, J{\'e}r{\^o}me and Crovisier, Sylvain and Sarig, Omri},
  journal={arXiv preprint arXiv:2501.07455},
  year={2025}
}

@book{walters2000introduction,
  title={An introduction to ergodic theory},
  author={Walters, Peter},
  volume={79},
  year={2000},
  publisher={Springer Science \& Business Media}
}

@book{viana2014lectures,
  title={Lectures on Lyapunov exponents},
  author={Viana, Marcelo},
  volume={145},
  year={2014},
  publisher={Cambridge University Press}
}

@article{gan2021statistical,
  title={Statistical properties of physical-like measures},
  author={Gan, Shaobo and Yang, Fan and Yang, Jiagang and Zheng, Rusong},
  journal={Nonlinearity},
  volume={34},
  number={2},
  pages={1014--1029},
  year={2021},
  publisher={IOP Publishing}
}

@article{crovisier2015introduction,
  title={Introduction to partially hyperbolic dynamics},
  author={Crovisier, Sylvain and Potrie, Rafael},
  journal={School on Dynamical Systems, ICTP, Trieste},
  volume={3},
  number={1},
  year={2015}
}

\end{document}